\chardef\bslash=`\\
\newtheorem{theorem}[subsection]{Theorem}
\newtheorem{lemma}[subsection]{Lemma}
\newtheorem{introthm}{Theorem}
\newtheorem{cor}[subsection]{Corollary}
\newtheorem{prop}[subsection]{Proposition}
\newtheorem{proposition}[subsection]{Proposition}
\theoremstyle{remark}
\numberwithin{equation}{section}
\newif\iffinalrun
  \newcommand{\need}[1]{}
  \newcommand{\mar}[1]{}
  \newcommand{\need}[1]{{\tiny *** #1}}
  \newcommand{\mar}[1]{\marginpar{\raggedright\tiny  #1}}\fi
\renewcommand\mathbb{\mathbf}
\newcommand{\scrQ}{\mathscr{Q}}
\newcommand{\Lie}{{\operatorname{Lie}\,}}
\newcommand{\rec}{{\operatorname{rec}}}
\newcommand{\wv}{{\widetilde{v}}}
\renewcommand{\ell}{l}
\def\Iw{\mathrm{Iw}}
\newcommand{\St}{\operatorname{St}}
\newcommand{\ad}{\operatorname{ad}}
\newcommand{\diag}{\operatorname{diag}}
\newcommand{\tr}{\operatorname{tr}}
\newcommand{\A}{\mathbf{A}}
\newcommand{\bA}{\ensuremath{\mathbf{A}}}
\newcommand{\CC}{{\mathbb C}}
\newcommand{\bC}{\ensuremath{\mathbf{C}}}
\newcommand{\bQ}{\ensuremath{\mathbf{Q}}}
\newcommand{\Q}{\QQ}
\newcommand{\QQ}{{\mathbb Q}}
\newcommand{\bN}{{\mathbf N}}
\newcommand{\bR}{\ensuremath{\mathbf{R}}}
\newcommand{\bT}{\ensuremath{\mathbf{T}}}
\newcommand{\bZ}{\ensuremath{\mathbf{Z}}}
\newcommand{\bbZ}{\ensuremath{\mathbf{Z}}}
\newcommand{\cA}{{\mathcal A}}
\newcommand{\cB}{{\mathcal B}}
\newcommand{\cC}{{\mathcal C}}
\newcommand{\cF}{{\mathcal F}}
\newcommand{\cG}{{\mathcal G}}
\newcommand{\cH}{{\mathcal H}}
\newcommand{\cL}{{\mathcal L}}
\newcommand{\cO}{{\mathcal O}}
\newcommand{\cR}{{\mathcal R}}
\newcommand{\cS}{{\mathcal S}}
\newcommand{\cV}{{\mathcal V}}
\newcommand{\ffrm}{{\mathfrak m}}
\newcommand{\frakp}{\mathfrak{p}}
\newcommand{\p}{\frakp}
\newcommand{\frakq}{\mathfrak{q}}
\newcommand{\q}{\frakq}
\newcommand{\Qbar}{\overline{\Q}}
\newcommand{\Qpbar}{\Qbar_p}
\DeclareMathOperator{\Ad}{Ad}
\DeclareMathOperator{\End}{End}
\DeclareMathOperator{\Gal}{Gal}
\newcommand{\GL}{\mathrm{GL}}
\newcommand{\GSp}{\mathrm{GSp}}
\DeclareMathOperator{\Hom}{Hom}
\DeclareMathOperator{\ord}{ord}
\DeclareMathOperator{\Spec}{Spec}
\newcommand{\Frob}{\mathrm{Frob}}
\newcommand{\Art}{{\operatorname{Art}}}
\newcommand{\Res}{\operatorname{Res}}
\newcommand{\doubleslash}{/\kern-0.2em{/}}
\newcommand{\prm}{\mathrm{p}}
\newcommand{\mrm}{\mathrm{m}}
\begin{document}
\dedicatory{To Professor Benedict Gross, on the occasion of his 70th birthday}
\author{Jack A. Thorne}
\title[The vanishing of adjoint Selmer groups]{On the vanishing of adjoint Bloch--Kato Selmer groups of irreducible automorphic Galois representations}
\begin{abstract} Let $\rho$ be the $p$-adic Galois representation attached to a cuspidal, regular algebraic, polarizable automorphic representation of $\GL_n$. Assuming only that $\rho$ satisfies an irreducibility condition, we prove the vanishing of the adjoint Bloch--Kato Selmer group attached to $\rho$. This generalizes previous work of the author and James Newton. 
\end{abstract}
\maketitle
\setcounter{tocdepth}{1}
\tableofcontents

\section{Introduction}

\textbf{Context.} Let $F, M$ be number fields, let $\widehat{G}$ be a reductive group defined over $M$, and suppose given a strictly compatible system 
\[ \cR = \left(\rho_\lambda : G_F \to \widehat{G}(\overline{M}_\lambda) \right)_\lambda \]
of continuous, $\widehat{G}$-irreducible $\lambda$-adic Galois representations.\footnote{By `strictly compatible' we mean that for each finite place $v$ of $F$, there is a Weil--Deligne representation $(r_v, N_v)$ of $W_{F_v}$ into $\widehat{G}$ over $\overline{M}$ (all but finitely many of which are unramified) such that for each place $\lambda$ of $M$, the Frobenius-semisimple Weil--Deligne representation associated to $\rho_\lambda|_{W_{F_v}}$ is conjugate to $(r_v, N_v)$. In particular, if $v$ and $\lambda$ have the same residue characteristic then $\rho_\lambda|_{G_{F_v}}$ is de Rham.}  For any place $\lambda$ of $M$ of residue characteristic $\ell$ and any representation $R : \widehat{G} \to \GL_N$, we may define the Bloch--Kato Selmer group of $R \circ \rho_\lambda$: 
\begin{multline*} H^1_f( F, R \circ \rho_\lambda ) \\ = \ker\left( H^1(F, R \circ \rho_\lambda) \to \prod_{v | \ell} H^1(F_v, (R \circ \rho_\lambda) \otimes_{\bQ_\ell} B_{cris}) \times \prod_{v \nmid \ell} H^1_{ur}(F_v, (R \circ \rho_\lambda)) \right).  \end{multline*}
Fixing an embedding $M \to \bC$, we may also define the associated $L$-function 
\[ L(\cR, R, s) = \prod_v \det(1 - (R \circ r_v)^{I_{F_v}, N_v = 0}(\Frob_v) q_v^{-s})^{-1}. \]
Conjectures of Fontaine--Mazur, Beilinson, and Bloch--Kato \cite{Fon95, Bei84, Blo90} together lead to the expectation that $ L(\cR, R, s)$
 converges absolutely in some right half-plane and admits a meromorphic continuation to $\bC$, and moreover that for any $\lambda$ there is an equality
\begin{equation}\label{eqn_intro_BK} \dim H^1_f(F, R \circ \rho_\lambda) - \dim H^0(F, R \circ \rho_\lambda) = \ord_{s = 1} L(\cR, R^\vee, s).  
\end{equation}
We are concerned here with the special case where $R = \Ad_{\widehat{G}}$ is the adjoint representation of $\widehat{G}$, when we should have
\begin{equation}\label{eqn_intro_BK_Ad} \dim H^1_f(F, \Ad_{\widehat{G}} \rho_\lambda) - \dim H^0(F,\Ad_{\widehat{G}}  \rho_\lambda) = \ord_{s = 1} L(\cR, \Ad_{\widehat{G}}, s).  
\end{equation}
 The representations $\Ad_{\widehat{G}} \rho_\lambda$ should be pure of weight 0, and one expects the group $\dim H^1_f(F, \Ad_{\widehat{G}} \rho_\lambda)$ to vanish. Since the representation $\Ad_{\widehat{G}}$ is self-dual, (\ref{eqn_intro_BK_Ad}) is expected to be equivalent (applying Poitou--Tate duality to the left-hand side and functional equation of the $L$-function to the right-hand side) to the equality
\begin{equation}\label{eqn_intro_BK_Ad_dual} \dim H^1_f(F, \Ad_{\widehat{G}} \rho_\lambda(1)) - \dim H^0(F,\Ad_{\widehat{G}} \rho_\lambda(1)) = \ord_{s = 0} L(\cR, \Ad_{\widehat{G}}, s).  
\end{equation}
An interesting case arises when the number field $F$ is totally real, $\widehat{G}$ is the $L$-group of a reductive group $G$ over $F$, and $\rho_\lambda$ is the compatible system of Galois representations conjecturally attached by Buzzard--Gee \cite{Buz14} to an automorphic representation $\pi$ of $G(\A_F)$ such that $\pi_\infty$ is square-integrable. Gross predicted \cite{grossodd} that the representations $\rho_\lambda$ should then be odd, in the sense that for each place $v | \infty$ and complex conjugation $c_v \in G_F$, $\Ad \rho_\lambda(c_v)$ is the unique (up to conjugacy) involution of $\widehat{G}$ such that the trace on $\widehat{\mathfrak{g}}$ equals $- \operatorname{rank} \widehat{G}$. Poitou--Tate duality then implies the equality
\begin{equation}\label{eqn_num_coinc} \dim H^1_f(F, \Ad_{\widehat{G}} \rho_\lambda) = \dim H^1_f(F, \Ad_{\widehat{G}} \rho_\lambda(1)). 
\end{equation}
In this paper we essentially establish the equalities (\ref{eqn_intro_BK_Ad}) and (\ref{eqn_intro_BK_Ad_dual}) for many compatible systems associated to automorphic representations $\pi$ of classical groups $G$ over totally real fields $F$ such that $\pi_\infty$ is discrete series. We are able to do this because the equality (\ref{eqn_num_coinc})
is exactly the `numerical coincidence', described in the introduction to \cite{cht}, under which the Taylor--Wiles method applies. Using the Taylor--Wiles method, we can identify the Bloch--Kato Selmer group of $\Ad_{\widehat{G}} \rho_\lambda$ with the Zariski tangent space of a Hecke algebra acting on a space of cuspidal automorphic forms. The vanishing of the Selmer group is thus ultimately a consequence of the fact that this action is semisimple.

This theme, sometimes with integral refinements, has been explored by several authors (see e.g.\ \cite{Kis04a, Dia04, All16, New19a}). On the other hand, Calegari--Geraghty \cite{Cal18} have recently explained how the Taylor--Wiles method can be generalized to cases where the numerical coincidence no longer holds, and applied this, with Harris, to prove unconditionally the vanishing of the adjoint Bloch--Kato Selmer group in some cases for automorphic representations of $\GSp_4(\bA_\bQ)$ associated to abelian surfaces over $\bQ$ \cite[Theorem A.1]{Cal20}. Our aim here is to leverage the relative maturity of the Taylor--Wiles case to prove vanishing results that are as general as possible. 

\textbf{Results.} To state our results, we prefer to work with automorphic representations of general linear groups satisfying self-duality conditions. Let $F$ be a CM number field, and let $\pi$ be a cuspidal, regular algebraic automorphic representation of $\GL_n(\A_F)$ which is polarizable, in the sense of \cite{BLGGT}. Then for any prime $p$ and isomorphism $\iota : \overline{\bQ}_p \to \bC$, there is an associated Galois representation $r_{\pi, \iota} : G_F \to \GL_n(\overline{\bQ}_p)$. Since $\pi$ is polarizable, $r_{\pi, \iota}$ is conjugate self-dual up to twist, and $\Ad  r_{\pi, \iota}$ extends to a representation of $G_{F^+}$ on $M_n(\overline{\bQ}_p)$ (which we may think of arising from the adjoint representation of the $L$-group of a unitary group over $F^+$). This defines the associated adjoint Bloch--Kato Selmer group $H^1_f(F^+, \Ad r_{\pi, \iota})$.

In a previous paper \cite{New19a}, we proved that this adjoint Selmer group vanishes provided that the group $r_{\pi, \iota}(G_{F(\zeta_{p^\infty})})$ is ``enormous''; roughly speaking, that it contains enough regular semisimple elements. The main theorem of this paper strengthens this result, proving the same vanishing under the weaker condition that $r_{\pi, \iota}|_{G_{F(\zeta_{p^\infty})}}$ is irreducible:
\begin{introthm}\label{thm_intro_main_theorem}[Theorem \ref{thm_vanishing_over_CM_field}]
	Let $F$ be a CM number field, and let $\pi$ be a polarizable, cuspidal, regular algebraic automorphic representation of $\GL_n(\A_F)$. Let $p$ be a prime, and let $\iota : \overline{\bQ}_p \to \bC$ be an isomorphism. Suppose that $r_{\pi, \iota}|_{G_{F(\zeta_{p^\infty})}}$ is irreducible. Then $H^1_f(F^+, \ad r_{\pi, \iota}) = 0$.
\end{introthm}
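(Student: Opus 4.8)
The plan is to reinterpret $H^1_f(F^+, \ad r_{\pi,\iota})$ as (essentially) the reduced cotangent space of a global Galois deformation ring at the point cut out by $r_{\pi,\iota}$, to prove an $R = \bT$ theorem for that ring by the Taylor--Wiles--Kisin method, and then to deduce the vanishing from the semisimplicity of the Hecke action on a space of cuspidal automorphic forms --- exactly as the introduction indicates.

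First I would make a series of harmless reductions. Bloch--Kato Selmer groups of $\ad r_{\pi,\iota}$ behave well under restriction along soluble CM extensions $F'/F$ (the restriction map being injective on the relevant $\Gal(F'/F)$--fixed part), and $\pi$ admits soluble base change, so one may assume: $F$ contains an imaginary quadratic field in which $p$ splits, every place of $F^+$ above $p$ splits in $F$, $p$ is unramified in $F^+$, at each finite place $v$ the component $\pi_v$ is unramified, an unramified twist of Steinberg, or of a fixed ``minimally ramified'' type, and there is a definite unitary group $G/F^+$ split by $F$ to which a suitable twist of $\pi$ descends. One then fixes a global deformation problem $\mathcal S$ for the residual representation $\bar r_{\pi,\iota}$ (or, when it is reducible, for the associated pseudocharacter): crystalline of the prescribed regular Hodge--Tate weights at the places above $p$, matched to the chosen local condition at the other bad places, with fixed similitude character. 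Since the Hodge--Tate weights are regular the crystalline lifting ring at $p$ is formally smooth at the point determined by $r_{\pi,\iota}|_{G_{F_v}}$, and with these choices the tangent space of $R_{\mathcal S}$ at the point $x_\pi$ defined by $r_{\pi,\iota}$ is identified, up to a contribution that vanishes by class field theory (and, when $p \mid n$, after splitting off the scalar part of $\ad r_{\pi,\iota}$, whose Selmer group vanishes for the same reason), with $H^1_f(F^+, \ad r_{\pi,\iota})$. So it suffices to show that $R_{\mathcal S}[1/p]$ is \'etale over $\overline{\bQ}_p$ at $x_\pi$.

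The core is the patching. There is a Hecke algebra $\bT_{\mathcal S}$ acting faithfully on the space $H_{\mathcal S}$ of algebraic automorphic forms on $G$ of the relevant level and infinity type, localised at the maximal ideal attached to $\iota^{-1}\pi$; since $G$ is definite this is a space of functions on a finite set and the relevant cohomological defect vanishes ($\ell_0 = 0$), so one is in the classical situation of \cite{cht}, with no Calegari--Geraghty complications. One has a surjection $R_{\mathcal S} \twoheadrightarrow \bT_{\mathcal S}$ and a compatible action of $R_{\mathcal S}$ on $H_{\mathcal S}$. Choosing Taylor--Wiles data $(Q_N)_{N \geq 1}$ and patching produces a maximal Cohen--Macaulay (indeed free) module $H_\infty$ over $R_\infty = R^{\mathrm{loc}}[[x_1, \dots, x_g]]$, with $R^{\mathrm{loc}}$ a completed tensor product of local lifting rings, specialising to $H_{\mathcal S}$ over $R_{\mathcal S}$. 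Here one uses that the ``numerical coincidence'' (\ref{eqn_num_coinc}) holds automatically for $r_{\pi,\iota}$ --- because $\pi$ is polarizable and regular algebraic, equivalently the extension of $\ad r_{\pi,\iota}$ to $G_{F^+}$ is odd in the sense of Gross --- so that the number $g$ of patching variables is exactly what makes $R_\infty$ have the expected dimension; together with the smoothness of the local lifting rings near the point attached to $r_{\pi,\iota}$, the relevant component of $R_\infty$ is then regular of that dimension, so $H_\infty$ is supported on all of it. It follows that $R_{\mathcal S}$ acts nearly faithfully on $H_{\mathcal S}$, that $R_{\mathcal S}$ is formally smooth at $x_\pi$, and that $R_{\mathcal S}[1/p] \to \bT_{\mathcal S}[1/p]$ is an isomorphism; invoking finally the semisimplicity of the Hecke action --- cuspidal automorphic representations of $\GL_n$ occur in the discrete spectrum with multiplicity one, so $\bT_{\mathcal S}[1/p]$ is a finite \'etale $\overline{\bQ}_p$--algebra --- one concludes that $R_{\mathcal S}[1/p]$ is \'etale over $\overline{\bQ}_p$ at $x_\pi$, hence $H^1_f(F^+, \ad r_{\pi,\iota}) = 0$.

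The step that does the real work --- and the advance over \cite{New19a}, which assumed ``enormous'' image --- is making the patching run under the sole hypothesis that $r_{\pi,\iota}|_{G_{F(\zeta_{p^\infty})}}$ is irreducible. Two difficulties arise. First, $\bar r_{\pi,\iota}$ need not be irreducible; when it is reducible there is no honest deformation ring, and the Galois side must be replaced by a pseudodeformation ring, with the patching carried out as in residually reducible automorphy lifting (in the tradition of Skinner--Wiles, in the form developed by the author), the semisimplicity input being unchanged. Second, even when $\bar r_{\pi,\iota}$ is absolutely irreducible it need not be ``enormous'' or ``adequate,'' so the usual Chebotarev construction of Taylor--Wiles primes --- which annihilates the dual Selmer group $H^1_{\mathcal S^\perp}(G_{F^+,S}, \ad^0 \bar r_{\pi,\iota}(1))$ by adjoining primes $v$ with $\bar r_{\pi,\iota}(\Frob_v)$ having a multiplicity-one eigenvalue --- is not directly available and must be replaced by a construction of Taylor--Wiles systems valid under irreducibility alone (working with $\ad \bar r_{\pi,\iota}$ in place of $\ad^0 \bar r_{\pi,\iota}$, using the freedom to enlarge coefficients and to make further soluble base changes, and drawing on the most refined available form of the Taylor--Wiles method for $\GL_n$ over CM fields). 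I expect this to be by far the most delicate ingredient; granted it, together with the by now standard local deformation ring computations and the known local--global compatibility for $r_{\pi,\iota}$, the argument above goes through.
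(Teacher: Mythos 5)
Your outline matches the paper's high-level strategy (reduce to a conjugate self-dual $\pi$ descending to a definite unitary group, identify $H^1_f(F^+,\ad r_{\pi,\iota})$ with the reduced cotangent space of a pseudodeformation ring at the characteristic-zero point, patch, and invoke semisimplicity of the Hecke action), but it defers exactly the point that constitutes the content of the theorem. You write that the construction of Taylor--Wiles systems ``valid under irreducibility alone'' must replace the usual Chebotarev argument and that you ``expect this to be by far the most delicate ingredient; granted it, the argument goes through.'' That ingredient is not granted anywhere: it occupies \S\ref{sec_different}--\S\ref{sec_deformation} of the paper. The actual idea is to impose \emph{no} condition on $\rho(\Frob_v)$ at a Taylor--Wiles place, but instead to choose a factorisation $f_v = f_{v,1}f_{v,2}$ of its characteristic polynomial into coprime monic factors and to \emph{shrink} the local Selmer condition so that inertia acts by a scalar on the $f_{v,2}$-generalised eigenspace (otherwise the auxiliary Selmer group is too large and the Greenberg--Wiles count in Lemma \ref{lem_greenberg_wiles} fails). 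Killing the dual Selmer group then rests on Proposition \ref{prop_weak_adequacy}: in characteristic $0$, absolute irreducibility forces the semisimple elements of the image to span $M_n(k)$, so one can find $\sigma\in G_{F(\zeta_{p^\infty})}$ and an eigenvalue $\alpha$ with $\tr e_{\sigma,\alpha}W\neq 0$. Making this condition match on the automorphic side requires the parahoric Hecke algebra analysis of \S\ref{sec_hecke}, with the resultant $\Res_{n_1,n_2}$ (and its $q$-analogue) controlling the discrepancy between spherical and parahoric level; none of this is visible in your sketch.

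Where you do commit to details of the patching, the route you describe would not work in this generality. You posit a patched module $H_\infty$ that is free over $R_\infty=R^{\mathrm{loc}}[[x_1,\dots,x_g]]$ with $R^{\mathrm{loc}}$ a completed tensor product of local lifting rings, a nearly faithful action, and an $R=\bT$ theorem. Since no hypothesis is placed on $\overline{r}_{\pi,\iota}$ (it may be trivial), there are no framed local lifting rings in play and no integral freeness statement to hope for; the paper's $R_\infty$ is just $\cO[[x_1,\dots,x_q]]$ with $q=\operatorname{corank}_\cO H^1(F_S/F^+,\ad r(1)\otimes E/\cO)$, the patching is carried out only at the characteristic-zero prime $P^{\mathrm{p}}$ and only modulo $\mathbf{a}_\infty^2$, and freeness is extracted from Brochard's criterion rather than from a support/dimension argument over $R^{\mathrm{loc}}$. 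So the proposal, as written, contains a genuine gap: the mechanism that lets the Taylor--Wiles method run under irreducibility alone is asserted rather than supplied, and the patching framework you substitute for it is not available here.
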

This theorem is probably the best possible using the kinds of methods considered here. We hope that this theorem will have applications of a similar sort to those of the main result of \cite{New19a} (see for example the papers \cite{New19b, New20}). For an analogous theorem in the case where $F$ is a totally real field, see Theorem \ref{thm_vanishing_over_real_field} below.

We now explain what is new here compared to the arguments of \cite{New19a}. As in that paper, we show that $H^1_f(F^+, \ad r_{\pi, \iota}) = 0$ by using auxiliary Selmer groups, with torsion coefficients, and where we allow ramification at Taylor--Wiles places of the base number field. Previously, we considered ramification at places where the image of Frobenius under $r_{\pi, \iota}$ is regular semisimple, with the modified Selmer conditions allowing arbitrary ramification at these places. Here we do not impose any condition on the image of Frobenius. However, we must then cut down the relevant Selmer condition, as allowing arbitrary ramification would otherwise define a Selmer group that was `too large'. The condition we impose is roughly that, selecting an eigenvalue $\alpha$ of the Frobenius at a Taylor--Wiles place, inertia acts through a scalar character on the $\alpha$-generalized eigenspace (an idea similar to the one used in \cite{jack}).

The hardest part of the proof is showing that this condition makes sense both at the level of Galois deformation theory and at the level of automorphic forms. We note that as in \cite{New19a}, we impose no condition on the residual representation $\overline{r}_{\pi, \iota}$ (which might even be trivial), so we need to study carefully the interaction of these conditions with the various integral structures that appear in order to make the final patching argument go through.

\textbf{Organization of this paper.} In \S \ref{sec_different}, we compute the different of the ring extension $\bbZ[x_1, \dots, x_n]^{S_n} \to \bbZ[x_1, \dots,  x_n]^{S_{n_1} \times S_{n_2}}$: it is the resultant, and occurs constantly throughout this paper. In \S \ref{sec_hecke}, we realise this ring extension as a map of Hecke algebras and show how the different controls the difference between certain spaces of automorphic forms which naturally appear in the Taylor--Wiles method. In \S \ref{sec_adequacy} and \S \ref{sec_deformation} we study our auxiliary Selmer groups. Finally, in \S \ref{sec_patching}, we combine everything to prove Theorem \ref{thm_intro_main_theorem}.

\textbf{Acknowledgements.} This work received funding from the European 
Research Council (ERC) under the European Union's Horizon 2020 research and 
innovation programme (grant agreement No 714405). 

\textbf{Notation.} We use the same notation as defined in \cite[\S 1]{New19a}. Table \ref{tab} gives a list of symbols used with their meanings. We refer to \emph{loc. cit.} for precise definitions.
\begin{table}
\caption{Summary of notation}\label{tab}
\begin{tabular}{|p{0.2\textwidth}|p{0.7\textwidth}|}
\hline
 Symbol &  Meaning \\ \hline
 $G_F$ & Absolute Galois group of field $F$ of characteristic 0 \\ \hline
$F_v$, $\cO_{F_v}$, $\varpi_v$, $k(v)$ &  Completion of number field $F$ at finite place $v$, ring of integers, fixed choice of uniformizer, residue field \\  \hline
 $F_S / F$, $G_{F, S}$& Maximal extension of number field $F$ unramified outside finite set $S$, $\Gal(F_S / F)$ \\ \hline
 $E, \cO, \varpi, k$ &  Finite extension of $\bQ_p$ with ring of integers, uniformizer, residue field \\ \hline
 $\cC_\cO, \cC_E$ & Category of complete Noetherian local $\cO$, resp. $E$-algebras \\ \hline
 $\cH(G, U)$ & Hecke algebra of locally profinite group $G$ with identity element $[U]$ \\ \hline
 $W_K, I_K, \Art_K$ & Weil group, inertia group, Artin map of $p$-adic local field $K$ \\ \hline
 $\rec_K, \rec^T_K$ & Local Langlands correspondence for $\GL_n(K)$ and its Tate-normalised version  \\ \hline
 $\mathrm{WD}(\rho)$, $\mathrm{WD}(\rho)^{F-ss}$ & Weil--Deligne representation associated to continuous representation $\rho : G_K \to \GL_n(\overline{\bQ}_\ell)$ (assumed geometric if $\ell = p$) and its Frobenius-semisimplification \\ \hline
 $r_{\pi, \iota}$ & $p$-adic Galois representation associated to a regular algebraic, cuspidal, polarizable automorphic representation of $\GL_n(\A_F)$, $F$ a CM or totally real number field, and $\iota : \overline{\bQ}_p \to \bC$ an isomorphism \\ \hline
 $\cG_n, \ad $ & Group scheme with neutral component $\GL_n \times \GL_1$ considered in \cite[\S 2]{cht}, and its adjoint representation on $\Lie \GL_n$ \\ \hline
\end{tabular}
	\end{table}
	\section{A different computation}\label{sec_different}

Let $\cA = \bbZ[e_1, \dots, e_n] \subset \cC = \bbZ[x_1, \dots, x_n]$ denote the ring of symmetric polynomials in $n$ variables. Fix a decomposition $n = n_1 + n_2$, and define elements $a_1, \dots, a_{n_1}$ and $b_1, \dots, b_{n_2}$ by the relations
\[ (T - x_1) \dots (T - x_{n_1}) = \sum_{i=0}^{n_1} T^{n_1-i}  a_{i}, \]
\[ (T - x_{n_1 + 1}) \dots (T - x_{n}) = \sum_{i=0}^{n_2} T^{n_2-i} b_{i}, \]
hence
\[ (T - x_1) \dots (T - x_n) = \sum_{i=0}^n T^{n-i}  e_i = \left(\sum_{i=0}^{n_1} T^{n_1-i}  a_{i}\right)\left(\sum_{j=0}^{n_2} T^{n_2-j} b_{j}\right). \]
We define $\cB = \cB_{n_1, n_2} = \bbZ[a_1, \dots, a_{n_1}, b_1, \dots, b_{n_2}]$. Thus the group $S_n$ acts on $\cC$ and its subgroup $S_{n_1} \times S_{n_2}$ acts trivially on $\cB$. 
\begin{lemma}
The ring $\cB$ is the ring of polynomials invariant under the subgroup $S_{n_1} \times S_{n_2}$, and is a free $\cA$-module. 
\end{lemma}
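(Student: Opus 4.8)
The two assertions — that $\cB$ is precisely the ring of $S_{n_1} \times S_{n_2}$-invariants in $\cC$, and that it is a free $\cA$-module — I would prove essentially separately, the second containing the only real subtlety (because the base ring is $\bbZ$, not a field). For the first: write $\cC = \cC_1 \otimes_{\bbZ} \cC_2$ with $\cC_1 = \bbZ[x_1, \dots, x_{n_1}]$ and $\cC_2 = \bbZ[x_{n_1+1}, \dots, x_n]$, so that $S_{n_1}$ acts on the first factor and $S_{n_2}$ on the second. Since $\cC_2$ is $\bbZ$-free, a monomial basis exhibits $\cC$ as a direct sum of copies of $\cC_1$ as a $\bbZ[S_{n_1}]$-module, giving $\cC^{S_{n_1}} = \cC_1^{S_{n_1}} \otimes_{\bbZ} \cC_2$; applying the same reasoning to $S_{n_2}$ gives $\cC^{S_{n_1} \times S_{n_2}} = \cC_1^{S_{n_1}} \otimes_{\bbZ} \cC_2^{S_{n_2}}$. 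By the fundamental theorem of symmetric polynomials over $\bbZ$, applied to the two sets of variables separately (and noting $a_i = (-1)^i e_i(x_1, \dots, x_{n_1})$, and similarly for the $b_j$), one has $\cC_1^{S_{n_1}} = \bbZ[a_1, \dots, a_{n_1}]$ and $\cC_2^{S_{n_2}} = \bbZ[b_1, \dots, b_{n_2}]$; their tensor product over $\bbZ$ is exactly $\cB$, which is therefore a polynomial ring.

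For freeness, grade all three rings by $\deg x_i = 1$; they are then connected graded with degree-$0$ part $\bbZ$, the inclusions are graded, and $\cA_{>0} = (e_1, \dots, e_n)$. I would use two consequences of the fundamental theorem: (a) $\cC$ is graded-free over $\cA$ of rank $n!$ (with, say, the monomial basis $\{\prod_i x_i^{c_i} : 0 \le c_i \le n-i\}$), and (b) $\cC$ is graded-free over $\cB$ of rank $n_1! \, n_2!$, obtained by applying (a) to $\cC_1$ over $\cC_1^{S_{n_1}}$ and to $\cC_2$ over $\cC_2^{S_{n_2}}$ and tensoring. Reducing (a) modulo $\cA_{>0}$ shows $\cC/(e_1, \dots, e_n)\cC$ is $\bbZ$-free of rank $n!$; since $(e_1, \dots, e_n)\cC$ is the extension of the ideal $(e_1, \dots, e_n)\cB$, reducing (b) modulo $\cA_{>0}$ shows this same quotient is free of rank $n_1! \, n_2!$ over $\cB/(e_1, \dots, e_n)\cB$. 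As $\cB$ is module-finite over the Noetherian ring $\cA$, the abelian group $\cB/(e_1, \dots, e_n)\cB$ is finitely generated, and the previous sentence then forces it to be torsion-free, hence $\bbZ$-free, of rank $r := \binom{n}{n_1}$. Inverting $\cA \setminus \{0\}$ in (a) and (b) likewise gives $[\operatorname{Frac}(\cC) : \operatorname{Frac}(\cA)] = n!$ and $[\operatorname{Frac}(\cC) : \operatorname{Frac}(\cB)] = n_1! \, n_2!$, hence $[\operatorname{Frac}(\cB) : \operatorname{Frac}(\cA)] = r$.

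To conclude, choose homogeneous elements $m_1, \dots, m_r \in \cB$ lifting a $\bbZ$-basis of $\cB/\cA_{>0}\cB$. By graded Nakayama the $m_i$ generate $\cB$ over $\cA$, so there is a graded surjection $\phi : \bigoplus_{i=1}^{r} \cA(-\deg m_i) \twoheadrightarrow \cB$. Its kernel $K$ is a graded submodule of a free module over the domain $\cA$, hence torsion-free; on the other hand, tensoring $\phi$ with $\operatorname{Frac}(\cA)$ yields a surjection of $\operatorname{Frac}(\cA)$-vector spaces both of dimension $r$ (using $\cB \otimes_{\cA} \operatorname{Frac}(\cA) = \operatorname{Frac}(\cB)$ and the degree computation above), hence an isomorphism, so $K \otimes_{\cA} \operatorname{Frac}(\cA) = 0$ and $K$ is a torsion module. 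Therefore $K = 0$, $\phi$ is an isomorphism, and $\cB$ is a free $\cA$-module (necessarily of rank $\binom{n}{n_1}$).

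The step I expect to be the main obstacle is the $\bbZ$-freeness of the relative fibre $\cB/(e_1, \dots, e_n)\cB$. Over a field this would be nearly automatic: $\cB$ is a $\cB$-module direct summand of $\cC$ (its free basis over $\cB$ may be taken to contain $1$), hence Cohen--Macaulay, hence free over the graded polynomial ring $\cA$. Over $\bbZ$, however, one must also rule out torsion in the fibre over the origin, and this is exactly what the rank comparison via $\cC$ supplies. (Alternatively one could deduce $\cA$-flatness of $\cB$ from miracle flatness and then upgrade flat to free using the grading, or identify $\cB/(e_1, \dots, e_n)\cB$ with the integral cohomology of the Grassmannian $\mathrm{Gr}(n_1, n)$ together with its Schubert basis; the self-contained rank comparison seems cleanest.)
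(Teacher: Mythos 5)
Your proof is correct. The identification of the invariant ring is essentially the paper's argument: both decompose $\cC = \cC_1 \otimes_{\bbZ} \cC_2$ and reduce to the fundamental theorem of symmetric functions in each block (the paper packages the bookkeeping via Bourbaki's base-change statement for $R[x_1,\dots,x_n]^{S_n}$, you do it by hand with a monomial basis; same content). For freeness the key input is identical — $\cC$ is graded free over $\cA$ of rank $n!$ and over $\cB$ of rank $n_1!\,n_2!$ — but the routes diverge from there. The paper concludes in one line: these two facts give $\cB^{n_1!n_2!} \cong \cA^{n!}$ as graded $\cA$-modules, so $\cB$ is a finitely generated graded projective $\cA$-module and hence free. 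You instead avoid invoking ``graded projective implies graded free'' (a principle that is standard when the degree-zero part is a field or local ring, and needs a word of justification over $\bbZ$) and substitute an explicit verification: you show the fibre $\cB/\cA_{>0}\cB$ is $\bbZ$-free by embedding its $n_1!\,n_2!$-th power in the torsion-free module $\cC/\cA_{>0}\cC$, lift a homogeneous basis by graded Nakayama, and kill the kernel of the resulting surjection by comparing generic ranks $[\operatorname{Frac}\cB : \operatorname{Frac}\cA] = \binom{n}{n_1}$. Your version is longer but entirely self-contained, makes the rank explicit, and in effect supplies the proof of the projective-implies-free step that the paper takes for granted; the paper's version is the more efficient formulation of the same underlying mechanism.
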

\begin{proof}
Let $R$ be a commutative ring, and let $S$ be an $R$-algebra. Then \cite[Ch. IV, \S 6, No. 1, Theorem 1]{Bou03} implies that the canonical map $R[x_1, \dots, x_n]^{S_n} \otimes_R S \to S[x_1, \dots, x_n]^{S_n}$ is an isomorphism (as both source and target are free $S$-modules, with basis given by the monomials in the elementary symmetric polynomials) and that $R[x_1, \dots, x_n]$ is a free $R[x_1, \dots, x_n]^{S_n}$-module. 

Let $R_1 = \bZ[x_1, \dots, x_{n_1}]$ and $R_2 = \bZ[x_{n - n_2 + 1}, \dots, x_n]$. We deduce that
\[ \cB =  R_1^{S_{n_1}} \otimes_\bZ R_2^{S_{n_2}} \cong (R_1^{S_{n_1}} \otimes_\bZ R_2)^{S_{n_2}} \cong (R_1 \otimes_\bZ R_2)^{S_{n_1} \times S_{n_2}}. \]
This proves the first statement of the lemma. We also see that $\cC = R_1 \otimes_\bZ R_2$ is free both as $\cA$-module and as $\cB$-module, and so there is an isomorphism $\cB^{ n_1! n_2!} \cong \cA^{n!}$ of graded $\cA$-modules. In particular, $\cB$ is a finitely generated projective graded $\cA$-module, which must therefore be free. 
\end{proof}
An important element of $\cB$ is the resultant 
\[ \Res_{n_1, n_2} = \operatorname{Res}(\sum_{i=0}^{n_1} T^{n_1-i}  a_{i},   \sum_{i=0}^{n_2} T^{n_2-i}  b_{i}) = \prod_{i=1}^{n_1} \prod_{j=1}^{n_2} (x_i - x_{n_1+j}).\]
Another important element of $\cB$ is the $q$-resultant, defined for $q \in \bN$:
\[ \Res_{q, n_1, n_2} =  \operatorname{Res}(\sum_{i=0}^{n_1} T^{n_1-i} q^i a_{i},   \sum_{i=0}^{n_2} T^{n_2-i}  b_{i}) = \prod_{i=1}^{n_1} \prod_{j=1}^{n_2} (q x_i - x_{n_1+j}).\]
\begin{proposition}\label{prop_different}
There exists a unique element $\widetilde{\Res}_{n_1, n_2} = \sum_i z_i \otimes w_i \in \cB \otimes_\cA \cB$ with the following properties:
\begin{enumerate}
\item $\sum_i z_i w_i = {\Res_{n_1, n_2}}$.
\item For each $\sigma \in S_n - S_{n_1} \times S_{n_2}$, $\sum_i \sigma(z_i) w_i = 0$.
\end{enumerate}
\end{proposition}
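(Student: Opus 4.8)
The plan is to obtain uniqueness from an injectivity statement and existence from an explicit trace-dual-basis construction; the one substantive input is the identification of the different of $\cB$ over $\cA$ with the principal ideal $(\Res_{n_1,n_2})$. Throughout, write $K = \operatorname{Frac}(\cA)$, $M = \operatorname{Frac}(\cB)$, $L = \operatorname{Frac}(\cC)$, so that $L/K$ is Galois with group $S_n$ and $M = L^H$ with $H = S_{n_1}\times S_{n_2}$, and $r := [M:K] = n!/(n_1!\,n_2!)$. For $\sigma\in S_n$ let $\phi_\sigma\colon \cB\otimes_\cA\cB\to\cC$ be the $\cA$-linear map $b\otimes b'\mapsto \sigma(b)b'$; it is well defined since $\sigma$ fixes $\cA$, it depends only on the coset $\sigma H$, and it equals the multiplication map $\mu$ when $\sigma\in H$. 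Thus (1) and (2) say exactly that $\mu(\widetilde{\Res}_{n_1,n_2}) = \Res_{n_1,n_2}$ and $\phi_\sigma(\widetilde{\Res}_{n_1,n_2}) = 0$ for $\sigma\notin H$, so for uniqueness it suffices that $x\mapsto(\phi_\sigma(x))_{\sigma\in S_n}$ be injective on $\cB\otimes_\cA\cB$. This follows from the previous lemma: since $\cC$ is free over $\cB$, the ring $\cB$ is an $\cA$-module direct summand of $\cC$, so $\cB\otimes_\cA\cB\hookrightarrow\cB\otimes_\cA\cC$; and $\cB\otimes_\cA\cC\to\prod_\sigma\cC$ becomes, after $-\otimes_\cA K$, the composite $M\otimes_K L\hookrightarrow L\otimes_K L\xrightarrow{\ \sim\ }\prod_{\sigma\in S_n}L$, hence is injective because its source is $\cA$-torsion free.

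For existence, fix an $\cA$-basis $\beta_1,\dots,\beta_r$ of $\cB$ (previous lemma) and let $\beta_1^*,\dots,\beta_r^*\in M$ be the dual basis for the trace form $(x,y)\mapsto\operatorname{Tr}_{M/K}(xy)$, which is nondegenerate as $\operatorname{char}K = 0$. I claim $\Res_{n_1,n_2}\beta_i^*\in\cB$ for all $i$; granting this, set $\widetilde{\Res}_{n_1,n_2} := \sum_i \beta_i\otimes(\Res_{n_1,n_2}\beta_i^*)\in\cB\otimes_\cA\cB$, an element independent of the chosen basis. For any two embeddings $j,j'\colon M\hookrightarrow\barK$ one has $\sum_i j(\beta_i)j'(\beta_i^*) = \delta_{j,j'}$: with $P = (j(\beta_i))_{j,i}$ and $Q = (j(\beta_i^*))_{j,i}$ one has $Q^{\mathrm t}P = (\operatorname{Tr}_{M/K}(\beta_i^*\beta_k))_{i,k} = I$, hence $PQ^{\mathrm t} = I$. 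Taking $j = j'$ the inclusion gives $\sum_i\beta_i\beta_i^* = 1$, so $\mu(\widetilde{\Res}_{n_1,n_2}) = \Res_{n_1,n_2}$, which is (1). For $\sigma\in S_n\setminus H$ the restriction $\sigma|_M$ is an embedding $M\hookrightarrow\barK$ distinct from the inclusion, so $\sum_i\sigma(\beta_i)\beta_i^* = 0$ and therefore $\phi_\sigma(\widetilde{\Res}_{n_1,n_2}) = \Res_{n_1,n_2}\sum_i\sigma(\beta_i)\beta_i^* = 0$, which is (2).

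It remains to prove that $\Res_{n_1,n_2}$ lies in the different $\mathfrak d_{\cB/\cA}$, equivalently that the codifferent $\operatorname{Hom}_\cA(\cB,\cA) = \sum_i\cA\beta_i^*\subseteq M$ is contained in $\Res_{n_1,n_2}^{-1}\cB$. For this I would use that $\cB$ is a complete intersection over $\cA$: with $A_0 = B_0 = 1$ and $r_k := \sum_{i+j=k}A_iB_j - e_k$, the natural surjection $\cA[A_1,\dots,A_{n_1},B_1,\dots,B_{n_2}]/(r_1,\dots,r_n)\to\cB$, $A_i\mapsto a_i$, $B_j\mapsto b_j$, is an isomorphism — the source is the coordinate ring of the scheme of monic factorisations $h = fg$ (with $\deg f = n_1$) of the universal polynomial $h = \sum_{i=0}^n T^{n-i}e_i$, hence is finite flat of rank $\binom n{n_1}$ over $\cA$, it has Krull dimension that of $\cA$ so the $r_k$ form a regular sequence, and the surjection is an isomorphism after $-\otimes_\cA K$ since both sides then become $M$. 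By the classical computation of the different of a finite flat complete intersection — namely $\mathfrak d_{\cB/\cA}$ is generated by the Jacobian determinant of the defining relations (equivalently $\operatorname{Tr}_{\cB/\cA} = \Delta\cdot\tau$ for the canonical trace generator $\tau$ of $\operatorname{Hom}_\cA(\cB,\cA)$) — one gets $\mathfrak d_{\cB/\cA} = (\Delta)$ with $\Delta = \det(\partial r_k/\partial A_i,\ \partial r_k/\partial B_j)$. Now $\partial r_k/\partial A_i = b_{k-i}$ and $\partial r_k/\partial B_j = a_{k-j}$, so this matrix is a rearrangement of the Sylvester matrix of $\sum_i T^{n_1-i}a_i$ and $\sum_j T^{n_2-j}b_j$, whence $\Delta = \pm\Res_{n_1,n_2}$; thus $\mathfrak d_{\cB/\cA} = (\Res_{n_1,n_2})$ and in particular $\Res_{n_1,n_2}\beta_i^*\in\cB$, completing the proof.

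The step I expect to be the main obstacle is the last paragraph: checking carefully that $\cB = \cA[A_\bullet,B_\bullet]/(r_\bullet)$ really is the stated complete intersection, and invoking the classical (but not entirely elementary) fact that for such a presentation the Dedekind different coincides with the Jacobian ideal — after which the identification of that Jacobian with the resultant is immediate. Everything else, namely uniqueness and the verification of (1) and (2) for the trace-dual element, is routine linear algebra over the chain of fraction fields $K\subseteq M\subseteq L$.
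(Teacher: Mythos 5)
Your argument is correct, and it reaches the conclusion by a genuinely different (though cognate) route. The paper reformulates conditions (1) and (2) as saying that $\widetilde{\Res}_{n_1,n_2}$ is the element of $J=\operatorname{Ann}_{\cB\otimes_\cA\cB}(\ker\mu)$ with $\mu(\widetilde{\Res}_{n_1,n_2})={\Res_{n_1,n_2}}$, and then quotes the Stacks Project result on the \emph{Noether} different of a finite flat relative complete intersection, which asserts in one stroke that $\mu|_J$ is an isomorphism onto the ideal generated by the Jacobian determinant $\det(\partial(e_1,\dots,e_n)/\partial(a_1,\dots,a_{n_1},b_1,\dots,b_{n_2}))=\pm{\Res_{n_1,n_2}}$. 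You instead work with the \emph{Dedekind} different: uniqueness via the injection of $\cB\otimes_\cA\cB$ into $\prod_{\sigma}L$ over the fraction fields, existence via the explicit Casimir-type element $\sum_i\beta_i\otimes{\Res_{n_1,n_2}}\beta_i^*$ built from a trace-dual basis, and the one substantive input being ${\Res_{n_1,n_2}}\cdot\operatorname{Hom}_\cA(\cB,\cA)\subseteq\cB$, deduced from the classical identity $\operatorname{Tr}_{\cB/\cA}=\Delta\cdot\tau$ for complete intersections. The underlying computation (complete-intersection presentation plus Jacobian $=$ Sylvester determinant $=$ resultant) is the same in both proofs; what your version buys is an explicit formula for $\widetilde{\Res}_{n_1,n_2}$ and a self-contained verification of (1) and (2), at the cost of invoking the trace-form version of the different theorem rather than the annihilator version. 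One simplification for the step you flag as the main obstacle: you do not need to appeal to finite flatness of the factorisation scheme to identify $\cA[A_\bullet,B_\bullet]/(r_\bullet)$ with $\cB$. Since $r_k=\sum_{i+j=k}A_iB_j-e_k$ simply expresses $e_k$ as a polynomial in the $A_i,B_j$, eliminating the variables $e_1,\dots,e_n$ gives $\cA[A_\bullet,B_\bullet]/(r_\bullet)\cong\bbZ[A_1,\dots,A_{n_1},B_1,\dots,B_{n_2}]=\cB$ on the nose; finiteness and flatness of $\cB$ over $\cA$ (hence the relative complete intersection property, as there are $n$ equations in $n$ variables with zero-dimensional fibres) is exactly the content of the lemma preceding the proposition. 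Also, your appeal to $\cB$ being an $\cA$-module direct summand of $\cC$ is dispensable: $\cB\otimes_\cA\cB$ is $\cA$-free, hence injects into $M\otimes_KM$, which injects into $L\otimes_KL\cong\prod_\sigma L$ directly.
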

\begin{proof}
Let $\mu : \cB \otimes_\cA \cB \to \cB$ be the $\cA$-algebra homomorphism given by $\mu(z \otimes w) = zw$. Let $I = \ker(\mu)$, and let $J = \operatorname{Ann}_{\cB \otimes_\cA \cB}(I)$. The statement of the proposition is equivalent to the assertion that the map $\mu|_J$ is injective and its image contains ${\Res_{n_1, n_2}}$. In fact, we will show that $\mu|_J$ is an isomorphism onto the ideal of $\cB$ generated by ${\Res_{n_1, n_2}}$.

The ring extension $\cA \subset \cB$ satisfies the hypotheses of \cite[Lemma 0BWD]{stacks-project}, which implies that $\mu|_J$ is an isomorphism onto the ideal of $\cB$ generated by the determinant of the Jacobian matrix $(\partial (e_1, \dots, e_n) / \partial (a_1, \dots, a_{n_1}, b_1, \dots, b_{n_2}))$. The determinant of this matrix is (up to sign) ${\Res_{n_1, n_2}}$.
\end{proof}
\begin{proposition}\label{prop_etale_when_different_non-zero}
The morphism $\Spec \cB \to \Spec \cA$ is \'etale away from the locus ${\Res_{n_1, n_2}} = 0$.
\end{proposition}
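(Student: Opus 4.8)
The plan is to deduce this from the Jacobian criterion for \'etale morphisms, reusing the determinant computation already carried out in the proof of Proposition \ref{prop_different}. The key point is that $\cB$ is not merely finite over $\cA$ but can be presented over $\cA$ by exactly $n$ generators and $n$ relations, so that the étale locus is governed by a single Jacobian determinant, which has already been identified with $\Res_{n_1, n_2}$.

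First I would exhibit $\cB$ as a relative global complete intersection over $\cA$. Write $\cA[A_1, \dots, A_{n_1}, B_1, \dots, B_{n_2}]$ for a polynomial ring over $\cA$, and let $c_k(A, B)$, $1 \le k \le n$, be the coefficients determined by $\left(\sum_{i=0}^{n_1} T^{n_1 - i} A_i\right)\left(\sum_{j=0}^{n_2} T^{n_2 - j} B_j\right) = \sum_{k=0}^{n} T^{n-k} c_k(A, B)$ (with the convention $A_0 = B_0 = 1$, matching $a_0 = b_0 = 1$ in the text). There is a surjection $\cA[A_1, \dots, B_{n_2}] \to \cB$ sending $A_i \mapsto a_i$, $B_j \mapsto b_j$, and its kernel contains the $n$ elements $g_k = e_k - c_k(A, B)$. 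Since $\cA = \bbZ[e_1, \dots, e_n]$ is itself a polynomial ring, quotienting by $(g_1, \dots, g_n)$ merely eliminates the variables $e_k$ and recovers $\bbZ[A_1, \dots, B_{n_2}] \cong \cB$; hence the kernel is exactly $(g_1, \dots, g_n)$, and $\cB = \cA[A_1, \dots, B_{n_2}]/(g_1, \dots, g_n)$ is a relative global complete intersection of relative dimension $0$.

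Next, the Jacobian matrix $\left(\partial g_k / \partial(A_i, B_j)\right)$ has determinant equal, up to sign, to $\det\left(\partial (c_1, \dots, c_n)/\partial(A_1, \dots, B_{n_2})\right)$, since the $e_k$ are constants for this computation. Evaluating in $\cB$, where $c_k(a, b) = e_k$, this determinant is $\det\left(\partial(e_1, \dots, e_n)/\partial(a_1, \dots, a_{n_1}, b_1, \dots, b_{n_2})\right)$, which the last line of the proof of Proposition \ref{prop_different} identifies with $\pm \Res_{n_1, n_2}$. The Jacobian criterion for étale morphisms — a relative global complete intersection of relative dimension $0$ is étale exactly on the locus where the Jacobian determinant is invertible — then shows that $\Spec \cB \to \Spec \cA$ is étale over the open subscheme $D(\Res_{n_1, n_2}) \subseteq \Spec \cB$, which is precisely the assertion.

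I do not anticipate a serious obstacle here: the only step requiring a moment's care is the verification that the relation ideal is generated by the $n$ elements $g_k$ on the nose (so that the equidimensional Jacobian criterion applies directly), and this is immediate from $\cA$ being a polynomial ring on the $e_k$. If one prefers to avoid the explicit presentation, there is an alternative route: by the Lemma, $\cB$ is finite and free, hence flat, over $\cA$, so $\Spec \cB \to \Spec \cA$ is étale at a prime if and only if it is unramified there; and Proposition \ref{prop_different} already identifies the Noether different of $\cB/\cA$ (the ideal $\mu(J)$) with $(\Res_{n_1, n_2})$, whose vanishing locus is exactly the ramification locus of a finite flat morphism.
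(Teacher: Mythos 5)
Your argument is correct, and your primary route is genuinely different from the one in the paper. The paper's proof is exactly your closing ``alternative route'': since $\cA \to \cB$ is finite flat, \'etale is equivalent to unramified, and the proof of Proposition \ref{prop_different} (via \cite[Lemma 0BWD]{stacks-project}) shows that the Noether different $\mu(J)$ is the ideal generated by $\Res_{n_1,n_2}$; one then quotes the fact (Stacks Tag 0BVU) that for a flat morphism the non-vanishing locus of the Noether different is the unramified locus. Your main argument instead exhibits $\cB$ explicitly as a relative global complete intersection $\cA[A_1,\dots,B_{n_2}]/(g_1,\dots,g_n)$ of relative dimension $0$ and applies the Jacobian criterion. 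The presentation is correct: the kernel is exactly $(g_1,\dots,g_n)$ because eliminating the $e_k$ leaves $\bbZ[A_1,\dots,B_{n_2}]$, and this maps isomorphically onto $\cB$ since the $a_i, b_j$ are algebraically independent (this is the content of the paper's first Lemma, which you are implicitly using and should cite at that point). Your route is more self-contained and elementary --- it avoids the formalism of the Noether different entirely, computing $\Omega_{\cB/\cA}$ directly as the cokernel of the Jacobian matrix --- at the cost of having to justify the presentation. The paper's route is shorter because it recycles the hypotheses already verified for \cite[Lemma 0BWD]{stacks-project} in the proof of Proposition \ref{prop_different}, so that the identification of the different with $(\Res_{n_1,n_2})$ comes for free. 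Both proofs ultimately rest on the same determinant computation $\det\left(\partial(e_1,\dots,e_n)/\partial(a_1,\dots,b_{n_2})\right) = \pm\Res_{n_1,n_2}$.
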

\begin{proof}
The proof of Proposition \ref{prop_different} shows that ${\Res_{n_1, n_2}}$ generates the Noether different of $\cA \to \cB$. The morphism $\cA \to \cB$ is flat, and \cite[\href{https://stacks.math.columbia.edu/tag/0BVU}{Tag 0BVU}]{stacks-project} shows that $\Spec \cB \to \Spec \cA$ fails to be unramified precisely at the points defined by the equation ${\Res_{n_1, n_2}} = 0$. 
\end{proof}
In this paper we will frequently use the interpretation of $\Spec \cB$ as the scheme of factorisations $F(X) = F_1(X) F_2(X)$, where $F_1, F_2$ are monic of degrees $n_1, n_2$, respectively. A related construction is given by the following lemma.
\begin{lemma}\label{lem_universal_idempotents} There are unique polynomials $G_1(X), G_2(X) \in \cB[X]$ of degrees $n_2-1, n_1-1$ such that $G_1(X) F_1(X) + G_2(X) F_2(X) = {\Res_{n_1, n_2}}$.
\end{lemma}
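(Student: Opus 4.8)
The plan is to recognise the left-hand side as a Sylvester-type linear map over $\cB$ and to apply Cramer's rule. For $m \ge 0$, let $\cB[X]_{<m}$ denote the free $\cB$-module of polynomials of degree $< m$, and consider the $\cB$-linear map
\[
\Phi \colon \cB[X]_{<n_2} \oplus \cB[X]_{<n_1} \longrightarrow \cB[X]_{<n}, \qquad (G_1, G_2) \longmapsto G_1 F_1 + G_2 F_2 .
\]
Since $F_1$ and $F_2$ are monic of degrees $n_1$ and $n_2$, both source and target are free $\cB$-modules of rank $n = n_1 + n_2$, and in the monomial bases the matrix $M$ of $\Phi$ is exactly the Sylvester matrix of the pair $(F_1, F_2)$. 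By the classical determinantal formula for the resultant, $\det M = \pm\operatorname{Res}(F_1, F_2) = \pm\Res_{n_1, n_2}$ — a quantity which, up to sign, already appeared as the Jacobian determinant in the proof of Proposition~\ref{prop_different}.

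The structural input I would use is that $\cB$ is an integral domain (it is a subring of $\cC = \bZ[x_1, \dots, x_n]$) and that $\Res_{n_1,n_2} = \prod_{i=1}^{n_1}\prod_{j=1}^{n_2}(x_i - x_{n_1+j})$ is a nonzero element of $\cC$, hence a nonzerodivisor in $\cB$; thus $\det M$ is a nonzerodivisor. For \emph{existence}, let $e_0 \in \cB[X]_{<n}$ be the basis vector corresponding to the constant polynomial $1$; since $M \cdot \operatorname{adj}(M) = \det(M)\, I_n$, the pair $(G_1, G_2) := \pm \operatorname{adj}(M)\cdot e_0$ (sign chosen so that $\pm\det M = \Res_{n_1,n_2}$) lies in $\cB[X]_{<n_2}\oplus\cB[X]_{<n_1}$ and satisfies $G_1 F_1 + G_2 F_2 = \pm\det(M) = \Res_{n_1,n_2}$, with the asserted degree bounds built into the construction. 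For \emph{uniqueness}, I would observe that $\det M$ being a nonzerodivisor forces $\Phi$ to be injective: if $\Phi(G_1, G_2) = 0$ then $\det(M)\,(G_1,G_2) = \operatorname{adj}(M)\,\Phi(G_1,G_2) = 0$, hence $(G_1, G_2) = 0$. (Alternatively one can pass to $\operatorname{Frac}(\cB)[X]$, where $F_1$ and $F_2$ are coprime because their resultant is invertible, and compare degrees.)

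I do not expect any real obstacle here. The only points requiring care are matching the Sylvester-matrix degree conventions to the asserted degrees $n_2-1$ for $G_1$ and $n_1 - 1$ for $G_2$ (these are precisely the degrees occurring in the Sylvester matrix of a degree-$n_1$ and a degree-$n_2$ polynomial), and recording the identification $\det M = \pm\Res_{n_1,n_2}$ with the correct sign; both are routine bookkeeping rather than substantive difficulties.
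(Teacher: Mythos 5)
Your proof is correct and is essentially identical to the paper's: both set up the Sylvester-type $\cB$-linear map on the same free modules of polynomials of bounded degree, identify its determinant with $\Res_{n_1,n_2}$, obtain existence from the adjugate matrix, and obtain uniqueness by working over $\operatorname{Frac}(\cB)$ (your explicit nonzerodivisor argument is just a slightly more detailed version of the same step).
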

\begin{proof}
	For a ring $R$, let $\operatorname{Pol}_d(R)$ denote the free $R$-module of polynomials of degree $\leq d$ with coefficients in $R$. There is a morphism $\mu : \operatorname{Pol}_{n_2 - 1}(\cB) \times \operatorname{Pol}_{n_1 - 1}(\cB) \to \operatorname{Pol}_{n_1+n_2-1}(\cB)$, $(G_1, G_2) \mapsto G_1 F_1 + G_2 F_2$. With respect to the standard bases the matrix of this morphism is the Sylvester matrix, whose determinant is the resultant ${\Res_{n_1, n_2}}$. The existence follows from the existence of the adjugate matrix and the uniqueness from linear algebra over $\operatorname{Frac} \cB$.
\end{proof}
Motivated by the lemma, we define 
\begin{equation}\label{eqn_projectors}
e_1(X) = G_2(X) F_2(X) \text{ and }e_2(X) = G_1(X) F_1(X),
\end{equation}
so that $e_1(X), e_2(X) \in \cB[X]$ and $e_1(X) + e_2(X) = {\Res_{n_1, n_2}}$.

In the statement of the next lemma, we fix a discrete valuation ring $\cO$ with uniformizer $\varpi$, and for each $m \geq 1$ define $A_m = \cO \oplus \epsilon \cO / \varpi^m \cO$ (where $\epsilon^2 = 0$) and, if $x \in \cO$, write $\alpha_x : A_m \to A_m$ for the $\cO$-algebra homomorphism which sends $\epsilon$ to $\epsilon x$.
\begin{lemma}\label{lem_approximate_hensel}
Let $f(X) \in \cO[X]$ be a monic polynomial of degree $n \geq 1$, and suppose given a factorisation $f(X) = f_1(X) f_2(X)$ in $\cO[X]$, where $f_1(X), f_2(X)$ are monic polynomials of degrees $n_1, n_2$, respectively. Suppose that the resultant $\delta = \operatorname{Res}(f_1, f_2) \in \cO$ is non-zero. Then:
\begin{enumerate}
\item There exist unique polynomials $g_1(X), g_2(X) \in \cO[X]$ of degrees strictly less than $n_2, n_1$, respectively, such that $g_1(X) f_1(X) + g_2(X) f_2(X) = \delta$.
\item Let $m \geq 1$, and suppose given a monic polynomial $\widetilde{f}(X) \in A_m[X]$ such that $\widetilde{f}(X) \text{ mod } \epsilon = f(X)$. Then there exists a factorisation $\alpha_\delta(\widetilde{f}(X)) = \widetilde{f}_1(X) \widetilde{f}_2(X)$ in $A_m[X]$, where $\widetilde{f}_1(X)$, $\widetilde{f}_2(X)$ are monic polynomials such that $\widetilde{f}_i(X) \text{ mod } \epsilon = f_i(X)$ ($i = 1, 2$).
\end{enumerate}
\end{lemma}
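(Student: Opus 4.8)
The plan is to obtain part~(1) by specialising the linear-algebra identity already behind Lemma~\ref{lem_universal_idempotents}, and then to deduce part~(2) from part~(1) by a Hensel-type argument over the dual numbers $A_m$, the point being that the twist $\alpha_\delta$ is exactly what is needed to absorb the (non-invertible) denominator $\delta$ that appears when one tries to solve the relevant linear system over $\cO$.

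For part~(1): the factorisation $f = f_1 f_2$ with $f_1, f_2$ monic of degrees $n_1, n_2$ is classified by an $\cO$-point of $\Spec \cB$, and pulling back the identity $G_1 F_1 + G_2 F_2 = \Res_{n_1, n_2}$ of Lemma~\ref{lem_universal_idempotents} along it produces $g_1, g_2 \in \cO[X]$ of degrees $< n_2, < n_1$ with $g_1 f_1 + g_2 f_2 = \delta$ (since $\Res_{n_1, n_2}$ specialises to $\operatorname{Res}(f_1, f_2) = \delta$). Concretely this is the adjugate construction: the $\cO$-linear map $(g_1, g_2) \mapsto g_1 f_1 + g_2 f_2$ on $\operatorname{Pol}_{n_2-1}(\cO) \times \operatorname{Pol}_{n_1-1}(\cO)$ has matrix the Sylvester matrix $M$ with $\det M = \pm\delta$, so (after possibly a sign) $M^{\operatorname{adj}}$ applied to the coordinate vector of the constant polynomial $1$ does the job. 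Uniqueness follows because $M$ becomes invertible after inverting $\varpi$, so the map is injective over $\operatorname{Frac}\cO$.

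For part~(2): write $\widetilde f(X) = f(X) + \epsilon h(X)$ with $h \in (\cO/\varpi^m\cO)[X]$ of degree $\leq n-1$ (forced, as $\widetilde f$ and $f$ are both monic of degree $n$), so that $\alpha_\delta(\widetilde f(X)) = f(X) + \epsilon\, \delta h(X)$. I look for $\widetilde f_i(X) = f_i(X) + \epsilon u_i(X)$ with $u_i \in (\cO/\varpi^m\cO)[X]$ of degree $\leq n_i - 1$; such a degree bound automatically makes $\widetilde f_i$ monic of degree $n_i$ with $\widetilde f_i \bmod \epsilon = f_i$, and since $\epsilon^2 = 0$ and $f_1 f_2 = f$, the condition $\widetilde f_1 \widetilde f_2 = \alpha_\delta(\widetilde f)$ is equivalent to the single equation
\[ u_1 f_2 + u_2 f_1 = \delta h \]
in $(\cO/\varpi^m\cO)[X]$. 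To solve it, reduce the identity of part~(1) modulo $\varpi^m$ and multiply through by $h$, obtaining $(g_1 h) f_1 + (g_2 h) f_2 = \delta h$; then perform one Euclidean division by the monic polynomial $f_1$, writing $g_2 h = q f_1 + u_1$ with $\deg u_1 < n_1$, and set $u_2 := g_1 h + q f_2$. Substituting back gives $u_1 f_2 + u_2 f_1 = \delta h$, and comparing degrees in $u_2 f_1 = \delta h - u_1 f_2$ — the right-hand side having degree $\leq n-1$ and $f_1$ being monic of degree $n_1$ — forces $\deg u_2 \leq n_2 - 1$, as needed.

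I do not anticipate a genuine obstacle: part~(1) is the standard resultant/Sylvester-matrix bookkeeping already in play in Lemma~\ref{lem_universal_idempotents}, and part~(2) is Hensel's lemma over $A_m$. The only two points requiring a little care are (i) recognising that the purpose of the twist $\alpha_\delta$ is precisely to legitimise the division by $\delta$ in the linear system (one cannot factor $\widetilde f$ itself over $A_m$ in general), and (ii) the simultaneous degree bookkeeping for $u_1$ and $u_2$, which is dispatched by the single Euclidean division by $f_1$ described above.
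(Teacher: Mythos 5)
Your proposal is correct and follows essentially the same route as the paper: part (1) is the specialisation of the Sylvester-matrix/adjugate argument of Lemma \ref{lem_universal_idempotents} to $\cO$, and part (2) reduces, exactly as in the paper, to solving $u_1 f_2 + u_2 f_1 = \delta h$ over $\cO/\varpi^m\cO$ with the indicated degree bounds. The only (cosmetic) difference is in how that linear equation is solved: the paper applies the adjugate of the Sylvester map directly to $h$, which produces the solution with the correct degrees at once, whereas you multiply the B\'ezout identity by $h$ and correct the degrees with one Euclidean division by the monic $f_1$ — both are valid.
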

\begin{proof}
The proof of the first part is essentially the same as the proof of Lemma \ref{lem_universal_idempotents} (except we replace $\cB$ by $\cO$).
For the second, write $\widetilde{f}(X) = f(X) + \epsilon h(X)$, where $h(X) \in P_{n-1}(\cO / \varpi^m)$. If $\widetilde{f}_i(X) = f_i(X) + \epsilon h_i(X)$, then $\widetilde{f}_1(X) \widetilde{f}_2(X) = f(X) + \epsilon( h_2(X) f_1(X) + h_1(X) f_2(X))$. Solving $\alpha_\delta(\widetilde{f}(X)) = \widetilde{f}_1(X) \widetilde{f}_2(X)$ is therefore equivalent to solving $\delta h(X) =h_2(X) f_1(X) + h_1(X) f_2(X)$, which we can do by choosing $(h_2, h_1)$ to be the image of $h(X)$ under the adjugate of the morphism $\mu$ considered in the proof of Lemma \ref{lem_universal_idempotents}.
\end{proof}

\section{Parahoric Hecke algebras}\label{sec_hecke}

Let $v$ be a finite place of a number field $F$, and let $G$ be a split reductive group over $\cO_{F_v}$ (we will soon specialise  to the case $G = \GL_n$). Fix a choice of split maximal torus and Borel subgroup $T \subset B \subset G$. If $P \subset G$ is a standard parabolic subgroup, then we let $P = M_P  N_P$ denote the standard Levi decomposition of $P$. Define the modulus character $\delta_P : P(F_v) \to \bbZ[q_v^{\pm 1/2}]^\times$ by $\delta_P(p) = | \det(\Ad(p)|_{\Lie N_P}) |_v$.  Let $W_G= W(G, T)$ denote the Weyl group of $G$, and $W_{M_P} = W(M_P, T)$ the Weyl group of $M_P$. Thus $W_G$ acts  on  $X_\ast(T)$  on the  left. 

If $A$ is a $\bbZ[q_v^{\pm 1/2}]$-algebra, and if $\pi$ is a smooth $A[G(F_v)]$-module, then we write $\pi_{N_P}$ for the space of $N_P(F_v)$-coinvariants of $\pi$, considered as $A[M_P(F_v)]$-module, and $r_P(\pi) = \pi_{N_P}(\delta_P^{-1/2})$. Thus $r_P(\pi)$ is what we usually call the normalised Jacquet module of $\pi$. If $A = \bC$ and $\pi = i_{B(F_v)}^{G(F_v)} \chi$ is an unramified principal series representation (i.e. the normalised induction of the inflation of $\chi$ to a character of $B(F_v)$)), then the characters of $T(F_v)$ appearing in $r_B(\pi)$ are those in the Weyl orbit of $\chi$.

Let $\mathfrak{p} \subset G(\cO_{F_v})$ be the standard parahoric subgroup associated to $P$ (pre-image of $P(k(v))$ in $P(\cO_{F_v})$). It contains the standard Iwahori subgroup $\mathfrak{b}$. If $P \subset Q$ are standard parabolic subgroups of $G$ then there is a natural inclusion $\cH(G(F_v), \q) \subset \cH(G(F_v), \p)$ which is not an algebra homomorphism, since it does not preserve unit elements. We will write $\cdot_\q$ for the multiplication in $\cH(G(F_v), \q)$ (so e.g. $[\q] \cdot_\p [\q] = [ \q : \p] [\q]$). Similarly if $\pi$ is a $\bbZ[q_v^{\pm 1/2}]$-module then we endow $\pi^\p$ with its natural structure of $\cH(G(F_v), \p)$-module; if $t \in \cH(G(F_v), \p)$ and $x \in \pi^\p$, then we write multiplication as $t \cdot_\p x$, so $[\p] \cdot_\p x = x$ for all $x \in \pi^\p$.

Given a standard parabolic subgroup $P$, let $\mathfrak{m}_P = M_P(\cO_{F_v}) = \mathfrak{p} \cap M_P(F_v)$. Let $\overline{N}_P$ denote the unipotent radical of the opposite parabolic with Levi subgroup $M_P$. Define $\mathfrak{n}_P = N_P(\cO_{F_v}) = \mathfrak{p} \cap N_P(F_v)$ and $\overline{\mathfrak{n}}_P = \ker(\overline{N}_P(\cO_{F_v}) \to \overline{N}_P(k(v))) = \mathfrak{p} \cap \overline{N}_P(F_v)$. Then $\mathfrak{p}$ has its Iwasawa decomposition
\[ \mathfrak{p} = \overline{ \mathfrak{n}}_P \mathfrak{m} \mathfrak{n}_P. \]
We say that an element $m \in M_P(F_v)$ is positive if we have the inclusions
\[ m\mathfrak{n}_P m^{-1} \subset \mathfrak{n}_P \] and 
\[ m^{-1} \overline{ \mathfrak{n}}_P  m \subset \overline{ \mathfrak{n}}_P . \]
We write $M_P(F_v)^+ \subset M_P(F_v)$ for the submonoid of positive elements. 
\begin{lemma}\label{lem_inverse_to_Hecke}
Let $\lambda \in X_\ast(T)$ be a dominant cocharacter which is valued in the centre $Z(M_P)$ of $M_P$. Then $[\mathfrak{p} \lambda(\varpi_v) \mathfrak{p}]$ is an invertible element of $\cH(G(F_v), \mathfrak{p}) \otimes_\bbZ \bbZ[q_v^{\pm 1/2}]$.
\end{lemma}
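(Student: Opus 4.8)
The plan is to exhibit the inverse explicitly using the structure of the parahoric Hecke algebra and the positivity of the cocharacter $\lambda$. First I would observe that since $\lambda$ is dominant and $Z(M_P)$-valued, the element $m = \lambda(\varpi_v)$ is positive in the sense defined above: dominance gives $m \overline{\mathfrak{n}}_P m^{-1} \supseteq \overline{\mathfrak{n}}_P$, equivalently $m^{-1}\overline{\mathfrak{n}}_P m \subseteq \overline{\mathfrak{n}}_P$, while centrality in $M_P$ together with dominance controls conjugation on $\mathfrak{n}_P$. Because $\lambda$ is central in $M_P$, the double coset $\mathfrak{p} \lambda(\varpi_v) \mathfrak{p}$ is in fact a single left (and right) coset up to the obstruction measured by $\mathfrak{n}_P$ versus $m\mathfrak{n}_P m^{-1}$; more precisely, using the Iwahori/Iwasawa decomposition $\mathfrak{p} = \overline{\mathfrak{n}}_P \mathfrak{m}_P \mathfrak{n}_P$ one computes that $[\mathfrak{p} \lambda(\varpi_v)\mathfrak{p}] = q_v^{c(\lambda)}\cdot[\mathfrak{p}\lambda(\varpi_v)\mathfrak{p}]$-type normalisations are unnecessary and the coset decomposes cleanly. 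This is the standard Bernstein-type presentation: for positive $m$ with $m$ central in $M_P$, the operator $[\mathfrak{p} m \mathfrak{p}]$ satisfies $[\mathfrak{p} m \mathfrak{p}] \cdot_\mathfrak{p} [\mathfrak{p} m' \mathfrak{p}] = [\mathfrak{p} m m' \mathfrak{p}]$ for $m, m'$ both of this form (no $q_v$ factor, by the centrality), so $m \mapsto [\mathfrak{p} m \mathfrak{p}]$ is a monoid homomorphism from the monoid of dominant $Z(M_P)$-valued cocharacters (composed with $\varpi_v$) into $\cH(G(F_v),\mathfrak{p})$.

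Granting that multiplicativity, the inverse is produced as follows. Choose a dominant cocharacter $\mu \in X_\ast(T)$ valued in $Z(M_P)$ such that $\lambda + \mu$ is a \emph{central} cocharacter of $G$ itself — for instance take $\mu$ to be a large multiple of the sum of fundamental coweights projected appropriately, or more simply note that $X_\ast(Z(M_P))$ contains $X_\ast(Z(G))$ with finite-index-after-rationalising complement, so some positive integer multiple $N\lambda$ differs from a central cocharacter of $G$ by another dominant $Z(M_P)$-valued cocharacter; replacing $\lambda$ by a suitable auxiliary element we may assume $\lambda + \mu = \nu$ with $\nu \in X_\ast(Z(G))$. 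Then $[\mathfrak{p}\nu(\varpi_v)\mathfrak{p}]$ is the operator attached to a central element of $G(F_v)$, which acts invertibly on $\cH(G(F_v),\mathfrak{p})$: indeed $\nu(\varpi_v) \in Z(G(F_v))$ normalises $\mathfrak{p}$ and $[\mathfrak{p}\nu(\varpi_v)\mathfrak{p}] = [\mathfrak{p}\nu(\varpi_v)]$ is a unit with inverse $[\mathfrak{p}\nu(\varpi_v)^{-1}]$. By multiplicativity $[\mathfrak{p}\lambda(\varpi_v)\mathfrak{p}] \cdot_\mathfrak{p} [\mathfrak{p}\mu(\varpi_v)\mathfrak{p}] = [\mathfrak{p}\nu(\varpi_v)\mathfrak{p}]$, so $[\mathfrak{p}\lambda(\varpi_v)\mathfrak{p}]$ has the explicit inverse $[\mathfrak{p}\mu(\varpi_v)\mathfrak{p}] \cdot_\mathfrak{p} [\mathfrak{p}\nu(\varpi_v)^{-1}]$ after tensoring with $\bbZ[q_v^{\pm 1/2}]$ (the half-integral powers of $q_v$ enter only through any modulus-character normalisation one introduces along the way).

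The main obstacle is establishing the clean multiplicativity $[\mathfrak{p} m \mathfrak{p}] \cdot_\mathfrak{p} [\mathfrak{p} m' \mathfrak{p}] = [\mathfrak{p} m m' \mathfrak{p}]$ for positive $m, m'$ that are central in $M_P$ — i.e. checking that the convolution double coset does not fragment. The key computation is that $\mathfrak{p} m \mathfrak{p} m' \mathfrak{p} = \mathfrak{p} m m' \mathfrak{p}$ as a set: using $\mathfrak{p} = \overline{\mathfrak{n}}_P \mathfrak{m}_P \mathfrak{n}_P$, positivity of $m$ gives $m^{-1}\overline{\mathfrak{n}}_P m \subseteq \overline{\mathfrak{n}}_P$ and $m'\mathfrak{n}_P m'^{-1}\subseteq \mathfrak{n}_P$, and centrality in $M_P$ lets one absorb the $\mathfrak{m}_P$-part; one then verifies the number of cosets multiplies correctly so that the structure constant is exactly $1$. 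This is a variant of the Iwahori--Matsumoto / Bernstein computation and I would either cite it in the form available for general parahorics (e.g. via the theory of the Jacquet module and the fact that $r_P$ turns these operators into invertible scalars on unramified vectors) or carry out the coset count directly, reducing to the rank-one-per-positive-root bookkeeping. Once multiplicativity is in hand, the rest is formal.
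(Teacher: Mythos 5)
Your first step---the multiplicativity $[\mathfrak{p} m \mathfrak{p}] \cdot_{\mathfrak{p}} [\mathfrak{p} m' \mathfrak{p}] = [\mathfrak{p} m m' \mathfrak{p}]$ for positive $m, m'$ central in $M_P(F_v)$---is fine; it is essentially the statement that $T_P^+$ is an algebra homomorphism (the first two parts of Proposition \ref{prop_jacquet_module}, which do not depend on the present lemma). The fatal problem is the second step: a dominant $Z(M_P)$-valued cocharacter $\mu$ with $\lambda + \mu \in X_\ast(Z(G))$ does not exist unless $\lambda$ is already central in $G$. Concretely, for $G = \GL_n$ and $P = P_{n_1, n_2}$ a $Z(M_P)$-valued cocharacter has the form $\lambda = (a, \dots, a, b, \dots, b)$, and dominance means $a \geq b$; if $\mu = (c, \dots, c, d, \dots, d)$ with $c \geq d$ and $\lambda + \mu$ is central, then $a + c = b + d$, i.e.\ $c - d = b - a \leq 0$, which forces $c = d$ and $a = b$. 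In other words, the monoid of dominant $Z(M_P)$-valued cocharacters modulo $X_\ast(Z(G))$ is a pointed cone with no nontrivial units, so no product of further positive operators $[\mathfrak{p}\mu(\varpi_v)\mathfrak{p}]$ can invert $[\mathfrak{p}\lambda(\varpi_v)\mathfrak{p}]$. (Your fallback of replacing $\lambda$ by $N\lambda$ has the same defect, and at best expresses a power of the given operator as a unit times another operator of the same shape, which is circular.) The inverse genuinely fails to be of the form $[\mathfrak{p} m \mathfrak{p}]$ for any $m$.

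The paper obtains invertibility from a different source: the quadratic relations in the Iwahori--Matsumoto presentation. In $\cH(G(F_v), \mathfrak{b}) \otimes_{\bbZ} \bbZ[q_v^{\pm 1/2}]$ every $T_w = [\mathfrak{b} w \mathfrak{b}]$ is invertible, since each generator satisfies $(T_s + 1)(T_s - q_v) = 0$ and hence $T_s^{-1} = q_v^{-1}(T_s + 1 - q_v)$; in particular $[\mathfrak{b}\lambda(\varpi_v)\mathfrak{b}]$ has an inverse $t$, a linear combination of many double cosets. This is then pushed down to the parahoric level: because $w\lambda = \lambda$ for $w \in W_{M_P}$, the elements $T_w$ and $T_\lambda$ commute in the braid group, so $[\mathfrak{p}] = \sum_{w \in W_{M_P}} [\mathfrak{b} w \mathfrak{b}]$ commutes with $[\mathfrak{b}\lambda(\varpi_v)\mathfrak{b}]$ and with $t$, and $t' = t \cdot_{\mathfrak{b}} [\mathfrak{p}]$ is a two-sided inverse of $[\mathfrak{p}\lambda(\varpi_v)\mathfrak{p}] = [\mathfrak{p}] \cdot_{\mathfrak{b}} [\mathfrak{b}\lambda(\varpi_v)\mathfrak{b}]$ in $\cH(G(F_v), \mathfrak{p}) \otimes_{\bbZ} \bbZ[q_v^{\pm 1/2}]$. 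Any repair of your argument needs some such descent from the Iwahori--Hecke algebra (or an equivalent input like the Bernstein presentation), not an inverse built from positive elements.
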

\begin{proof}
We first recall the Iwahori--Matsumoto presentation of the Iwahori--Hecke algebra $\cH(G(F_v), \mathfrak{b}) \otimes_\bZ \bZ[q_v^{\pm 1/2}]$. Define the affine Weyl group 
\[ \widetilde{W}_G = N_{G(F_v)}(T) / T(\cO_{F_v}) \cong W_G \ltimes X_\ast(T). \] The choice of $\mathfrak{b}$ determines a set of simple affine roots, hence a set of simple affine reflections in $\widetilde{W}_G$; they are the linear reflections $s_\alpha \in W_G$ associated to the simple roots of the pair $(B, T)$, together with the affine reflections $s_{\alpha_0} \alpha_0^\vee$, where $\alpha_0$ is the lowest root of a simple sub-root system of $\Phi(G, T)$. There is an associated length function $l : \widetilde{W}_G \to \bZ_{\geq 0}$ giving each simple affine reflection length 1. The associated braid group $B_G$ is the free group generated by the elements $T_w$ ($w \in \widetilde{W}_G$) subject to the relations $T_{w w'} = T_w T_{w'}$ when $l(w w') = l(w) l(w')$. The Iwahori--Matsumoto presentation is a surjective algebra homomorphism
\[  \bZ[q_v^{\pm 1/2}][B_G] \to  \cH(G(F_v), \mathfrak{b}) \otimes_\bZ \bZ[q_v^{\pm 1/2}], \]
\[ T_w \mapsto [ \mathfrak{b} w \mathfrak{b}], \]
with kernel generated by the elements $(T_s + 1)(T_s - q_v)$, $s$ a simple affine reflection.

If $w \in W_{M_P}$ and $\lambda \in X_\ast(T)$ is a cocharacter valued in $Z(M_P)$, then $w \lambda = \lambda$, which implies (\cite[Lemma 2.2]{Lus89}) that $T_w$ and $T_\lambda$ commute in $B_G$. We deduce that $[\mathfrak{p}]  = \sum_{w \in W_{M_P}} [\mathfrak{b} w \mathfrak{b}]$ and $[\mathfrak{b} \lambda(\varpi_v) \mathfrak{b}]$ commute in $\cH(G(F_v), \mathfrak{b})$. We also see that $[\mathfrak{b} \lambda(\varpi_v) \mathfrak{b}]$ is an invertible element of $\cH(G(F_v), \mathfrak{b}) \otimes_\bZ \bZ[q_v^{\pm 1/2}]$; let $t \in \cH(G(F_v), \mathfrak{b}) \otimes_\bZ \bZ[q_v^{\pm 1/2}]$ be its inverse.

The element $t$ commutes with $[\mathfrak{p}]$, while direct computation shows that $[\mathfrak{p}] \cdot_\mathfrak{b} [\mathfrak{b} \lambda(\varpi_v) \mathfrak{b}] = [\mathfrak{p} \lambda(\varpi_v) \mathfrak{p}]$. Let $t' = t \cdot_{\mathfrak{b}} [\mathfrak{p}] \in \cH(G(F_v), \mathfrak{p}) \otimes_\bbZ \bbZ[q_v^{\pm 1/2}]$. We finally compute
\[ t' \cdot_{\mathfrak{p}} [\mathfrak{p} \lambda(\varpi_v) \mathfrak{p}] = (t \cdot_{\mathfrak{b}} [\mathfrak{p}]) \cdot_{\mathfrak{p}} ( [\mathfrak{b} \lambda(\varpi_v) \mathfrak{b}] \cdot_{\mathfrak{b}} [\mathfrak{p}] )= [\mathfrak{p}], \]
and similarly $[\mathfrak{p} \lambda(\varpi_v) \mathfrak{p}]\cdot_{\mathfrak{p}} t' = [\mathfrak{p}]$. This completes the proof.
\end{proof}
The following proposition is basically contained in \cite{Bus98} and \cite{Vig98}. 
\begin{proposition}\label{prop_jacquet_module}
	\begin{enumerate}
		\item $\cH(M_P(F_v)^+, \mathfrak{m}_P) \subset \cH(M_P(F_v), \mathfrak{m}_P)$ is a subalgebra.
		\item The homomorphism of $\bbZ[q_v^{\pm 1/2}]$-modules 
		\[ T_P^+ : \cH(M_P(F_v)^+, \mathfrak{m}_P) \otimes_\bbZ \bbZ[q_v^{\pm 1/2}] \to \cH(G(F_v), \mathfrak{p}) \otimes_\bbZ \bbZ[q_v^{\pm 1/2}] \]
		 defined on basis elements by 
		\[ T_P^+([\mathfrak{m}_P m \mathfrak{m}_P]) = \delta_P(m)^{1/2} [\mathfrak{p} m \mathfrak{p}] \]
		 is an injective algebra homomorphism, with image equal to the set of functions with support in $\mathfrak{p} M_P(F_v)^+ \mathfrak{p}$. 
		\item $T_P^+$ extends uniquely to an algebra homomorphism $T_P : \cH(M_P(F_v), \mathfrak{m}_P) \otimes_\bZ \bZ[q_v^{\pm 1/2}] \to \cH( G(F_v), \mathfrak{p})\otimes_\bZ \bZ[q_v^{\pm 1/2}] $.
		\item Let $\pi$ be a smooth $\bZ[q_v^{\pm 1/2}] [G(F_v)]$-module, and let $q : \pi^{\mathfrak{p}} \to r_P(\pi)^{\mathfrak{m}_P}$ denote the canonical projection. Then $q$ is an isomorphism and for any $x \in \pi^{\mathfrak{p}}$, $t \in \cH(M_P(F_v), \mathfrak{m}_P) \otimes_\bZ \bZ[q_v^{\pm 1/2}]$, we have $q(T_P(t) x) = t q(x)$.
	\end{enumerate}
\end{proposition}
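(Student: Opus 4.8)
The plan is to establish Proposition \ref{prop_jacquet_module} by reducing to the well-known structure theory of Hecke algebras with respect to the Iwahori subgroup, exactly as in the Bushnell–Kutzko and Vignéras references cited. The key point is that $\mathfrak{p}$ contains the Iwahori $\mathfrak{b}$, and the Hecke algebra $\cH(G(F_v), \mathfrak{b}) \otimes \bZ[q_v^{\pm 1/2}]$ is understood via the Iwahori–Matsumoto presentation recalled in the proof of Lemma \ref{lem_inverse_to_Hecke}.

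\textbf{Step 1 (the monoid algebra).} First I would check (1): that $\cH(M_P(F_v)^+, \mathfrak{m}_P)$ is closed under convolution inside $\cH(M_P(F_v), \mathfrak{m}_P)$. This is a standard consequence of the fact that $M_P(F_v)^+$ is a submonoid of $M_P(F_v)$ and that $\mathfrak{m}_P M_P(F_v)^+ \mathfrak{m}_P = M_P(F_v)^+$ (positivity is stable under left/right multiplication by $\mathfrak{m}_P$), so that the product of two double cosets supported in $M_P(F_v)^+$ is again supported there; the structure constants are the usual nonnegative integers.

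\textbf{Step 2 (the map $T_P^+$ via Iwahori).} The crux is (2). The normalization $\delta_P(m)^{1/2}$ is exactly the twist needed to make $T_P^+$ multiplicative: one writes $[\mathfrak{p} m \mathfrak{p}] = [\mathfrak{p}] \cdot_{\mathfrak{b}} [\mathfrak{b} m \mathfrak{b}] \cdot_{\mathfrak{b}} [\mathfrak{p}]$ in $\cH(G(F_v), \mathfrak{b})$ (using the Iwasawa decomposition $\mathfrak{p} = \overline{\mathfrak{n}}_P \mathfrak{m} \mathfrak{n}_P$ and positivity of $m$ to control lengths, so that $[\mathfrak{b} m \mathfrak{b}] \cdot_{\mathfrak b} [\mathfrak{b} m' \mathfrak{b}] = [\mathfrak{b} m m' \mathfrak{b}]$ for positive $m, m'$ — the Bernstein/Iwahori–Matsumoto length additivity), and likewise inside $\cH(M_P(F_v), \mathfrak{m}_P)$ one has $[\mathfrak{m}_P m \mathfrak{m}_P] \cdot [\mathfrak{m}_P m' \mathfrak{m}_P] = [\mathfrak{m}_P m m' \mathfrak{m}_P]$ for positive $m, m'$, because inside $M_P$ positive elements have no "$\overline N$-contraction". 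The factor of $\delta_P(m)^{1/2}$ accounts for the discrepancy between the index $[\mathfrak{p} m \mathfrak{p} : \mathfrak{p}]$ and $[\mathfrak{m}_P m \mathfrak{m}_P : \mathfrak{m}_P]$ coming from the $\mathfrak{n}_P$-part of $\mathfrak{p}$; this is the standard computation of Jacquet-module normalization at Iwahori level. Injectivity and the identification of the image with functions supported in $\mathfrak{p} M_P(F_v)^+ \mathfrak{p}$ follow from the disjointness of the double cosets $\mathfrak{p} m \mathfrak{p}$ for distinct $m \in \mathfrak{m}_P \backslash M_P(F_v)^+ / \mathfrak{m}_P$, which in turn follows from the corresponding disjointness at Iwahori level.

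\textbf{Step 3 (extension $T_P$ and the Jacquet-module compatibility).} To get (3) I would invoke Lemma \ref{lem_inverse_to_Hecke}: any $m \in M_P(F_v)$ can be written as $m_1 \lambda(\varpi_v)^{-1}$ with $m_1$ positive and $\lambda$ a dominant central cocharacter of $M_P$, so $\cH(M_P(F_v), \mathfrak{m}_P) \otimes \bZ[q_v^{\pm 1/2}]$ is the localization of $\cH(M_P(F_v)^+, \mathfrak{m}_P) \otimes \bZ[q_v^{\pm 1/2}]$ at the (central, Ore) multiplicative set of elements $[\mathfrak{m}_P \lambda(\varpi_v) \mathfrak{m}_P]$; since Lemma \ref{lem_inverse_to_Hecke} shows $T_P^+$ sends these to units, $T_P^+$ extends uniquely to $T_P$ on the localization. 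For (4), the canonical projection $q : \pi^{\mathfrak{p}} \to (\pi_{N_P})^{\mathfrak{m}_P}$ — composed with the unnormalized-to-normalized twist — is an isomorphism by a standard averaging/Jacquet argument: surjectivity because $\mathfrak{p}$-fixed vectors surject onto $\mathfrak{m}_P$-fixed vectors in coinvariants (integrate over $\mathfrak{n}_P$), and injectivity because any $\mathfrak{p}$-fixed vector killed in $\pi_{N_P}$ is killed by a large compact-open in $N_P(F_v)$, hence (using positive $m$ to spread $\mathfrak{n}_P$ over that subgroup, together with invertibility of $[\mathfrak{p} m \mathfrak{p}]$) is already zero; this is Casselman's lemma. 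The intertwining identity $q(T_P(t) x) = t\, q(x)$ is checked on the positive submonoid (where $[\mathfrak{p} m \mathfrak{p}]$ acting on $\pi^{\mathfrak{p}}$ corresponds under $q$ to $\delta_P(m)^{1/2} [\mathfrak{m}_P m \mathfrak{m}_P]$ acting on the normalized Jacquet module — again the defining property of the normalized Jacquet module at parahoric level) and then propagated to all of $\cH(M_P(F_v), \mathfrak{m}_P) \otimes \bZ[q_v^{\pm 1/2}]$ by the localization in (3).

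\textbf{The main obstacle} I expect is Step 2: the bookkeeping that makes $T_P^+$ an \emph{algebra} homomorphism, i.e. verifying that the product of double cosets $\mathfrak{p} m \mathfrak{p}$ and $\mathfrak{p} m' \mathfrak{p}$ for positive $m, m'$ is exactly $\mathfrak{p} m m' \mathfrak{p}$ with the right multiplicity, and that the $\delta_P^{1/2}$-twist is precisely what repairs the multiplicity defect relative to the $M_P$-side. This is the content of the Bushnell and Vignéras references, and I would carry it out by descending to the Iwahori level via $[\mathfrak{p}] = \sum_{w \in W_{M_P}} [\mathfrak{b} w \mathfrak{b}]$, reducing everything to length-additivity of the relevant elements of the extended affine Weyl group and the Iwahori–Matsumoto relations; the positivity hypothesis on $m, m'$ is exactly what guarantees that no quadratic relation $(T_s+1)(T_s-q_v)$ intervenes.
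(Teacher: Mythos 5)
Your proposal is correct and follows essentially the same route as the paper: parts (1)--(3) rest on the Bushnell--Kutzko theory of positive elements together with the strongly positive invertible central element supplied by Lemma \ref{lem_inverse_to_Hecke} (the paper simply cites \cite[Corollary 6.12, Theorem 7.2]{Bus98} for what you sketch at Iwahori level), and your part (4) — the intertwining identity checked on positive double cosets, injectivity via Casselman's argument with $[\mathfrak{p} z^m \mathfrak{p}]$ invertible, and surjectivity by averaging over $\mathfrak{n}_0$ — is exactly the paper's argument.
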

\begin{proof}
The first two parts follow from \cite[Corollary  6.12]{Bus98}. The third part follows from \cite[Theorem 7.2]{Bus98}, provided we can exhibit an element $z \in Z({M_P})(F_v)$ such that $[\mathfrak{p} z \mathfrak{p}]$ is invertible  in $\cH( G(F_v), \mathfrak{p})\otimes_\bZ \bZ[q_v^{\pm 1/2}] $ and such that $z$ is strongly $(P, \mathfrak{p})$-positive, in the sense of \cite[Definition 6.16]{Bus98}. The existence of such an element follows from Lemma \ref{lem_inverse_to_Hecke}.

For the fourth part, we first prove the formula $q(T_P(t)x) = t q(x)$. Because of the presence of invertible elements, it is enough to show that the formula holds for elements of the form $t = [\mathfrak{m}_P m \mathfrak{m}_P]$ with $m \in M_P(F_v)^+$. Choose elements $x_i \in M_P(F_v)$ such that $\mathfrak{m}_P m \mathfrak{m}_P = \sqcup_i x_i \mathfrak{m}_P$, and elements $y_{ij}$ such that $\mathfrak{n}_P = \sqcup_{i, j} y_{ij} x_i \mathfrak{n}_P x_i^{-1}$. Then the number of $y_{ij}$ is $\delta_P(x_i)^{-1} = \delta_P(m)^{-1}$ and $\mathfrak{p} m \mathfrak{p} = \sqcup_{i,j} y_{ij} x_i \mathfrak{p}$ (here we use the positivity of $m$). We then compute
\[ q(T_P(t)x) = \delta_P^{1/2}(m)\sum_{i, j} q(y_{ij} x_i x) =  \delta_P^{-1/2}(m) \sum_i  q(x_i x) = \sum_i x_i q(x) = tq(x). \]
We next show that  $q$ is injective. If $x \in \pi^{\mathfrak{p}}$ and $q(x) = 0$ then we can write $x = \sum_i (n_i - 1) x_i$ for some $n_i \in N(F_v)$, $x_i \in  \pi$. Let $\mathfrak{n}_0 \subset N(F_v)$ be a compact open subgroup containing all of the $n_i$ and $\mathfrak{n}_P$. Then we have $\tr_{\mathfrak{n}_P / \mathfrak{n}_0}(x) = 0$. Choose $m \geq 0$ such that $z^m \mathfrak{n}_0 z^{-m} \subset \mathfrak{n}_P$. Then we have
\[ [\mathfrak{p} z^m \mathfrak{p}] x = \tr_{z^m \mathfrak{n}_P z^{-m} /\mathfrak{n}_P } z^m x = z^m \tr_{ \mathfrak{n}_P / z^{-m} \mathfrak{n}_P z^m} x = 0. \]
Since $ [\mathfrak{p} z^m \mathfrak{p}] $ acts invertibly on $\pi^{\mathfrak{p}}$, we find that $x = 0$.

We finally show that $q$ is surjective. Let $\bar{x} \in r_P(\pi)^{\mathfrak{m}_P}$, and let $x \in \pi$ be a pre-image. We can assume that $x$ is fixed by $\mathfrak{n}_P$. We claim that we can further choose $x$ to be invariant under $\mathfrak{m}_P$. Indeed, for any $g \in \mathfrak{m}_P$, $gx - x$ maps to zero in $r_P(\pi)$. Using the argument of the previous paragraph, we can find an open compact subgroup $\mathfrak{n}_0 \subset N_P(F_v)$ containing $\mathfrak{n}_P$, normalized by $\mathfrak{m}_P$, such that $\tr_{\mathfrak{n}_P / \mathfrak{n}_0} (gx - x) = 0$ for all $g \in \mathfrak{m}_P$. Since $g$ normalises both $\mathfrak{n}_P$ and $\mathfrak{n}_0$, this implies that $g [\mathfrak{n}_0 : \mathfrak{n}_P]^{-1} \tr_{\mathfrak{n}_P / \mathfrak{n}_0} x = [\mathfrak{n}_0 : \mathfrak{n}_P]^{-1} \tr_{\mathfrak{n}_P / \mathfrak{n}_0} x$. We see that $ [\mathfrak{n}_0 : \mathfrak{n}_P]^{-1} \tr_{\mathfrak{n}_P / \mathfrak{n}_0} x $ is a pre-image of $\bar{x}$ which is invariant under $\mathfrak{m}_P \mathfrak{n}_P$.

We can find $m \geq 0$ such that $z^m x$ is invariant under $\overline{\mathfrak{n}}_P$, hence under $\overline{\mathfrak{n}}_P \mathfrak{m}_P z^{m} \mathfrak{n}_P z^{-m}$. It follows that $\tr_{ z^{m} \mathfrak{n}_P z^{-m} / \mathfrak{n}_P} z^m x$ is invariant under $\p$. We finally find that 
\[ [\mathfrak{p} z^m \mathfrak{p}]^{-1} \tr_{z^{m} \mathfrak{n}_P z^{-m} / \mathfrak{n}_P} z^m x \]
 lies in $\pi^{\mathfrak{p}}$ and is the desired pre-image of $x$.
\end{proof}
The Satake isomorphism is a canonical isomorphism (see \cite{Gro96})
\[ \cS_{M_P} :  \cH(M_P(F_v), \mathfrak{m}_P) \otimes_{\bbZ} \bbZ[q_v^{\pm 1/2}] \to (\bbZ[X_\ast(T)] \otimes_{\bbZ} \bbZ[q_v^{\pm 1/2}])^{W_{M_P}} \]
\[	f \mapsto (\cS_{M_P} f)(t) = \delta_{M_P \cap B}(t)^{1/2} \int_{n \in (N_B \cap M_P)(F_v)} f(t n) \, dn.
\]
We define 
\[ \Sigma_P = T_P \circ \cS_{M_P}^{-1} : (\bbZ[X_\ast(T)] \otimes_{\bbZ} \bbZ[q_v^{\pm 1/2}])^{W_{M_P}} \to \cH( G(F_v), \mathfrak{p})  \otimes_{\bbZ} \bbZ[q_v^{\pm 1/2}]. \]
If $\pi$ is any smooth $\bZ[q_v^{\pm 1/2}] [G(F_v)]$-module, then we regard  $\pi^{\mathfrak{p}}$ as a $ (\bbZ[X_\ast(T)] \otimes_{\bbZ} \bbZ[q_v^{\pm 1/2}])^{W_{M_P}}$-module via the map $\Sigma_P$.
\begin{cor}\label{cor_compatible_with_jacquet_module}
Let $\pi$ be a smooth $\bZ[q_v^{\pm 1/2}] [G(F_v)]$-module, and let $q : \pi^{\mathfrak{p}} \to r_P(\pi)^{\mathfrak{m}_P}$ denote the canonical projection. Then $q$ is an isomorphism and for any $s \in (\bbZ[X_\ast(T)] \otimes_{\bbZ} \bbZ[q_v^{\pm 1/2}])^{W_{M_P}}$ and $x \in \pi^{\mathfrak{p}}$,  we have $q(\Sigma_P(s)x) = \cS_{M_P}^{-1}(s) q(x)$.

More generally, let $P \subset Q$ be another standard parabolic subgroup and let $q : \pi^{\mathfrak{p}} \to r_Q(\pi)^{\mathfrak{p} \cap \mathfrak{m}_Q}$ denote the canonical projection. Then $q$ is an isomorphism and for any $s \in (\bbZ[X_\ast(T)] \otimes_{\bbZ} \bbZ[q_v^{\pm 1/2}])^{W_{M_P}}$ and $x \in \pi^{\mathfrak{p}}$, we have $q(\Sigma_P(s) x) = \Sigma_{P \cap M_Q}(s)q(x)$.
\end{cor}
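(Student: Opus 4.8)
The first assertion is essentially a restatement of Proposition~\ref{prop_jacquet_module}(4). Recall that $\Sigma_P = T_P \circ \cS_{M_P}^{-1}$; so for $s \in (\bbZ[X_\ast(T)] \otimes_\bbZ \bbZ[q_v^{\pm 1/2}])^{W_{M_P}}$, setting $t = \cS_{M_P}^{-1}(s) \in \cH(M_P(F_v), \mathfrak{m}_P) \otimes_\bbZ \bbZ[q_v^{\pm 1/2}]$ we have $\Sigma_P(s) = T_P(t)$, and Proposition~\ref{prop_jacquet_module}(4) tells us both that $q : \pi^{\mathfrak{p}} \to r_P(\pi)^{\mathfrak{m}_P}$ is an isomorphism and that $q(\Sigma_P(s)x) = q(T_P(t)x) = t\, q(x) = \cS_{M_P}^{-1}(s)\, q(x)$ for all $x \in \pi^{\mathfrak{p}}$. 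I would simply record this.

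For the second assertion the plan is to reduce to the first one using transitivity of (normalised) Jacquet modules. First observe that, since $\mathfrak{p} \subset G(\cO_{F_v})$, the subgroup $\mathfrak{p} \cap \mathfrak{m}_Q = \mathfrak{p} \cap M_Q(F_v)$ is precisely the standard parahoric subgroup of $M_Q$ attached to the standard parabolic $P \cap M_Q$, whose standard Levi containing $T$ is $M_P$; in particular $W_{M_{P \cap M_Q}} = W_{M_P}$ and the Satake isomorphism for $M_{P \cap M_Q}$ is the map $\cS_{M_P}$ already in play. Next, from $N_P = N_Q \rtimes (N_P \cap M_Q)$ and the decomposition $\Lie N_P = \Lie N_Q \oplus \Lie(N_P \cap M_Q)$ of $M_P$-modules, we get $\delta_P = (\delta_Q|_{M_P}) \cdot \delta_{P \cap M_Q}$, and transitivity of coinvariants yields a canonical $M_P(F_v)$-equivariant isomorphism $r_{P \cap M_Q}(r_Q(\pi)) \cong r_P(\pi)$ under which the canonical projection $\pi \to r_P(\pi)$ becomes the composite $\pi \to r_Q(\pi) \to r_{P \cap M_Q}(r_Q(\pi))$. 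Passing to $\mathfrak{p}$-invariants, the map $q_P : \pi^{\mathfrak{p}} \to r_P(\pi)^{\mathfrak{m}_P}$ of the first part factors as $q_P = q'' \circ q$, where $q : \pi^{\mathfrak{p}} \to r_Q(\pi)^{\mathfrak{p} \cap \mathfrak{m}_Q}$ is the map whose properties we want and $q'' : r_Q(\pi)^{\mathfrak{p} \cap \mathfrak{m}_Q} \to r_{P \cap M_Q}(r_Q(\pi))^{\mathfrak{m}_P}$ is again a canonical projection.

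Now apply the first part of the corollary twice: to $(G, P)$, which gives that $q_P$ is an isomorphism with $q_P(\Sigma_P(s)x) = \cS_{M_P}^{-1}(s)\, q_P(x)$; and to $(M_Q, P \cap M_Q)$ applied to the smooth $M_Q(F_v)$-module $r_Q(\pi)$, which gives that $q''$ is an isomorphism with $q''(\Sigma_{P \cap M_Q}(s)\, y) = \cS_{M_P}^{-1}(s)\, q''(y)$. Then $q = (q'')^{-1} \circ q_P$ is an isomorphism, and for $x \in \pi^{\mathfrak{p}}$ one computes
\[ q''\bigl(q(\Sigma_P(s)x)\bigr) = q_P(\Sigma_P(s)x) = \cS_{M_P}^{-1}(s)\, q_P(x) = \cS_{M_P}^{-1}(s)\, q''(q(x)) = q''\bigl(\Sigma_{P \cap M_Q}(s)\, q(x)\bigr), \]
whence $q(\Sigma_P(s)x) = \Sigma_{P \cap M_Q}(s)\, q(x)$ by injectivity of $q''$.

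The only real content beyond invoking Proposition~\ref{prop_jacquet_module} is the bookkeeping in the second step: checking that the normalising half-powers of the modulus characters match, i.e.\ that $\delta_P = (\delta_Q|_{M_P}) \cdot \delta_{P \cap M_Q}$, so that the two canonical projections genuinely compose to $q_P$, and checking that $\mathfrak{p} \cap \mathfrak{m}_Q$ is the standard parahoric of $M_Q$ associated to $P \cap M_Q$ so that the first part of the corollary may legitimately be applied to $M_Q$. Both are routine, so I expect no serious obstacle.
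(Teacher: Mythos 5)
Your proposal is correct and follows essentially the same route as the paper: the first part is read off from Proposition \ref{prop_jacquet_module}(4), and the second is deduced by factoring the projection $\pi^{\mathfrak{p}} \to r_P(\pi)^{\mathfrak{m}_P}$ through $r_Q(\pi)^{\mathfrak{p} \cap \mathfrak{m}_Q}$ and using that equivariance is already known for the composite and for the second factor (the first part applied to $M_Q$ and $P \cap M_Q$). Your extra bookkeeping on modulus characters and transitivity of Jacquet modules is exactly what the paper leaves implicit.
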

\begin{proof}
The first part is a reformulation of the last part of Proposition \ref{prop_jacquet_module}. For the second, we consider the composite
\[ \xymatrix@1{ \pi^{\mathfrak{p}} \ar[r]^-\alpha & r_Q(\pi)^{\mathfrak{p} \cap \mathfrak{m}_Q} \ar[r]^-\gamma & r_P(\pi)^{\mathfrak{m}_P}.} \]
All the maps here are isomorphisms, and we have shown the equivariance  for $\gamma$ and $\gamma \alpha$. The equivariance for $\alpha$ follows from this.
\end{proof}
\begin{proposition}\label{prop_comparison_of_Hecke}
Let $P \subset Q$ be standard parabolic subgroups of $G$, and  let $\pi $ be a  smooth $\bZ[q_v^{\pm 1/2}] [G(F_v)]$-module. 
Then for  any $x \in \pi^{\mathfrak{q}} \subset \pi^{\mathfrak{p}}$ and $s \in  (\bbZ[X_\ast(T)] \otimes_{\bbZ} \bbZ[q_v^{\pm 1/2}])^{W_{M_Q}}$, we have $\Sigma_Q(s) \cdot_{\mathfrak{q}} x = \Sigma_P(s) \cdot_{\mathfrak{p}}  x$.
\end{proposition}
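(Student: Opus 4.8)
The plan is to reduce, using the Jacquet-module compatibilities of Corollary~\ref{cor_compatible_with_jacquet_module}, to the case $Q = G$, and then to settle that case via the classical compatibility of the Satake isomorphism with the constant term along $P$.

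\emph{Reduction to $Q = G$.} Since $W_{M_P} \subseteq W_{M_Q}$, the element $s$ also lies in $(\bbZ[X_\ast(T)] \otimes_{\bbZ} \bbZ[q_v^{\pm 1/2}])^{W_{M_P}}$, and $\Sigma_Q(s) \cdot_{\mathfrak{q}} x$ again lies in $\pi^{\mathfrak{q}} \subseteq \pi^{\mathfrak{p}}$. Let $q : \pi^{\mathfrak{p}} \xrightarrow{\sim} r_Q(\pi)^{\mathfrak{p} \cap \mathfrak{m}_Q}$ be the isomorphism of Corollary~\ref{cor_compatible_with_jacquet_module}; as the canonical projection $\pi \to r_Q(\pi)$ is independent of the level, $q$ restricted to $\pi^{\mathfrak{q}}$ is the canonical projection onto $r_Q(\pi)^{\mathfrak{m}_Q} \subseteq r_Q(\pi)^{\mathfrak{p} \cap \mathfrak{m}_Q}$, and we set $\overline{x} = q(x) \in r_Q(\pi)^{\mathfrak{m}_Q}$. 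Applying the first part of Corollary~\ref{cor_compatible_with_jacquet_module} to the parabolic $Q$ and the second part to $P \subseteq Q$, we obtain
\[ q(\Sigma_Q(s) \cdot_{\mathfrak{q}} x) = \cS_{M_Q}^{-1}(s) \cdot \overline{x}, \qquad q(\Sigma_P(s) \cdot_{\mathfrak{p}} x) = \Sigma_{P \cap M_Q}(s) \cdot \overline{x}, \]
the left-hand side computed via the $\cH(M_Q(F_v), \mathfrak{m}_Q)$-action on $r_Q(\pi)^{\mathfrak{m}_Q}$ and the right-hand one via the module structure on $r_Q(\pi)^{\mathfrak{p} \cap \mathfrak{m}_Q}$ afforded by $\Sigma_{P \cap M_Q}$. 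As $q$ is injective, it suffices to show $\cS_{M_Q}^{-1}(s) \cdot \overline{x} = \Sigma_{P \cap M_Q}(s) \cdot \overline{x}$. But $M_Q$, viewed as a parabolic subgroup of itself, has trivial unipotent radical, so $T_{M_Q} = \mathrm{id}$ and $\Sigma_{M_Q} = \cS_{M_Q}^{-1}$; the identity we need is thus precisely the assertion of the proposition for the triple $(M_Q, P \cap M_Q, M_Q)$ applied to $\overline{x} \in r_Q(\pi)^{\mathfrak{m}_Q}$. This reduces us to the case $Q = G$.

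\emph{The case $Q = G$.} Here $\mathfrak{q} = G(\cO_{F_v})$ and $\Sigma_G = \cS_G^{-1}$, and we must show $\cS_G^{-1}(s) \cdot x = \Sigma_P(s) \cdot_{\mathfrak{p}} x$ for $x \in \pi^{G(\cO_{F_v})}$ and $s \in (\bbZ[X_\ast(T)] \otimes \bbZ[q_v^{\pm 1/2}])^{W_G}$, the left side being the spherical Hecke action and the right side the $\cH(G(F_v), \mathfrak{p})$-action on $\pi^{\mathfrak{p}} \supseteq \pi^{G(\cO_{F_v})}$. With $q : \pi^{\mathfrak{p}} \xrightarrow{\sim} r_P(\pi)^{\mathfrak{m}_P}$ as in Corollary~\ref{cor_compatible_with_jacquet_module} we have $q(\Sigma_P(s) \cdot_{\mathfrak{p}} x) = \cS_{M_P}^{-1}(s) \cdot q(x)$; since $q$ is injective and $\cS_G^{-1}(s) \cdot x \in \pi^{G(\cO_{F_v})} \subseteq \pi^{\mathfrak{p}}$, it is enough to prove $q(\cS_G^{-1}(s) \cdot x) = \cS_{M_P}^{-1}(s) \cdot q(x)$, i.e.\ that the canonical projection $\pi^{G(\cO_{F_v})} \to r_P(\pi)^{\mathfrak{m}_P}$ intertwines the spherical Hecke action of $G$ with that of $M_P$, under the inclusion $(\bbZ[X_\ast(T)] \otimes \bbZ[q_v^{\pm 1/2}])^{W_G} \hookrightarrow (\bbZ[X_\ast(T)] \otimes \bbZ[q_v^{\pm 1/2}])^{W_{M_P}}$. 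I would establish this using the normalized constant-term map
\[ c_P : \cH(G(F_v), G(\cO_{F_v})) \otimes \bbZ[q_v^{\pm 1/2}] \to \cH(M_P(F_v), \mathfrak{m}_P) \otimes \bbZ[q_v^{\pm 1/2}], \]
\[ c_P(h)(m) = \delta_P(m)^{1/2} \int_{N_P(F_v)} h(mn) \, dn, \]
together with the two standard facts that (a) $\cS_{M_P} \circ c_P = \cS_G$, under the inclusion of $W_G$-invariants into $W_{M_P}$-invariants, and (b) for spherical $h$ and $x \in \pi^{G(\cO_{F_v})}$ the projection $\pi \to r_P(\pi)$ sends $h \cdot x$ to $c_P(h) \cdot q(x)$; the latter comes from the Iwasawa decomposition $G(F_v) = N_P(F_v) M_P(F_v) G(\cO_{F_v})$, using that $x$ is $G(\cO_{F_v})$-fixed, that $N_P(F_v)$ acts trivially on $r_P(\pi)$, and that $r_P(\pi)$ carries the $\delta_P^{-1/2}$-twisted action, so that the powers of $\delta_P$ cancel. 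Granting (a) and (b), $q(\cS_G^{-1}(s) \cdot x) = c_P(\cS_G^{-1}(s)) \cdot q(x) = \cS_{M_P}^{-1}(s) \cdot q(x)$, as required.

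The genuine content is the case $Q = G$, and within it facts (a) and (b): the compatibility of the Satake isomorphism with the constant term, and the matching of normalizations in the Iwasawa computation. Both are classical with $\bC$-coefficients, but here they must be set up integrally over $\bbZ[q_v^{\pm 1/2}]$ and compatibly with the normalized Jacquet-module conventions of \S\ref{sec_hecke}; in particular one must check that $c_P$ takes values in the $\bbZ[q_v^{\pm 1/2}]$-form of the Hecke algebra and that the computation of (b) produces no spurious power of $q_v$. Alternatively, (a) and (b) can be quoted from the literature on the Satake transform. The remainder of the argument is formal manipulation of the isomorphisms of Corollary~\ref{cor_compatible_with_jacquet_module}.
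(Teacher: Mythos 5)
Your proposal is correct, and its overall architecture matches the paper's: the reduction to the case $Q = G$ via the commutative square of Jacquet-module isomorphisms from Corollary \ref{cor_compatible_with_jacquet_module} is exactly the first step of the paper's proof. Where you diverge is in the base case. The paper first establishes the pair $B \subset G$ by quoting the compatibility of the Bernstein isomorphism with the Satake isomorphism (Haines, \S 4.6), and then deduces the general pair $P \subset G$ by passing down to $\pi^{\mathfrak{b}}$ and using the already-proved case $B \cap M_P \subset M_P$ together with Corollary \ref{cor_compatible_with_jacquet_module}; in other words, every comparison is routed through the Iwahori level and the centre of the Iwahori--Hecke algebra. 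You instead stay at spherical/parahoric level and prove $P \subset G$ directly from the transitivity of the Satake transform, $\cS_{M_P} \circ c_P = \cS_G$, plus the intertwining of the spherical Hecke action with the normalized constant term under the projection $\pi^{G(\cO_{F_v})} \to r_P(\pi)^{\mathfrak{m}_P}$. Both inputs are classical and your facts (a) and (b) do hold over $\bbZ[q_v^{\pm 1/2}]$ with the normalizations of \S\ref{sec_hecke} (the Iwasawa decomposition $G(F_v) = N_P(F_v) M_P(F_v) G(\cO_{F_v})$ and the relation $\delta_B = \delta_P \delta_{B \cap M_P}$ on $T$ make the modulus factors cancel as you anticipate), so your route is valid; it is marginally more self-contained at the level of the statement proved but requires you to set up and verify the integral constant-term map, whereas the paper's route leans on a single citable statement about the Iwahori--Hecke algebra that it needs anyway for Corollary \ref{cor_centre_of_parahoric_hecke_algebra} and Lemma \ref{lem_trace_equivariance}. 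You should make facts (a) and (b) precise (or locate precise references) before regarding the argument as complete, but there is no gap in the logic.
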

\begin{proof}
We first observe that the proposition holds for the pair $P \subset Q$ of parabolic subgroups of $G$ if it holds for the pair $P \cap M_Q \subset M_Q$ of parabolic subgroups of $M_Q$. Indeed, there is a commutative diagram 
\[ \xymatrix{ \pi^{\mathfrak{q}} \ar[r] \ar[d] & r_Q(\pi)^{\mathfrak{m}_Q} \ar[d] \\
\pi^{\mathfrak{p}} \ar[r] & r_Q(\pi)^{\mathfrak{p} \cap \mathfrak{m}_Q}. } \]
The horizontal arrows are isomorphisms, by Corollary \ref{cor_compatible_with_jacquet_module}, and are equivariant with respect to the maps $\Sigma_P$ and $\Sigma_Q$. 

We next observe that the proposition holds when $P = G$ and $Q = B$. Indeed, in this case the restriction of $\Sigma_B$ to 
\[ (\bbZ[X_\ast(T)] \otimes_{\bbZ} \bbZ[q_v^{\pm 1/2}])^{W_{G}}  \subset \cH(G(F_v), \mathfrak{b}) \otimes_\bZ \bZ[q_v^{\pm 1/2}] \]
is the usual Bernstein isomorphism onto the centre of the Iwahori--Hecke algebra. The proposition in this case is the compatibility between the Bernstein isomorphism and the Satake isomorphism, cf. \cite[\S 4.6]{Hai10}.

Finally we treat the general case. By the first paragraph of the proof, we can assume $Q = G$, and allow $P$ to be an arbitrary standard parabolic subgroup. We must show that for all $x \in \pi^{\mathfrak{g}}$ and $s \in (\bbZ[X_\ast(T)] \otimes_{\bbZ} \bbZ[q_v^{\pm 1/2}])^{W_{G}}$, we have $\Sigma_G(s) \cdot_{\mathfrak{g}} x = \Sigma_P(s) \cdot_\p x$. Equivalently, we must show that $\Sigma_B(s) \cdot_{\mathfrak{b}} x = \Sigma_P(s) \cdot_\p x$. To this end we consider the commutative diagram
\[ \xymatrix{ \pi^{\mathfrak{p}} \ar[r]^-{q} \ar[d] & r_P(\pi)^{\mathfrak{m}_P} \ar[d] \\
\pi^{\mathfrak{b}} \ar[r]_-{q} & r_P(\pi)^{\mathfrak{b} \cap \mathfrak{m}_P}. } \]
where again the horizontal maps are isomorphisms. We compute
\[ q(\Sigma_B(s) \cdot_{\mathfrak{b}} x) = \Sigma_{B \cap M_P}(s) \cdot_{\mathfrak{b} \cap \mathfrak{m}_P}q(x) = \Sigma_{M_P}(s) \cdot_{\mathfrak{m}_P} q(x) = q(\Sigma_P(s) \cdot_\p x), \]
the first and third equalities by Corollary \ref{cor_compatible_with_jacquet_module} and the middle one by the current proposition for the pair $B \cap M_P \subset M_P$. Since $q$ is an isomorphism this implies that $\Sigma_B(s) \cdot_{\mathfrak{b}} x = \Sigma_P(s)\cdot_\p x$, as required. 
\end{proof}
The above proposition shows that $(\bbZ[X_\ast(T)] \otimes_{\bbZ} \bbZ[q_v^{\pm 1/2}])^{W_{G}}$ acts unambiguously on $\pi^\p$ for any standard parabolic subgroup $P \subset G$. The following corollary gives slightly more information on this action.
\begin{cor}\label{cor_centre_of_parahoric_hecke_algebra}
Let $P$ be a standard parabolic subgroup of $G$. Then $\Sigma_P((\bbZ[X_\ast(T)] \otimes_{\bbZ} \bbZ[q_v^{\pm 1/2}])^{W_{G}})$ is contained in the centre of $\cH(G(F_v), \p) \otimes_\bZ \bZ[q_v^{\pm 1/2}]$. If $\chi : T(F_v) \to \bC^\times$ is any unramified character, identified with a homomorphism $\chi : X_\ast(T) \to \bC^\times$, and $s \in (\bbZ[X_\ast(T)] \otimes_{\bbZ} \bbZ[q_v^{\pm 1/2}])^{W_{G}}$, then $s$ acts on $(i_{B(F_v)}^{G(F_v)} \chi)^{\p}$ by the scalar $\chi(s)$. 
\end{cor}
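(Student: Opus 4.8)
The plan is to deduce both statements from Proposition \ref{prop_comparison_of_Hecke} together with the $P = G$ case (the Bernstein isomorphism onto the centre of the Iwahori--Hecke algebra), by a standard "module acts faithfully enough" argument. First I would prove the centrality statement. Fix a standard parabolic $P$ and an element $s \in (\bbZ[X_\ast(T)] \otimes_\bbZ \bbZ[q_v^{\pm 1/2}])^{W_G}$; we must show $\Sigma_P(s)$ is central in $\cH(G(F_v), \p) \otimes_\bZ \bZ[q_v^{\pm 1/2}]$. The key point is that this Hecke algebra acts faithfully on a single module of the form $\pi^\p$: indeed one may take $\pi = C^\infty_c(G(F_v))$ (or, if a finite-dimensional witness is preferred, a sufficiently generic unramified principal series whose Jacquet module at every level separates the finitely many relevant basis elements $[\p m \p]$), for which $\pi^\p$ is a faithful module over $\cH(G(F_v),\p) \otimes_\bZ \bZ[q_v^{\pm 1/2}]$ — this is the usual statement that a Hecke algebra embeds into the endomorphisms of its own regular representation at the appropriate level. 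So it suffices to check that $\Sigma_P(s)$ commutes with every $t \in \cH(G(F_v),\p) \otimes_\bZ \bZ[q_v^{\pm 1/2}]$ in its action on $\pi^\p$. But by Proposition \ref{prop_comparison_of_Hecke} applied to the pair $P \subset G$, the operator $x \mapsto \Sigma_P(s) \cdot_\p x$ on $\pi^\p$ coincides with $x \mapsto \Sigma_G(s) \cdot_\g x$ after identifying $\pi^\p$ as a subspace of $\pi^\g$ — more precisely, the action of $\Sigma_G(s)$ on $\pi^\g$ (the Iwahori-level, i.e.\ $\mathfrak g = \mathfrak b$, vector space, with $\p$ replaced appropriately) is central by the Bernstein isomorphism statement invoked in the proof of Proposition \ref{prop_comparison_of_Hecke}, and centrality descends to each $\p$-level compatibly. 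Concretely: the center of the Iwahori--Hecke algebra acts on $\pi^\b$ commuting with $\cH(G(F_v),\b)$, hence in particular commuting with the idempotent-like element $[\p]$ and with $\cH(G(F_v),\p) = [\p] \cH(G(F_v),\b) [\p]$; since $\Sigma_P(s) \cdot_\p (-) = \Sigma_B(s) \cdot_\b (-)$ on $\pi^\p \subset \pi^\b$ by Proposition \ref{prop_comparison_of_Hecke}, we get that $\Sigma_P(s)$ commutes with all of $\cH(G(F_v),\p)\otimes_\bZ\bZ[q_v^{\pm1/2}]$ on $\pi^\p$, and by faithfulness this proves centrality.

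For the second statement, let $\chi : T(F_v) \to \bC^\times$ be unramified and $\pi = i_{B(F_v)}^{G(F_v)} \chi$. The line of attack is to reduce to the Iwahori level $\p = \b$, where the claim is classical, via Proposition \ref{prop_comparison_of_Hecke}. We have $\pi^\p \subset \pi^\b$, and both are nonzero since $\pi$ is an unramified principal series. Pick any nonzero $x \in \pi^\p$, regarded as an element of $\pi^\b$. By Proposition \ref{prop_comparison_of_Hecke} for $P \subset G$ (using $Q=G$), $\Sigma_P(s) \cdot_\p x = \Sigma_B(s) \cdot_\b x$ inside $\pi^\b$. Now $\Sigma_B(s)$ is the image under the Bernstein isomorphism of $s$, viewed as a central element of the Iwahori--Hecke algebra; it is a standard fact (the defining compatibility of the Satake and Bernstein normalizations, cf.\ \cite[\S 4.6]{Hai10}, which is exactly what was used in the proof of Proposition \ref{prop_comparison_of_Hecke}) that this central element acts on $\pi^\b$ — equivalently on $r_B(\pi)^{\mathfrak m_B}$, which by Corollary \ref{cor_compatible_with_jacquet_module} is where $\cS_T^{-1}(s) = \cS_{M_B}^{-1}(s)$ acts — by the scalar $\chi(s)$, since the characters of $T(F_v)$ occurring in $r_B(\pi)$ are the Weyl-conjugates of $\chi$ and $s$ is $W_G$-invariant so takes the same value on all of them. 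Hence $\Sigma_P(s) \cdot_\p x = \chi(s)\, x$ for all $x \in \pi^\p$, which is the claim. (One can also argue directly at level $\p$ using the isomorphism $q : \pi^\p \iso r_P(\pi)^{\mathfrak m_P}$ of Corollary \ref{cor_compatible_with_jacquet_module} and the transitivity of Jacquet functors, but routing through the Iwahori level is cleanest.)

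The main obstacle — really the only non-formal input — is pinning down the scalar by which the Bernstein-central element $\Sigma_B(s)$ acts on the unramified principal series $i_{B(F_v)}^{G(F_v)}\chi$, and checking it is $\chi(s)$ rather than $\chi(s)$ up to some Weyl-twist or $\delta_B$-normalization ambiguity. This is where the normalization conventions matter: one must use that $r_B(i_B^G \chi) $ has the characters $\{w\chi : w \in W_G\}$ as its Jacquet-module constituents (stated in the text), that $\cS_{M_B} = \cS_T$ is the identity-like Satake map at torus level so that $\cS_T^{-1}(s)$ acts on the $w\chi$-isotypic piece by $(w\chi)(s)$, and that $W_G$-invariance of $s$ forces $(w\chi)(s) = \chi(s)$ for all $w$ — so the operator is genuinely scalar with eigenvalue $\chi(s)$. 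Everything else is bookkeeping with the isomorphisms $q$ of Corollary \ref{cor_compatible_with_jacquet_module} and the containments $\pi^\p \subset \pi^\b$, which the earlier results have already made functorial and compatible.
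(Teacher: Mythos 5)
Your proposal is correct and takes essentially the same route as the paper: both halves of the statement are reduced to the Iwahori level via Proposition \ref{prop_comparison_of_Hecke} (note the relevant instance is the pair $B \subset P$, which gives $\Sigma_P(s)\cdot_\p x = \Sigma_B(s)\cdot_{\mathfrak{b}} x$ for all $x \in \pi^{\p}$ — not the pair $P \subset G$, which only covers $x \in \pi^{\mathfrak{g}}$) and then settled by the identification of $\Sigma_B$ on $W_G$-invariants with the Bernstein isomorphism onto the centre of the Iwahori--Hecke algebra. The only cosmetic difference is the order: the paper proves the scalar statement first and deduces centrality from the resulting equality $\Sigma_P(s) = \Sigma_B(s)\cdot_{\mathfrak{b}}[\p]$ in $\cH(G(F_v),\p)\otimes_{\bZ}\bZ[q_v^{\pm 1/2}]$ (two elements acting identically on all irreducible admissible modules being equal), whereas you establish centrality first via faithfulness of the regular representation — both are standard separation arguments and equally valid.
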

\begin{proof}
The last sentence follows from Proposition \ref{prop_comparison_of_Hecke} and the fact, already mentioned, that the restriction of $\Sigma_B$ to $(\bbZ[X_\ast(T)] \otimes_{\bbZ} \bbZ[q_v^{\pm 1/2}])^{W_{G}}$ is the usual Bernstein isomorphism to the centre of the Iwahori--Hecke algebra. The first part follows from this: if $s \in (\bbZ[X_\ast(T)] \otimes_{\bbZ} \bbZ[q_v^{\pm 1/2}])^{W_{G}}$ then $\Sigma_P(s)$ and $\Sigma_B(s) \cdot_\mathfrak{b} [\p]$ act by the same scalar on $\pi^\p$ for any irreducible admissible $\bC[G(F_v)]$-module $\pi$. This implies that they must be equal as elements of  $\cH(G(F_v), \p) \otimes_\bZ \bZ[q_v^{\pm 1/2}]$, and moreover that $\Sigma_P(s)$ lies in the centre of this Hecke algebra (as $\Sigma_B(s) \cdot_\mathfrak{b} [\p]$ does).
\end{proof}
We now specialise to our intended context. Let $G = \GL_n$ and let $P = P_{n_1, n_2}$ denote the standard parabolic associated to a partition $n = n_1 + n_2$. Let $x_1, \dots, x_n$ denote the standard basis of $X_\ast(T)$. Then we can identify $W = S_n$, $W_{M_P} = S_{n_1} \times S_{n_2}$, and 
\[ \cA = \bZ[X_\ast(T)]^W = \bZ[e_1, \dots, e_n, e_n^{-1}], \]
\[ \cB = \bZ[X_\ast(T)]^{W_{M_P}} = \bZ[a_1, \dots, a_{n_1}, b_1, \dots, b_{n_2}, e_n^{-1}], \]
where $e_1, \dots, e_n$ (resp. $a_1, \dots, a_{n_1}$, resp. $b_1, \dots,  b_{n_2}$) are the standard symmetric polynomials in $x_1, \dots, x_n$ (resp. $x_1, \dots, x_{n_1}$, resp. $x_{n_1 + 1}, \dots, x_{n_1 + n_2}$). As in  \S \ref{sec_different}, we have the resultant
\[ {\Res_{n_1, n_2}} = \prod_{i=1}^{n_1} \prod_{j=n_1 + 1}^{n_1 + n_2} (x_i - x_j) \in \bZ[X_\ast(T)]^{W_{M_P}}. \]
By Proposition \ref{prop_different}, there is a canonical lift of ${\Res_{n_1, n_2}}$ to an element $\widetilde{\Res}_{n_1, n_2} \in \cB \otimes_\cA \cB$. We record some useful properties.
\begin{lemma}\label{lem_trace_equivariance}
\begin{enumerate}
\item For any $s \in \cB$, we have $s \otimes 1 \cdot \widetilde{\Res}_{n_1, n_2} = 1 \otimes s \cdot  \widetilde{\Res}_{n_1, n_2}$.
\item Let $\pi$ be a smooth $\bZ[q_v^{\pm 1/2}][\GL_n(F_v)]$-module. Then for any $s \in \cA$, $x \in \pi^{\p}$, we have $\tr_{\p / \mathfrak{g}}(sx) = s \tr_{\p / \mathfrak{g}}(x)$.
\item Let $\pi$ be a smooth $\bZ[q_v^{\pm 1/2}][\GL_n(F_v)]$-module such that $(q_v - 1) \pi = 0$. Then for any $s \in \cB$, $x \in \pi^{\mathfrak{g}}$, we have $\tr_{\p / \mathfrak{g}}(sx) = \tr_{\cB / \cA}(s) x$.
\item Let $\pi$ be a smooth $\bZ[q_v^{\pm 1/2}][\GL_n(F_v)]$-module such that $(q_v - 1) \pi = 0$, and let $f_1, \dots, f_N$ be an $\cA$-basis for $\cB$. Write $\widetilde{\Res}_{n_1, n_2} = \sum_i f_i \otimes z_i$. Then for any $y \in \pi^{\mathfrak{p}}$, we have ${\Res_{n_1, n_2}} y = \sum_i f_i \tr_{\mathfrak{p} / \mathfrak{g}}(z_i y)$.
\end{enumerate}
\end{lemma}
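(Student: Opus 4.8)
The plan is to prove the four parts in order, using the earlier structural results on the extension $\cA \subset \cB$ together with the Hecke-algebra material of \S\ref{sec_hecke}. For part (1), I would interpret $\widetilde{\Res}_{n_1,n_2} \in \cB \otimes_\cA \cB$ via the characterization of Proposition \ref{prop_different}: it is the unique element $\sum_i z_i \otimes w_i$ with $\sum_i z_i w_i = {\Res_{n_1,n_2}}$ and $\sum_i \sigma(z_i) w_i = 0$ for $\sigma \in S_n - S_{n_1}\times S_{n_2}$. The claim $s \otimes 1 \cdot \widetilde{\Res}_{n_1,n_2} = 1 \otimes s \cdot \widetilde{\Res}_{n_1,n_2}$ is symmetric in the two tensor factors, so it suffices to observe that both $(s\otimes 1)\widetilde{\Res}$ and $(1\otimes s)\widetilde{\Res}$ satisfy the same defining properties after multiplying the first constraint by $s$: both map to $s\cdot{\Res_{n_1,n_2}}$ under $\mu$, and both are annihilated by $\ker\mu$ since $\widetilde{\Res}_{n_1,n_2}$ generates $J = \operatorname{Ann}(I)$ as a $\cB\otimes_\cA\cB$-module (from the proof of Proposition \ref{prop_different}, $\mu|_J$ is an isomorphism onto the ideal $({\Res_{n_1,n_2}})$, so $J = (\cB\otimes_\cA\cB)\cdot\widetilde{\Res}_{n_1,n_2}$, and in particular $J\cdot(\text{difference})$ considerations give the equality). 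Concretely: $(s\otimes 1 - 1\otimes s)\widetilde{\Res}_{n_1,n_2}$ lies in $J$ and is killed by $\mu$, hence is zero since $\mu|_J$ is injective.

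For part (2), $\tr_{\p/\mathfrak{g}}$ is the Hecke operator $[\mathfrak{g}]\cdot_\p(-)$ (or the trace map $\pi^\p \to \pi^{\mathfrak g}$ followed by inclusion), and $s \in \cA = (\bZ[X_\ast(T)]\otimes\bZ[q_v^{\pm1/2}])^{W_G}$ acts through $\Sigma_\p(s)$, which by Corollary \ref{cor_centre_of_parahoric_hecke_algebra} lies in the centre of $\cH(\GL_n(F_v),\p)\otimes\bZ[q_v^{\pm1/2}]$; since $[\mathfrak g] = [\mathfrak g]\cdot_\p[\p]$ is the element implementing $\tr_{\p/\mathfrak g}$ and central elements commute with everything, $\Sigma_\p(s)$ commutes with $\tr_{\p/\mathfrak g}$, giving $\tr_{\p/\mathfrak g}(sx) = s\,\tr_{\p/\mathfrak g}(x)$. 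For part (3), under the hypothesis $(q_v-1)\pi = 0$ the Hecke algebra degenerates: the Iwahori--Hecke algebra becomes the group algebra of the (extended) affine Weyl group at $q_v = 1$, and I would compute $[\mathfrak g]\cdot_\p(s\cdot_{?} x)$ for $x\in\pi^{\mathfrak g}$ directly. The point is that for $x$ already $\GL_n(\cO_{F_v})$-fixed, $sx$ for $s\in\cB$ is a specific element of $\pi^\p$, and applying the trace down to $\pi^{\mathfrak g}$ picks out the $W_G$-average; concretely one shows $\tr_{\p/\mathfrak g}\circ(\text{action of }s) = (\text{action of }\tr_{\cB/\cA}(s))$ on $\pi^{\mathfrak g}$, using that $\cB$ is free over $\cA$ (Lemma before Proposition \ref{prop_different}) and the explicit double-coset description of $[\mathfrak g]\cdot_\p[\p m\p]$ together with the $q_v=1$ specialization of the Satake/Bernstein formulas from Corollary \ref{cor_compatible_with_jacquet_module}.

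Finally, part (4) is a formal consequence of (1) and (3): write $\widetilde{\Res}_{n_1,n_2} = \sum_i f_i \otimes z_i$ with $\{f_i\}$ an $\cA$-basis of $\cB$. For $y\in\pi^\p$, note $\tr_{\p/\mathfrak g}(z_i y)\in\pi^{\mathfrak g}$, so by part (3) applied with $s = f_i$ we have $f_i \tr_{\p/\mathfrak g}(z_i y) = \tr_{\p/\mathfrak g}\big(f_i\,\tr_{\p/\mathfrak g}(z_i y)\big)$ only if $f_i\tr_{\p/\mathfrak g}(z_i y)\in\pi^{\mathfrak g}$ — instead the cleaner route is: by part (1), $\sum_i f_i \otimes z_i$ has the property that for any $s\in\cB$, $\sum_i (sf_i)\otimes z_i = \sum_i f_i \otimes (sz_i)$; expanding $sf_i = \sum_j c_{ij}f_j$ with $c_{ij}\in\cA$ recovers the "multiplication-by-$s$ matrix has trace $\tr_{\cB/\cA}(s)$" bookkeeping. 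Then $\sum_i f_i\,\tr_{\p/\mathfrak g}(z_i y)$: apply $\tr_{\p/\mathfrak g}$ is not needed since we want this to equal ${\Res_{n_1,n_2}}\,y$ directly; I would verify this by reducing to the universal case $\pi = \cB$ with its natural $\cH(\GL_n(F_v),\p)$-action at $q_v = 1$, where the identity $\sum_i f_i\cdot\operatorname{pr}_\cA(z_i y) = {\Res_{n_1,n_2}}y$ for $y\in\cB$ is exactly the defining property $\sum_i z_i w_i = {\Res_{n_1,n_2}}$ of $\widetilde{\Res}_{n_1,n_2}$ combined with the fact that $\tr_{\cB/\cA}$ composed with the dual-basis pairing reconstructs the multiplication map.

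The main obstacle I expect is part (3): making precise the claim that at $q_v = 1$ the parahoric Hecke operator $\tr_{\p/\mathfrak g}$ realizes the algebra trace $\tr_{\cB/\cA}$, which requires carefully tracking the normalization factors $\delta_P(m)^{1/2}$ in $\Sigma_\p$ and the way the Satake isomorphism specializes. Parts (1), (2), (4) are then essentially formal.
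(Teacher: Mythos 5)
Parts (1) and (2) of your proposal are correct and essentially identical to the paper's proof: for (1) the point is exactly that $s \otimes 1 - 1 \otimes s \in I = \ker \mu$ while $\widetilde{\Res}_{n_1,n_2} \in J = \operatorname{Ann}(I)$, and for (2) one writes $\tr_{\p/\mathfrak{g}}(sx) = [\mathfrak{g}] \cdot_\p sx$ and uses centrality of $\Sigma_P(s)$ from Corollary \ref{cor_centre_of_parahoric_hecke_algebra}. Part (3) is the right strategy but you leave the key computation undone (and you flag this yourself). What makes it work in the paper: modulo $q_v - 1$ the Iwahori--Matsumoto presentation degenerates so that $[\mathfrak{b} w w' \mathfrak{b}] = [\mathfrak{b} w \mathfrak{b}] \cdot_{\mathfrak{b}} [\mathfrak{b} w' \mathfrak{b}]$ and $[\mathfrak{b} w \mathfrak{b}] \cdot_{\mathfrak{b}} \Sigma_B(s) = \Sigma_B({}^w s) \cdot_{\mathfrak{b}} [\mathfrak{b} w \mathfrak{b}]$; one then writes $\tr_{\p/\mathfrak{g}} = \sum_{w \in W_G/W_{M_P}} [\mathfrak{b} w \mathfrak{b}] \cdot_{\mathfrak{b}} (-)$ on $\pi^{\p}$, commutes, and uses $\tr_{\cB/\cA}(s) = \sum_{w \in W_G/W_{M_P}} {}^w s$. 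This is what you gesture at, so I would count (3) as an incomplete but correctly aimed sketch.

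Part (4) is where there is a genuine gap. Your opening claim that (4) is ``a formal consequence of (1) and (3)'' is false, and your own first attempt collapses mid-sentence for exactly the right reason: for general $y \in \pi^{\p}$ you cannot apply (3) to $z_i y$, since (3) only computes $\tr_{\p/\mathfrak{g}}(s x)$ for $x \in \pi^{\mathfrak{g}}$. (If $y \in \pi^{\mathfrak{g}}$ the deduction does go through, but (4) for general $y \in \pi^{\p}$ is precisely the non-trivial direction needed for $fg = \Res_{n_1,n_2}$ in Proposition \ref{prop_quasi_inverse}.) Your fallback --- ``reduce to the universal case $\pi = \cB$'' --- has no content: there is no universal smooth $\GL_n(F_v)$-module through which an arbitrary $\pi$ with $(q_v-1)\pi = 0$ factors, and $\pi^{\p}$ as a Hecke module is not determined by $\cB$. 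The actual proof repeats the part-(3) computation: expand $\sum_i f_i \tr_{\p/\mathfrak{g}}(z_i y) = \sum_{w \in W_G/W_{M_P}} \sum_i \Sigma_B(f_i\, {}^w z_i) \cdot_{\mathfrak{b}} [\mathfrak{b} w \mathfrak{b}] \cdot_{\mathfrak{b}} y$ using the degenerate commutation relation, and then invoke the \emph{second} defining property of $\widetilde{\Res}_{n_1,n_2}$ from Proposition \ref{prop_different}, namely that $\sum_i \sigma(f_i) z_i = 0$ for $\sigma \in S_n \setminus (S_{n_1} \times S_{n_2})$, so that every coset $w \neq 1$ dies and only $\sum_i f_i z_i = \Res_{n_1,n_2}$ survives. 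That orthogonality relation is the whole point of introducing $\widetilde{\Res}_{n_1,n_2}$, and it never appears in your argument for (4).
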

\begin{proof}
The element $(s \otimes 1 - 1 \otimes s) \in \cB \otimes_\cA \cB$ lies in the kernel of the multiplication map $\cB \otimes_\cA \cB \to \cB$, so is annihilated by $\widetilde{\Res}_{n_1, n_2}$ by definition. This shows the first part. For the second, we can write $\tr_{\p / \mathfrak{g}}(sx) = [\mathfrak{g}] \cdot_\p s x$. Since $s$ lies in the centre of $\cH(G(F_v), \p) \otimes_\bZ \bZ[q_v^{\pm 1/2}]$, this equals $s [\mathfrak{g}] \cdot_\p x = s \tr_{\p / \mathfrak{g}}(x)$.

For the remaining two parts of the lemma, we fix $\pi$, a smooth $\bZ[q_v^{\pm 1/2}][\GL_n(F_v)]$-module such that $(q_v - 1) \pi = 0$. The Iwahori--Matsumoto presentation of the Iwahori--Hecke algebra descends to an isomorphism
\[ \bZ[q_v^{\pm 1/2}] / (q_v - 1)[\widetilde{W}_G] \to  \cH(G(F_v), \mathfrak{b}) \otimes_\bZ \bZ[q_v^{\pm 1/2}] / (q_v - 1). \]
Consequently, if $w \in W_G$ and $s \in \bZ[X_\ast(T)]$ then we have the identity $[\mathfrak{b} w \mathfrak{b}]\cdot_\mathfrak{b}  \Sigma_B(s) = \Sigma_B({}^w s) \cdot_\mathfrak{b} [\mathfrak{b} w \mathfrak{b}]$ in $\cH(G(F_v), \mathfrak{b}) \otimes_\bZ \bZ[q_v^{\pm 1/2} ] / (q_v - 1)$; and if $w, w' \in W_G$ then $[\mathfrak{b} w w' \mathfrak{b}] = [\mathfrak{b} w \mathfrak{b}] \cdot_\mathfrak{b} [ \mathfrak{b} w' \mathfrak{b}]$ in $\cH(G(F_v), \mathfrak{b}) \otimes_\bZ \bZ[q_v^{\pm 1/2}]  / (q_v - 1)$. If $x \in \pi^{\mathfrak{p}}$, then we have
\[ \tr_{\p / \mathfrak{g}}(x) = [\mathfrak{g}] \cdot_\mathfrak{p} x = \sum_{w \in W_G / W_{M_P}} [\mathfrak{b} w \mathfrak{b}] \cdot_{\mathfrak{b}} [\p] \cdot_{\p} x = \sum_{w \in W_G / W_{M_P}} [\mathfrak{b} w \mathfrak{b}] \cdot_{\mathfrak{b}} x. \]
This allows us to prove the third part of the lemma: if $s \in \cB$ and $x \in \pi^{\mathfrak{g}}$, we compute
\begin{multline*} \tr_{\mathfrak{p} / \mathfrak{g}}(sx) = \sum_{w \in W_G / W_{M_P}} [\mathfrak{b} w \mathfrak{b}] \cdot_{\mathfrak{b}} \Sigma_B(s) \cdot_{\mathfrak{b}} x \\ = \sum_{w \in W_G / W_{M_P}}  \Sigma_B({}^w s) \cdot_{\mathfrak{b}}  [\mathfrak{b} w \mathfrak{b}] \cdot_{\mathfrak{b}} x\\  = \Sigma_B\left( \sum_{w \in W_G / W_{M_P}} {}^w s \right) \cdot_{\mathfrak{b}} x = \Sigma_G(\tr_{\cB / \cA}(s)) \cdot_{\mathfrak{g}} x. 
\end{multline*}
For the final part, let $y \in \pi^{\mathfrak{p}}$. Then we compute
\begin{multline*} \sum_i f_i \tr_{\p / \mathfrak{g}}(z_i y) = \sum_{i} \sum_{w \in W_G / W_{M_P}} \Sigma_B(f_i) \cdot_{\mathfrak{b}} [ \mathfrak{b} w \mathfrak{b}] \cdot_{\mathfrak{b}} \Sigma_B(z_i) \cdot_{\mathfrak{b}} y 
	\\ =  \sum_{w \in W_G / W_{M_P}} \sum_{i}  \Sigma_B( f_i {}^w z_i)  \cdot_{\mathfrak{b}} [\mathfrak{b} w \mathfrak{b}] \cdot_{\mathfrak{b}}  y.
	\end{multline*}
Now Proposition \ref{prop_different} says that $ \sum_{i} f_i {}^w z_i$ equals ${\Res_{n_1, n_2}}$ if $w \in W_{M_P}$ and $0$ otherwise. We get 
\[ \Sigma_B({\Res_{n_1, n_2}}) \cdot_{\mathfrak{b}} y = {\Res_{n_1, n_2}} y. \]
This completes the proof.
\end{proof}
If $\pi$ is a smooth $\bZ[q_v^{\pm 1/2}][\GL_n(F_v)]$-module, then we define maps 
\[ f : \cB \otimes_\cA \pi^\mathfrak{g} \to \pi^\mathfrak{p}, s \otimes x \mapsto s x, \]
\[ g : \pi^\p \to \cB \otimes_\cA \pi^{\mathfrak{g}}, x \mapsto \sum_i f_i \otimes \tr_{\p / \mathfrak{g}} (z_i x), \]
where $f_1, \dots, f_N$ is an $\cA$-basis of $\cB$ and $\widetilde{\Res}_{n_1, n_2} =  \sum_i f_i \otimes z_i$. Note that $f$ is well-defined by Proposition \ref{prop_comparison_of_Hecke}. 
\begin{lemma}\label{lem_comparison_map_independent_of_choices} The map $g$  is independent of the choice of basis $f_1, \dots, f_N$. Both $f$ and $g$ are morphisms of $\cB$-modules.
\end{lemma}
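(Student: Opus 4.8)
The plan is to reduce the whole statement to two inputs extracted from Lemma \ref{lem_trace_equivariance}: part (2), that the trace $\tr_{\mathfrak{p}/\mathfrak{g}}\colon \pi^{\mathfrak{p}}\to\pi^{\mathfrak{g}}$ is $\cA$-linear, and part (1), the ``self-adjointness'' identity $s\otimes 1\cdot\widetilde{\Res}_{n_1,n_2}=1\otimes s\cdot\widetilde{\Res}_{n_1,n_2}$ in $\cB\otimes_\cA\cB$ for every $s\in\cB$. Additivity of $f$ and $g$ is immediate, and $f$ being a $\cB$-module map is essentially formal: the $\cB$-action on $\pi^{\mathfrak{p}}$ is defined through the ring homomorphism $\Sigma_P$, so $t\cdot(s\otimes x)=(ts)\otimes x\mapsto (ts)x=t(sx)$ for $t,s\in\cB$ and $x\in\pi^{\mathfrak{g}}\subseteq\pi^{\mathfrak{p}}$; well-definedness of $f$ on the tensor product over $\cA$ is exactly Proposition \ref{prop_comparison_of_Hecke}, as already remarked in the text.

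For $g$ I would first repackage the definition in a basis-free way. Fix $x\in\pi^{\mathfrak{p}}$ and let $\phi_x\colon\cB\to\pi^{\mathfrak{g}}$, $z\mapsto\tr_{\mathfrak{p}/\mathfrak{g}}(zx)$; since $\cA\subset\cB$ acts on $\pi^{\mathfrak{p}}$ through $\Sigma_P$ and $\tr_{\mathfrak{p}/\mathfrak{g}}$ is $\cA$-linear by Lemma \ref{lem_trace_equivariance}(2), the map $\phi_x$ is $\cA$-linear, hence there is a well-defined $\mathrm{id}_\cB\otimes_\cA\phi_x\colon\cB\otimes_\cA\cB\to\cB\otimes_\cA\pi^{\mathfrak{g}}$. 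Writing $\widetilde{\Res}_{n_1,n_2}=\sum_i f_i\otimes z_i$, one then has $g(x)=(\mathrm{id}_\cB\otimes_\cA\phi_x)(\widetilde{\Res}_{n_1,n_2})$, where $\widetilde{\Res}_{n_1,n_2}$ is regarded in $\cB\otimes_\cA\cB$ with its second tensor factor feeding into $\phi_x$. As $\widetilde{\Res}_{n_1,n_2}$ is a canonically defined element by Proposition \ref{prop_different} and $\phi_x$ mentions no basis, this exhibits $g$ as independent of the choice of $f_1,\dots,f_N$. (A direct alternative: for a new basis $f'_j=\sum_i c_{ij}f_i$ with $c_{ij}\in\cA$, uniqueness of expansions in $\cB\otimes_\cA\cB$ forces $z_i=\sum_j c_{ij}z'_j$, and the $\cA$-linearity of $\tr_{\mathfrak{p}/\mathfrak{g}}$ lets one slide the scalars $c_{ij}$ across the tensor symbol to recover the new-basis formula.)

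For $\cB$-linearity of $g$, I would apply $\mathrm{id}_\cB\otimes_\cA\phi_x$ to both sides of $\sum_i(s f_i)\otimes z_i=\sum_i f_i\otimes(s z_i)$, which is Lemma \ref{lem_trace_equivariance}(1): the left-hand side becomes $\sum_i(s f_i)\otimes\tr_{\mathfrak{p}/\mathfrak{g}}(z_i x)=s\,g(x)$, while the right-hand side becomes $\sum_i f_i\otimes\tr_{\mathfrak{p}/\mathfrak{g}}((s z_i)x)=\sum_i f_i\otimes\tr_{\mathfrak{p}/\mathfrak{g}}(z_i(sx))=g(sx)$, using that $s$ and $z_i$ both act on $\pi^{\mathfrak{p}}$ through the commutative ring $\cB$; hence $g(sx)=s\,g(x)$. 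The step I would be most careful about — and really the only subtlety here — is keeping straight the two distinct $\cB$-actions in play, namely the tautological one on the left tensor factor of $\cB\otimes_\cA\pi^{\mathfrak{g}}$ versus the one on $\pi^{\mathfrak{p}}$ coming from $\Sigma_P$, and verifying that $\mathrm{id}\otimes\tr_{\mathfrak{p}/\mathfrak{g}}$ genuinely descends to the tensor product over $\cA$; all the substantive content is already encapsulated in Lemma \ref{lem_trace_equivariance}.
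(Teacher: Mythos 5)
Your argument is correct and rests on exactly the same inputs as the paper's proof (parts (1) and (2) of Lemma \ref{lem_trace_equivariance}, the canonicity of $\widetilde{\Res}_{n_1,n_2}$ from Proposition \ref{prop_different}, and Proposition \ref{prop_comparison_of_Hecke} for well-definedness of $f$); your observation that $g(x)=(\mathrm{id}_\cB\otimes_\cA\phi_x)(\widetilde{\Res}_{n_1,n_2})$ is just a basis-free repackaging of the paper's explicit change-of-basis computation, and your parenthetical ``direct alternative'' is literally the paper's argument. No gaps.
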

\begin{proof}
Suppose that $f'_1, \dots, f'_N$ is another choice of basis. Then we can write $f'_j = \sum_i a_{ij} f_i$ for some elements $a_{ij} \in \cA$, hence $f_j = \sum_{i} b_{ij} f'_i$ for some elements $b_{ij} \in \cA$ with $\sum_k a_{ik} b_{kj} = \delta_{ij}$, hence $z'_i = \sum_j b_{ij} z_j$. We then calculate using the first part of Lemma \ref{lem_trace_equivariance}:
\begin{multline*} \sum_i f_i' \otimes \tr_{\p / \mathfrak{g}}(z_i' x) = \sum_{i,j,k} a_{ji} f_j \otimes \tr_{\p / \mathfrak{g}}(b_{ik}z_k x) \\ = \sum_{i,j,k} a_{ji} b_{ik} f_j \otimes \tr_{\p / \mathfrak{g}}(z_k x) = \sum_j f_j \otimes \tr_{\p / \mathfrak{g}}(z_j x). 
	\end{multline*}
This shows that $g$ is independent of the choice of basis. It is clear from the definition that $f$ is a morphism of $\cB$-modules. To show that $g$ is a morphism of $\cB$-modules, let $s \in \cB$, and write $s f_j = \sum_i a_{ij} f_i$ for some elements $a_{ij} \in \cB$. Then the relation given in the first part of Lemma \ref{lem_trace_equivariance} implies that $s z_i = \sum_j a_{ij} z_j$, and we compute
\begin{multline*} g(sx) = \sum_i f_i \otimes \tr_{\p / \mathfrak{g}} (z_i s x) = \sum_{i, j} f_i \otimes \tr_{\p / \mathfrak{g}} (a_{ij} z_j x) \\ = \sum_{i, j} a_{ij} f_i \otimes \tr_{\p / \mathfrak{g}} (z_j x) = \sum_j s f_j \otimes \tr_{\p / \mathfrak{g}}(z_j x) = sg(x), 	\end{multline*}
as required.
\end{proof}
\begin{proposition}\label{prop_quasi_inverse}
Suppose that $(q_v-1) \pi = 0$. Then both $fg$ and $gf$ are given by multiplication by ${\Res_{n_1, n_2}}$. Consequently, both $f$ and $g$ have the property that their kernels and cokernels are annihilated by ${\Res_{n_1, n_2}}$.
\end{proposition}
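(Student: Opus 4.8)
The plan is to obtain $fg = {\Res_{n_1,n_2}}$ essentially for free from Lemma \ref{lem_trace_equivariance}(4), to reduce $gf = {\Res_{n_1,n_2}}$ to a single $\cB$-linear identity checked on module generators, and to deduce the statement about kernels and cokernels by a formal argument. For the first point: given $y\in\pi^{\mathfrak{p}}$, unwinding the definitions of $f$ and $g$ gives $f(g(y)) = \sum_i f_i\,\tr_{\mathfrak{p}/\mathfrak{g}}(z_i y)$, and this equals ${\Res_{n_1,n_2}}\,y$ by Lemma \ref{lem_trace_equivariance}(4) (which applies since $(q_v-1)\pi=0$), so $fg$ is multiplication by ${\Res_{n_1,n_2}}$ with no further work.

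For $gf$: both $f$ and $g$ are morphisms of $\cB$-modules (Lemma \ref{lem_comparison_map_independent_of_choices}) and multiplication by ${\Res_{n_1,n_2}}\in\cB$ is $\cB$-linear, so since $\cB\otimes_\cA\pi^{\mathfrak{g}}$ is generated over $\cB$ by the elements $1\otimes x$ with $x\in\pi^{\mathfrak{g}}$, it suffices to check $g(f(1\otimes x)) = {\Res_{n_1,n_2}}\cdot(1\otimes x)$ for such $x$. Here $f(1\otimes x) = x\in\pi^{\mathfrak{g}}\subseteq\pi^{\mathfrak{p}}$, so $g(x) = \sum_i f_i\otimes\tr_{\mathfrak{p}/\mathfrak{g}}(z_i x)$; since $x$ is fixed by $\mathfrak{g}$, Lemma \ref{lem_trace_equivariance}(3) rewrites $\tr_{\mathfrak{p}/\mathfrak{g}}(z_i x) = \tr_{\cB/\cA}(z_i)\,x$ with $\tr_{\cB/\cA}(z_i)\in\cA$, and pulling this scalar across $\otimes_\cA$ yields $g(x) = \bigl(\sum_i\tr_{\cB/\cA}(z_i)\,f_i\bigr)\otimes x$. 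It then remains to identify $\sum_i\tr_{\cB/\cA}(z_i)\,f_i = \sum_{w\in W_G/W_{M_P}}\sum_i({}^w z_i)\,f_i$ with ${\Res_{n_1,n_2}}$, which is precisely the computation already carried out in the proof of Lemma \ref{lem_trace_equivariance}(4) (via Proposition \ref{prop_different}, using that $\widetilde{\Res}_{n_1,n_2}$ is symmetric under interchanging the two tensor factors of $\cB\otimes_\cA\cB$): the identity coset contributes ${\Res_{n_1,n_2}}$ and every other coset contributes $0$. Thus $g(x) = {\Res_{n_1,n_2}}\otimes x = {\Res_{n_1,n_2}}\cdot(1\otimes x)$, as desired.

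Finally, the claim about kernels and cokernels is formal: if $f(a)=0$ then ${\Res_{n_1,n_2}}\cdot a = g(f(a)) = 0$, and for any $b$ we have ${\Res_{n_1,n_2}}\cdot b = f(g(b))\in\operatorname{im} f$, so ${\Res_{n_1,n_2}}$ annihilates both $\ker f$ and $\operatorname{coker} f$; interchanging the roles of $f$ and $g$ handles $\ker g$ and $\operatorname{coker} g$. I expect no genuine obstacle here — the only step requiring real care is keeping track of which tensor factor of $\cB\otimes_\cA\cB$ a permutation acts on, which is why it is cleanest to invoke the symmetry of $\widetilde{\Res}_{n_1,n_2}$, or equivalently to cite the relevant line in the proof of Lemma \ref{lem_trace_equivariance}(4), rather than to re-derive that identity from scratch.
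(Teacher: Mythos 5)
Your proof is correct and follows essentially the same route as the paper's: $fg=\Res_{n_1,n_2}$ is exactly Lemma \ref{lem_trace_equivariance}(4), and your computation of $gf$ on generators $1\otimes x$ via Lemma \ref{lem_trace_equivariance}(3) and Proposition \ref{prop_different} is the paper's computation of $gf(s\otimes x)$ specialized to $s=1$ (the general case following, as you note, from $\cB$-linearity). Your remark about which tensor factor the Weyl group element acts on is well taken — the symmetry of $\widetilde{\Res}_{n_1,n_2}$ under swapping factors follows from the uniqueness statement in Proposition \ref{prop_different}, and the paper uses it implicitly in the proof of Lemma \ref{lem_trace_equivariance}(4) as well.
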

\begin{proof}
	We compute $gf$ and $fg$ in turn. First, for any element $s \otimes x \in \cB \otimes_\cA \pi^{\mathfrak{g}}$, we have
	\[ gf(x) = g(sx) = \sum_i f_i \otimes \tr_{\mathfrak{p} / \mathfrak{g}}(z_i s x). \]
	Using the third part of Lemma \ref{lem_trace_equivariance}, this becomes
	\[ \sum_i f_i \tr_{\cB / \cA}(z_i s) \otimes x. \]
	We now note the equality $\sum_i f_i \tr_{\cB / \cA}(z_i s) = \sum_i f_i z_i s = s {\Res_{n_1, n_2}}$, from which we obtain $gf(x) = {\Res_{n_1, n_2}} s \otimes x$.
	
	For the other direction, we compute
	\[ fg(y) = f \left( \sum_i f_i \otimes \tr_{\mathfrak{p} / \mathfrak{g}}(z_i y) \right) = \sum_i f_i \tr_{\mathfrak{p} / \mathfrak{g}}(z_i y). \]
	The final part of Lemma \ref{lem_trace_equivariance} is thus equivalent to the equality $fg(y) = {\Res_{n_1, n_2}} y$, as required.
\end{proof}
Now fix a prime $p$ such that $q_v \equiv 1 \text{ mod }p$. In this case we define $\Delta_v$ to be the maximal $p$-power quotient $k(v)^\times(p)$ of $k(v)^\times$. Reduction modulo $\varpi_v$, projection to second factor and determinant gives a homomorphism
\[ \p \to \GL_{n_1}(k(v)) \times \GL_{n_2}(k(v)) \to \GL_{n_2}(k(v)) \to k(v)^\times \to \Delta_v, \]
and we define $\p_1$ to be its kernel, $\ffrm_{P, 1} = \p_1 \cap M_P(F_v)$.
\begin{prop}
\begin{enumerate}
\item $\cH(M_P(F_v), \ffrm_{P, 1})$ is commutative.
\item $\cH(M_P(F_v)^+, \ffrm_{P, 1}) \subset \cH(M_P(F_v), \ffrm_{P, 1})$ is a subalgebra.
\item The homomorphism of $\bZ[q_v^{\pm 1/2}]$-modules $T_{P, 1}^+ : \cH(M_P(F_v)^+, \ffrm_{P, 1}) \to \cH(G(F_v), \p_1)$ defined on basis elements by 
\[ T_{P, 1}^+([\ffrm_{P, 1} m \ffrm_{P, 1}]) = \delta_P(m)^{1/2} [\p_1 m \p_1] \]
is an injective algebra homomorphism, with image equal to the set of functions with support in $\p_1 M_P(F_v)^+ \p_1$.
\item $T_{P, 1}^+$ extends uniquely to an algebra homomorphism $T_{P, 1} : \cH(M_P(F_v), \ffrm_{P, 1}) \otimes_\bZ \bZ[q_v^{\pm 1/2}] \to \cH(G(F_v), \p_1) \otimes_\bZ \bZ[q_v^{\pm 1/2}]$.
\item Let $\pi$ be a smooth $\bZ[q_v^{\pm 1/2}] [G(F_v)]$-module, and let $q : \pi^{\mathfrak{p}_1} \to r_P(\pi)^{\mathfrak{m}_{P, 1}}$ denote the canonical projection. Then $q$ is an isomorphism and for any $x \in \pi^{\mathfrak{p}_1}$, $t \in \cH(M_P(F_v), \mathfrak{m}_{P, 1}) \otimes_\bZ \bZ[q_v^{\pm 1/2}]$, we have $q(T_{P, 1}(t) x) = t q(x)$.
\end{enumerate}
\end{prop}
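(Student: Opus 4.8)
The plan is to regard this proposition as the ``$\p_1$-level'' counterpart of Proposition \ref{prop_jacquet_module}: parts (2)--(5) should be obtained by transcribing the proofs of Proposition \ref{prop_jacquet_module}(1)--(4) almost verbatim, with $\p$ replaced by $\p_1$ and $\mathfrak{m}_P$ by $\ffrm_{P,1}$, while part (1) --- a commutativity statement with no analogue there --- needs a separate, standard argument. The first point to record is that $\p_1$ carries an Iwahori decomposition with the \emph{same} unipotent pieces as $\p$. Indeed, the homomorphism $\p \to \Delta_v$ defining $\p_1$ is the composite of reduction modulo $\varpi_v$, the Levi projection $P(k(v)) \to M_P(k(v))$, projection to the $\GL_{n_2}$-factor, the determinant, and $k(v)^\times \to \Delta_v$; it therefore kills $\mathfrak{n}_P$ (which lands in $1 \in M_P(k(v))$) and $\overline{\mathfrak{n}}_P$ (which is trivial modulo $\varpi_v$). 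Hence $\overline{\mathfrak{n}}_P, \mathfrak{n}_P \subseteq \p_1$, and since $\p \to \Delta_v$ is a homomorphism one gets $\p_1 = \overline{\mathfrak{n}}_P \ffrm_{P,1} \mathfrak{n}_P$; in particular the monoid $M_P(F_v)^+$ and the notion of strong $(P,\cdot)$-positivity are unchanged on passing from $\p$ to $\p_1$.

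Granting, as a replacement for Lemma \ref{lem_inverse_to_Hecke}, that $[\p_1 \lambda(\varpi_v) \p_1]$ is invertible in $\cH(G(F_v), \p_1) \otimes_\bZ \bZ[q_v^{\pm 1/2}]$ whenever $\lambda$ is a dominant cocharacter valued in $Z(M_P)$, parts (2) and (3) then follow from \cite[Corollary 6.12]{Bus98} and part (4) from \cite[Theorem 7.2]{Bus98}, exactly as in Proposition \ref{prop_jacquet_module}: the hypothesis of \cite[Theorem 7.2]{Bus98} asks for a strongly $(P,\p_1)$-positive $z \in Z(M_P)(F_v)$ with $[\p_1 z \p_1]$ invertible, which we supply by taking $z = \lambda(\varpi_v)$ for a strictly $P$-dominant cocharacter $\lambda$ of $Z(M_P)$. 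Part (5) is proved by the same coset computation as Proposition \ref{prop_jacquet_module}(4): one uses the decompositions $\ffrm_{P,1} m \ffrm_{P,1} = \bigsqcup_i x_i \ffrm_{P,1}$, $\mathfrak{n}_P = \bigsqcup_{i,j} y_{ij} x_i \mathfrak{n}_P x_i^{-1}$ (with $\delta_P(m)^{-1}$ of the $y_{ij}$, since $\delta_P$ does not see $\Delta_v$), and $\p_1 m \p_1 = \bigsqcup_{i,j} y_{ij} x_i \p_1$ for $m \in M_P(F_v)^+$, together with the injectivity and surjectivity argument for $q$, which only invokes the invertibility of the action of $[\p_1 z^m \p_1]$ on $\pi^{\p_1}$.

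For the invertibility of $[\p_1 \lambda(\varpi_v) \p_1]$ I would imitate the proof of Lemma \ref{lem_inverse_to_Hecke}, working with the ``$\Delta_v$-congruence Iwahori subgroup'' $\mathfrak{b}_1 := \mathfrak{b} \cap \p_1 = \ker(\mathfrak{b} \to \Delta_v)$ in place of the Iwahori $\mathfrak{b}$. The Hecke algebra $\cH(G(F_v), \mathfrak{b}_1) \otimes_\bZ \bZ[q_v^{\pm 1/2}]$ is closely related to the pro-$p$ Iwahori--Hecke algebra studied in \cite{Vig98}, and from its Iwahori--Matsumoto- and Bernstein-type presentations one extracts that $[\mathfrak{b}_1 \lambda(\varpi_v) \mathfrak{b}_1]$ is invertible and commutes with $[\p_1]$ (because $\lambda$ is fixed by $W_{M_P}$); since $[\p_1] \cdot_{\mathfrak{b}_1} [\mathfrak{b}_1 \lambda(\varpi_v) \mathfrak{b}_1] = [\p_1 \lambda(\varpi_v) \p_1]$, its inverse is $[\mathfrak{b}_1 \lambda(\varpi_v) \mathfrak{b}_1]^{-1} \cdot_{\mathfrak{b}_1} [\p_1]$. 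I expect this to be the main obstacle --- one has to set up the congruence-level affine Hecke algebra carefully and verify the structural facts just used (the case $p = 2$, where permutation matrices representing $W_{M_P}$ can have nontrivial image in $\Delta_v$, needs a little extra care) --- whereas everything else is a faithful copy of Proposition \ref{prop_jacquet_module}.

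Finally, for part (1): since $M_P(F_v) = \GL_{n_1}(F_v) \times \GL_{n_2}(F_v)$ and $\ffrm_{P,1} = \GL_{n_1}(\cO_{F_v}) \times V$ with $V = \ker(\GL_{n_2}(\cO_{F_v}) \xrightarrow{\det} \Delta_v)$, we have $\cH(M_P(F_v), \ffrm_{P,1}) = \cH(\GL_{n_1}(F_v), \GL_{n_1}(\cO_{F_v})) \otimes_\bZ \cH(\GL_{n_2}(F_v), V)$, and the first factor is commutative by the Satake isomorphism, so it suffices to treat $\cH(\GL_{n_2}(F_v), V)$. Here I would use the Gelfand trick: transpose $g \mapsto {}^t g$ is an anti-automorphism of $\GL_{n_2}(F_v)$ fixing $V$ (as $\det {}^t g = \det g$), and it fixes every $V$-double coset. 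Indeed, writing $g = k_1 d k_2$ with $d$ dominant diagonal and $k_i \in \GL_{n_2}(\cO_{F_v})$, one has $VgV = k_1 (VdV) k_2$ (as $V$ is normal in $\GL_{n_2}(\cO_{F_v})$); and since the stabiliser $K_d := \GL_{n_2}(\cO_{F_v}) \cap d\, \GL_{n_2}(\cO_{F_v})\, d^{-1}$ contains the diagonal torus, whose determinant exhausts $\cO_{F_v}^\times$ and hence surjects onto $\Delta_v$, we get $\GL_{n_2}(\cO_{F_v}) = V \cdot K_d$, from which ${}^t g \in VgV$ follows by a short computation. Thus transpose acts as the identity on $\cH(\GL_{n_2}(F_v), V)$, which is therefore commutative.
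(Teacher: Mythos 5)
Your proposal is correct and follows essentially the same route as the paper: commutativity of $\cH(M_P(F_v), \ffrm_{P,1})$ via Gelfand's trick, and the remaining parts by rerunning the proof of Proposition \ref{prop_jacquet_module} with $\p$ replaced by $\p_1$, the only new input being the invertibility of $[\p_1 \lambda(\varpi_v) \p_1]$, which both you and the paper obtain by adapting Lemma \ref{lem_inverse_to_Hecke} to the congruence-level Iwahori--Hecke algebra using Vign\'eras' results. The only (cosmetic) difference is in part (1): the paper exhibits an explicit transpose-invariant set of double coset representatives $\diag(\varpi_v^{k_1}, \dots, \alpha\varpi_v^{k_{n_1+1}}, \dots)$, whereas you verify directly that transposition preserves each $\ffrm_{P,1}$-double coset via the factorisation $K = V \cdot K_d$.
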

\begin{proof}
$\cH(M_P(F_v), \ffrm_{P, 1})$ is commutative by Gelfand's trick: there is a set of double coset representatives for $\ffrm_{P, 1} \backslash M_P(F_v) / \ffrm_{P, 1}$ which is invariant under $g \mapsto {}^t g$ (we can take the matrices of the form 
\[ \diag( \varpi_v^{k_1}, \varpi_v^{k_2}, \dots, \varpi_v^{k_{n_1}}, \alpha \varpi_v^{k_{n_1+1}}, \varpi_v^{k_{n_1+2}}, \dots, \varpi_v^{k_n}), \]
 where $k_1 \geq k_2 \geq \dots \geq k_{n_1}$, $k_{n_1+1} \geq \dots \geq k_{n}$, and $\alpha$ ranges over a set of representatives for the quotient $\Delta_v$ of $\cO_{F_v}^\times$). The proof of the remainder of the proposition is basically the same as the proof of the corresponding parts of Proposition \ref{prop_jacquet_module}, provided we can exhibit a strongly $(P, \p_1)$-positive element $z \in Z(M_P)(F_v)$ such that $[\p_1 z \p_1]$ is invertible. In fact, the result of Lemma \ref{lem_inverse_to_Hecke} holds with $\p$ replaced by $\p_1$, with essentially the same proof, using \cite[Corollary 1]{Vig05}.
\end{proof}
We have defined a map $\Sigma_P : \cB \otimes_\bZ \bZ[q_v^{\pm 1/2}] \to \cH(G(F_v), \p) \otimes_\bZ \bZ[q_v^{\pm 1/2}]$ using $T_P$ and the Satake isomorphism. We define a map $\Sigma_{P, 1} : \cB[q_v^{\pm 1/2}, \Delta_v] \to \cH(G(F_v), \p_1) \otimes_\bZ \bZ[q_v^{\pm 1/2}]$ as follows: it is the composite with $T_{P, 1}$ of the tensor product of the homomorphisms $\cB\otimes_\bZ \bZ[q_v^{\pm 1/2}] \to \cH(M_P(F_v), \ffrm_{P, 1})\otimes_\bZ \bZ[q_v^{\pm 1/2}]$, $\bZ[\Delta_v] \to \cH(M_P(F_v), \ffrm_{P, 1})$ given by the formulae
\[ a_i \mapsto q_v^{i(i-n_1)/2} [\ffrm_{P, 1} \diag(\underbrace{\varpi_v, \dots, \varpi_v}_i, 1, \dots, 1) \ffrm_{P, 1}] \]
\[ b_i \mapsto q_v^{i(i-n_2)/2} [\ffrm_{P, 1} \diag(\underbrace{1, \dots, 1}_{n_1}, \underbrace{\varpi_v, \dots, \varpi_v}_i, 1, \dots, 1) \ffrm_{P, 1}] \]
and
\[ \alpha \in \Delta_v \mapsto \langle \alpha \rangle =  [\ffrm_{P, 1} \diag(\underbrace{1, \dots, 1}_{n_1}, \alpha, 1, \dots, 1) \ffrm_{P, 1}]. \]
If $\pi$ is a smooth $\bZ[q_v^{\pm 1/2}][\GL_n(F_v)]$-module then we use $\Sigma_{P, 1}$ to view $\pi^{\p_1}$ as an $\cB[q_v^{\pm 1/2}, \Delta_v]$-module.
\begin{lemma}\label{lem_restriction_to_P_invariants}
Let $\pi$ be a smooth $\bZ[q_v^{\pm 1/2}][\GL_n(F_v)]$-module. Then $\pi^{\mathfrak{p}} \subset \pi^{\mathfrak{p}_1}$ is an $\cB[\Delta_v]$-submodule on which $\Delta_v$ acts trivially, and the induced structure of $\cB$-module agrees with the one induced by $\Sigma_P$.
\end{lemma}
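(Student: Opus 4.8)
The plan is to treat the statement about $\Delta_v$ by hand, reduce the comparison of $\cB$-module structures to a statement about the Levi $M_P$, and then settle that by a coset count; the coset count will be the main point. First, for $\alpha\in\Delta_v$ choose a lift $\tilde\alpha\in\cO_{F_v}^\times$ and set $z_\alpha=\diag(1,\dots,1,\tilde\alpha,1,\dots,1)$, with $\tilde\alpha$ in position $n_1+1$; then $z_\alpha\in\mathfrak{p}$ and $\delta_P(z_\alpha)=1$. Since $\mathfrak{p}_1$ is normal in $\mathfrak{p}$ the double coset $\mathfrak{p}_1 z_\alpha\mathfrak{p}_1=z_\alpha\mathfrak{p}_1$ is a single left coset, so $\langle\alpha\rangle$ acts on $\pi^{\mathfrak{p}_1}$ via $\Sigma_{P,1}$---i.e.\ via $T_{P,1}(\langle\alpha\rangle)=[\mathfrak{p}_1 z_\alpha\mathfrak{p}_1]$---as the group element $z_\alpha$, which fixes $\pi^{\mathfrak{p}}$ pointwise because $z_\alpha\in\mathfrak{p}$. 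Hence $\pi^{\mathfrak{p}}$ is $\Delta_v$-stable with $\Delta_v$ acting trivially.

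For the $\cB$-module structure, let $q\colon\pi^{\mathfrak{p}}\to r_P(\pi)^{\mathfrak{m}_P}$ and $q_1\colon\pi^{\mathfrak{p}_1}\to r_P(\pi)^{\mathfrak{m}_{P,1}}$ be the canonical projections. By Proposition~\ref{prop_jacquet_module}(4) and its analogue at level $\mathfrak{p}_1$ (the last part of the Proposition immediately preceding this lemma), $q$ and $q_1$ are isomorphisms intertwining the Hecke actions through $T_P$, respectively $T_{P,1}$, with the natural actions on the Jacquet module; as both are restrictions of the canonical projection $\pi\to r_P(\pi)$, $q_1$ restricts to $q$ on $\pi^{\mathfrak{p}}$, and since $q_1$ is bijective and $q$ surjects onto $r_P(\pi)^{\mathfrak{m}_P}$ we get $q_1^{-1}\bigl(r_P(\pi)^{\mathfrak{m}_P}\bigr)=\pi^{\mathfrak{p}}$. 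Writing $\phi\colon\cB\otimes_\bZ\bZ[q_v^{\pm1/2}]\to\cH(M_P(F_v),\mathfrak{m}_{P,1})\otimes_\bZ\bZ[q_v^{\pm1/2}]$ for the homomorphism on $a_i,b_i$ used to build $\Sigma_{P,1}$, so that $\Sigma_{P,1}$ restricted to $\cB\otimes_\bZ\bZ[q_v^{\pm1/2}]$ equals $T_{P,1}\circ\phi$, it suffices to show that for every $s\in\cB$ the operator $\phi(s)$ on $r_P(\pi)^{\mathfrak{m}_{P,1}}$ preserves the subspace $r_P(\pi)^{\mathfrak{m}_P}$ and agrees there with $\cS_{M_P}^{-1}(s)$. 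Indeed, then for $x\in\pi^{\mathfrak{p}}$ one has $q_1(\Sigma_{P,1}(s)x)=\phi(s)q_1(x)=\phi(s)q(x)\in r_P(\pi)^{\mathfrak{m}_P}$, hence $\Sigma_{P,1}(s)x\in\pi^{\mathfrak{p}}$, and then $q(\Sigma_{P,1}(s)x)=\cS_{M_P}^{-1}(s)q(x)=q(\Sigma_P(s)x)$, so $\Sigma_{P,1}(s)x=\Sigma_P(s)x$ by injectivity of $q$---which, with the first paragraph, is the lemma.

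Now the Levi computation. Write $K_1=\GL_{n_1}(\cO_{F_v})$, $K_2=\GL_{n_2}(\cO_{F_v})$, so $M_P(F_v)=\GL_{n_1}(F_v)\times\GL_{n_2}(F_v)$, $\mathfrak{m}_P=K_1\times K_2$, and $\mathfrak{m}_{P,1}=K_1\times H$ with $H=\ker\bigl(K_2\xrightarrow{\det}k(v)^\times\to\Delta_v\bigr)\trianglelefteq K_2$ of index $|\Delta_v|$; the relevant Hecke algebras are then tensor products, $\cS_{M_P}$ is the product of the Satake isomorphisms of $\GL_{n_1}$ and $\GL_{n_2}$, and, with $\mu^{(1)}_i\in\GL_{n_1}(F_v)$, $\mu^{(2)}_i\in\GL_{n_2}(F_v)$ the diagonal matrices with exactly $i$ entries $\varpi_v$ and the rest $1$, the standard normalization of the Satake isomorphism (\cite{Gro96}) gives
\begin{align*}
\cS_{M_P}^{-1}(a_i)&=q_v^{i(i-n_1)/2}[K_1\mu^{(1)}_iK_1]\otimes[K_2], & \phi(a_i)&=q_v^{i(i-n_1)/2}[K_1\mu^{(1)}_iK_1]\otimes[H],\\
\cS_{M_P}^{-1}(b_i)&=[K_1]\otimes q_v^{i(i-n_2)/2}[K_2\mu^{(2)}_iK_2], & \phi(b_i)&=[K_1]\otimes q_v^{i(i-n_2)/2}[H\mu^{(2)}_iH],
\end{align*}
the right-hand column being a direct unwinding of the definition of $\phi$. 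On $v\in r_P(\pi)^{\mathfrak{m}_P}$ the units $[K_1],[K_2],[H]$ act as the identity, so for $a_i$ the two operators visibly agree and preserve $r_P(\pi)^{\mathfrak{m}_P}$; for $b_i$ one is reduced to the identity $[H\mu^{(2)}_iH]v=[K_2\mu^{(2)}_iK_2]v$ for $v$ fixed by $K_2$. To see this, set $\mu=\mu^{(2)}_i$ and $L=K_2\cap\mu K_2\mu^{-1}$ (the preimage in $K_2$ of the parabolic of $\GL_{n_2}(k(v))$ of type $(i,n_2-i)$). Conjugation by the diagonal $\mu$ preserves determinants, hence preserves the condition cutting out $H$ in $K_2$; this gives $\mu H\mu^{-1}\cap H=L\cap H$, so $H\mu H=\bigsqcup_h h\mu H$ with $h$ over $H/(L\cap H)$. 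Also $L$ contains the diagonal torus of $K_2$, so $L\to K_2\xrightarrow{\det}k(v)^\times\to\Delta_v$ is surjective with kernel $L\cap H$, whence $[L:L\cap H]=|\Delta_v|=[K_2:H]$, and therefore $HL=K_2$ and $[H:L\cap H]=[K_2:L]$. Consequently $h\mu H\mapsto h\mu K_2$ is a bijection from the left $H$-cosets in $H\mu H$ onto the left $K_2$-cosets in $K_2\mu K_2$ (surjectivity: write $g\in K_2$ as $h\ell$ with $h\in H$, $\ell\in L$; then $g\mu K_2=h\mu(\mu^{-1}\ell\mu)K_2=h\mu K_2$ since $\mu^{-1}\ell\mu\in K_2$). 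Since $v$ is $K_2$-fixed, $y\mu v$ depends only on $y\mu K_2$, so the sums defining $[H\mu H]v$ and $[K_2\mu K_2]v$ agree, and $[H\mu H]v\in r_P(\pi)^{\mathfrak{m}_P}$. Finally $a_1,\dots,a_{n_1},b_1,\dots,b_{n_2}$ generate $\cB$ after inverting $a_{n_1}b_{n_2}$, whose $\phi$- and $\cS_{M_P}^{-1}$-images are invertible and agree on $r_P(\pi)^{\mathfrak{m}_P}$; so the agreement and the stability of $r_P(\pi)^{\mathfrak{m}_P}$ propagate from the generators to all of $\cB$, completing the proof.

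I expect the $b_i$ step to be the only real difficulty: because $\mathfrak{m}_{P,1}$ is a proper subgroup of $\mathfrak{m}_P$, the degree-$i$ ``Satake'' double coset at level $\mathfrak{m}_{P,1}$ is strictly smaller than at level $\mathfrak{m}_P$, and it is not formal that they act the same way on $\mathfrak{m}_P$-fixed vectors; the coset count above is purely combinatorial---it requires no coprimality between $|\Delta_v|$ and the Gaussian-binomial index $[K_2:L]$, and in particular no division by $p$---which is what is needed, since $p$ is precisely the residue characteristic in play.
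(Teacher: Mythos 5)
Your proof is correct, and the mathematical heart of it is the same as the paper's: both arguments come down to the fact that the natural map between left-coset decompositions of the relevant double coset at the two levels is a bijection, so the two Hecke operators defining the action of $a_i$ (resp.\ $b_i$) agree on vectors fixed by the larger group. The paper states this bijection directly for $\mathfrak{p}_1 \eta_i \mathfrak{p}_1/\mathfrak{p}_1 \to \mathfrak{p}\eta_i\mathfrak{p}/\mathfrak{p}$ inside $G(F_v)$ and leaves the verification to the reader, whereas you first transport everything to the Levi via the Jacquet-module isomorphisms and then carry out the coset count there (using normality of $H$ in $K_2$ and the index identity $[H:L\cap H]=[K_2:L]$); your version is longer but actually supplies the combinatorial verification that the paper only asserts.
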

\begin{proof}
It is clear from the definitions that $\Delta_v$ acts trivially on $\pi^{\mathfrak{p}}$. What needs to be checked is that e.g.\ the two operators
\[ [\p_1 \diag(\underbrace{\varpi_v, \dots, \varpi_v}_i, 1, \dots, 1) \p_1],\, [\p \diag(\underbrace{\varpi_v, \dots, \varpi_v}_i, 1, \dots, 1) \p] \]
defining the action of $a_i$ act in the same way on $\pi^{\mathfrak{p}}$ (and similarly for the operators defining the action of $b_i$). This is true because, writing 
\[ \eta_i = \diag(\underbrace{\varpi_v, \dots, \varpi_v}_i, 1, \dots, 1), \]
the maps $\p_1 \eta_i \p_1 / \p_1 \to \p \eta_i \p / \p$ are bijections.
\end{proof}
Let $\pi$ be an irreducible admissible $\bC[\GL_n(F_v)]$-module. Suppose that $\rec_{F_v}(\pi) = \oplus_{i=1}^r \operatorname{Sp}_{m_i}(\chi_i \circ \Art_{F_v}^{-1})$, where $\chi_i : F_v^\times \to \bC^\times$ are smooth characters and $\operatorname{Sp}_m = (r_m, N_m)$ is the Weil--Deligne representation given by $r_m = \oplus_{i=1}^m | \cdot|^{(m+1-2i)/2} \circ \Art_{F_v}^{-1}$, $N_m e_i = e_{i-1}$ if $i > 0$, $e_1, \dots, e_m$ the standard basis of $\bC^m$. Then $\pi$ is isomorphic to a subquotient of the induced representation
\[ \Pi = i_{P_{m_1, \dots, m_r}(F_v)}^{\GL_n(F_v)} \otimes_{i=1}^r \St_{m_i}(\chi_i), \]
where $P_{m_1, \dots, m_r}$ is the standard parabolic subgroup of $\GL_n$ corresponding to the partition $n = m_1 + m_2 + \dots + m_r$.
\begin{proposition}\label{prop_trace_relation_0}
Let $\pi$ be an irreducible admissible $\bC[\GL_n(F_v)]$-module. If $\Res_{q_v, n_1, n_2}^{n!} \pi^{\mathfrak{p}} \neq 0$, then $\pi$ is unramified and $\pi^{\mathfrak{p}} = \pi^{\mathfrak{p}_1}$.
\end{proposition}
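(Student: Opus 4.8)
The plan is to translate $\pi^{\mathfrak p}$ into the Jacquet module $r_P(\pi)$ by Corollary~\ref{cor_compatible_with_jacquet_module}, and then to extract the statement from the Zelevinsky classification of irreducible admissible representations of $\GL_n(F_v)$.

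\emph{Reduction to the Jacquet module.} From $\Res_{q_v,n_1,n_2}^{n!}\pi^{\mathfrak p}\neq 0$ we get $\pi^{\mathfrak p}\neq 0$, hence $\pi^{\mathfrak b}\neq 0$ since $\mathfrak b\subseteq\mathfrak p$; by the theorem of Borel and Casselman $\pi$ is then a subquotient of an unramified principal series $i_{B(F_v)}^{\GL_n(F_v)}\chi$, so the supercuspidal support of $\pi$ is a multiset $C=\{c_1,\dots,c_n\}$ of values at $\varpi_v$ of unramified characters of $F_v^\times$. By Corollary~\ref{cor_compatible_with_jacquet_module} the projection $q\colon\pi^{\mathfrak p}\to r_P(\pi)^{\mathfrak m_P}$ is an isomorphism intertwining the $\cB\otimes_{\bZ}\bZ[q_v^{\pm1/2}]$-action (via $\Sigma_P$) with the $\cH(M_P(F_v),\mathfrak m_P)\otimes_{\bZ}\bZ[q_v^{\pm1/2}]$-action (via $\cS_{M_P}$), under which $\Res_{q_v,n_1,n_2}\in\cB$ corresponds to $\prod_{i,j}(q_vx_i-x_{n_1+j})$. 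As $\pi$ is admissible, $r_P(\pi)$ has finite length; I would decompose the finite-dimensional space $r_P(\pi)^{\mathfrak m_P}$ into generalized eigenspaces for this commutative algebra. The eigencharacters are pairs $(A,B)$ of multisets with $|A|=n_1$, $|B|=n_2$ and $A\sqcup B=C$, each the Satake parameter of a composition factor $\tau_1\boxtimes\tau_2$ of $r_P(\pi)$ with $\tau_1$, $\tau_2$ the unramified representations of $\GL_{n_1}(F_v)$, $\GL_{n_2}(F_v)$ with these parameters. Since $\pi^{\mathfrak p}\subseteq\pi^{\mathfrak b}$ and $\dim_{\bC}\pi^{\mathfrak b}\leq\dim_{\bC}\bigl(i_{B(F_v)}^{\GL_n(F_v)}\chi\bigr)^{\mathfrak b}=|W|=n!$ (Bruhat decomposition and exactness of $(-)^{\mathfrak b}$), each generalized eigenspace has dimension $\leq n!$. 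Writing $A=\{\alpha_i\}$, $B=\{\beta_j\}$, on the $(A,B)$-eigenspace $\Res_{q_v,n_1,n_2}$ acts as the scalar $\prod_{i,j}(q_v\alpha_i-\beta_j)$ plus a nilpotent of index $\leq n!$, so $\Res_{q_v,n_1,n_2}^{n!}$ annihilates it unless $q_v\alpha_i\neq\beta_j$ for all $i,j$. Hence the hypothesis produces a pair $(A,B)$ with $q_vA\cap B=\emptyset$, realised as the Satake parameter of a composition factor $\tau_1\boxtimes\tau_2$ of $r_P(\pi)$.

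\emph{Reduction to a combinatorial claim.} Because $\tau_1\boxtimes\tau_2$ is a composition factor of $r_P(\pi)$ and $\pi$ is irreducible, $\pi$ is a composition factor of $i_P^{G}(\tau_1\boxtimes\tau_2)=\tau_1\times\tau_2$ (Bernstein--Zelevinsky). Since $\dim_{\bC}(\tau_1\times\tau_2)^{\GL_n(\cO_{F_v})}=\dim_{\bC}\tau_1^{\GL_{n_1}(\cO_{F_v})}\cdot\dim_{\bC}\tau_2^{\GL_{n_2}(\cO_{F_v})}=1$ (Iwasawa decomposition), $\tau_1\times\tau_2$ has a unique unramified composition factor $\pi_{\mathrm{ur}}$, namely the unramified representation with supercuspidal support $C$. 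It therefore suffices to prove: \emph{if $q_vA\cap B=\emptyset$, then for no composition factor $\pi'\neq\pi_{\mathrm{ur}}$ of $\tau_1\times\tau_2$ is $\tau_1\boxtimes\tau_2$ a composition factor of $r_P(\pi')$}. Granting this, the pair $(A,B)$ is carried only by $\pi_{\mathrm{ur}}$, forcing $\pi=\pi_{\mathrm{ur}}$, so $\pi$ is unramified; and $\pi^{\mathfrak p}=\pi^{\mathfrak p_1}$ follows by the argument below. To prove the displayed claim I would use the Zelevinsky classification: write $\tau_i=\langle\mathfrak n_i\rangle$ with $\mathfrak n_i$ the unique multisegment, supported on the given multiset, with no pair of linked segments; the composition factors of $\langle\mathfrak n_1\rangle\times\langle\mathfrak n_2\rangle$ are the $\langle\mathfrak m\rangle$ for multisegments $\mathfrak m$ obtained from $\mathfrak n_1\sqcup\mathfrak n_2$ by iterated elementary operations $\{\Delta,\Delta'\}\mapsto\{\Delta\cup\Delta',\Delta\cap\Delta'\}$ on linked pairs, with $\pi_{\mathrm{ur}}=\langle\mathfrak m\rangle$ for the unique such $\mathfrak m$ without linked pairs. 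For $\mathfrak m$ with a linked pair one must determine the Satake parameters $(A',B')$ occurring in $r_P(\langle\mathfrak m\rangle)^{\mathfrak m_P}$ — equivalently the ways to cut the segments of $\mathfrak m$ into a ``left'' multisegment of total size $n_1$ and a ``right'' one of size $n_2$, each without linked pairs — and check that every such $(A',B')$ satisfies $q_vA'\cap B'\neq\emptyset$: cutting a segment of $\mathfrak m$ that arose from merging a piece of $A$ with a piece of $B$ creates, at the cut, an element of one block whose $q_v$-neighbour lies in the other.

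This last point is the main obstacle. The Jacquet module of the Zelevinsky quotient $\langle\mathfrak m\rangle$ is only a proper subquotient of the transparent Jacquet module of the standard module $\langle\Delta_1\rangle\times\cdots$, so the supports $(A',B')$ cannot be read off naively: one must identify which of them survive in $\langle\mathfrak m\rangle$ rather than in the other constituents of the standard module, and verify that all the surviving ``mixed'' supports carry a cross-block $q_v$-linkage as soon as $\langle\mathfrak m\rangle$ fails to be unramified. I expect this to require an induction on $n$ together with Zelevinsky's explicit Jacquet-module formula (or the geometric lemma) to control the bookkeeping; everything else above is routine.

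\emph{The equality $\pi^{\mathfrak p}=\pi^{\mathfrak p_1}$.} Here $\mathfrak p_1\triangleleft\mathfrak p$ with $\mathfrak p/\mathfrak p_1\cong\Delta_v$, so $\pi^{\mathfrak p}=(\pi^{\mathfrak p_1})^{\Delta_v}$ (cf.\ Lemma~\ref{lem_restriction_to_P_invariants}), and it suffices to show $\Delta_v$ acts trivially on $\pi^{\mathfrak p_1}$. By the $\mathfrak p_1$-analogue of Proposition~\ref{prop_jacquet_module}, $\pi^{\mathfrak p_1}\cong r_P(\pi)^{\mathfrak m_{P,1}}$ $\Delta_v$-equivariantly, and since $(-)^{\mathfrak m_{P,1}}$ and $(-)^{\mathfrak m_P}$ are exact functors on smooth $M_P(F_v)$-representations (multiplication by the relevant idempotents) it suffices to treat a single composition factor $\sigma_1\boxtimes\sigma_2$ of $r_P(\pi)$. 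For it, $(\sigma_1\boxtimes\sigma_2)^{\mathfrak m_{P,1}}=\sigma_1^{\GL_{n_1}(\cO_{F_v})}\otimes\sigma_2^{K_1}$, where $K_1\subseteq\GL_{n_2}(\cO_{F_v})$ is the kernel of reduction modulo $\varpi_v$ followed by $\det$ and projection to $\Delta_v$; decomposing $\sigma_2^{K_1}=\bigoplus_{\psi}\sigma_2^{K_1}[\psi]$ under $\GL_{n_2}(\cO_{F_v})/K_1\cong\Delta_v$, for $\psi\neq 1$ we have $\sigma_2^{K_1}[\psi]\cong(\sigma_2\otimes(\widetilde\psi^{-1}\circ\det))^{\GL_{n_2}(\cO_{F_v})}$, which vanishes because $\sigma_2$ has unramified supercuspidal support (contained in that of $\pi$) whereas any extension $\widetilde\psi$ of $\psi$ to $F_v^\times$ is ramified, so $\sigma_2\otimes(\widetilde\psi^{-1}\circ\det)$ has ramified supercuspidal support and hence no $\GL_{n_2}(\cO_{F_v})$-fixed vector. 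Thus $(\sigma_1\boxtimes\sigma_2)^{\mathfrak m_{P,1}}=(\sigma_1\boxtimes\sigma_2)^{\mathfrak m_P}$ for every composition factor, $\Delta_v$ acts trivially on $\pi^{\mathfrak p_1}$, and $\pi^{\mathfrak p}=\pi^{\mathfrak p_1}$.
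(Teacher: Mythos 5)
There is a genuine gap, and you have flagged it yourself: the combinatorial claim in your second paragraph (that for a pair $(A,B)$ with $q_vA\cap B=\emptyset$ the only constituent of $\tau_1\times\tau_2$ whose Jacquet module carries the $(A,B)$-eigencharacter is the unramified one) is exactly the content of the proposition, and your sketch of it via the Zelevinsky classification and elementary operations on multisegments is not carried out. Determining which supports $(A',B')$ survive in $r_P(\langle\mathfrak m\rangle)^{\mathfrak m_P}$ for a non-generic constituent $\langle\mathfrak m\rangle$ is precisely the hard bookkeeping you acknowledge, so as written the argument does not close.

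The paper sidesteps this entirely by inducing from the standard module of $\pi$ itself rather than from $\tau_1\times\tau_2$. Since $\pi^{\mathfrak b}\neq 0$, one has $\rec_{F_v}(\pi)=\oplus_{j}\operatorname{Sp}_{m_j}(\psi_j\circ\Art_{F_v}^{-1})$ with $\psi_j$ unramified, and $\pi$ is a subquotient of $\Pi=i_{P_{m_1,\dots,m_r}}^{\GL_n(F_v)}\otimes_j\St_{m_j}(\psi_j)$; by exactness of $(-)^{\mathfrak p}$, $\pi^{\mathfrak p}$ is a $\cB$-module subquotient of $\Pi^{\mathfrak p}$, so it suffices to kill $\Pi^{\mathfrak p}$ by $\Res_{q_v,n_1,n_2}^{n!}$ when some $m_j\geq 2$. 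The geometric lemma gives a filtration of $r_P(\Pi)$ with explicit graded pieces $\sigma_\lambda$ indexed by cuts $m_j=\lambda_{1j}+\lambda_{2j}$; nonvanishing of $\sigma_\lambda^{\mathfrak m_P}$ forces every $\lambda_{ij}\in\{0,1\}$, hence $m_j\leq 2$, and when $m_j=2$ the cut is necessarily $\lambda_{1j}=\lambda_{2j}=1$ with the normalisation placing $|\cdot|^{1/2}\psi_j$ in the first block and $|\cdot|^{-1/2}\psi_j$ in the second, so that $\Res_{q_v,n_1,n_2}$ acts on $\sigma_\lambda^{\mathfrak m_P}$ by a scalar divisible by $q_v\psi_j(\varpi_v)|\varpi_v|^{1/2}-\psi_j(\varpi_v)|\varpi_v|^{-1/2}=0$. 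Since $\dim_{\bC}\Pi^{\mathfrak p}\leq n!$, the operator $\Res_{q_v,n_1,n_2}^{n!}$ annihilates $\Pi^{\mathfrak p}$, contradicting the hypothesis; hence all $m_j=1$ and $\pi$ is unramified. Note that this also answers your worry about ``which of the two orderings occurs'': the geometric lemma records the ordering for $\Pi$, and every splitting occurring in $r_P(\pi)$ already occurs in $r_P(\Pi)$, so the cross-block $q_v$-linkage is automatic. Your final paragraph on $\pi^{\mathfrak p}=\pi^{\mathfrak p_1}$ is fine (and more detailed than the paper, which simply observes $\sigma_\lambda^{\mathfrak m_P}=\sigma_\lambda^{\mathfrak m_{P,1}}$ once all characters are unramified).
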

\begin{proof}
Since $\pi^{\mathfrak{p}} \neq 0$, we have in particular $\pi^{\mathfrak{b}} \neq 0$, so there is an isomorphism $\rec_{F_v}(\pi)  = \oplus_{i=1}^r \operatorname{Sp}_{m_i}(\chi_i \circ \Art_{F_v}^{-1})$, where the characters $\chi_i$ are unramified, and $\pi$ is isomorphic to a subquotient of the representation $\Pi$ as above.	We compute the Jacquet module $r_P(\Pi)$. According to the `geometrical lemma' \cite[Lemma 2.12]{Bel77} and \cite[Lemma 5.1]{jack}, $r_P(\Pi)$ admits a filtration whose graded pieces $\sigma_\lambda$ are indexed by decompositions $m_j = \lambda_{1j} + \lambda_{2j}$, $j = 1, \dots, r$, where $\lambda_{ij}$ are non-negative integers such that $\sum_j \lambda_{1j} = n_1$ and $\sum_j \lambda_{2j} = n_2$. The representation $\sigma_\lambda$ can be described as follows: let $P_{\lambda, i}$ denote the standard parabolic subgroup of $\GL_{n_i}$ associated to the decomposition $n_i = \lambda_{i1} + \dots + \lambda_{ir}$. Then we have
\begin{multline}\label{eqn_graded_piece_of_Jacquet_module}  \sigma_\lambda = \left( i_{P_{\lambda, 1}(F_v)}^{\GL_{n_1}(F_v)} \otimes_{j=1}^r \St_{\lambda_{1j}}(|\cdot|^{(m_j - \lambda_{1j})/2} \psi_j) \right) \\ \otimes \left( i_{P_{\lambda, 2}(F_v)}^{\GL_{n_2}(F_v)} \otimes_{j=1}^r \St_{\lambda_{2j}}(|\cdot|^{(\lambda_{2j} - m_j)/2} \psi_j) \right). \end{multline}
Since passage to invariants under an open compact subgroup is exact, Proposition \ref{prop_jacquet_module} implies that $\Pi^{\p} \neq 0$ if and only if $\sigma_\lambda^{\mathfrak{m}_P} \neq 0$ for some $\lambda$, or in other words if there exists a decomposition $m_j = \lambda_{1j} + \lambda_{2j}$ ($j = 1, \dots, r$) such that $\lambda_{ij} = 0$ or $1$ for all $i, j$. This implies that $m_j \leq 2$ for all $j$. Suppose that $m_j = 2$ for some $j$ and that $\lambda_{1j} = \lambda_{2j} = 1$. Then $\Res_{q_v, n_1, n_2}$ acts on $\sigma_\lambda^{\mathfrak{m}_P}$ by a scalar which is divisible by $(q_v \psi_j(\varpi_v)|\varpi_v|^{1/2} - \psi_j(\varpi_v) |\varpi_v|^{-1/2} ) = 0$. Since the dimension of $\Pi^{\p}$ as $\bC$-vector space is bounded above by $n!$, we conclude that if $m_j = 2$ for some $j$ then $\Res_{q_v, n_1, n_2}^{n!} \Pi^{\mathfrak{p}}  = 0$, contradicting our hypothesis.

We conclude that $m_j = 1$ for all $j$ and therefore that $\pi$ is unramified (since $\rec_{F_v}(\pi)$ is). It remains to explain why $\pi^\p = \pi^{\p_1}$. Since passage to invariants under an open compact subgroup is exact, it's enough to show that $\Pi^{\p} = \Pi^{\p_1}$ or even that $\sigma_\lambda^{\mathfrak{m}_P} = \sigma_{\lambda}^{\mathfrak{m}_{P, 1}}$ for each $\lambda$. This is true.
\end{proof}
\begin{cor}\label{cor_trace_relation_0}
Let $p$ be a prime such that $(p, q_v) = 1$ and fix an isomorphism $\iota : \overline{\bQ}_p \to \bC$ (and hence a $\bZ[q_v^{\pm 1/2}]$-algebra structure on $\overline{\bQ}_p$). Let $\pi$ be an irreducible admissible $\overline{\bQ}_p[\GL_n(F_v)]$-module such that $\pi^{\p} \neq 0$. Suppose given a continuous homomorphism $\rho : G_{F_v} \to \GL_n(\overline{\bQ}_p)$ such that $\mathrm{WD}(\rho)^{F-ss} \cong \rec_{F_v}^T(\pi_v)$. Then either $\Res_{q_v, n_1, n_2}^{n!} \pi^{\p} = 0$ or $\rho$ is unramified.
\end{cor}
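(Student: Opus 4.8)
The plan is to deduce the corollary from Proposition~\ref{prop_trace_relation_0} by transport of structure along $\iota$, and then to pass from unramifiedness of the associated Weil--Deligne representation to unramifiedness of $\rho$ itself.

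First I would assume $\Res_{q_v, n_1, n_2}^{n!}\pi^{\p} \neq 0$ and aim to conclude that $\rho$ is unramified. Let $\iota_\ast \pi$ be the irreducible admissible $\bC[\GL_n(F_v)]$-module obtained from $\pi$ by transport of structure along $\iota$. Since formation of invariants under a compact open subgroup commutes with base change along an isomorphism of coefficient fields, one has $(\iota_\ast\pi)^{\p} = \iota_\ast(\pi^{\p})$ and $(\iota_\ast\pi)^{\p_1} = \iota_\ast(\pi^{\p_1})$; and since the $\bZ[q_v^{\pm 1/2}]$-algebra structure on $\overline{\bQ}_p$ is by definition the one induced by $\iota$, the homomorphism $\Sigma_P$, and hence the action of $\Res_{q_v, n_1, n_2} \in \cB$ on $\p$-invariants, is intertwined by $\iota_\ast$. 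Therefore $\Res_{q_v, n_1, n_2}^{n!}(\iota_\ast\pi)^{\p} \neq 0$, and Proposition~\ref{prop_trace_relation_0} applied to $\iota_\ast\pi$ yields that $\iota_\ast\pi$ is unramified (and $(\iota_\ast\pi)^{\p} = (\iota_\ast\pi)^{\p_1}$). Descending along $\iota$, $\pi$ is unramified.

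Next I would deduce that $\rho$ is unramified. Since $\pi$ is unramified, $\rec_{F_v}^T(\pi_v)$ is an unramified Weil--Deligne representation: the inertia subgroup $I_{F_v}$ acts trivially and the monodromy $N$ vanishes (this is the standard description of the local Langlands correspondence on unramified principal series, unaffected by the Tate normalisation, which only twists by an unramified character). The hypothesis $\mathrm{WD}(\rho)^{F-ss} \cong \rec_{F_v}^T(\pi_v)$ then says that $\mathrm{WD}(\rho)^{F-ss}$ is unramified. Since Frobenius-semisimplification changes neither $N$ nor the multiset of composition factors of the underlying $W_{F_v}$-representation, the inertia action on $\mathrm{WD}(\rho)$ has only trivial composition factors; and because $(p, q_v) = 1$ forces the residue characteristic of $F_v$ to differ from $p$, inertia acts on $\mathrm{WD}(\rho)$ through a finite quotient, so a transformation of finite order with trivial composition factors is trivial. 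Hence $\mathrm{WD}(\rho)$ is itself unramified (trivial inertia, $N = 0$), and by Grothendieck's $\ell$-adic monodromy theorem (applicable precisely because the residue characteristic of $F_v$ is not $p$) the representation $\rho$ is unramified, which is what I want.

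I do not expect a serious obstacle: the transport along $\iota$ and the appeal to Proposition~\ref{prop_trace_relation_0} are formal, and the passage from ``$\mathrm{WD}(\rho)^{F-ss}$ unramified'' to ``$\rho$ unramified'' is standard. If any point needs care it is this last passage, which relies on the facts that Frobenius-semisimplification does not affect whether inertia acts with trivial composition factors and that the inertia image in a Weil--Deligne representation is finite.
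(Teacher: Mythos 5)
Your argument is correct and is exactly the route the paper intends: the paper states Corollary~\ref{cor_trace_relation_0} with no written proof, treating it as immediate from Proposition~\ref{prop_trace_relation_0} via transport of structure along $\iota$ and the standard passage from ``$\mathrm{WD}(\rho)^{F\text{-}ss}$ unramified'' to ``$\rho$ unramified'' (which is even slightly easier than you make it, since Frobenius-semisimplification leaves the restriction to inertia literally unchanged).
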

\begin{prop}\label{prop_trace_relation_1}
Let $\pi$ be an irreducible admissible $\bC[\GL_n(F_v)]$-module. Suppose that $\pi^{\mathfrak{p}} = 0$ but $\pi^{\mathfrak{p}_1} \neq 0$. Then $\pi^{\mathfrak{p}_1}$ has dimension 1. If $\rec_{F_v}(\pi) = (r, N)$, then $N = 0$ and there is an isomorphism $r = \oplus_{i=1}^n \chi_i \circ \Art_{F_v}^{-1}$, where the characters $\chi_1, \dots, \chi_{n_1}$ are unramified and the characters $\chi_{n_1+1}, \dots, \chi_n$ are ramified with equal restriction to $\cO_{F_v}^\times$. The algebra $\cB$ acts on $\pi^{\p_1}$ according to the factorisation $\det(X - r(\phi_v)) = F_1(X) F_2(X)$, where $F_1(X) = \prod_{i=1}^{n_1}(X - \chi_i(\varpi_v))$ and $F_2(X) = \prod_{j=1}^{n_2} (X - \chi_{n_1 + j}(\varpi_v))$, and the group $\Delta_v$ acts on $\pi^{\p_1}$ according to the character $\langle \alpha \rangle \mapsto \chi_{n}(\alpha)$.

Finally, let $f_\pi : \cB[\Delta_v] \to \bC$ be the character giving the action of $\cB[\Delta_v]$ on $\pi^{\p_1}$. Then for every pair $\tau \in I_{F_v}$, $\alpha \in \cO_{F_v}^\times$ such that $\alpha = \Art_{F_v}^{-1}(\tau)$,  we have
\begin{equation}\label{eqn_trace_relation_1}
f_\pi(\Res_{n_1, n_2}^2) r(\tau) = f_\pi(\Res_{n_1, n_2})(e_1(r(\phi_v)) + f_\pi(\langle \alpha \rangle) e_2(r(\phi_v))).
\end{equation}
\end{prop}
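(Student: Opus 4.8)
The strategy is to combine the Jacquet-module isomorphism $\pi^{\mathfrak p_1}\cong r_P(\pi)^{\mathfrak m_{P,1}}$ just established (together with its counterpart $\pi^{\mathfrak p}\cong r_P(\pi)^{\mathfrak m_P}$ from Proposition \ref{prop_jacquet_module}) with the structure theory of representations of $\GL_n(F_v)$, in the style of the proof of Proposition \ref{prop_trace_relation_0}. First I would use exactness of invariants under an open compact subgroup to translate the hypotheses into statements about the constituents of $r_P(\pi)$, viewed as a smooth representation of $M_P(F_v)=\GL_{n_1}(F_v)\times\GL_{n_2}(F_v)$. Since $\pi^{\mathfrak p_1}\neq 0$, some irreducible subquotient $\tau_1\otimes\tau_2$ of $r_P(\pi)$ has $\tau_1^{\GL_{n_1}(\cO_{F_v})}\neq 0$ and $\tau_2^{K_1}\neq 0$, where $K_1=\mathfrak m_{P,1}\cap\GL_{n_2}(F_v)=\ker(\GL_{n_2}(\cO_{F_v})\xrightarrow{\det}\Delta_v)$; thus $\tau_1$ is unramified, and since $K_1\supset\SL_{n_2}(\cO_{F_v})$, a short Iwasawa-decomposition argument shows $\tau_2\cong(\theta\circ\det)\otimes\sigma_2$ with $\sigma_2$ unramified and $\theta|_{\cO_{F_v}^\times}$ factoring through $\Delta_v$ (the key point being that $B(k(v))$ surjects onto $\Delta_v$, so $B(k(v))\backslash\GL_{n_2}(k(v))/\overline{K}_1$ is a single coset and $i_{B_{n_2}}(\nu)^{K_1}$ is the one-dimensional spherical line for every unramified $\nu$). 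Feeding in $\pi^{\mathfrak p}=0$ forces $\theta$ to be ramified, since otherwise $\tau_1^{\GL_{n_1}(\cO)}\otimes\tau_2^{\GL_{n_2}(\cO)}$ would be a nonzero subquotient of $r_P(\pi)^{\mathfrak m_P}=\pi^{\mathfrak p}$. Consequently the cuspidal support of $\pi$ consists of $n$ characters of $F_v^\times$: $n_1$ unramified ones and $n_2$ ramified ones, all sharing the restriction $\theta|_{\cO_{F_v}^\times}$.

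The main obstacle is then to upgrade this to $N=0$, the precise form of $\rec_{F_v}(\pi)$, and $\dim\pi^{\mathfrak p_1}=1$. Because the cuspidal support is entirely in $\GL_1$, I can write $\rec_{F_v}(\pi)=\bigoplus_{i=1}^r\operatorname{Sp}_{m_i}(\chi_i\circ\Art_{F_v}^{-1})$ with $\pi$ a subquotient of $\Pi=i_{P_{m_1,\dots,m_r}(F_v)}\bigotimes_i\St_{m_i}(\chi_i)$, each $\chi_i$ being either unramified or ramified with restriction $\theta|_{\cO_{F_v}^\times}$ (segments cannot mix the two, as linked characters have equal restriction to $\cO_{F_v}^\times$). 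Now I would compute $r_P(\Pi)$ by the geometrical lemma (\cite[Lemma 2.12]{Bel77}, \cite[Lemma 5.1]{jack}) into graded pieces $\sigma_\lambda$ as in \eqref{eqn_graded_piece_of_Jacquet_module}, and impose $\sigma_\lambda^{\mathfrak m_{P,1}}\neq 0$. The $\GL_{n_1}$-factor must be unramified; since $\St_k(\cdot)$ is unramified only for $k=1$, this forces every unramified $\chi_i$ to have $m_i=1$ and to sit entirely in the $\GL_{n_1}$-block. The $\GL_{n_2}$-factor is then $(\theta\circ\det)\otimes\sigma$ with $\sigma=i_{P_{\lambda,2}(F_v)}\bigotimes_j\St_{m_j}(\nu_j)$, $\nu_j$ unramified, and $((\theta\circ\det)\otimes\sigma)^{K_1}=\sigma^{K_1}\subseteq i_{B_{n_2}}(\nu)^{K_1}$ is at most one-dimensional and is nonzero only when $\sigma$ is unramified, i.e. only when all those $m_j=1$ as well. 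Hence every $m_i=1$: $N=0$, $\rec_{F_v}(\pi)=\bigoplus_{i=1}^n\chi_i\circ\Art_{F_v}^{-1}$ with $\chi_1,\dots,\chi_{n_1}$ unramified and $\chi_{n_1+1},\dots,\chi_n$ ramified with equal restriction to $\cO_{F_v}^\times$; there is a unique contributing $\lambda$, and $\dim\pi^{\mathfrak p_1}=\dim\tau_1^{\GL_{n_1}(\cO)}\cdot\dim\tau_2^{K_1}=1$. (As a consistency check I would note separately that $((\theta\circ\det)\otimes\sigma)^{\GL_{n_2}(\cO)}=0$: the nontrivial $\Delta_v$-type $\theta^{-1}\circ\det$ does not occur in $\mathbf C[B(k(v))\backslash\GL_{n_2}(k(v))]$ by Frobenius reciprocity, hence not in an unramified $\sigma$.)

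Next I would extract the action of $\cB[\Delta_v]$ on the line $\pi^{\mathfrak p_1}$. With $\pi=i_{B_n(F_v)}(\chi_1,\dots,\chi_n)$, the constituent $\tau_1\otimes\tau_2$ contributing to $r_P(\pi)^{\mathfrak m_{P,1}}$ is $i_{B_{n_1}}(\chi_1,\dots,\chi_{n_1})\otimes\big((\theta\circ\det)\otimes i_{B_{n_2}}(\nu_1,\dots,\nu_{n_2})\big)$ with $\chi_{n_1+j}=\theta\nu_j$. Unwinding the definition of $\Sigma_{P,1}$ — i.e. $T_{P,1}$ composed with the Satake isomorphism and the normalising powers of $q_v$ in the formulas for $a_i$ and $b_j$ — together with the $\cH(M_P(F_v),\mathfrak m_{P,1})$-equivariance of the projection $q$ from the preceding proposition, one finds that $\cB[\Delta_v]$ acts through the character $f_\pi$ determined by $f_\pi(\sum_i T^{n_1-i}a_i)=F_1(T)=\prod_{i=1}^{n_1}(T-\chi_i(\varpi_v))$, $f_\pi(\sum_j T^{n_2-j}b_j)=F_2(T)=\prod_{j=1}^{n_2}(T-\chi_{n_1+j}(\varpi_v))$ and $f_\pi(\langle\alpha\rangle)=\chi_n(\alpha)$; here the power $\theta(\varpi_v)^j$ coming out of $[\ffrm_{P,1}\diag(\underbrace{\varpi_v,\dots,\varpi_v}_j,1,\dots,1)\ffrm_{P,1}]$ is exactly what turns the Satake parameter of $\sigma_2$ into the roots of $F_2$. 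This is the content of the claims about the action of $\cB$ and $\Delta_v$ in the statement.

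Finally, the trace relation \eqref{eqn_trace_relation_1} is a formal consequence. Interpreting the $\cB$-coefficients of $e_1(X),e_2(X)\in\cB[X]$ via $f_\pi$ gives $\overline e_1,\overline e_2\in\bC[X]$ with, by Lemma \ref{lem_universal_idempotents} and \eqref{eqn_projectors}, $\overline e_1\equiv 0$, $\overline e_2\equiv f_\pi(\Res_{n_1,n_2})\pmod{F_1}$ and $\overline e_1\equiv f_\pi(\Res_{n_1,n_2})$, $\overline e_2\equiv 0\pmod{F_2}$. Since $N=0$, the Frobenius $r(\phi_v)$ is semisimple with characteristic polynomial $F_1F_2$, and (as $\alpha\in\cO_{F_v}^\times$ and $\chi_1,\dots,\chi_{n_1}$ are unramified while $\chi_{n_1+1},\dots,\chi_n$ agree on $\cO_{F_v}^\times$) $r(\tau)$ acts as the identity on the sum of eigenspaces with eigenvalue a root of $F_1$ and by $\chi_n(\alpha)$ on the sum of eigenspaces with eigenvalue a root of $F_2$. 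If $f_\pi(\Res_{n_1,n_2})=0$ both sides of \eqref{eqn_trace_relation_1} vanish; otherwise these two eigenspace-sums are complementary with projections $P_1,P_2$, one has $\overline e_1(r(\phi_v))=f_\pi(\Res_{n_1,n_2})P_1$ and $\overline e_2(r(\phi_v))=f_\pi(\Res_{n_1,n_2})P_2$, and the right-hand side of \eqref{eqn_trace_relation_1} equals $f_\pi(\Res_{n_1,n_2})^2\big(P_1+\chi_n(\alpha)P_2\big)=f_\pi(\Res_{n_1,n_2}^2)\,r(\tau)$, which is the left-hand side. The structural input (everything here, and especially the one-dimensionality forced by the level-$\Delta_v$ structure detecting unramifiedness rather than enlarging invariants) is the step I expect to require the most care; the Hecke-algebra bookkeeping and the final linear algebra are then routine.
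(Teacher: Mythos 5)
Your proposal is correct and follows essentially the same route as the paper: compute $r_P(\Pi)$ by the geometrical lemma, show that a graded piece $\sigma_\lambda$ can have nonzero $\mathfrak{m}_{P,1}$-invariants only for a unique $\lambda$ with all $\lambda_{ij}\leq 1$ and all $m_j=1$ (the paper shortcuts your cuspidal-support analysis by citing \cite[Lemma 3.1.6]{cht}, but the mechanism is the same), and then deduce \eqref{eqn_trace_relation_1} from the fact that $e_i(r(\phi_v))$ is $f_\pi(\Res_{n_1,n_2})$ times the spectral projector onto the roots of $F_i$. One small slip: by \eqref{eqn_projectors} one has $e_1=G_2F_2\equiv \Res_{n_1,n_2}\pmod{F_1}$ and $e_1\equiv 0\pmod{F_2}$ (and conversely for $e_2$), so your displayed congruences are swapped, though the identities $\overline{e}_i(r(\phi_v))=f_\pi(\Res_{n_1,n_2})P_i$ that your final computation actually uses are the correct ones.
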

The polynomials $e_1(X), e_2(X) \in \cB[X]$ appearing in (\ref{eqn_trace_relation_1}) are the ones defined in (\ref{eqn_projectors}).
\begin{proof} The argument is similar to the proof of Proposition \ref{prop_trace_relation_0}. By \cite[Lemma 3.1.6]{cht}, there exist characters $\chi_1, \dots, \chi_n : F_v^\times \to \bC^\times$ such that $\chi_1, \dots, \chi_{n_1}$ are unramified, $\chi_{n_1 +1 }, \dots, \chi_n$ are tamely ramified with equal restriction to inertia, and such that $r = \oplus_{i=1}^n \chi_i \circ \Art_{F_v}^{-1}$. We can also write $(r, N) = \oplus_{j=1}^t \operatorname{Sp}_{m_j}(\psi_j \circ \Art_{F_v}^{-1})$ for tamely ramified characters $\psi_j : F_v^\times \to \bC^\times$, so that $\pi$ is a subquotient of the induced representation
\[ \Pi = i_{P_{m_1, \dots, m_t}}^{\GL_n(F_v)} \otimes_{j=1}^t \St_{m_j}(\psi_j). \]
As in the proof of Proposition \ref{prop_trace_relation_0} we see that $r_P(\Pi)$ admits a filtration with graded pieces $\sigma_\lambda$ indexed by decompositions $m_j = \lambda_{1j} + \lambda_{2j}$ with $\lambda_{ij}$ non-negative integers such that $\sum_j \lambda_{ij} = n_i$, and $\sigma_\lambda$ given by the equation (\ref{eqn_graded_piece_of_Jacquet_module}). Since $\mathfrak{m}_{P, 1}$ contains $\GL_{n_1}(\cO_{F_v})$, we see that $\sigma_\lambda^{\mathfrak{m}_{P, 1}}$ can be non-zero only if $\lambda_{ij} \leq 1$ for each $i, j$ and moreover that if $\lambda_{1j} = 1$ then $\psi_j$ is unramified.

Fix $\lambda$ such that $\sigma_\lambda^{\mathfrak{m}_{P, 1}} \neq 0$. We see that if $m_j = 2$ (hence $\lambda_{1j} = \lambda_{2j} = 1$) then $\psi_j$ is unramified, hence all characters $\psi_k$ must be unramified, hence $\sigma_\lambda^{\mathfrak{m}_P} \neq 0$. It follows that $\Pi^{\p} = \Pi^{\p_1}$, hence $\pi^{\p} = \pi^{\p_1} \neq 0$, contradicting our hypothesis.

We conclude that $m_j = 1$ for all $j$, or in other words that $N = 0$. Thus $t = n$, and we can assume that $\psi_j = \chi_j$. Then there is a unique choice of $\lambda$ for which $\sigma_\lambda^{\mathfrak{m}_{P, 1}} \neq 0$, namely $(\lambda_{1j}, \lambda_{2j}) = (1, 0)$ if $j = 1, \dots, n_1$ and $(0, 1)$ if $j = n_1 + 1, \dots, n$. This shows that $\Pi^{\p_1}$ is 1-dimensional, hence that $\pi^{\p_1}$ is 1-dimensional (since it is assumed non-zero).

It remains to establish the formula (\ref{eqn_trace_relation_1}). We split into cases. If $f_\pi(\Res_{n_1, n_2}) = 0$ then both sides are zero. If $f_\pi(\Res_{n_1, n_2}) \neq 0$ then $e_1(r(\phi_v))$ is $f_\pi(\Res_{n_1, n_2})$ times the idempotent which projects to the subrepresentation $\oplus_{i=1}^{n_1} \chi_i$ of $r$, and similarly for $e_2(r(\phi_v))$, in which case the formula follows from the fact that the characters $\chi_1, \dots, \chi_{n_1}$ are unramified and the characters $\chi_{n_1 +1}, \dots, \chi_n$ have the same restriction to $\cO_{F_v}^\times$.
\end{proof}
\begin{cor}\label{cor_trace_relation_1}
Let $p$ be a prime such that $(p, q_v) = 1$ and fix an isomorphism $\iota : \overline{\bQ}_p \to \bC$ (and hence a $\bZ[q_v^{\pm 1/2}]$-algebra structure on $\overline{\bQ}_p$). Let $\pi$ be an irreducible admissible $\overline{\bQ}_p[\GL_n(F_v)]$-module. Suppose that $\pi^{\mathfrak{p}} = 0$ but $\pi^{\mathfrak{p}_1} \neq 0$, and suppose given a continuous homomorphism $\rho : G_{F_v} \to \GL_n(\overline{\bQ}_p)$ such that $\mathrm{WD}(\rho)^{F-ss} \cong \rec_{F_v}^T(\pi_v)$. Then $\pi^{\p_1}$ is 1-dimensional; let $f_\pi : \cB[\Delta_v] \to \overline{\bQ}_p$ be the character by which this algebra acts on $(\pi | \cdot |^{(1-n)/2})^{\p_1}$. Then for every pair $\tau \in I_{F_v}$, $\alpha \in \cO_{F_v}^\times$ such that $\alpha = \Art_{F_v}^{-1}(\tau)$, we have
\[ f_\pi(\Res_{n_1, n_2}^2) \rho(\tau) = f_\pi(\Res_{n_1, n_2}) (e_1(\rho(\phi_v)) + f_\pi(\langle \alpha \rangle) e_2(\rho(\phi_v)). \]
\end{cor}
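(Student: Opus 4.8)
The plan is to deduce this from Proposition \ref{prop_trace_relation_1} by transporting everything along $\iota$, keeping careful track of the Tate normalisation.

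\emph{Passing to $\bC$ and invoking the previous proposition.} The isomorphism $\iota$ makes $\overline{\bQ}_p$ into a $\bZ[q_v^{\pm 1/2}]$-algebra, and the base-change functor $V \mapsto V_\bC := V \otimes_{\overline{\bQ}_p, \iota} \bC$ on smooth representations is exact, preserves irreducibility and admissibility, and commutes with taking invariants under open compact subgroups and with the Hecke action. I would set $\pi' = \pi |\cdot|^{(1-n)/2}$ (a representation over $\overline{\bQ}_p$, using the square root of $q_v$ supplied by $\iota$). Since $|\cdot|^{(1-n)/2} \circ \det$ is unramified, $(\pi')^\p$ and $(\pi')^{\p_1}$ have the same underlying vector spaces as $\pi^\p$ and $\pi^{\p_1}$, so the hypotheses give $(\pi')^\p = 0$, $(\pi')^{\p_1} \neq 0$; and by $\iota$-equivariance of $\rec_{F_v}$ together with $\rec^T_{F_v}(-) = \rec_{F_v}(- \otimes |\det|^{(1-n)/2})$ we have $\rec_{F_v}((\pi')_\bC) = \iota\,\rec^T_{F_v}(\pi) = \iota\, \mathrm{WD}(\rho)^{F-ss}$. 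Applying Proposition \ref{prop_trace_relation_1} to $(\pi')_\bC$ now gives: $(\pi')^{\p_1} = \pi^{\p_1}$ is one-dimensional, so $f_\pi$ is a well-defined character; $\mathrm{WD}(\rho)^{F-ss} = (\oplus_{i=1}^n \chi_i \circ \Art_{F_v}^{-1}, 0)$ with $\chi_1, \dots, \chi_{n_1}$ unramified and $\chi_{n_1+1}, \dots, \chi_n$ ramified with common restriction to $\cO_{F_v}^\times$ (viewed as $\overline{\bQ}_p$-valued via $\iota^{-1}$); writing $F_1(X) = \prod_{i=1}^{n_1}(X - \chi_i(\varpi_v))$ and $F_2(X) = \prod_{j=1}^{n_2}(X - \chi_{n_1+j}(\varpi_v))$, the character $f_\pi$ sends $\cB$ to $\overline{\bQ}_p$ via the factorisation $\det(X - \rho(\phi_v)) = F_1(X) F_2(X)$ and sends $\langle\alpha\rangle$ to $\chi_n(\alpha)$; and the identity (\ref{eqn_trace_relation_1}) holds for $f_\pi$ and for $(r, N) = \mathrm{WD}(\rho)^{F-ss}$.

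\emph{From $\mathrm{WD}(\rho)^{F-ss}$ back to $\rho$.} Because $N = 0$, Grothendieck's monodromy theorem gives $\mathrm{WD}(\rho) = (\rho, 0)$, so $\rho|_{I_{F_v}} \cong \bigoplus_i (\chi_i\circ\Art_{F_v}^{-1})|_{I_{F_v}}$ and $\det(X - \rho(\phi_v)) = F_1(X)F_2(X)$. In particular $\rho(I_{F_v})$ is finite; let $V = \overline{\bQ}_p^n$ carry $\rho$, set $W_0 = V^{\rho(I_{F_v})}$, and let $W_1$ be the isotypic part of $V$ for the non-trivial inertia character $\overline{\chi} := (\chi_n\circ\Art_{F_v}^{-1})|_{I_{F_v}}$. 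Since exactly $n_1$ of the $\chi_i$ are unramified and the remaining $n_2$ restrict to $\overline{\chi}$ on $I_{F_v}$, we get $V = W_0 \oplus W_1$ with $\dim W_0 = n_1$ and $\dim W_1 = n_2$. The element $\rho(\phi_v)$ normalises the finite group $\rho(I_{F_v})$, and as conjugation by it is an automorphism of that group fixing the trivial character and hence (there being only these two isotypic pieces) the $\overline{\chi}$-isotypic one, $\rho(\phi_v)$ preserves both $W_0$ and $W_1$. Thus $W_0, W_1$ are $W_{F_v}$-subrepresentations of $\rho$, unramified resp.\ $\overline{\chi}$-isotypic on inertia; comparing their semisimplifications with $\rho^{\mathrm{ss}} = \bigoplus_i \chi_i\circ\Art_{F_v}^{-1}$ forces the characteristic polynomial of $\rho(\phi_v)|_{W_0}$ to be $F_1$ and that of $\rho(\phi_v)|_{W_1}$ to be $F_2$.

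\emph{Conclusion.} If $f_\pi(\Res_{n_1,n_2}) = 0$ both sides of the asserted identity vanish, since $f_\pi(\Res_{n_1,n_2}^2) = f_\pi(\Res_{n_1,n_2})^2$. Otherwise $F_1, F_2$ are coprime, so the Chinese remainder theorem gives $V = \ker F_1(\rho(\phi_v)) \oplus \ker F_2(\rho(\phi_v))$, and Cayley--Hamilton applied to $\rho(\phi_v)|_{W_i}$ (using the characteristic polynomials just found) identifies these summands with $W_0$ and $W_1$. The relation $G_1 F_1 + G_2 F_2 = \Res_{n_1,n_2}$ of Lemma \ref{lem_universal_idempotents}, specialised via $f_\pi$, then shows $e_1(\rho(\phi_v)) = f_\pi(\Res_{n_1,n_2}) P_{W_0}$ and $e_2(\rho(\phi_v)) = f_\pi(\Res_{n_1,n_2}) P_{W_1}$, where $P_{W_i}$ is the projection to $W_i$ along the complementary summand; while, for $\tau \in I_{F_v}$ with $\alpha = \Art_{F_v}^{-1}(\tau)$, one has $\rho(\tau) = P_{W_0} + \overline{\chi}(\tau) P_{W_1} = P_{W_0} + f_\pi(\langle\alpha\rangle) P_{W_1}$. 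Combining these gives $f_\pi(\Res_{n_1,n_2})\rho(\tau) = e_1(\rho(\phi_v)) + f_\pi(\langle\alpha\rangle) e_2(\rho(\phi_v))$, and multiplying through by $f_\pi(\Res_{n_1,n_2})$ yields the claim.

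The step I expect to be the main obstacle is the second one: over $\bC$ one knows $\rec_{F_v}(\pi)$ is Frobenius-semisimple and can argue directly with eigenspaces, but here $\rho(\phi_v)$ need not be semisimple, so one must produce the Galois-stable splitting $V = W_0 \oplus W_1$ — on which $e_1, e_2$ become scaled projectors — purely from the fact that Proposition \ref{prop_trace_relation_1} leaves $\rho|_{I_{F_v}}$ with only two isotypic components. One should also pin down the normalisation so that $\rec^T_{F_v}$ really is $\rec_{F_v}(- \otimes |\det|^{(1-n)/2})$, matching the twist $|\cdot|^{(1-n)/2}$ in the statement.
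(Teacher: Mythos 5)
Your proposal is correct and follows the paper's route: reduce to Proposition \ref{prop_trace_relation_1} after sorting out the Tate twist, dispose of the case $f_\pi(\Res_{n_1,n_2})=0$ separately, and then deal with the possible non-semisimplicity of $\rho(\phi_v)$. The paper does this last step in one line --- $e_i(\rho(\phi_v)) = e_i(r(\phi_v))$ because the semisimple part $s$ of $\rho(\phi_v)=su$ has the same generalised eigenspaces as $su$ --- whereas you rebuild the identity for $\rho$ from the inertia-isotypic splitting $V=W_0\oplus W_1$ and the coprimality of $F_1,F_2$; both boil down to the same observation that $e_1, e_2$ only see the generalised eigenspace decomposition of Frobenius.
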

\begin{proof}
 If $f_\pi(\Res_{n_1, n_2}) = 0$ then both sides of the proposed equality are zero, so we can assume that $f_\pi(\Res_{n_1, n_2}) \neq 0$. In this case we write $\mathrm{WD}(\rho)^{F-ss} = (r, N)$, where $N = 0$ and, if $\rho(\phi_v) = su$ is the multiplicative Jordan decomposition, then $r(\phi_v) = s$. The result will follow from Proposition \ref{prop_trace_relation_1} if we can show that $e_i(\rho(\phi_v)) = e_i(s)$. This is true, since $s$ and $su$ have the same generalised eigenspaces.
\end{proof}

\section{Weak adequacy in characteristic 0}\label{sec_adequacy}

In this section, let $k$ be a field of characteristic 0.
\begin{lemma}
Let $G$ be a linear algebraic group over $k$ such that $G^0$  is reductive. Then we can find a dense open subset $U \subset G$ consisting entirely of semisimple elements.
\end{lemma}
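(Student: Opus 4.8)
The plan is to construct $U$ one connected component at a time. Since $\operatorname{char} k = 0$, $G$ is smooth, so it has finitely many connected components, each open, closed and irreducible; and we may assume $k = \overline{k}$, because all the morphisms and loci appearing below are defined over $k$ and the properties we want to check (nonemptiness of an open subscheme, and its consisting of semisimple elements) may be verified after base change. It therefore suffices to find, in each component $C = g G^0$, a nonempty open subscheme $U_C$ consisting entirely of semisimple elements: as $C$ is irreducible, $U_C$ is then automatically dense, and $U := \bigsqcup_C U_C$ does the job.

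Fix $C$ and choose $N \geq 1$ divisible by $[G : G^0]$, so that $x \mapsto x^N$ defines a morphism $\psi \colon C \to G^0$ (the precise value of $N$ is pinned down below). The basic observation, valid because $\operatorname{char} k = 0$, is that $x \in C$ is semisimple if and only if $x^N$ is: if $x = x_s x_u$ is the Jordan decomposition, then $x^N = x_s^N x_u^N$ is the Jordan decomposition of $x^N$, and $x_u^N = 1$ forces $x_u = 1$ since raising to the $N$-th power is injective on unipotent elements in characteristic $0$. Hence, writing $(G^0)^{\mathrm{rs}} \subset G^0$ for the dense open set of regular semisimple elements of the connected reductive group $G^0$, the open subscheme $\psi^{-1}((G^0)^{\mathrm{rs}})$ of $C$ consists entirely of semisimple elements, and the problem reduces to showing it is nonempty, i.e.\ that some element of $C$ has $N$-th power regular semisimple in $G^0$.

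This last point is the crux, and is where I would invoke the structure theory of the (possibly disconnected) reductive group $G$. If $G^0$ is a torus it is immediate (every element of $G$ is semisimple), so assume otherwise. By a theorem of Steinberg (see also Spaltenstein, and Digne--Michel for the general theory of non-connected reductive groups), $C$ contains a quasi-semisimple element $\sigma$: one normalizing a maximal torus $T$ and a Borel $B \supset T$ of $G^0$. Then $\theta := \operatorname{Int}(\sigma)|_T$ has finite order, permutes the roots of $G^0$, and preserves the positive ones (up to the inessential action on the central torus it is the diagram automorphism attached to $\sigma$). Taking $N$ to be a common multiple of $[G:G^0]$ and the order of $\theta$, one gets $\sigma^N \in C_{G^0}(T) = T$, and a direct computation with $(\sigma t)^N = \sigma^N \prod_{j=0}^{N-1} {}^{\sigma^{-j}} t$ shows that $\{ (\sigma t)^N : t \in T \}$ contains the coset $\sigma^N S$, with $S := (T^\theta)^0$. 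Now $S$ is a nontrivial subtorus, since $\theta$ permutes a basis of $X^\ast(T) \otimes \mathbf{Q}$ and so has nonzero fixed space; and crucially no root $\beta$ of $G^0$ restricts trivially to $S$, because its $\theta$-orbit consists of roots of a single sign, so $\sum_k \theta^k(\beta)$ is a nonzero combination of simple roots. Therefore the coset $\sigma^N S$ is not contained in the vanishing locus of any root, hence meets the regular locus of $T$, and any element of it lying there is a regular semisimple element of $G^0$ in the image of $\psi$; thus $\psi^{-1}((G^0)^{\mathrm{rs}}) \neq \emptyset$. (Alternatively one could simply quote the structure theory of non-connected reductive groups, which directly supplies a dense open set of regular semisimple elements of $C$.) Everything else is formal bookkeeping with Jordan decomposition and with the passage between components; the production of an element of $C$ with regular semisimple power in $G^0$ is the one genuinely substantive ingredient.
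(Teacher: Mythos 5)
Your proof is correct, but it takes a genuinely different route from the paper's. The paper also works one component $G^0h$ at a time with $h$ semisimple and invokes Steinberg's theorem to get an $\operatorname{Ad}(h)$-stable pair $T \subset B$, but it then sets $S = Z_T(h)^\circ$ and proves that the conjugation map $\mu : G^0 \times S \to G^0h$, $(g,s) \mapsto gshg^{-1}$, is dominant: it checks that $\mu$ is smooth at a point $(1,s)$ for a suitable $s \in S$ whose existence is quoted from Reeder (the centralizer of $\operatorname{Ad}(sh)$ in $\operatorname{Lie} G^0$ should equal $\operatorname{Lie} S$), so that the constructible image --- which consists of semisimple elements, being conjugates of elements of $Sh$ --- contains a dense open set. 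You instead exploit the characteristic-zero fact that $x$ is semisimple iff $x^N$ is, pull back the regular semisimple locus of $G^0$ under the $N$-th power map, and reduce everything to exhibiting one element of the component with regular semisimple $N$-th power; your explicit computation on the coset $\sigma^N(T^\theta)^0$ (note that your $(T^\theta)^0$ is the same torus as the paper's $Z_T(h)^\circ$ for $h = \sigma$) replaces the appeal to Reeder's result and the tangent-space calculation, at the cost of a slightly longer hands-on argument. Both proofs rest on the same essential input, Steinberg's theorem on semisimple automorphisms. One small imprecision: your justification that $S = (T^\theta)^0$ is nontrivial ("$\theta$ permutes a basis of $X^\ast(T)\otimes\mathbf{Q}$") is not valid in general, since $\theta$ need only permute the simple roots and may act by an arbitrary finite-order automorphism on the central part of $X^\ast(T)\otimes\mathbf{Q}$; but this is harmless, because what you actually use --- and correctly prove --- is that no root restricts trivially to $S$, which already forces $S \neq 1$ once $G^0$ is not a torus.
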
	
\begin{proof}
We are free to replace $k$ by a finite extension, and can assume that each connected component of $G$ has a rational point. Then it suffices to construct for each $h \in G(k)$ a dense open subset $U_h \subset G^0 h$ consisting entirely of semisimple elements. The unipotent part of $h$ is in $G^0$, so we can assume that $h$ is semisimple. Then $\Ad(h)$ is a semisimple automorphism of $G^0$, so \cite[Theorem 7.5]{Ste68} implies that, after possibly further enlarging $k$, we can find a split maximal torus and Borel subgroup $T \subset B \subset G^0$ which are invariant under $\Ad(h)$.

Let $S = Z_T(h)^\circ$. We define a map $\mu : G^0 \times S \to G^0 h$, $(g, s) \mapsto g s h g^{-1} = g s \Ad(h)(g^{-1}) h$. We claim that the image of $\mu$ is dense in $G^0 h$. This will imply the lemma: the image of $\mu$ is constructible, so contains a dense open subset of $G^0 h$. The image of $\mu$ consists of semisimple elements, since $S h$ consists of semisimple elements.

To prove the claim, it is enough to exhibit $s \in S(k)$ such that the centralizer in $\Lie G^0$ of $\Ad(sh)$ is $\Lie S$. Indeed, then computing the differential shows that $\mu$ is smooth in a neighbourhood of $(1, s)$. The existence of an $s$ with this property can be read off  from \cite[Proposition 3.8]{Ree10}.
\end{proof}
In the statement of the next result, we write $h = h^{ss} h^u$ for the multiplicative Jordan decomposition of an element $h \in \GL_n(k)$.
\begin{lemma}\label{lem_weakly_adequate}
Let $H \subset \GL_n(k)$ be a subgroup, and suppose that for each $h \in H$, the characteristic polynomial of $h$ splits into linear factors over $k$. Then the following are equivalent:
\begin{enumerate}
\item The span of the set $\{ h^{ss} \mid  h \in  H \}$ equals $M_n(k)$.
\item For every non-zero $H$-invariant subspace $W \subset M_n(k)$, there exists $h \in H$ and an eigenvalue $\alpha \in k$ of $h$ such that $\tr e_{h, \alpha} W \neq 0$ (where $e_{h, \alpha}$ projects to the generalised $\alpha$-eigenspace of $h$).
\end{enumerate}
\end{lemma}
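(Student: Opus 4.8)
The plan is to run the argument through the non-degenerate, conjugation-invariant trace pairing $\langle A,B\rangle = \tr(AB)$ on $M_n(k)$. I would set $V = \sum_{h \in H} k\cdot h^{ss} \subseteq M_n(k)$. Since $g h^{ss} g^{-1} = (ghg^{-1})^{ss}$ and $H$ is stable under conjugation by its own elements, $V$ is an $H$-invariant subspace, and condition (1) says exactly $V = M_n(k)$. The trace pairing is non-degenerate and invariant under conjugation by $\GL_n(k)$, so for any $H$-invariant subspace $U \subseteq M_n(k)$ the orthogonal complement $U^{\perp}$ is again $H$-invariant and $\dim U + \dim U^{\perp} = n^2$.

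The key preliminary step is to show that $V$ is in fact spanned by the spectral projectors: for every $h \in H$ and every eigenvalue $\alpha$ of $h$ one has $e_{h,\alpha} \in V$. Here I would use that $H$ is a group, so $h^m \in H$ for all $m \geq 0$, and that the multiplicative Jordan decomposition (which exists over $k$ because the characteristic polynomial of $h$ splits) gives $(h^m)^{ss} = (h^{ss})^m$; hence $(h^{ss})^m \in V$ for all $m \geq 0$. Writing $\alpha_1,\dots,\alpha_r$ for the distinct eigenvalues of $h$, the identity $(h^{ss})^m = \sum_{i=1}^r \alpha_i^m e_{h,\alpha_i}$ together with invertibility of the Vandermonde matrix $(\alpha_i^m)_{0 \le m \le r-1,\ 1 \le i \le r}$ expresses each $e_{h,\alpha_i}$ as a $k$-linear combination of $(h^{ss})^0,\dots,(h^{ss})^{r-1}$, so $e_{h,\alpha_i} \in V$. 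Conversely $h^{ss} = \sum_i \alpha_i e_{h,\alpha_i}$, so $V$ is precisely the $k$-span of $\{e_{h,\alpha} : h \in H,\ \alpha \text{ an eigenvalue of } h\}$.

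Granting this, both implications follow quickly. If (1) holds, then given a non-zero $H$-invariant $W$ I would pick $0 \neq w \in W$ and, by non-degeneracy, find $A \in M_n(k) = V$ with $\tr(Aw) \neq 0$; expanding $A$ in the $e_{h,\alpha}$ forces $\tr(e_{h,\alpha} w) \neq 0$ for some $h,\alpha$, which is (2). Conversely, if (1) fails, then $W := V^{\perp}$ is a non-zero $H$-invariant subspace, and since $e_{h,\alpha} \in V$ for all $h, \alpha$ we get $\tr(e_{h,\alpha} w) = 0$ for every $w \in W$ and every such $h,\alpha$, contradicting (2); hence $V = M_n(k)$. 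I do not anticipate a genuine obstacle: the only real content is the observation that $V$ coincides with the span of the spectral projectors $e_{h,\alpha}$ — this is what makes $V$ the correct subspace to play off against the trace form — and it is this point, which uses the group structure of $H$ and the splitting hypothesis on characteristic polynomials, that ties the two conditions together.
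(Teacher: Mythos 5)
Your proof is correct and follows essentially the same route as the paper's (much terser) argument: the paper's entire proof is the observation that $e_{h,\alpha}$ is a polynomial in $h^{ss}$, which is exactly your Vandermonde step showing that the span of $\{h^{ss}\}$ coincides with the span of the spectral projectors, after which the equivalence follows from the nondegenerate trace pairing and the $H$-invariance of the orthogonal complement. You have simply filled in the details (including the use of $(h^m)^{ss}=(h^{ss})^m$ and the group structure of $H$) that the paper leaves implicit.
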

\begin{proof}
For a given subspace $W \subset M_n(k)$, the existence of $h, \alpha$ such that $\tr e_{h, \alpha} W \neq 0$ is equivalent to  the existence of  an element $h \in H$ such that $\tr h^{ss} W \neq 0$ (as $e_{h, \alpha}$ is a polynomial  in  $h^{ss}$).
\end{proof}
When $k$ has characteristic $p$,  Guralnick \cite{Gur12} calls subgroups satisfying the analogue of the equivalent conditions of Lemma  \ref{lem_weakly_adequate} ``weakly adequate''. The following (easy) proposition shows that when $k$ has characteristic 0, this condition is equivalent to absolute irreducibility.
\begin{proposition}\label{prop_weak_adequacy}
Let $H \subset \GL_n(k)$ be a subgroup which is absolutely irreducible. Then the span of the set $\{ h \in H \mid h = h^{ss} \}$ equals $M_n(k)$.
\end{proposition}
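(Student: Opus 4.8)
The plan is to show that the span $V$ of semisimple elements of $H$ is all of $M_n(k)$ by exhibiting enough semisimple elements. First I would observe that since $H$ is absolutely irreducible, the span of $H$ itself (all elements, not just semisimple ones) is all of $M_n(k)$: this is Burnside's theorem over $k$, valid because $\mathrm{char}\, k = 0$ so $M_n(k)$ is a simple algebra and the $H$-span is a nonzero two-sided ideal — actually just a subalgebra containing the identity and acting irreducibly, hence equal to $M_n(k)$. So it suffices to show that $V$, the $k$-span of $\{ h \in H : h = h^{ss}\}$, contains the $k$-span of all of $H$.

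The key idea is to use the Zariski-density trick together with the first lemma of the section. Let $\overline{H}$ denote the Zariski closure of $H$ in $\GL_n$ (over $\overline{k}$, or work with a suitable extension); then $\overline{H}$ is still absolutely irreducible as a linear group, so its identity component $\overline{H}^0$ is reductive (an absolutely irreducible linear algebraic group in characteristic $0$ is reductive, since its unipotent radical, being normal, would have a nonzero space of invariants which would be an invariant subspace). Hence by the first lemma of this section applied to $G = \overline{H}$, there is a dense open subset $U \subset \overline{H}$ consisting entirely of semisimple elements. Now I want to conclude that the $k$-span of $U \cap H$ already equals $M_n(k)$.

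Here is where the main work lies. Consider the linear subspace $V \subset M_n(k)$ spanned by semisimple elements of $H$, and its orthogonal complement under the trace pairing, or rather: suppose $V \neq M_n(k)$, so there is a nonzero linear functional $\phi$ on $M_n(k)$ vanishing on $V$. The functional $\phi$ is a polynomial (in fact linear) function on $M_n$, so $\{g \in \overline{H} : \phi(g) = 0\}$ is Zariski closed in $\overline{H}$; it contains every semisimple element of $H$, in particular it contains $U \cap H$. But $U \cap H$ is Zariski dense in $U$ — this is the crucial point, and it follows because $H$ is dense in $\overline{H}$ and $U$ is open, so $U \cap H$ is dense in $U$, hence dense in $\overline{H}$ (as $U$ is a dense open subset of the variety $\overline{H}$, which we may take to be irreducible after restricting to a component, or handle components separately). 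Therefore $\phi$ vanishes on all of $\overline{H}$, hence on the $k$-span of $H$, contradicting Burnside. Thus $V = M_n(k)$.

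The step I expect to be the main obstacle is the reduction to a connected/irreducible situation and the density claim "$U \cap H$ is Zariski dense in $\overline{H}$": one must be careful that $\overline{H}$ may be disconnected, so one should argue component by component — on each component $C$ of $\overline{H}$, the set $H \cap C$ is Zariski dense in $C$ (this needs $C(k)\neq\emptyset$, achievable after a finite field extension, which is harmless since the span statement can be checked after extending scalars), and $U \cap C$ is a nonempty open subset of the irreducible variety $C$, so $H \cap U \cap C$ is dense in $C$. A secondary subtlety is justifying that $\overline{H}^0$ is reductive; this is standard but worth stating, and it is exactly what is needed to invoke the first lemma of the section. Once these structural points are in place, the Burnside/duality argument finishes the proof cleanly.
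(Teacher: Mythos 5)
Your proposal is correct and follows essentially the same route as the paper: take the Zariski closure $G$ of $H$, invoke the first lemma of the section to get a dense open set $U\subset G$ of semisimple elements, note that $U\cap H$ is Zariski dense in $G$, and conclude via a linear functional (equivalently, a proper linear subspace) plus Burnside. Your extra remarks — that $G^0$ is reductive because its unipotent radical would have a nonzero invariant fixed space, and the component-by-component density check — are correct elaborations of points the paper leaves implicit.
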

\begin{proof}
 Let $G$ be the Zariski closure of $H$ in $\GL_n$, and let $U \subset G$ be a dense open subset consisting of semisimple elements. Then $U \cap H \subset \{ h \in H \mid h = h^{ss} \}$ and $U \cap H$ is Zariski dense in $G$. If the span  of  $U \cap H$ does not equal $M_n(k)$, then $G$ is contained  in a proper linear subspace of $M_n(k)$, hence so is $H$. This contradicts Burnside's lemma.
\end{proof}
We conclude this section by giving some examples of subgroups of $\GL_n(k)$ which are irreducible but not enormous, in the sense of \cite[Definition 2.27]{New19a}, along similar lines to the examples of non-big subgroups given by Barnet-Lamb \cite[\S 5.2]{barnetlamb2010nonbig}. This shows that the results of this paper really are stronger than those of \cite{New19a}. 

It is easy to give examples of finite irreducible subgroups of $\GL_n(k)$ containing no regular semisimple element (for example, the image of the 10-dimensional irreducible representation of $A_6$). The definition implies that such subgroups can not be enormous. Such examples are less relevant to the context considered here, since we are interested in the images of the Galois representations attached to regular algebraic automorphic representations; Sen theory implies that images of such representations should always contain regular semisimple elements, so we need to consider the interaction with the decomposition of the adjoint representation.

To this end, let $H' \subset \GL_2(k)$ denote the normalizer of the group of diagonal matrices, and let $H$ denote the image of $H' \times H'$ under the tensor product representation $\GL_2 \times \GL_2 \to \GL_4$. One can check that $H$ is absolutely irreducible but not enormous, because the span of the regular semisimple elements of $H$ in $M_4(k)$ is contained in the subspace of matrices with 0's on the anti-diagonal.

\section{Galois pseudodeformation theory}\label{sec_deformation}

Let us suppose given the following data:
\begin{itemize}
\item A prime $p$, a finite extension $E / \bQ_p$ inside the fixed algebraic closure $\overline{\bQ}_p$, and an isomorphism $\iota : \overline{\bQ}_p \to \bC$. We assume that $E$ contains all quadratic extensions of $\bQ_p$, so that using $\iota$, $\cO$ has a canonical structure of $\bZ[q^{\pm 1/2}]$-algebra for any prime number $q \neq p$.
\item A CM field $F$ with maximal totally real subfield $F^+$.
\item A finite set $S$ of finite places of $F^+$, including the set $S_p$ of $p$-adic places, which all split in $F$.
\item For each $v \in S$, a factorisation $v = \wv \wv^c$ in $F$. We write $\widetilde{S}$ for the set  of places $\wv$.
\item  A continuous representation $r  : G_{F^+, S} \to \cG_n(\cO)$ such that $\rho = r|_{G_{F, S}} \otimes_\cO  E$ is absolutely irreducible and $\nu \circ r = \delta_{F / F^+}^n \epsilon^{1-n}$.
\item Integers $a \leq b$ such that all of the Hodge--Tate weights of $\rho$ lie in the interval $[a, b]$ and $a + b = n-1$.
\end{itemize}
In this section, we will write $\mathcal{DET}(\sigma)$ for the group determinant (in the sense of \cite{chenevier_det}) associated to a representation $\sigma$. Let $\overline{D} = \mathcal{DET}(\overline{\rho})$ denote the group determinant of $G_{F, S}$ associated to $\overline{\rho}$, and let $R_S \in \cC_\cO$ denote the object representing the functor of conjugate self-dual deformations of $\overline{D}$ that are unramified outside $S$ and semistable with Hodge--Tate weights in $[a, b]$, as defined in \cite[\S 2.19]{New19a}.

We define $W_\cO = \ad r$, $W_m = W_\cO / (\varpi^m)$, $W_E = \ad r \otimes_\cO E$, and $W_{E / \cO} = W_\cO \otimes_\cO E / \cO$; these are $\cO[G_{F^+, S}]$-modules. We write $\cL_S = \{ \cL_{v, m} \}$ for the Selmer conditions for $W_m$ defined as in \cite[\S 2.19]{New19a} (semistable with Hodge--Tate weights in $[a, b]$ at places above $p$, unramified outside $S$, no restriction at places of $S - S_p$). We write $D_S$ for the universal group determinant over $R_S$ and  $\Lambda_i : G_{F, S} \to R_S$ for the coefficients of the universal characteristic polynomial $D_S(X - \sigma) = \sum_{i=0}^n (-1)^i \Lambda_i(\sigma) X^{n-i}$.

We define a Taylor--Wiles datum $\scrQ = (Q, \widetilde{Q}, (f_{v, 1}(X) )_{v \in Q}, (f_{v, 2}(X) )_{v \in Q})$ of level $N \geq 1$ to be a tuple consisting of the following data:
\begin{itemize}
\item A tuple $Q = (v_1, \dots, v_q)$ of distinct finite places of $F^+$ such that for each $i = 1, \dots, q$, $v_i\not\in S$, $v_i$ splits in $F$, and $q_{v_i} \equiv 1 \text{ mod }p^N$.
\item A tuple $(\widetilde{v}_1, \dots, \widetilde{v}_q)$ of finite places of $F$ such that $\widetilde{v}_i$ lies above $v_i$.
\item For each $i = 1, \dots, q$, a factorisation $f_{v_i}(X) := \det(X - \rho(\Frob_{\wv_i})) = f_{v_i, 1}(X) f_{{v_i}, 2}(X)$ in $\cO[X]$, where $f_{v, 1}(X), f_{v, 2}(X)$ are monic polynomials with no common roots in $\overline{\bQ}_p$. 
\end{itemize}
If $\scrQ$ is a Taylor--Wiles datum and $v \in Q$, then we define $\Delta_v$ to be the maximal $p$-power quotient of $k(\wv)^\times$ and $\Delta_Q = \prod_{v \in Q} \Delta_v$. If $\tau \in I_{F_\wv}$, we write $\langle \tau \rangle \in \Delta_v$ for the image of $\Art_{F_\wv}^{-1}(\tau)$ in $\Delta_v$. We write $t_v : I_{F_\wv} \to \bZ_p$ for any choice of surjective homomorphism. We define $\cA(\scrQ) = \otimes_{i=1}^q \cA $ and $\cB(\scrQ) = \otimes_{i=1}^q \cB_{\deg f_{v, 1}, \deg f_{v, 2}}$, where $\cA, \cB$ are as considered in \S \ref{sec_different}.

 We define an enhancement  $R(\scrQ)$ of the universal deformation ring $R_{S \cup Q}$ as follows. It will be a complete Noetherian semi-local $\cO$-algebra. If $v \in Q$, let $F_v(X) = D_{S \cup Q}(X - \phi_\wv) \in R_{S \cup Q}[X]$ be the characteristic polynomial of a fixed Frobenius lift $\phi_\wv$ in the universal deformation. The polynomials $F_v(X)$ $(v \in Q)$ give $R_{S \cup Q}$ the structure of $\cA(\scrQ)$-algebra. Over the ring $R_{S \cup Q} \otimes_{\cA(\scrQ)} \cB(\scrQ)$, we have universal factorisations $F_v(X) = F_{v, 1}(X) F_{v, 2}(X)$, where $F_{v, 1}(X)$, $F_{v, 2}(X)$ are monic polynomials of degrees $\deg f_{v, 1}$, $\deg f_{v, 2}$, respectively, and (after Lemma \ref{lem_universal_idempotents}) polynomials $e_{v, 1}(X)$, $e_{v, 2}(X)$ such that $e_{v, 1}(X) + e_{v, 2}(X) = \operatorname{Res}_v$, where we write $\Res_v$ for the image of $R_{\deg f_{v, 1}, \deg f_{v, 2}}$ in $\cB(\scrQ)$. We also write $\Res_{v, q}$ for the image of $R_{q_v, \deg f_{v, 1}, \deg f_{v ,2}}$ in $\cB(\scrQ)$. We define $R(\scrQ)$ be the quotient of 
\[R_{S \cup Q} \otimes_{\cA(\scrQ)} \cB(\scrQ) \otimes_\cO \cO[\Delta_Q] \]
defined by the relation 
\begin{equation}\label{eqn_trace_relations} \Res_{v, q}^{n!} \Lambda_1( \sigma( \Res_v^2 \tau - \Res_v e_{v, 1}(\phi_\wv) - \langle \tau \rangle \Res_v e_{v, 2}(\phi_\wv) ) ) = 0
\end{equation}
for all $v \in Q$, $\tau \in I_{F_\wv}$, $\sigma \in G_{F, S \cup Q}$. We write $P(\mathscr{Q}) \subset R(\scrQ)$ for the kernel of the homomorphism 
\[ f_\scrQ :   R(\scrQ) \to R_{S}  \otimes_{\cA(\scrQ)} \cB(\scrQ) \to \cO \]
associated to $\mathcal{DET}(\rho)$ and the fixed factorisations $f_v(X) = f_{v, 1}(X) f_{v, 2}(X)$ ($v \in Q$).

If $\scrQ$ is a Taylor--Wiles datum of level $N \geq 1$ and $m$ is an integer such that $1 \leq m \leq N$, then we define modified local conditions $\cL(\scrQ) = \{ \cL(\scrQ)_{v, m} \}$ for the $\cO[G_{F^+, S \cup Q}]$-module $W_m$ as follows: if  $v \not\in Q$, then $\cL(\scrQ)_{v, m} = \cL_{v, m}$. If $v \in Q$, then we define $\cL_{v, m}$ to be the pre-image (under restriction) in $H^1(F^+_v, W_m)$ of the $\cO$-submodule of
\[  H^1(I_{F_\wv}, W_m)^{G_{F_\wv}} \cong \Hom_{cts}(I_{F_\wv}, W_m^{G_{F_\wv}}), \]
generated by the homomorphism
\[ \tau  \mapsto  t_v(\tau) e_{v, 2}(\rho(\Frob_\wv)) \text{ mod }\varpi^m. \]
(In interpreting this, we point out that $e_{v, 2}(\rho(\Frob_\wv)) \in M_n(\cO)$ is $f_\scrQ(\Res_v)$ times the idempotent in $M_n(E)$ which projects to the sum of the $\alpha$-generalised eigenspaces of $\rho(\Frob_\wv)$ for those $\alpha$ with $f_{v, 2}(\alpha) = 0$; moreover, the definition of $\cL_{v, m}$ is independent of the choice of homomorphism $t_v$.) We write $l(\scrQ)_{v, m}$ for the length of $\cL(\scrQ)_{v, m}$ as $\cO$-module. We write $\cL(\scrQ)^\perp = \{ \cL(\scrQ)_{v, m}^\perp \}$ for the dual local conditions for the $\cO[G_{F^+, S \cup Q}]$-module $W_m(1)$.
\begin{lemma}\label{lem_comparing_Selmer_and_tangent_space}
 There exists a constant $d \geq 0$ with the following property: for any $m \geq 1$ and for any Taylor--Wiles datum $\scrQ$ of level $N \geq m$, there exists a homomorphism of $\cO$-modules
\[  H^1_{\cL(\scrQ)}(F^+, W_m) \to \Hom_\cO(P(\scrQ) / P(\scrQ)^2, \cO  / \varpi^m  \cO) \]
with kernel and cokernel annihilated  by $\varpi^d \Res(\scrQ)^{3+n!}$, where we define
\[ \Res(\scrQ) =  \operatorname{lcm}( \{ f_\scrQ(\Res_v)  \}_{v \in Q}) \in \cO. \]
\end{lemma}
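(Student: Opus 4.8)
The plan is to define the comparison map by sending a cohomology class to the first-order deformation of $r$ that it determines, and then to read the modified Selmer condition at the Taylor--Wiles places off the trace relations (\ref{eqn_trace_relations}). Recall first the standard identification: an $\cO$-algebra homomorphism $R(\scrQ)\to A_m$ whose reduction modulo $\epsilon$ is $f_\scrQ$ is the same datum as an element of $\Hom_\cO(P(\scrQ)/P(\scrQ)^2,\cO/\varpi^m\cO)$. So, given a class $[\phi]\in H^1_{\cL(\scrQ)}(F^+,W_m)$ represented by a cocycle $\phi$, I would form the twisted map $r_\phi=(1+\epsilon\phi)r\colon G_{F^+,S\cup Q}\to\cG_n(A_m)$, a deformation of $r$ modulo $\varpi^m$. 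Since $\phi$ satisfies $\cL_{v,m}$ for every $v\notin Q$, the associated group determinant is a conjugate self-dual deformation of $\overline{D}$, unramified outside $S\cup Q$ and semistable with Hodge--Tate weights in $[a,b]$ at the $p$-adic places, hence gives a homomorphism $R_{S\cup Q}\to A_m$. Comparing honest deformations of $r$ with pseudodeformations of $\overline{D}$ costs only a fixed power of $\varpi$, depending on $r$ but not on $\scrQ$ or $m$ (because $\rho\otimes E$ is absolutely irreducible; compare \cite[\S 2]{New19a}), which will be absorbed into $d$.

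To upgrade this to a homomorphism out of $R_{S\cup Q}\otimes_{\cA(\scrQ)}\cB(\scrQ)\otimes_\cO\cO[\Delta_Q]$ one must, for each $v\in Q$, choose a factorisation over $A_m$ of the characteristic polynomial of $r_\phi(\phi_\wv)$ lifting $f_v=f_{v,1}f_{v,2}$, together with a homomorphism $\Delta_v\to 1+\epsilon\cO/\varpi^m$. By Lemma \ref{lem_approximate_hensel}(2), applied over $A_m$ with $\delta=f_\scrQ(\Res_v)=\operatorname{Res}(f_{v,1},f_{v,2})$, the polynomial obtained by applying $\alpha_{f_\scrQ(\Res_v)}$ to that characteristic polynomial has a unique such factorisation; equivalently, the factorisation is defined only after multiplying the deformation by $f_\scrQ(\Res_v)$, which is the source of a power of $\Res(\scrQ)$ in the cokernel. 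The character of $\Delta_v$ is then dictated by $\phi|_{I_{F_\wv}}$, which by the Selmer condition is a scalar multiple of $\tau\mapsto t_v(\tau)e_{v,2}(\rho(\Frob_\wv))$; after the same twist by $\Res_v$, that scalar is exactly the infinitesimal character required. Finally I would verify (\ref{eqn_trace_relations}): over $A_m$ with $\Res_v$ inverted, the idempotent polynomials $e_{v,1}(X),e_{v,2}(X)$ of (\ref{eqn_projectors}) split $r_\phi|_{G_{F_\wv}}$ into a summand on which inertia acts trivially and one on which it acts through the chosen character of $\Delta_v$; feeding this into the $A_m$-linear analogue of the identity of Corollary \ref{cor_trace_relation_1} and clearing the denominators $\Res_v^2$ produces the relation (\ref{eqn_trace_relations}), where since the level satisfies $N\ge m$ we have $q_v\equiv1\bmod\varpi^m$, so $\Res_{v,q}^{n!}$ and $\Res_v^{n!}$ have the same image in $\cO/\varpi^m\cO$.

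For the bounds on kernel and cokernel I would run the construction in both directions. A class in the kernel gives a deformation $r_\phi$ inducing the homomorphism $f_\scrQ$ itself, so $\phi$ becomes a coboundary once the pseudodeformation discrepancy and the factorisation ambiguity are inverted, whence the kernel is killed by a bounded power of $\varpi$ times $\Res(\scrQ)$. Conversely, an arbitrary homomorphism $\xi\colon R(\scrQ)\to A_m$ over $f_\scrQ$ restricts to a pseudodeformation over $R_{S\cup Q}$, which lifts (up to a fixed power of $\varpi$) to an honest deformation $r_\phi$ and hence to a class $[\phi]\in H^1(F^+,W_m)$ satisfying $\cL_{v,m}$ for $v\notin Q$; at $v\in Q$, the relation (\ref{eqn_trace_relations}) satisfied by $\xi$, divided by $\Res_{v,q}^{n!}\Res_v^2$ and re-read through the idempotents $e_{v,i}$ and Corollary \ref{cor_trace_relation_1}, shows that $\Res(\scrQ)^{3+n!}$ kills the obstruction to $\phi|_{I_{F_\wv}}$ lying in $\cL(\scrQ)_{v,m}$ --- two powers from $\Res_v^2$, one from the Hensel step of Lemma \ref{lem_approximate_hensel}, and $n!$ from $\Res_{v,q}^{n!}=\Res_v^{n!}$ modulo $\varpi^m$. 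The key point keeping the exponent uniform in $\lvert Q\rvert$ is that these discrepancies are local at each $v\in Q$, so they enter through an exact sequence whose $v$-term is killed by a bounded power of $f_\scrQ(\Res_v)$; a direct sum of such terms is killed by the lcm of the $f_\scrQ(\Res_v)$, which is $\Res(\scrQ)$, rather than by their product. Composing the two directions in either order is thus multiplication by a power of $\varpi$ times $\Res(\scrQ)$ to a power at most $3+n!$, as required.

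The main obstacle is exactly this last local matching at the Taylor--Wiles places: with torsion coefficients, and \emph{without} inverting $\Res_v$, one must show that the single scalar relation (\ref{eqn_trace_relations}) is equivalent --- up to the explicit factor $\varpi^d\Res(\scrQ)^{3+n!}$ --- to the one-dimensionality statement defining $\cL(\scrQ)_{v,m}$. The commutative-algebra results of \S\ref{sec_different} (the canonical lift $\widetilde{\Res}_{n_1,n_2}\in\cB\otimes_\cA\cB$, the idempotent polynomials $e_{v,i}(X)$, and the approximate Hensel lemma \ref{lem_approximate_hensel}) are precisely the tools that make this work while keeping the denominators under control.
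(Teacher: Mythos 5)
Your proposal is correct and follows essentially the same route as the paper: the comparison map is defined by sending $[\phi]$ to the $A_m$-point of $R(\scrQ)$ classified by the deformation $\rho_{\Res(\scrQ)\phi}$, with the factorisations supplied by Lemma \ref{lem_approximate_hensel} and the $\Delta_v$-character read off from $\phi|_{I_{F_{\wv}}}$, and the kernel/cokernel bounds are obtained exactly as you describe, by comparing with the unmodified map $H^1_{\cL_{S\cup Q}}(F^+,W_m)\to\Hom_\cO(P_Q/P_Q^2,\cO/\varpi^m\cO)$ and using the relations (\ref{eqn_trace_relations}) at each $v\in Q$. Your accounting of the exponent $3+n!$ (two from $\Res_v^2$ in the relation, one from the Hensel/factorisation twist, $n!$ from $\Res_{v,q}^{n!}\equiv\Res_v^{n!}$ modulo $\varpi^m$) matches the paper's computation.
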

\begin{proof}
We can identify $\Hom_\cO(P(\scrQ) / P(\scrQ)^2, \cO  / \varpi^m  \cO)$ with the set of $\cO$-algebra morphisms $R(\scrQ) \to \cO \oplus \epsilon \cO  / \varpi^m  \cO$ which recover $f_\scrQ$ after reduction modulo $\epsilon$. Let $[\phi] \in H^1_{\cL(\scrQ)}(F^+, W_m)$. We associate to $\phi$ a homomorphism  $\rho_\phi : G_{F, S \cup Q} \to \GL_n(A_m)$ by the formula $\rho_\phi(\sigma) = \rho(\sigma)(1 + \epsilon \phi(\sigma))$. If $v \in Q$, let $f_{\phi, v}(X) = \det(X - \rho_\phi(\phi_\wv)) \in A_m[X]$. Using Lemma \ref{lem_approximate_hensel}, we are given a factorisation $f_{\Res(\scrQ) \phi, v}(X) =  \alpha_{\Res(\scrQ)}(f_{\phi, v}(X)) = f_{\Res(\scrQ) \phi, v, 1}(X) f_{\Res(\scrQ) \phi, v, 2}(X)$ in $A_m[X]$ lifting the factorisation $f_{v, X}  = f_{v, 1}(X) f_{v, 2}(X)$ in $\cO[X]$. There exists a constant $\lambda_v \in \cO$ such that $\phi(\tau) = \lambda_v t_v(\tau) e_{v, 2}(\rho(\Frob_\wv))$ for all $\tau \in I_{F_\wv}$, and we define a homomorphism $\Delta_v \to 1 + \epsilon \cO / \varpi^m \cO$ by $\tau \mapsto 1 + \epsilon \Res(\scrQ) \Res_v \lambda_v t_v(\tau)$ (this depends only on $\phi$ and not on the choice of $\lambda_v$). With the group determinant $\mathcal{DET}(\rho_{\Res(\scrQ) \phi})$, these data  define a homomorphism $R_{S \cup Q} \otimes_{\cA(\scrQ)} \cB(\scrQ) \otimes_\cO \cO[\Delta_Q] \to A_m$. We claim that it factors through the quotient $R(\scrQ)$. It is enough to show the equality
\begin{multline*} \Res_v^2 ( 1 + \epsilon \Res(\scrQ) \phi(\tau) ) \\ = \Res_v e_{v, 1}(\rho_{\Res(\scrQ) \phi}(\phi_\wv)) + (1 + \epsilon \Res(\scrQ) \Res_v \lambda_v t_v(\tau)) \Res_v e_{ v, 2}(\rho_{\Res(\scrQ) \phi}(\phi_\wv)) 
\end{multline*}
for all $v \in Q$, $\tau \in I_{F_\wv}$. This follows on multiplying both sides of the equality $\Res_v = e_{v, 1}(\rho_{\Res(\scrQ) \phi}(\phi_\wv)) + e_{v, 2}(\rho_{\Res(\scrQ) \phi}(\phi_\wv))$ by $\Res_v ( 1 + \epsilon \Res(\scrQ) \phi(\tau) )$ and re-arranging.

We have defined a map $H^1_{\cL(\scrQ)}(F^+, W_m) \to \Hom_\cO(P(\scrQ) / P(\scrQ)^2, \cO  / \varpi^m  \cO)$. It is easy to see that it is in fact a homomorphism of $\cO$-modules. We need to bound the exponent of the kernel and cokernel of this homomorphism. It is helpful here to introduce the commutative diagram
\[ \xymatrix{ H^1_{\cL(\scrQ)}(F^+, W_m) \ar[r] \ar[d] &  \Hom_\cO(P(\scrQ) / P(\scrQ)^2, \cO  / \varpi^m  \cO) \ar[d] \\ 
H^1_{\cL_{S \cup Q}}(F^+, W_m) \ar[r] & \Hom_\cO(P_Q / P_Q^2, \cO / \varpi^m \cO), } \]
where $P_Q \subset R_{S \cup Q}$ is the kernel of the homomorphism $R_{S \cup Q} \to  R_S \to \cO$ associated to $\mathcal{DET}(\rho)$, and the arrows may be described as follows: the left vertical arrow is the natural inclusion, the right vertical arrow is pullback along $R_{S \cup Q} \to R(\scrQ)$, and the bottom horizontal sends $[\phi]$ to the classifying map of $\mathcal{DET}(\rho_{\Res(\scrQ) \phi})$. Using \cite[Proposition 2.20]{New19a}, we get the existence of a constant $d \geq 0$ (not depending on $\scrQ$) such that the kernel and cokernel of the bottom horizontal map are annihilated by $\varpi^d \Res(\scrQ)$. After possibly increasing $d$, we can assume as well that $\rho(\cO[G_{F, S}])$ contains $\varpi^d M_n(\cO)$.

We now establish the analogous claim for the upper map. It is immediate that the kernel of the upper map is also annihilated by $\varpi^d \Res(\scrQ)$. To analyse the cokernel, take a homomorphism $P(\scrQ) / P(\scrQ)^2 \to \cO / \varpi^m$ corresponding to a homomorphism $f : R(\scrQ) \to A_m$, and let $D_0$ be the corresponding group determinant. By the cited proposition, there exists $[\phi] \in H^1_{\cL_{S \cup Q}}(F^+, W_m)$  such that $\alpha_{\varpi^d} \circ D_0$ is the group determinant associated to $\rho_\phi$. Using the defining relations (\ref{eqn_trace_relations}) we find that for all $v \in Q$ and $\tau \in I_{F_\wv}$, we have
\begin{multline*} \Res_{v, q}^{n!} \Res_v^2 \rho_{\varpi^d \phi}(\tau) = \Res_{v, q}^{n!} \Res_v^2(1 + \epsilon \varpi^d \phi(\tau)) \\
= \Res_{v, q}^{n!} \Res_v e_{v, 1}(\rho_{\varpi^d \phi}(\phi_\wv)) + \langle \tau \rangle \Res_{v, q}^{n!} \Res_v e_{v, 2}(\rho_{\varpi^d \phi}(\phi_\wv)). 
\end{multline*} 
We can find $\mu_v \in \cO$ such that $\langle \tau \rangle = 1 + \epsilon  \mu_v t_v(\tau)$. The above identity then gives
\[ \epsilon \Res_{v, q}^{n!} \Res_v^2  \varpi^d \phi(\tau) = \epsilon \Res_{v, q}^{n!} \mu_v \Res_v t_v(\tau) e_{v, 2}(\rho_{\varpi^d \phi}(\phi_\wv)) \]
in $A_m$, hence
\[ \Res_{v, q}^{n!} \Res_v^2  \varpi^d \phi(\tau) = \mu_v \Res_{v, q}^{n!} \Res_v t_v(\tau) e_{v, 2}(\rho(\Frob_\wv)) \]
in $\cO / \varpi^m \cO$. It follows that $[ \Res(\scrQ)^2  \Res_q(\scrQ)^{n!}\varpi^d \phi] \in H^1_{\cL(\scrQ)}(F^+, W_m)$, where we define
\[ \Res_q(\scrQ) =  \operatorname{lcm}( \{ f_\scrQ(\Res_{q, v})  \}_{v \in Q}) \in \cO. \]
This element is a pre-image of $\alpha_{\varpi^{2d} \Res(\scrQ)^3 \Res_q(\scrQ)^{n!}} \circ f$. The proof is complete on noting that $\Res(\scrQ) \equiv \Res_q(\scrQ) \text{ mod }\varpi^N$.
\end{proof}
Here is a variant which will be used later to conclude the vanishing of the adjoint Selmer group.
\begin{lemma}\label{lem_adjoint_Selmer}
Suppose given elements $\sigma_1, \dots, \sigma_q \in G_F$ and factorisations $f_i(X) := \det(X - \rho(\sigma_i)) = f_{i, 1}(X) f_{i, 2}(X)$ for $i = 1, \dots, q$, where for each $i$, $f_{i, 1}(X), f_{i, 2}(X) \in \cO[X]$ are monic polynomials with no common roots in $\overline{\bQ}_p$. Let $\cA_0 = \otimes_{i=1}^q \cA$, $\cB_0 = \otimes_{i=1}^q \cB_{\deg f_{i, 1}, f_{i, 2}}$, and let 
$P_0 \subset R_S \otimes_{\cA_0} \cB_0$ be the kernel of the map $R_S \otimes_{\cA_0} \cB_0 \to \cO$ which classifies the group determinant of $\rho$, together with the factorisations $f_i(X) = f_{i, 1}(X) f_{i, 2}(X)$ for $i = 1, \dots, q$. Then there is an isomorphism
\[ H^1_{g, S}(F^+,  W_E) \cong \Hom_\cO(P_0 / P_0^2, E). \]
\end{lemma}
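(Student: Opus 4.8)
The plan is to use Proposition~\ref{prop_etale_when_different_non-zero} to remove the factorisation data after inverting $p$, thereby reducing the right-hand side to the rational tangent space of $R_S$ at $\rho$, and then to apply the standard dictionary relating $R_S$ to Galois cohomology.

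First I would set up the reduction. Via the coefficients $\Lambda_j(\sigma_i)$ of the universal polynomials $D_S(X-\sigma_i)$, the ring $R_S \otimes_{\cA_0}\cB_0$ is the base change of $\cB_0$ over $\cA_0$ along $\cA_0 \to R_S$; since $\cB_0$ is finite free over $\cA_0$ it is finite free over $R_S$. Let $\Res_i \in R_S\otimes_{\cA_0}\cB_0$ be the image of the resultant element of the $i$-th tensor factor. By Proposition~\ref{prop_etale_when_different_non-zero}, together with its compatibility with base change, the morphism $\Spec(R_S\otimes_{\cA_0}\cB_0) \to \Spec R_S$ is unramified (hence, being also flat, étale) over the locus where all the $\Res_i$ are invertible. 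Writing $P = \ker(R_S \to \cO)$ for the prime classifying $\mathcal{DET}(\rho)$, we have $P \subseteq P_0$ and $(R_S\otimes_{\cA_0}\cB_0)/P_0 = R_S/P = \cO$. The hypothesis that $f_{i,1}$ and $f_{i,2}$ have no common root in $\overline{\bQ}_p$ says exactly that the image of $\Res_i$ in $\cO$, which is $\operatorname{Res}(f_{i,1},f_{i,2})$, is non-zero and hence a unit in $E$. Consequently, after inverting $p$ the $E$-point cut out by $P_0$ lies over the étale locus, the morphism induces an isomorphism of tangent spaces over $E$, and using $\Hom_\cO(M,E) \cong \Hom_E(M\otimes_\cO E, E)$ for finitely generated $\cO$-modules $M$ this yields
\[ \Hom_\cO(P_0/P_0^2, E) \cong \Hom_\cO(P/P^2, E). \]

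Next I would identify $\Hom_\cO(P/P^2, E)$ with $H^1_{g,S}(F^+, W_E)$ by the usual translation. For each $m \geq 1$, an element of $\Hom_\cO(P/P^2, \cO/\varpi^m)$ is an $\cO$-algebra homomorphism $R_S \to \cO \oplus \epsilon\,\cO/\varpi^m$ lifting the classifying map of $\mathcal{DET}(\rho)$, i.e.\ a conjugate self-dual deformation of $\overline D$ to $\cO\oplus\epsilon\,\cO/\varpi^m$ which is unramified outside $S$ and semistable with Hodge--Tate weights in $[a,b]$; since $\rho$ is absolutely irreducible this is $\mathcal{DET}(\rho_\phi)$ with $\rho_\phi(\sigma) = \rho(\sigma)(1+\epsilon\phi(\sigma))$, for a uniquely determined $[\phi] \in H^1_{\cL_S}(F^+, W_m)$ (cf.\ \cite[\S 2.19]{New19a}; there is no contribution from $H^0(F^+, W_E)$, which vanishes because the outer involution defining $\cG_n$ acts by $-1$ on scalars). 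Taking the colimit over $m$, inverting $p$, and invoking the standard comparison between $W_E$- and $W_{E/\cO}$-cohomology identifies $\Hom_\cO(P/P^2, E)$ with $H^1_{g,S}(F^+, W_E)$: at $v \in S_p$ the rational form of the semistable, bounded-weight local deformation condition on the de Rham representation $\rho|_{G_{F^+_v}}$ is precisely the Bloch--Kato condition cutting out $H^1_g$, while at $v \in S\setminus S_p$ neither side imposes any condition. Combining the two paragraphs proves the lemma.

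The genuinely new ingredient is the étaleness reduction of the first step: the coprimality of the $f_{i,j}$ is exactly what places us in the good locus of Proposition~\ref{prop_etale_when_different_non-zero}, and it is also the reason one is forced to work with $\Hom(-,E)$, since the resultants $\operatorname{Res}(f_{i,1},f_{i,2})$ need not be units in $\cO$ and there is no analogous statement with torsion coefficients. The hard part, to the extent there is one, is the bookkeeping in the second step — the passage to the limit in $m$, the matching of the semistable local condition at $p$ with $H^1_g$, and the $W_E$ versus $W_{E/\cO}$ comparison — but this is routine and is essentially already carried out in \cite{New19a}.
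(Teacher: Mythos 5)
Your argument is correct, but it organizes the proof differently from the paper. The paper's proof simply reruns the construction of Lemma \ref{lem_comparing_Selmer_and_tangent_space} with $Q = \emptyset$ and the fixed elements $\sigma_1, \dots, \sigma_q$ in place of Frobenii: at each finite level $m$ one builds a map $H^1_{\cL_S}(F^+, W_m) \to \Hom_\cO(P_0/P_0^2, \cO/\varpi^m)$ by lifting the factorisations over the dual numbers via the approximate Hensel lemma (Lemma \ref{lem_approximate_hensel}), accepting kernel and cokernel annihilated by $\varpi^d$ times powers of the (now fixed, non-zero) resultants; passing to $\varprojlim_m$ and tensoring with $E$ kills these errors and gives the isomorphism, with the left side identified with $H^1_{g,S}(F^+, W_E)$ by \cite[Proposition 2.21]{New19a}. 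You instead decouple the two sources of error: you dispose of the factorisation data once and for all by observing that $\Spec(R_S \otimes_{\cA_0} \cB_0)[1/p] \to \Spec R_S[1/p]$ is \'etale at the $E$-point cut out by $P_0$ (Proposition \ref{prop_etale_when_different_non-zero} plus base change, using that $\operatorname{Res}(f_{i,1}, f_{i,2})$ is a unit in $E$), so that $\Hom_\cO(P_0/P_0^2, E) \cong \Hom_\cO(P/P^2, E)$, and only then invoke the determinant-to-cohomology dictionary for $R_S$ alone. This is a clean reorganization: it avoids redoing the $\alpha_{\Res}$-twisted Hensel bookkeeping for this purely rational statement, at the cost of working only after inverting $p$ (which is all the lemma needs, and, as you note, all one could hope for since the resultants need not be units in $\cO$). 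Two small imprecisions to fix in your second step: the correspondence between deformations of the group determinant to $\cO \oplus \epsilon\,\cO/\varpi^m$ and classes $[\phi] \in H^1_{\cL_S}(F^+, W_m)$ is \emph{not} a bijection at finite level when $\overline{\rho}$ is reducible --- \cite[Proposition 2.20]{New19a} only provides a map with kernel and cokernel annihilated by a fixed $\varpi^d$, which is why the limit over $m$ and the tensoring with $E$ are genuinely needed rather than cosmetic; and the limit in $m$ is an inverse limit, not a colimit, since the transition maps on $\Hom_\cO(P/P^2, \cO/\varpi^m)$ are reductions. Neither affects the validity of the argument.
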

The definition of the group $H^1_{g, S}(F^+,  W_E)$ is recalled in \cite[\S 1]{New19a}. We note that when $\mathrm{WD}(\rho|_{G_{F_\wv}})$ is generic for each $v \in S$, it equals $H^1_f(F^+, W_E)$.
\begin{proof}
Arguing in the same way as in the proof of the previous proposition shows that there is a an isomorphism of $E$-vector spaces
\[ \left( \varprojlim_m H^1_{\cL_S}(F^+, W_m) \right) \otimes_\cO E \to \left( \varprojlim_m \Hom_\cO(P_0 / P_0^2, \cO / \varpi^m \cO) \right) \otimes_\cO E. \]
The left-hand side may be identified with $H^1_{g, S}(F^+, W_E)$, by \cite[Proposition 2.21]{New19a}. The right-hand side may be identified with $\Hom_\cO(P_0 / P_0^2, E)$ ($P_0 / P_0^2$ is a finitely generated $\cO$-module). This completes the proof.
\end{proof}
\begin{lemma}\label{lem_greenberg_wiles}
Suppose that for each place $v \in S$, $\operatorname{WD}(\rho|_{G_{F_\wv}})$ is generic, in the sense of \cite[Definition 1.1]{New19a}. Then there exists $d \geq 0$ with the following property: for each Taylor--Wiles datum $\scrQ$ of level $N \geq 1$ and for each integer $1 \leq m \leq N$, we have
\[ h^1_{\cL(\scrQ)}(F^+, W_m) \leq d + h^1_{\cL(\scrQ)^\perp}(F^+, W_m(1)) + m |Q|. \]
\end{lemma}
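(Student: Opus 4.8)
The plan is to read the inequality off from the Greenberg--Wiles formula (the standard consequence of Poitou--Tate duality used throughout \cite{New19a}), applied to the Galois module $W_m$ with the Selmer system $\cL(\scrQ) = \{\cL(\scrQ)_{v,m}\}$ and its dual $\cL(\scrQ)^\perp$. Writing $h^i(\cdot)$ for the $\cO$-length of a cohomology group and $l(\scrQ)_{v,m}$ for that of $\cL(\scrQ)_{v,m}$, this formula reads
\[ h^1_{\cL(\scrQ)}(F^+, W_m) - h^1_{\cL(\scrQ)^\perp}(F^+, W_m(1)) = h^0(F^+, W_m) - h^0(F^+, W_m(1)) + \sum_v \bigl( l(\scrQ)_{v,m} - h^0(F^+_v, W_m) \bigr), \]
the sum over all places $v$ of $F^+$; the summand vanishes for $v \notin S \cup Q$, $v \nmid \infty$, since there $\rho$ is unramified and $\cL(\scrQ)_{v,m} = H^1_{\mathrm{ur}}(F^+_v, W_m)$ has length $h^0(F^+_v, W_m)$. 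It thus suffices to bound the right-hand side above by $h^1_{\cL(\scrQ)^\perp}(F^+, W_m(1)) + m|Q| + d$: I would show that every term except those indexed by $v \in Q$ is bounded by a constant independent of $m$ and $\scrQ$, and that $\sum_{v \in Q}(l(\scrQ)_{v,m} - h^0(F^+_v, W_m)) \le m|Q|$.

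Next I would handle the terms not involving $Q$ exactly as in \cite{New19a}. For the global terms: $\rho$ is absolutely irreducible, so $W_E^{G_F} = \End_{E[G_F]}(\rho)$ is the line of scalars, on which complex conjugation acts by $-1$ (the relation $\nu \circ r = \delta_{F/F^+}^n \epsilon^{1-n}$ forces the action of any $c_v$ on $W_E = M_n(E)$ to be a transpose--inverse twisted involution); hence $W_E^{G_{F^+}} = 0$, which bounds $h^0(F^+, W_m)$, and $-h^0(F^+, W_m(1)) \le 0$. For the local terms with $v \notin Q$: at $v \mid p$ the semistable condition satisfies $l(\scrQ)_{v,m} - h^0(F^+_v, W_m) \le [F^+_v : \bQ_p] \binom{n}{2} m + O(1)$ by the $p$-adic Hodge-theoretic length estimates of \cite{New19a}, while at $v \mid \infty$ the condition has bounded length (zero when $p$ is odd) and $l(\scrQ)_{v,m} - h^0(F^+_v, W_m) = -\,(\dim_E W_E^{c_v = 1})\, m + O(1) = -\binom{n}{2} m + O(1)$, the coefficient $\binom{n}{2}$ being the dimension of the $(+1)$-eigenspace of the above involution, as forced by $\nu \circ r$; summing over $v \mid p$ and $v \mid \infty$ the $m$-linear terms cancel up to $O(1)$ (this is the numerical coincidence). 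At $v \in S - S_p$, where $\cL(\scrQ)_{v,m} = H^1(F^+_v, W_m)$, local duality and the self-duality of $\Ad$ give $l(\scrQ)_{v,m} - h^0(F^+_v, W_m) = h^2(F^+_v, W_m) = h^0(F^+_v, W_m(1))$, which is bounded because genericity of $\operatorname{WD}(\rho|_{G_{F_\wv}})$ forces $W_E(1)^{G_{F_\wv}} = 0$.

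Finally I treat the summands for $v \in Q$, the only place where these Selmer conditions differ from those of \cite{New19a}. By construction $\cL(\scrQ)_{v,m}$ is the preimage, under the restriction map $H^1(F^+_v, W_m) \to H^1(I_{F_\wv}, W_m)^{G_{F_\wv}} \cong \Hom_{cts}(I_{F_\wv}, W_m^{G_{F_\wv}})$, of the cyclic $\cO$-submodule generated by $\tau \mapsto t_v(\tau) e_{v,2}(\rho(\Frob_\wv)) \bmod \varpi^m$; being cyclic it is a quotient of $\cO/\varpi^m\cO$, hence has length $\le m$. The kernel of that restriction map is $H^1_{\mathrm{ur}}(F^+_v, W_m) \cong W_m/(\Frob_\wv - 1)W_m$ (using that $\rho$ is unramified at $v$), whose length equals that of $W_m^{\Frob_\wv} = H^0(F^+_v, W_m)$, since the kernel and cokernel of an endomorphism of a finite module have equal length. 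Therefore $l(\scrQ)_{v,m} \le h^0(F^+_v, W_m) + m$ for each $v \in Q$, and summing yields $\sum_{v \in Q}(l(\scrQ)_{v,m} - h^0(F^+_v, W_m)) \le m|Q|$. Collecting the various constants into a single $d$ finishes the argument. The main obstacle is the bookkeeping at the places above $p$ and $\infty$ — checking that the $m$-linear contributions cancel up to something bounded above, which relies on the Hodge--Tate weight and oddness hypotheses exactly as in \cite{New19a}; the genuinely new ingredient, the bound at $v \in Q$, is elementary once the definition of $\cL(\scrQ)_{v,m}$ is unwound.
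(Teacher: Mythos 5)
Your proof is correct and takes essentially the same route as the paper: apply the Greenberg--Wiles formula, handle all terms away from $Q$ exactly as in \cite[Lemma 2.23]{New19a}, and observe that for $v \in Q$ the condition $\cL(\scrQ)_{v,m}$ exceeds the unramified classes by a cyclic $\cO$-module (generated by $\tau \mapsto t_v(\tau) e_{v,2}(\rho(\Frob_\wv))$) of length at most $m$, so that $l(\scrQ)_{v,m} - h^0(F^+_v, W_m) \leq m$. The paper's argument is precisely this, stated more tersely by citing the earlier lemma for everything except the new bound at $v \in Q$.
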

\begin{proof}
This is an application of the Greenberg--Wiles formula, compare \cite[Lemma 2.23]{New19a}. The only additional thing to check here is that if $v \in Q$, then $l(\scrQ)_{v, m} - h^0(F^+_v, W_m)$ is bounded above by $m$. Inspecting the definition of $\cL(\scrQ)$, we see that $l(\scrQ)_{v, m} - h^0(F^+_v, W_m)$ equals the length of the $\cO$-submodule of $\Hom_\cO(I_{F_\wv}, W_m^{G_{F_\wv}})$ generated by the homomorphism $\tau \mapsto t_v(\tau) e_{v, 2}(\rho(\Frob_\wv))$, which is certainly bounded above by $m$.
\end{proof}
\begin{cor}\label{cor_generators_for_Selmer}
Suppose that for each place $v \in S$,  $\operatorname{WD}(\rho|_{G_{F_\wv}})$ is generic. Then there exists $d \geq 0$ such that for every $N \geq 1$ and every Taylor--Wiles datum $\scrQ$ of level $N$, there is a map
\[ \cO^{|Q|} \to H^1_{\cL(\scrQ)}(F^+, W_N) \]
with cokernel  of length $\leq  d + h^1_{\cL(\scrQ)^\perp}(F^+, W_N(1))$.
\end{cor}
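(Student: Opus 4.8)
The plan is to deduce the corollary from the Greenberg--Wiles inequality of Lemma~\ref{lem_greenberg_wiles}, together with a Poitou--Tate analysis of $M := H^1_{\cL(\scrQ)}(F^+, W_N)$ as a finite $\cO/\varpi^N$-module. First I would construct the map. For $v \in Q$ write $H^1_{ur}(F^+_\wv, W_N) \subseteq \cL(\scrQ)_{v,N}$ for the unramified subgroup; by the very definition of $\cL(\scrQ)$, the quotient $\cL(\scrQ)_{v,N}/H^1_{ur}(F^+_\wv, W_N)$ is the cyclic $\cO$-module generated by the class $\tau \mapsto t_v(\tau)\,e_{v,2}(\rho(\Frob_\wv))$, hence is a quotient of $\cO/\varpi^N$. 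Restriction to inertia at the places of $Q$ therefore gives a map
\[ \psi : H^1_{\cL(\scrQ)}(F^+, W_N) \longrightarrow \bigoplus_{v\in Q} \cL(\scrQ)_{v,N}/H^1_{ur}(F^+_\wv, W_N), \]
whose target is a quotient of $(\cO/\varpi^N)^{|Q|}$, so that $\im \psi$ is generated by at most $|Q|$ elements, and whose kernel is, by the usual inflation--restriction argument, the subgroup $H^1_{\cL_S}(F^+, W_N)$ of classes unramified at every $v \in Q$. Since $H^1_{\cL_S}(F^+, W_{E/\cO})$ is cofinitely generated over $\cO$ with $\scrQ$-independent $\cO$-corank, $H^1_{\cL_S}(F^+, W_N) \cong (\cO/\varpi^N)^{c}\oplus(\text{bounded})$ with $c$ bounded; and because $M$ is killed by $\varpi^N$, the free summand $(\cO/\varpi^N)^c$ splits off inside $M$. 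I would then take the map $\cO^{|Q|}\to M$ to be defined by $c$ elements spanning this split-off free part together with $|Q|-c$ elements lifting a generating set of $\im\psi$ (padding with zeros if $|Q| < c$, in which case the assertion is trivial).

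To bound the cokernel, observe that the exact sequence above extends, via the Poitou--Tate comparison for the pair $\cL_S \subseteq \cL(\scrQ)$, to
\[ \bigoplus_{v\in Q}\cL(\scrQ)_{v,N}/H^1_{ur}(F^+_\wv, W_N)\xrightarrow{\ \partial\ } H^1_{\cL_S^\perp}(F^+, W_N(1))^\vee \longrightarrow H^1_{\cL(\scrQ)^\perp}(F^+, W_N(1))^\vee \to 0, \]
so that the length of $\im\partial$ equals $h^1_{\cL_S^\perp}(F^+, W_N(1)) - h^1_{\cL(\scrQ)^\perp}(F^+, W_N(1))$, while $\im\psi = \ker\partial$. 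Applying Lemma~\ref{lem_greenberg_wiles} to the empty Taylor--Wiles datum gives a constant $d_0$, independent of $N$, with $h^1_{\cL_S^\perp}(F^+, W_N(1)) \le d_0 + h^1_{\cL_S}(F^+, W_N)$, which, combined with the structure of $H^1_{\cL_S}(F^+, W_N)$ above, shows that $\im\partial$ is large exactly when the dual Selmer group $H^1_{\cL(\scrQ)^\perp}(F^+, W_N(1))$ is small. In that regime $\im\psi$ is correspondingly small as a submodule of $\bigoplus_v \cL(\scrQ)_{v,N}/H^1_{ur}(F^+_\wv,W_N)$, so it has at most $|Q|-c$ elementary divisors that are not bounded; these, together with the $c$ split-off copies of $\cO/\varpi^N$, are precisely covered by our $|Q|$ chosen elements, and the remaining quotient has length absorbed into a uniform constant $d$ plus $h^1_{\cL(\scrQ)^\perp}(F^+, W_N(1))$.

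The main obstacle is exactly this trade-off: one must verify, uniformly in $N$ and in $\scrQ$, that the number of ``large'' elementary divisors of $M$ — equivalently, of $\im\psi$ together with the $\cO$-corank of $H^1_{\cL_S}(F^+,W_N)$ — never exceeds $|Q|$ by more than a bounded amount of bounded-size divisors, the excess being controlled divisor by divisor through $\partial$ by the dual Selmer module $H^1_{\cL(\scrQ)^\perp}(F^+, W_N(1))^\vee$. This forces one to keep careful track of all the bounded torsion that enters the argument: the torsion of $H^1_{\cL_S}(F^+, W_{E/\cO})$ and of its Poitou--Tate dual, the torsion in the local modules $\cL(\scrQ)_{v,N}/H^1_{ur}(F^+_\wv, W_N)$, and the defect between $H^1(-, W_N)$ and $\varprojlim_m H^1(-, W_m)$ at finite level; a uniform bound on all of these yields the constant $d$.
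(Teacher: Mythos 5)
There is a genuine gap. Your construction of the map via restriction to inertia at the places of $Q$, and your use of the five-term Poitou--Tate sequence for the pair of conditions $\cL_S \subseteq \cL(\scrQ)$, only control the \emph{total length} of $M := H^1_{\cL(\scrQ)}(F^+, W_N)$ at the single level $N$. But the assertion to be proved is about the cokernel of a map $\cO^{|Q|} \to M$, i.e.\ about the elementary divisors $e_1 \geq e_2 \geq \cdots$ of $M$: one must bound $\sum_{i > |Q|} e_i$. A bound on $l(M)$ alone cannot do this (a module of length $N|Q| + C$ could a priori be $(\cO/\varpi)^{N|Q|+C}$, requiring that many generators), and your decomposition of $M$ into $\ker\psi$ and $\im\psi$ only shows that $M$ needs at most $c + |Q|$ large generators, which is too many when $c > 0$ --- and $c$ is essentially the quantity the whole paper is trying to prove is zero, so it cannot be assumed small. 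You recognise this in your final paragraph, but the step you label ``the main obstacle'' --- that the excess beyond $|Q|$ large elementary divisors is controlled by a constant plus $h^1_{\cL(\scrQ)^\perp}(F^+, W_N(1))$ --- is precisely the content of the corollary, and your proposal asserts it rather than proving it.

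The mechanism that closes this gap is to apply the Greenberg--Wiles inequality of Lemma \ref{lem_greenberg_wiles} \emph{for the datum $\scrQ$ itself at every intermediate level $m \leq N$}, not (as you do) only for the empty datum and only at level $N$. Concretely, setting $m = e_{|Q|+1}$ one has $l(M/\varpi^m M) = m|Q| + \sum_{i > |Q|} e_i$, so the desired cokernel bound follows from the comparison $l(M / \varpi^m M) \leq h^1_{\cL(\scrQ)}(F^+, W_m) + d_0$, then Lemma \ref{lem_greenberg_wiles} at level $m$, then the comparison $h^1_{\cL(\scrQ)^\perp}(F^+, W_m(1)) \leq h^1_{\cL(\scrQ)^\perp}(F^+, W_N(1)) + d_1$. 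These two inter-level comparison inequalities (which require checking that the maps $W_m \to W_{m+1}$ and $W_{m+1} \to W_m$ carry $\cL(\scrQ)_{v,m}$ into $\cL(\scrQ)_{v,m+1}$ and vice versa, and which are the substance of \cite[Lemma 2.24, Corollary 2.25]{New19a}) are exactly what the paper's proof reduces to; they are also what you would need in order to make rigorous your unproved appeals to ``the structure of $H^1_{\cL_S}(F^+, W_N)$'' and to the list of bounded defects at the end of your proposal. Without them the argument as written does not yield the stated cokernel bound.
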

\begin{proof}
By \cite[Lemma 2.24]{New19a} and Lemma \ref{lem_greenberg_wiles}, it is enough to show there are constants $d_0, d_1 \geq 0$ such that for every $N \geq 1$ and any Taylor--Wiles datum $\scrQ$ of level $N$, we have
\begin{equation}
l(H^1_{\cL(\scrQ)}(F^+, W_N) / (\varpi^m)) \leq h^1_{\cL(\scrQ)}(F^+, W_m) + d_0
\end{equation}
and
\begin{equation}
l(H^1_{\cL(\scrQ)^\perp}(F^+, W_m(1)) \leq l(H^1_{\cL(\scrQ)^\perp}(F^+, W_N(1)) + d_1.
\end{equation}
This follows by the same argument as in the proof of \cite[Corollary 2.25]{New19a}, provided we can show that for each $m \geq 1$ the natural maps $W_m \to W_{m+1}$ (resp. $W_{m+1} \to W_m$) send $\cL(\scrQ)_{v, m}$ into $\cL(\scrQ)_{v, m+1}$ (resp. $\cL(\scrQ)_{v, m+1}$ into $\cL(\scrQ)_{v, m}$). This is clear from the definitions. 
\end{proof}
\begin{lemma}\label{lem_killing_dual_Selmer_group}
Let $q \geq  \operatorname{corank}_\cO H^1(F_S / F^+, W_{E / \cO}(1))$, and suppose that $\rho$ satisfies the following conditions:
\begin{enumerate}
\item There is a place $v \nmid S$ of $F$ such that all of the eigenvalues of  $\rho(\Frob_v)$ are $q_v^{(n-1)}$-Weil numbers. 
\item $\rho|_{G_{F(\zeta_{p^\infty})}}$ is absolutely irreducible and for each $\sigma \in G_{F(\zeta_{p^\infty})}$, the eigenvalues of $\rho(\sigma)$ all lie in $E$.
\end{enumerate}
Then we can find the following data:
\begin{enumerate}
\item An integer $d \geq 1$.
\item Elements $\sigma_1, \dots, \sigma_q \in G_{F(\zeta_{p^\infty})}$, together with factorisations $f_i(X) := \det(X - \rho(\sigma_i)) = f_{i, 1}(X) f_{i, 2}(X)$, where $f_{i, 1}(X), f_{i, 2}(X)$ are monic, coprime polynomials in $\cO[X]$.
\end{enumerate}
These data have the property that for any Taylor--Wiles datum 
\[ \scrQ = (Q, \widetilde{Q}, (f_{v, 1}(X))_{v \in Q}, (f_{v, 2}(X))_{v \in Q}) \]
 of level $N > d$ such that $Q = \{ v_1, \dots, v_q \}$ and $\rho(\Frob_{\wv_i}) \text{ mod } \varpi^N = \rho(\sigma_i) \text{ mod }\varpi^N$ and $f_{v_i, j}(X) \equiv f_{i, j}(X) \text{ mod }\varpi^N$ for each $i = 1, \dots, q$ and $j = 1, 2$, the following conditions are satisfied:
\begin{enumerate}
\item For each $v \in Q$, we have $\ord_\varpi \operatorname{Res}(f_{v, 1}, f_{v, 2}) \leq d$.
\item $h^1_{\cL(\scrQ)^\perp}(F^+, W_N(1)) \leq d$.
\end{enumerate}
\end{lemma}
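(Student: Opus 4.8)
The plan is to adapt the proof of \cite[Lemma 2.26]{New19a}, the essential new point being that weak adequacy --- equivalently, by Proposition \ref{prop_weak_adequacy}, absolute irreducibility of $\rho|_{G_{F(\zeta_{p^\infty})}}$ --- must be made to play the role that enormousness played there. The first step is to unwind the local conditions at a place $v \in Q$. Since $\cL(\scrQ)_{v,m}$ contains $H^1_{\mathrm{ur}}(F^+_v, W_m)$ and exceeds it only by the $\cO$-line spanned by the homomorphism $\tau \mapsto t_v(\tau) e_{v,2}(\rho(\Frob_\wv))$, the dual condition $\cL(\scrQ)^\perp_{v,m}$ is contained in $H^1_{\mathrm{ur}}(F^+_v, W_m(1))$, and --- using the explicit form of the cup product of an unramified class with a ramified one, together with the self-duality of $W$ under the trace form --- a class $\psi$ lies in $\cL(\scrQ)^\perp_{v,m}$ exactly when it is unramified at $v$ and $\operatorname{tr}\bigl( e_{v,2}(\rho(\Frob_\wv))\, \psi(\Frob_\wv) \bigr) = 0$. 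Consequently $H^1_{\cL(\scrQ)^\perp}(F^+, W_N(1))$ is the kernel of the map
\[ H^1_{\cL_S^\perp}(F^+, W_N(1)) \longrightarrow \bigoplus_{v \in Q} \cO/\varpi^N, \qquad \psi \longmapsto \bigl( \operatorname{tr}(e_{v,2}(\rho(\Frob_\wv))\psi(\Frob_\wv)) \bigr)_{v \in Q}. \]
Condition (1) forces $W_{E/\cO}(1)^{G_{F^+}} = 0$: the eigenvalues of $\rho(\Frob_v)$ being $q_v^{(n-1)}$-Weil numbers, no ratio of two of them equals $q_v$, so $\epsilon(\Frob_v)\Ad\rho(\Frob_v) - 1$ is invertible on $M_n(E)$. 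With this and the corank hypothesis in hand, the bookkeeping of \emph{loc.\ cit.} --- a Chebotarev argument over a finite extension trivialising $\rho$ and finitely many generating classes modulo $\varpi^N$, together with the comparison between $W_N(1)$- and $W_{E/\cO}(1)$-coefficients --- reduces the lemma to producing $\sigma_1, \dots, \sigma_q \in G_{F(\zeta_{p^\infty})}$ and coprime monic factorisations $\det(X - \rho(\sigma_i)) = f_{i,1}(X) f_{i,2}(X)$ in $\cO[X]$ --- available because the eigenvalues of $\rho(\sigma_i)$ lie in $\cO$ by condition (2) --- for which the induced map $H^1_{\cL_S^\perp}(F^+, W_{E/\cO}(1)) \to \bigoplus_i E/\cO$ has finite kernel. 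Conclusion (a) is then immediate, as one may take $d$ to dominate the (bounded) $\varpi$-valuations of $\operatorname{Res}(f_{i,1}, f_{i,2})$; and the contribution of the scalar summand of $W$, on which each $\sigma_i$ detects with the nonzero weight $\deg f_{i,2}$, is absorbed into $d$ once the Chebotarev places are chosen so that the resulting functionals span.

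The heart of the matter is therefore the following detection statement: for every nonzero $[\psi] \in H^1_{\cL_S^\perp}(F^+, \ad^0\rho_E(1))$ there exist $\sigma \in G_{F(\zeta_{p^\infty})}$, whose image $\rho(\sigma)$ has at least two distinct eigenvalues, and an eigenvalue $\alpha$ of $\rho(\sigma)$, such that $\operatorname{tr}(e_{\rho(\sigma),\alpha}\, \psi(\sigma)) \neq 0$. To prove it, I would restrict $\psi$ to $G_{F(\zeta_{p^\infty})}$, where the Tate twist is trivial, obtaining a cocycle $\overline{\psi}$ valued in $\ad^0\rho_E$ under the adjoint action; it is nonzero because $(\ad^0\rho_E)^{G_{F(\zeta_{p^\infty})}} = 0$ by absolute irreducibility. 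Let $L$ be the fixed field of $\ker(\rho|_{G_{F(\zeta_{p^\infty})}})$, so that $\rho$ identifies $H := \rho(G_{F(\zeta_{p^\infty})})$ with the finite group $\operatorname{Gal}(L/F(\zeta_{p^\infty}))$. If $\overline{\psi}|_{G_L}$ were zero, then $\overline{\psi}$ would be inflated from $H^1(H, \ad^0\rho_E)$, which vanishes because $H$ is finite and the coefficients form an $E$-vector space; hence $\overline{\psi}|_{G_L} \neq 0$. The $E$-span $V_L$ of $\overline{\psi}(G_L)$ is then a nonzero subspace of $\ad^0\rho_E \subseteq M_n(E)$, and the cocycle identity $\overline{\psi}(\sigma\tau\sigma^{-1}) = \Ad(\rho(\sigma))\overline{\psi}(\tau)$ (for $\tau \in G_L$, $\sigma \in G_{F(\zeta_{p^\infty})}$) shows it is $H$-stable. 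Proposition \ref{prop_weak_adequacy} and Lemma \ref{lem_weakly_adequate} now furnish $h \in H$ and an eigenvalue $\alpha$ of $h$ with $\operatorname{tr}(e_{h,\alpha} V_L) \neq 0$; such an $h$ automatically has at least two distinct eigenvalues, since otherwise $e_{h,\alpha} = 1$ and $\operatorname{tr}(V_L) = 0$. Finally, to descend from $V_L$ to a single element, I would fix $g \in G_{F(\zeta_{p^\infty})}$ with $\rho(g) = h$; for $\tau \in G_L$ the cocycle identity gives $\psi(g\tau) = \psi(g) + \Ad(h)\psi(\tau)$, and since $e_{h,\alpha}$ commutes with $h$,
\[ \operatorname{tr}(e_{h,\alpha}\psi(g\tau)) = \operatorname{tr}(e_{h,\alpha}\psi(g)) + \operatorname{tr}(e_{h,\alpha}\psi(\tau)). \]
As $\tau$ ranges over $G_L$ the last term runs over a nonzero $E$-subspace of $E$, hence over all of $E$; choosing it equal to $1 - \operatorname{tr}(e_{h,\alpha}\psi(g))$ and setting $\sigma = g\tau$ yields $\rho(\sigma) = h$ and $\operatorname{tr}(e_{h,\alpha}\psi(\sigma)) = 1 \neq 0$.

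Given $\sigma$, $\alpha$ as produced, one takes $f_{i,2}(X)$ to be the factor of $\det(X - \rho(\sigma))$ with root $\alpha$, $f_{i,1}(X)$ the complementary factor, and $\sigma_i = \sigma$; running this inductively over a flag of the corank-$\leq q$ module $H^1_{\cL_S^\perp}(F^+, W_{E/\cO}(1))$ makes the combined map (after $\otimes E$) injective using at most $q$ elements. The passage to an arbitrary Taylor--Wiles datum subject to the stated congruences, and the resulting bound (b), is then the Chebotarev bookkeeping of \cite[proof of Lemma 2.26]{New19a} --- one uses that the trace is conjugation-invariant, that $e_{v_i,2}(\rho(\Frob_{\wv_i}))$ is the matching conjugate of $e_{v_i,2}(\rho(\sigma_i))$, and that the coboundary ambiguity in $\psi(\Frob_{\wv_i})$ lies in $(\epsilon(\Frob_{\wv_i})\Ad(\rho(\Frob_{\wv_i})) - 1)W_N(1)$, on which $e_{v_i,2}(\rho(\Frob_{\wv_i}))$ pairs to zero --- which I would follow essentially verbatim.

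I expect the main obstacle to be exactly the detection step: making do with weak adequacy rather than enormousness. The two observations that make it go through are that a cohomology class which is nonzero with $E$-coefficients cannot be inflated from the \emph{finite} group $H$, so that its restriction to $G_L$ is a genuine (nonzero) homomorphism whose image spans an $H$-submodule of $M_n(E)$ to which Lemma \ref{lem_weakly_adequate} may be applied; and the coset trick $\sigma = g\tau$, which upgrades a detection at some unspecified element to a detection at an element with the prescribed semisimple $\rho$-image. The rest --- the uniform-in-$N$ estimates, the tracking of the various powers of $f_\scrQ(\operatorname{Res}_v)$ and $f_\scrQ(\operatorname{Res}_{q,v})$ (which is why the exponent $d$ is allowed to depend on the chosen $\sigma_i$), and the treatment of the scalar summand of $W$ --- is routine and proceeds exactly as in \cite{New19a}.
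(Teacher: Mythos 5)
Your overall architecture matches the paper's: reduce to showing that for suitable $\sigma_i$ and factorisations the evaluation map $[\phi] \mapsto (\tr e_{i,2}(\rho(\sigma_i))\phi(\sigma_i))_i$ on $H^1(F_S/F^+, W_{E/\cO}(1))$ has finite kernel, argue by induction on a single nonzero divisible summand, restrict the class to the kernel of the relevant representation so that its image spans an invariant submodule of $W_{E/\cO}(1)$, apply Lemma \ref{lem_weakly_adequate} (via Proposition \ref{prop_weak_adequacy}) to detect it, and finish with the coset trick $\sigma_0 = \sigma$ or $\sigma_0 = \tau\sigma$. All of that is exactly what the paper does.

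However, there is a genuine gap at the crucial restriction step. You write that $H := \rho(G_{F(\zeta_{p^\infty})})$ is identified with the \emph{finite} group $\Gal(L/F(\zeta_{p^\infty}))$ and conclude that $H^1(H, \ad^0\rho_E) = 0$ because "$H$ is finite and the coefficients form an $E$-vector space." But $H$ is not finite: $\rho$ is the Galois representation attached to a regular algebraic automorphic representation, so (as the paper itself notes in \S\ref{sec_adequacy}, invoking Sen theory) its image contains regular semisimple elements of infinite order; $H$ is an infinite compact $p$-adic analytic group. For such groups continuous $H^1$ with $p$-adic coefficients does not vanish in general ($H^1(\bZ_p, \bQ_p) \neq 0$), so your argument that $\overline{\psi}|_{G_L} \neq 0$ collapses. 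This is precisely where the paper needs the purity hypothesis (condition (1) of the lemma, that the eigenvalues of $\rho(\Frob_v)$ at one unramified place are $q_v^{(n-1)}$-Weil numbers): it invokes \cite[Lemma 6.2]{Kis04a} to prove $H^1(L'_\infty/F^+, W_E(1)) = 0$ for the \emph{infinite} group $\Gal(L'_\infty/F^+)$ cut out by $W_E(1)$, whence $H^1(L_\infty/F^+, W_{E/\cO}(1))$ has finite length and the restriction of $\kappa$ to $G_{L_\infty}$ is nonzero. Your proposal uses condition (1) only to deduce $W_{E/\cO}(1)^{G_{F^+}} = 0$, which is an $H^0$ statement and is not enough; the inflation term you need to kill is an $H^1$ of an infinite image group. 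Once this step is repaired (by citing Kisin's lemma as the paper does, or reproving it), the remainder of your argument goes through and coincides with the paper's.
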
	
\begin{proof}
We first claim that to prove the lemma, it is enough to find elements $\sigma_1, \dots, \sigma_q \in G_{F(\zeta_{p^\infty})}$ with factorisations $f_i(X) = f_{i, 1}(X) f_{i, 2}(X)$ such that the morphism of $\cO$-modules
\[ H^1(F_S / F^+, W_{E / \cO}(1)) \to \oplus_{i=1}^q E / \cO, \]
\[ [ \phi ] \mapsto ( \tr e_{i, 2}(\rho(\sigma_i)) \phi(\sigma_i) )_{i = 1, \dots, q}, \]
has kernel of finite length. Indeed, suppose given elements with this property. Then there exists $d_0 \geq 0$ such that for all $m \geq 1$, the kernel of the map
\[  H^1(F_S / F^+, W_{m}(1)) \to \oplus_{i=1}^q \cO / \varpi^m \]
\[ [ \phi ] \mapsto ( \tr e_{i, 2}(\rho(\sigma_i)) \phi(\sigma_i) )_{i = 1, \dots, q}, \]
has length bounded above by $d_0$. Suppose that $\scrQ$ is a Taylor--Wiles datum such that $Q = \{ v_1, \dots, v_q \}$ and $\rho(\Frob_{\wv_i}) \text{ mod } \varpi^N = \rho(\sigma_i) \text{ mod }\varpi^N$ and $f_{v_i, j}(X) \equiv f_{i, j}(X) \text{ mod }\varpi^N$ for each $i, j$. Then $H^1_{\cL(\scrQ)^\perp}(F^+, W_N(1))$ is identified with the kernel of the above map (for $m = N$) so has length bounded above by $d_0$. The lemma will hold with $d = \max(d_0, \{ \ord_\varpi \operatorname{Res}( f_{i, 1}(X), f_{i, 2}(X) )\}_{i =1, \dots, q})$. 

We now explain how to finds elements $\sigma_1, \dots, \sigma_q$ with these properties. By induction, it is enough to show that for any non-zero homomorphism $\kappa : E / \cO \to H^1(F_S / F^+, W_{E / \cO}(1))$, we can find an element $\sigma_0 \in G_{F(\zeta_{p^\infty})}$ and factorisation $f_0(X) := \det(X - \rho(\sigma_0)) = f_{0, 1}(X) f_{0, 2}(X)$ such that the homomorphism $\kappa _{\sigma_0} : E/\cO \to E / \cO$, $x \mapsto \tr e_{0, 2}(\rho(\sigma_0)) \kappa (x)(\sigma_0)$ is still non-zero. 

Let $F_\infty = F(\zeta_{p^\infty})$, let $L'_\infty / F^+$ be the extension cut out by $W_E(1)$, and let $L_\infty = L'_\infty \cdot F_\infty$. Then \cite[Lemma 6.2]{Kis04a} implies that $H^1(L'_\infty / F^+, W_{E}(1)) = 0$, hence $H^1(L_\infty / F^+, W_E(1)) = 0$, hence $H^1(L_\infty / F^+, W_{E / \cO}(1))$ has finite length and the restriction of $\kappa $ to $G_{L_\infty}$ is non-zero. (The cited result assumes that $W_E(1)|_{G_{F_v}}$ is pure for but finitely many places $v$, but it is enough to assume purity at a single place, as we do here. In our applications of this result, the stronger condition of purity at all but finitely many places is known to hold.)

We can interpret this restriction as a $G_{F^+}$-equivariant homomorphism $K : E / \cO \to H^1(L_\infty, W_{E / \cO}(1))$. Let $M \subset W_{E / \cO}(1)$ be the $\cO$-submodule generated by the elements $K(x)(\sigma)$, $x \in E / \cO$, $\sigma \in G_{L_\infty}$. Then $M$ is a divisible $\cO$-submodule which is invariant under the action of $G_{F_\infty}$, so by Lemma \ref{lem_weakly_adequate} there exists $x \in E / \cO$, $\tau \in G_{L_\infty}$, $\sigma \in G_{F_\infty}$ with eigenvalue $\alpha \in \cO$ such that $\tr e_{\sigma, \alpha}(\rho(\sigma)) K(x)(\tau) \neq 0$. If $\tr e_{\sigma, \alpha}(\rho(\sigma)) K(x)(\sigma) \neq 0$, we're done on taking $\sigma_0 = \sigma$ and $f_{0, 2}(X) = \operatorname{gcd}(f_0(X), (X - \alpha)^n)$. If $\tr e_{2, \alpha}(\rho(\sigma)) K(x)(\sigma) = 0$, we're done on taking $\sigma_0 = \tau \sigma$ and $f_{0, 2}(X) = \operatorname{gcd}(f_0(X), (X - \alpha)^n)$. 
\end{proof}
\begin{proposition}\label{prop_existence_of_TW_primes}
Let $q \geq  \operatorname{corank}_\cO H^1(F_S / F^+, W_{E / \cO}(1))$, and suppose that $\rho$ satisfies the following conditions:
\begin{enumerate}
\item There is a place $v \nmid S$ of $F$ such that all of the eigenvalues of  $\rho(\Frob_v)$ are $q_v^{(n-1)}$-Weil numbers. 
\item $\rho|_{G_{F(\zeta_{p^\infty})}}$ is absolutely irreducible.
\item For each place $v \in S$, $\operatorname{WD}(\rho|_{G_{F_\wv}})$ is generic. 
\end{enumerate}
Then we can find the following data:
	\begin{enumerate}	
	\item An integer $d \geq 1$.	
	\item Elements $\sigma_1, \dots, \sigma_q \in G_{F(\zeta_{p^\infty})}$, together with factorisations $f_i(X) := \det(X - \rho(\sigma_i)) = f_{i, 1}(X) f_{i, 2}(X)$, where $f_{i, 1}(X), f_{i, 2}(X) \in \cO[X]$ are monic coprime polynomials in $\cO[X]$.
\end{enumerate}	
These data have the property that for any Taylor--Wiles datum 	
\[ \scrQ = (Q, \widetilde{Q}, (f_{v, 1}(X))_{v \in Q}, (f_{v, 2}(X))_{v \in Q}) \]	
of level $N > d$ such that $\rho(\Frob_{\wv_i}) \text{ mod } \varpi^N = \rho(\sigma_i) \text{ mod }\varpi^N$, $f_{v_i, 1}(X) \equiv f_{i, 1}(X) \text{ mod }\varpi^N$ and $f_{v_i, 2}(X) \equiv f_{i, 2}(X) \text{ mod }\varpi^N$ for each $i = 1, \dots, q$, the following conditions are satisfied: there is a map 	
\[ \cO \llbracket x_1, \dots, x_q \rrbracket \to R(\scrQ) \]	
such that the images of $x_1, \dots, x_q$ lie in $P(\scrQ)$ and	
\[ P(\scrQ) / (P(\scrQ)^2, x_1, \dots, x_q) \]	
is an $\cO$-module of length $\leq d$. Moreover, we have $\ord_\varpi \Res(f_{v, 1}, f_{v, 2}) \leq d$.	
\end{proposition}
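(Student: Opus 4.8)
The plan is to assemble the three technical inputs of this section into a structural description of the relative cotangent space $M := P(\scrQ)/P(\scrQ)^2$, and then to read off the map $\cO\llbracket x_1,\dots,x_q\rrbracket \to R(\scrQ)$. As a preliminary I would enlarge $E$ so that the eigenvalues of $\rho(\sigma)$ lie in $E$ for every $\sigma \in G_{F(\zeta_{p^\infty})}$; this is possible because $\bQ_p$ has only finitely many extensions of degree $\le n$, and it is harmless for the downstream patching argument since all relevant lengths and $\varpi$-valuations change only by the bounded factor $[\cO':\cO]$. With this reduction, hypotheses (1) and (2) of the proposition are precisely hypotheses (1) and (2) of Lemma \ref{lem_killing_dual_Selmer_group}, which then provides a constant $d_1 \ge 1$ together with elements $\sigma_1,\dots,\sigma_q \in G_{F(\zeta_{p^\infty})}$ and factorisations $f_i(X) = f_{i,1}(X)f_{i,2}(X)$ such that, for any Taylor--Wiles datum $\scrQ$ of level $N > d_1$ matching these data modulo $\varpi^N$, one has $\ord_\varpi \operatorname{Res}(f_{v,1},f_{v,2}) \le d_1$ for all $v \in Q$ and $h^1_{\cL(\scrQ)^\perp}(F^+,W_N(1)) \le d_1$. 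The last assertion of the proposition is thereby already in hand, and everything reduces to constructing the power series map.

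Fix such a $\scrQ$, of level $N$ larger than a constant $d$ to be chosen at the end. Using hypothesis (3), Corollary \ref{cor_generators_for_Selmer} supplies a map $\cO^q \to H^1_{\cL(\scrQ)}(F^+,W_N)$ with cokernel of length $\le d_2 + h^1_{\cL(\scrQ)^\perp}(F^+,W_N(1)) \le d_1 + d_2$, and Lemma \ref{lem_comparing_Selmer_and_tangent_space} (taken with $m = N$) supplies a map $H^1_{\cL(\scrQ)}(F^+,W_N) \to \Hom_\cO(M,\cO/\varpi^N)$ whose kernel and cokernel are annihilated by $\varpi^{d_3}\Res(\scrQ)^{3+n!}$. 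Since $\Res(\scrQ) = \operatorname{lcm}_{v\in Q} f_\scrQ(\Res_v) = \operatorname{lcm}_{v\in Q}\operatorname{Res}(f_{v,1},f_{v,2})$ has $\varpi$-valuation at most $d_1$, they are in fact annihilated by $\varpi^e$ with $e := d_3 + (3+n!)d_1$. Composing the two maps gives $\cO^q \to \Hom_\cO(M,\cO/\varpi^N)$ with cokernel of length at most $(d_1+d_2+e)g$, where $g$ is the minimal number of generators of $M$ — a quantity itself bounded uniformly in $\scrQ$ and $N$, via the Greenberg--Wiles formula (Lemma \ref{lem_greenberg_wiles}) applied to $W_1$ and the bound on $h^1_{\cL(\scrQ)^\perp}$. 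The point is that every constant here is independent of $\scrQ$ and of $N$.

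Now I would transpose over $\cO/\varpi^N$ to get $M/\varpi^N M \to (\cO/\varpi^N)^q$ with kernel of bounded length. Letting $N$ grow (the conditions on $\scrQ$ persist under increasing the level) and combining with the two-sided Greenberg--Wiles count — which, using that the local conditions at $S$ are chosen so that the relevant global Euler characteristic vanishes, gives $h^1_{\cL(\scrQ)}(F^+,W_m) = mq + O(1)$ uniformly, hence $\operatorname{length}\Hom_\cO(M,\cO/\varpi^m) = mq + O(1)$ — forces $M$ to have free $\cO$-rank exactly $q$ and torsion submodule of uniformly bounded length, and shows that the images of the standard basis of $\cO^q$ under the transposed map generate the free part of $M$ up to a submodule of bounded length. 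Lifting these images to elements $y_1,\dots,y_q \in P(\scrQ)$ (possible since $P(\scrQ)$ lies in the Jacobson radical of $R(\scrQ)$, or after localising at the maximal ideal cut out by $f_\scrQ$) and sending $x_i \mapsto y_i$ produces the map $\cO\llbracket x_1,\dots,x_q\rrbracket \to R(\scrQ)$, and then $P(\scrQ)/(P(\scrQ)^2,x_1,\dots,x_q) = M/(\bar y_1,\dots,\bar y_q)$ is an $\cO$-module of length $\le d$ once $d$ is taken larger than all the constants that have appeared.

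The one genuinely delicate point is not any individual step — each of the three inputs does the substantive work — but the simultaneous bookkeeping: the powers of $\varpi$ entering through the kernels and cokernels of Lemmas \ref{lem_comparing_Selmer_and_tangent_space} and \ref{lem_greenberg_wiles}, the powers of $\Res(\scrQ)$, and the number of generators of $M$ must all be bounded uniformly in $\scrQ$. This is exactly what the integrally-refined formulations of those lemmas are built for, and what the choice of $\sigma_1,\dots,\sigma_q$ in Lemma \ref{lem_killing_dual_Selmer_group} secures — it is that choice which keeps $h^1_{\cL(\scrQ)^\perp}(F^+,W_N(1))$ bounded rather than growing linearly with $N$. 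A secondary subtlety is the transpose-and-limit manoeuvre needed to turn ``the $\cO$-linear dual of $M$ is generated by $q$ elements up to bounded error'' into the required structural statement about $M$ itself.
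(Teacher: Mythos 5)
Your overall architecture matches the paper's: Lemma \ref{lem_killing_dual_Selmer_group} supplies $d$, the $\sigma_i$ and the factorisations (and hence the resultant bound and the bound on $h^1_{\cL(\scrQ)^\perp}(F^+,W_N(1))$); Corollary \ref{cor_generators_for_Selmer} composed with Lemma \ref{lem_comparing_Selmer_and_tangent_space} produces the $q$ elements of $P(\scrQ)$; and the remaining task is to bound the length of $P(\scrQ)/(P(\scrQ)^2,x_1,\dots,x_q)$ uniformly. Up to that point the proposal is correct, and your explicit enlargement of $E$ and your explicit Matlis-duality transposition are fine (the paper is terser on both).

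The gap is in your claimed uniform bound on $g$, the number of generators of $M=P(\scrQ)/P(\scrQ)^2$ — a bound which your argument genuinely needs (both to control the cokernel of the composite $\cO^q\to\Hom_\cO(M,\cO/\varpi^N)$, whose error term from Lemma \ref{lem_comparing_Selmer_and_tangent_space} is only an \emph{annihilation} statement and so contributes length only after multiplying by the number of generators, and to convert ``quotient annihilated by $\varpi^d$'' into ``quotient of length $\le d'$''). You propose to extract $g$ from the Greenberg--Wiles formula applied to $W_1$. But the only bridge between $h^1_{\cL(\scrQ)}(F^+,W_1)$ and $\dim_k(M\otimes_\cO k)$ is the $m=1$ case of Lemma \ref{lem_comparing_Selmer_and_tangent_space}, whose kernel and cokernel are controlled only up to annihilation by $\varpi^{d}\Res(\scrQ)^{3+n!}$ — a statement that is vacuous for $k$-vector spaces. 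This is not a removable technicality: since no condition is imposed on $\overline{\rho}$ (it may be reducible, even trivial), the mod-$p$ tangent space of the pseudodeformation ring is genuinely not computed by a Selmer group of $\ad\overline{\rho}$, and the whole design of this section is to compare the two objects only after inverting bounded powers of $\varpi$. The paper closes this step by a different, purely deformation-theoretic route: the number of generators of $P(\scrQ)/P(\scrQ)^2$ is bounded by that of $\ffrm_{R_{S\cup Q}}/\ffrm_{R_{S\cup Q}}^2$ (plus the finitely many generators contributed by $\cB(\scrQ)$ and $\cO[\Delta_Q]$), and the latter is bounded independently of $Q$ by the argument of \cite[Corollary 2.31]{New19a} (a mod-$p$ cohomology count for the unrestricted pseudodeformation problem, with each place of $Q$ contributing a bounded local term). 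You need to import that input; nothing in \S\ref{sec_deformation}'s Selmer-theoretic lemmas can substitute for it. A secondary remark: once $g$ is bounded, your two-sided count ($h^1_{\cL(\scrQ)}(F^+,W_m)=mq+O(1)$ and the resulting ``free of rank $q$ plus bounded torsion'' structure of $M$) is unnecessary — and its lower-bound half is not established by Lemma \ref{lem_greenberg_wiles} as stated — since ``cokernel annihilated by $\varpi^d$'' together with ``at most $g$ generators'' already gives length $\le dg$, which is how the paper concludes.
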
	
\begin{proof}	
We choose the data $\sigma_1, \dots, \sigma_q$ and $f_{i, j}(X) \in \cO[X]$ and integer $d \geq 1$ using Lemma \ref{lem_killing_dual_Selmer_group}. Suppose given a Taylor--Wiles  datum  $\scrQ$ satisfying the conditions in the statement of the proposition. By Corollary \ref{cor_generators_for_Selmer} and Lemma \ref{lem_comparing_Selmer_and_tangent_space}, there exists a morphism of $\cO$-modules $\cO^q \to   P(\scrQ) / P(\scrQ)^2 \otimes_\cO \cO / \varpi^N \cO$ with cokernel annihilated by $\varpi^d$. We define the map $\cO \llbracket x_1, \dots, x_q \rrbracket \to R(\scrQ)$ to send $x_1, \dots, x_q$ to arbitrary lifts to $P(\scrQ)$ of the images of the standard basis elements of $\cO^q$. 	

To finish the proof, we need to show that $P(\scrQ) / (P(\scrQ)^2, x_1, \dots, x_q)$ is an $\cO$-module of uniformly bounded length. Since $N > d$, 	
\[ P(\scrQ) / (P(\scrQ)^2, x_1, \dots, x_q) \]	
is annihilated by $\varpi^d$. The desired result will follow therefore if we can show that there is a bound, independent of $\scrQ$, for the number of generators for 	
$P(\scrQ) / P(\scrQ)^2$. As in the proof of \cite[Corollary 2.31]{New19a}, this follows from the corresponding statement for $\ffrm_{R_{S \cup Q}} / \ffrm_{R_{S \cup Q}}^2$.	
\end{proof}

\section{The main theorem}\label{sec_patching}

In this section we prove our main theorem:
\begin{theorem}\label{thm_vanishing_over_CM_field}
Let $F$ be a CM number field, let $n \geq 2$, and let $(\pi, \chi)$ be a regular algebraic, cuspidal, polarized automorphic representation of $\GL_n(\A_F)$. Let $\iota : \overline{\bQ}_p \to \bC$ be an isomorphism, and suppose that $r_{\pi, \iota}|_{G_{F(\zeta_{p^\infty})}}$ is irreducible. Then $H^1_f(F^+,  \ad r_{\pi, \iota}) = 0$.
\end{theorem}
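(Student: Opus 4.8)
The plan is to run a Taylor--Wiles patching argument in the pseudodeformation-theoretic framework of \cite{New19a}, the essentially new feature being that at Taylor--Wiles places one imposes the cut-down local condition $\cL(\scrQ)$ of \S\ref{sec_deformation}, which is why the combinatorics of \S\ref{sec_different}--\S\ref{sec_hecke} (the resultants, the element $\widetilde{\Res}_{n_1,n_2}$, and the parahoric comparison maps) are needed. I would begin with the standard reductions. Since $H^1_f(F^+,\ad r_{\pi,\iota})$ injects into $H^1_f(L^+,\ad r_{\pi,\iota}|_{G_{L^+}})$ for any solvable $L^+/F^+$ (by restriction--corestriction, after inverting $p$), it suffices to prove the theorem after a cyclic base change of degree prime to $p$. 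Using solvable base change for $\GL_n$ and enlarging $E$, I would reduce to the setup of \S\ref{sec_deformation} with, in addition: $F/F^+$ split above $p$ and unramified at all finite places; $\operatorname{WD}(\rho|_{G_{F_\wv}})$ generic for all $v\in S$ (automatic, since $\pi$ is cuspidal hence everywhere locally generic); a place $v\nmid S$ at which $\rho(\Frob_v)$ has $q_v^{n-1}$-Weil number eigenvalues (automatic, by purity); and $\pi$ descending, up to twist, to a definite unitary group $G/F^+$ with $G\times_{F^+}F\cong\GL_{n}$. Here $\rho = r_{\pi,\iota}|_{G_{F,S}}$, and by the remark after Lemma \ref{lem_adjoint_Selmer} we have $H^1_f(F^+,\ad r_{\pi,\iota}) = H^1_f(F^+,W_E) = H^1_{g,S}(F^+,W_E)$.

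Next I would put $q = \operatorname{corank}_\cO H^1(F_S/F^+, W_{E/\cO}(1))$ and apply Proposition \ref{prop_existence_of_TW_primes} to obtain an integer $d$, elements $\sigma_1,\dots,\sigma_q\in G_{F(\zeta_{p^\infty})}$, and coprime factorisations $f_i(X) = f_{i,1}(X)f_{i,2}(X)$ of $\det(X-\rho(\sigma_i))$ with the stated properties; the irreducibility hypothesis is used precisely here, through the identification of absolute irreducibility with weak adequacy (Proposition \ref{prop_weak_adequacy}) in the proof of Lemma \ref{lem_killing_dual_Selmer_group}. By Lemma \ref{lem_adjoint_Selmer}, $H^1_{g,S}(F^+,W_E)\cong\Hom_\cO(P_0/P_0^2,E)$ with $P_0\subset R_S\otimes_{\cA_0}\cB_0$, so it is enough to show $P_0/P_0^2\otimes_\cO E = 0$. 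Since $\cB_0$ is finite flat over $\cA_0$ and \'etale away from the resultant locus (Proposition \ref{prop_etale_when_different_non-zero}), and our factorisations are coprime so that the point $\rho$ avoids that locus, this will in turn follow once we know that $R_S$ (equivalently $R_S\otimes_{\cA_0}\cB_0$) is a finite $\cO$-algebra with reduced generic fibre at $\rho$.

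To prove the latter I would carry out the patching. For each $N$, choose (by Chebotarev) a Taylor--Wiles datum $\scrQ_N$ of level $N$ satisfying the congruence conditions of Proposition \ref{prop_existence_of_TW_primes}, and form the space $S_{\scrQ_N}$ of algebraic modular forms for $G$ with hyperspecial level away from $Q_N$ and the level $\p_1$ of \S\ref{sec_hecke} at the places of $Q_N$, localised at the maximal ideal attached to $\overline{D} = \mathcal{DET}(\overline\rho)$; one works with group determinants throughout, so as to impose no hypothesis on $\overline\rho$. Local--global compatibility equips $S_{\scrQ_N}$ with a $G_{F,S\cup Q_N}$-group determinant deforming $\overline{D}$, and the $\cB[\Delta_v]$-operators of \S\ref{sec_hecke} act on it; Corollary \ref{cor_trace_relation_1}, i.e. the trace relation \eqref{eqn_trace_relation_1}, shows that these data satisfy the relations \eqref{eqn_trace_relations}, making $S_{\scrQ_N}$ an $R(\scrQ_N)$-module. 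The comparison maps $f,g$ of \S\ref{sec_hecke} (Proposition \ref{prop_quasi_inverse}) identify this, up to kernels and cokernels killed by $\Res_v$, with the hyperspecial-level space $S$ and its $R_S$-action; since $\ord_\varpi\Res(f_{v,1},f_{v,2})\leq d$, these error terms stay bounded as $N\to\infty$. Patching the $S_{\scrQ_N}$ (with their $R(\scrQ_N)$-actions, the framing variables, and the $\cO[\Delta_{Q_N}]$-structures) yields a module $M_\infty$ over $R_\infty$, finite free over a formal power series ring $S_\infty$ over $\cO$ (incorporating the diamond and framing variables), with $M_\infty$ recovering $S$ after quotienting by the augmentation ideal of $S_\infty$. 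The numerical coincidence of \cite{cht} --- valid here because polarizability forces $\rho$ to be odd --- makes the dimension count balance, so $M_\infty$ is maximal Cohen--Macaulay and faithful over $R_\infty$; cutting by the regular sequence defining $S_\infty\to\cO$ and invoking the generic reducedness of the local lifting rings together with the semisimplicity of the Hecke action in characteristic $0$, one deduces that $R_S$ is finite flat over $\cO$ with reduced generic fibre near $\rho$. Feeding this back, and using Lemma \ref{lem_comparing_Selmer_and_tangent_space} (which compares $H^1_{\cL(\scrQ)}(F^+,W_m)$ with $\Hom_\cO(P(\scrQ)/P(\scrQ)^2,\cO/\varpi^m\cO)$ up to $\varpi^d\Res(\scrQ)^{3+n!}$-torsion, a bounded power of $\varpi$), one obtains $P_0/P_0^2\otimes_\cO E = 0$, hence $H^1_f(F^+,\ad r_{\pi,\iota}) = 0$.

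The step I expect to be the main obstacle is the one flagged in the introduction: arranging that the cut-down condition at Taylor--Wiles places is meaningful \emph{simultaneously} on the Galois side --- the defining relations \eqref{eqn_trace_relations} of $R(\scrQ)$ and the length bound for $\cL(\scrQ)_{v,m}$ behind Lemma \ref{lem_greenberg_wiles} --- and on the automorphic side --- the $\cB[\Delta_v]$-action on $\pi_v^{\p_1}$ and the trace relation \eqref{eqn_trace_relation_1} --- while keeping every comparison integral and controlling the resultant denominators $\Res_v$ through the patching, all with no hypothesis on $\overline\rho$; this is exactly what necessitates the systematic use of Chenevier group determinants and the delicate integral bookkeeping of \S\ref{sec_hecke}--\S\ref{sec_deformation}. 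Once that is done, the patching and the extraction of Selmer vanishing go through essentially as in \cite{New19a}.
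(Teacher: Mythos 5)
Your setup matches the paper up to the patching: the solvable base change reductions, the choice of Taylor--Wiles data with coprime factorisations via Proposition \ref{prop_existence_of_TW_primes}, the identification $H^1_{g,S}(F^+,W_E)\cong\Hom_\cO(P_0/P_0^2,E)$ from Lemma \ref{lem_adjoint_Selmer}, the surjections $R(\scrQ)\to\bT_{\scrQ,0}$ and $R(\scrQ)\to\bT_{\scrQ,1}$ via the trace relations, and the comparison maps $f,g$ with error controlled by resultants. The gap is in the endgame. You propose to patch with ``framing variables'', invoke ``generic reducedness of the local lifting rings'', show $M_\infty$ is maximal Cohen--Macaulay and faithful over $R_\infty$, and deduce that $R_S$ is a finite $\cO$-algebra with reduced generic fibre. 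None of this is available here. There are no framed deformation rings or local lifting rings in this argument: since no hypothesis is placed on $\overline{\rho}$ (it may even be trivial), everything is done with pseudodeformation rings, and the Taylor--Wiles primes produced by Proposition \ref{prop_existence_of_TW_primes} only control the cotangent module $P(\scrQ)/P(\scrQ)^2$ at the characteristic-zero prime, not $\ffrm_{R(\scrQ)}/\ffrm_{R(\scrQ)}^2$. So there is no surjection from a power series ring onto the full patched ring, the depth/Auslander--Buchsbaum argument cannot be run, and the assertion that $R_S$ is a finite $\cO$-algebra is both unprovable by these methods and far stronger than what is needed.

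What the paper actually does is localize and complete everything at the characteristic-zero prime. Proposition \ref{prop_completed_patched_def_ring} shows $(R^\prm)^{\widehat{}}_{P^\prm}$ is a quotient of the $q$-dimensional regular local ring $(R_\infty)^{\widehat{}}_{P_\infty}$, while the patched module is only controlled modulo $\mathbf{a}_\infty^2$ (the ultrafilter construction patches finite quotients only), giving $\mrm_1=(M_1/\mathbf{a}_\infty^2)_{P^\prm}$ finite free over the Artinian ring $S_{\infty,\mathbf{a}_\infty}/(\mathbf{a}_\infty^2)$. Because one is in this Artinian situation, the decisive input is Brochard's freeness criterion \cite[Theorem 1.1]{brochard}, which yields that $\mrm_1$ is free over $(R^\prm)^{\widehat{}}_{P^\prm}/(\mathbf{a}_\infty^2)$ and hence that $\mrm\cong\mrm_0$ is free over $(R_0)^{\widehat{}}_{P_0}$; note also that the resultant error terms in the comparison $f:M\to M_0$ are eliminated not because $\ord_\varpi\Res_v$ is bounded but because the resultants become units after localization at $P^\prm$. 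Freeness together with semisimplicity of the Hecke action in characteristic zero (which uses Proposition \ref{prop_etale_when_different_non-zero} to see that $\bT_\emptyset\to\bT_\emptyset\otimes_{\cA_0}\cB_0$ is \'etale at $P_0$) forces $(R_0)^{\widehat{}}_{P_0}=E$, i.e.\ $P_0/P_0^2\otimes_\cO E=0$, and Lemma \ref{lem_adjoint_Selmer} concludes. You need to replace your finiteness/MCM step with this localization-at-$P$ plus Brochard argument.
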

\begin{proof}
Using the same sequence of reductions as in the proof of \cite[Theorem 5.2]{New19a}, we can assume that $\pi$ satisfies the following additional conditions:
\begin{itemize}
\item $\pi$ is conjugate self-dual.
\item $F / F^+$ is everywhere unramified and $[F^+ : \bQ]$ is even.
\item Let $S$ denote the set of finite places of $F$ at which  $\pi$ is ramified, together  with the $p$-adic places of $F$. Then for each $v \in S$, $v$ is split over $F^+$ and $\pi_v$ is Iwahori-spherical.
\end{itemize}
The proof of the theorem  in this special case will be given in the rest of this section, starting in \S \ref{subsec_start_of_the_proof}.
\end{proof}
Theorem  \ref{thm_vanishing_over_CM_field} has the following consequence for automorphic representations over totally real fields. We refer to \cite[\S 5]{New19a} for the definition of the representation $\mathfrak{gs}$ appearing in the statement (it is the Lie algebra of a general similitude group of a $(\pm)$-symmetric bilinear form, whose parity depends on the parity of the polarizing character $\chi$):
\begin{theorem}\label{thm_vanishing_over_real_field}
Let $F$ be a totally real number field,  and let $(\pi, \chi)$ be a regular algebraic, cuspidal, polarized automorphic representation of  $\GL_n(\A_F)$. Let $\iota : \overline{\bQ}_p \to  \bC$ be an isomorphism, and suppose that $r_{\pi, \iota}|_{G_{F(\zeta_{p^\infty})}}$ is  irreducible. Then $H^1_f(F^+, \mathfrak{gs}) = 0$.
\end{theorem}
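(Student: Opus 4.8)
I would deduce this from Theorem~\ref{thm_vanishing_over_CM_field} by cyclic base change to an imaginary quadratic extension, following the corresponding reduction in \cite{New19a}. Choose a quadratic CM extension $F'/F$ whose maximal totally real subfield is $F$, so $F' = F(\sqrt{-\alpha})$ for some totally positive $\alpha \in F$; by an appropriate choice of $\alpha$ (weak approximation) one may insist that (a) $r_{\pi,\iota}|_{G_{F'(\zeta_{p^\infty})}}$ is still irreducible and (b) $\pi' := \mathrm{BC}_{F'/F}(\pi)$ is cuspidal. Both are generic conditions: (a) fails only if $r_{\pi,\iota}|_{G_{F(\zeta_{p^\infty})}}$ is induced from the index-two subgroup $G_{F'(\zeta_{p^\infty})}$, equivalently if the restriction of $\delta_{F'/F}$ to $G_{F(\zeta_{p^\infty})}$ is a nontrivial character $\eta$ with $r_{\pi,\iota}|_{G_{F(\zeta_{p^\infty})}} \otimes \eta \cong r_{\pi,\iota}|_{G_{F(\zeta_{p^\infty})}}$, of which there are finitely many; (b) fails only if $\pi \cong \pi \otimes (\delta_{F'/F} \circ \det)$, which again excludes only finitely many $F'$.

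Then $(\pi', \chi \circ N_{F'/F})$ is a regular algebraic, cuspidal, polarized automorphic representation of $\GL_n(\A_{F'})$ over the CM field $F'$, with $r_{\pi',\iota} = r_{\pi,\iota}|_{G_{F'}}$ irreducible on restriction to $G_{F'(\zeta_{p^\infty})}$; the conjugate self-duality up to twist over $F'$ is inherited from the self-duality up to twist of $r_{\pi,\iota}$ over $F$. Applying Theorem~\ref{thm_vanishing_over_CM_field} to $(\pi', \chi \circ N_{F'/F})$, and using $(F')^+ = F$, yields $H^1_f(F, \ad r_{\pi',\iota}) = 0$.

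It then remains to cut $\mathfrak{gs}$ out of $\ad r_{\pi',\iota}$. Fix a matrix $\Phi$ realizing $r_{\pi,\iota}^\vee \cong r_{\pi,\iota} \otimes \psi$ (the appropriate twist), of parity $\varepsilon \in \{\pm 1\}$ equal to the parity of $\chi$, and let $\theta(X) = -\Phi^{-1} X^T \Phi$: an involution of $M_n(\overline{\bQ}_p)$ commuting with the $G_F$-action on $\ad r_{\pi,\iota}$, whose $(+1)$-eigenspace is the derived subalgebra $\mathfrak{g}^{\mathrm{der}}$ of $\mathfrak{gs}$, with $\mathfrak{gs} = \mathfrak{g}^{\mathrm{der}} \oplus \overline{\bQ}_p \cdot I_n$ as $G_F$-modules. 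As a $G_F$-module $\ad r_{\pi',\iota}$ is $M_n(\overline{\bQ}_p)$ with $G_{F'}$ acting by conjugation via $r_{\pi,\iota}|_{G_{F'}}$, and by Schur's lemma (applied to the irreducible $r_{\pi,\iota}|_{G_{F'}}$) one sees that the $\cG_n$-extension defining the $G_F$-structure can be taken compatible with the datum $\Phi$, so that a complex conjugation acts on the $\theta$-fixed subspace $\mathfrak{g}^{\mathrm{der}}$ exactly as in $\ad r_{\pi,\iota}$. Hence $\mathfrak{g}^{\mathrm{der}}$ is a $G_F$-direct summand of $\ad r_{\pi',\iota}$, isomorphic as a $G_F$-module to $\mathfrak{g}^{\mathrm{der}} \subset \ad r_{\pi,\iota}$, so $H^1_f(F, \mathfrak{g}^{\mathrm{der}})$ is a direct summand of $H^1_f(F, \ad r_{\pi',\iota}) = 0$. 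Since $H^1_f(F, \overline{\bQ}_p) = 0$ as well (any such class is unramified everywhere, hence factors through the finite group $\mathrm{Cl}(F)$), we conclude $H^1_f(F, \mathfrak{gs}) = H^1_f(F, \mathfrak{g}^{\mathrm{der}}) \oplus H^1_f(F, \overline{\bQ}_p) = 0$.

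\textbf{Expected main obstacle.} Everything is routine once Theorem~\ref{thm_vanishing_over_CM_field} is in hand; the care lies in the last step — reconciling the parity conventions of $\cG_n$ with those of the general similitude group, and checking that the $\cG_n$-extension of $r_{\pi',\iota}$ can be normalized so that complex conjugation acts on $\mathfrak{g}^{\mathrm{der}}$ through $r_{\pi,\iota}$ itself — together with verifying that $F'$ can be chosen to meet conditions (a), (b) (and any splitting conditions one wishes to impose) simultaneously. This is why the result is stated merely as a corollary of the CM case.
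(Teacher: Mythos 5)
Your argument is correct and is essentially the paper's own proof: the paper disposes of this theorem in one line by citing base change and \cite[Theorem B]{All16}, and what you have written is a faithful expansion of exactly that deduction (choice of auxiliary CM quadratic extension preserving cuspidality and irreducibility, application of Theorem \ref{thm_vanishing_over_CM_field}, and the eigenspace decomposition of $\ad r_{\pi',\iota}$ isolating $\mathfrak{g}^{\mathrm{der}}$ and the scalars). The parity bookkeeping you flag at the end is precisely the content delegated to Allen's paper, so no further comment is needed.
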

\begin{proof}
This can be deduced from Theorem \ref{thm_vanishing_over_CM_field} using base change, cf. \cite[Theorem B]{All16}.
\end{proof}

\subsection{Start of the proof}\label{subsec_start_of_the_proof}
 We begin  by repeating, almost verbatim, the set-up from \cite[\S 4]{New19a}; the arguments will diverge when we begin to describe the Hecke algebras associated to Taylor--Wiles data. 

We therefore suppose given $n \geq2$, a CM number field $F$, a cuspidal, regular algebraic, conjugate self-dual automorphic representation $\pi$ of $\GL_n(\bA_F)$, and an isomorphism $\iota : \overline{\bQ}_p \to \bC$. We assume that the following conditions are satisfied: 
\begin{itemize}
\item $F / F^+$ is everywhere unramified and $[F^+ : \bQ]$ is even.
\item Let $S$ denote the set of finite places of $F^+$ above which $\pi$ is ramified, together with the $p$-adic places of $F^+$. Then for each $v \in S$, $v$ splits $v  = w w^c$ in $F$ and $\pi_w$ is Iwahori-spherical.
\item  $r_{\pi, \iota}|_{G_{F(\zeta_{p^\infty})}}$ is absolutely irreducible.
\end{itemize}
We remark that for each place $v$ of $F$, $\operatorname{WD}(r_{\pi, \iota}|_{G_{F_v}})$ is generic \cite{Caraianilnotp, Car14}. We choose an extension of $r_{\pi, \iota}$ to a homomorphism $G_{F^+} \to \cG_n(\overline{\bQ}_p)$, which then gives the action of $G_{F^+}$ on $\ad r_{\pi, \iota}$. We have $\nu \circ r_{\pi, \iota} = \delta_{F / F^+}^n \epsilon^{1-n}$. We must show that $H^1_f(F^+, \ad r_{\pi, 
	\iota}) = 0$.

We can find the following data:
\begin{itemize}
	\item For each place $v \in S$, a choice of place $\wv$ of $F$ lying above $v$. We set $\widetilde{S} = \{ \wv \mid v \in S \}$ and $\widetilde{S}_p = \{ \wv \mid v \in S_p \}$.
	\item A Hermitian form $\langle \cdot, \cdot \rangle : F^n \times F^n \to F$ such that the associated unitary group $G$ (defined on $R$-points by $G(R) = \{ g \in \GL_n(F \otimes_{F^+} R) \mid g^\ast g = 1 \}$) is definite at infinity and quasi-split at each finite place of $F^+$.
	\item A reductive group scheme over $\cO_{F^+}$ extending $G$ (also denoted $G$).
	\item For each finite place $v = w w^c$ of $F^+$ which splits in $F$, an 
	isomorphism $\iota_w : G_{\cO_{F^+_v}} \to \Res_{\cO_{F_w} / \cO_{F^+_v}} 
	\GL_n $ of group schemes over $\cO_{F^+_v}$. We assume that the 
	induced isomorphism $\iota_w : G(F^+_v) \to \GL_n(F_w)$ is in the same 
	inner class as the isomorphism given by inclusion $G(F_v^+) \subset 
	\GL_n(F_w) \times \GL_n(F_{w^c})$, followed by projection to the first 
	factor.
	\item An automorphic representation $\sigma$ of $G(\bA_{F^+})$ with the following properties:
	\begin{itemize}
	\item For each finite place $v$ of $F^+$ which is inert in $F$, $\sigma_v^{G(\cO_{F^+_v})} \neq 0$ and $\sigma_v$, $\pi_v$ are related by unramified base change.	
	\item For each finite place $v$ of $F^+$ which is split $v = w w^c$ in $F$, $\sigma_v \cong \pi_w 	
	\circ \iota_w$.
		\item If $v | \infty$ is a place of $F^+$, then the infinitesimal character of $\sigma_v$ respects that of $\pi_v$ under base change. 
	\end{itemize}
	\item An open compact subgroup $U = \prod_v U_v$ of $G(\bA_{F^+}^\infty)$ with the following properties:
	\begin{itemize}
		\item For each place $v \in S_p$, $U_v = \iota_{\wv}^{-1}(\Iw_{\wv})$, where $\Iw_\wv \subset \GL_n(\cO_{F_\wv})$ is the standard Iwahori subgroup.
		\item For each inert place $v$ of $F^+$, $U_v = G(\cO_{F^+_v})$.
		\item $(\sigma^\infty)^U \neq 0$.
		\item $U$ is sufficiently small: for all $g \in G(\bA_{F^+}^\infty)$, $g U g^{-1} \cap G(F^+) = \{ 1 \}$.
	\end{itemize}
\end{itemize}
(We can find such a $G$ because $[F^+ : \bQ]$ is even. The existence of $\sigma$ is deduced from that of $\pi$ using \cite[\S 5]{labesse}.)
We can regard $\sigma_\infty$ as an algebraic representation of the group 
$(\Res_{F^+ / \bQ} G)_\bC$. Let $\widetilde{I}_p \subset \Hom(F, 
\overline{\bQ}_p)$ denote the set of 
embeddings inducing places $\wv \in \widetilde{S}_p$. Then our choices 
determine an isomorphism
\[ (\Res_{F^+ / \bQ} G)_{\overline{\bQ}_p} \cong \prod_{\tau \in \widetilde{I}_p} \GL_n. \]
Let $\lambda = (\lambda_\tau)_{\tau \in \widetilde{I}_p} \in 
(\bZ_+^n)^{\widetilde{I}_p}$ denote the highest weight of the algebraic 
representation $V_\lambda$ of $(\Res_{F^+ / \bQ} G)_{\overline{\bQ}_p}$ such 
that $V_\lambda \otimes_{\iota, \overline{\bQ}_p} \bC \cong 
\sigma_\infty^\vee$. We can define a highest weight $\xi$ for 
$(\Res_{F/\bQ}\GL_n)_{\overline{\bQ}_p}$ by letting $\xi_\tau = \lambda_\tau$ 
and $\xi_{\tau c} = -w_0\lambda_\tau$ for $\tau \in \widetilde{I}_p$ ($w_0$ is 
the longest element in the Weyl group of $\GL_n$). The 
infinitesimal character of 
$\pi_\infty$ is the same as that of $V_\xi^\vee\otimes_{\iota,\Qpbar}\CC$.   We fix once and for all integers $a \leq b$ such that for all $\tau \in \Hom(F, \overline{\bQ}_p)$, the elements of $\mathrm{HT}_\tau(r_{\pi, \iota})$ are contained in $[a, b]$ and $a + b = n-1$. 

We can find a representation $\cV_\lambda$ of the group scheme 
$(\Res_{\cO_{F^+} / \bZ} G)_\cO$, finite free over $\cO$, and such that $\cV_\lambda \otimes_\cO 
\overline{\bQ}_p \cong V_\lambda$. Thus $\cV_\lambda(\cO)$ is a finite free $\cO$-module 
which receives an action of $U_p = \prod_{v \in S_p} U_v$. For any open compact 
subgroup $V = \prod_v V_v \subset U$, and any $\cO$-algebra $A$, we define 
$S_\lambda(V, A)$ to be the set of functions $f : G(\bA_{F^+}^\infty) \to 
\cV_\lambda(A)$ such that for each $v \in V$, $\gamma \in G(F^+)$, $g \in 
G(\bA_{F^+}^\infty)$, $v_p f(\gamma g v) = f(g)$. We observe that
\[ \varinjlim_{U^p} S_\lambda(U^p U_p, A) \]
has a natural structure of $A[U^p]$-module, and the $U^p$-invariants are $S_\lambda(U, A)$. It follows that $S_\lambda(U, A)$ has a natural structure of $\cH(G(\bA_{F^+}^{\infty, p}), U^p)$-module. There is an isomorphism of $\cH(G(\bA_{F^+}^{\infty, p}), U^p)$-modules
\[ S_\lambda(U, \cO) \otimes_{\iota, \cO} \bC \cong \oplus_{\mu} (\mu^\infty)^U, \]
where the sum is over automorphic representations of $G(\bA_{F^+})$ (with multiplicity) such that $\mu_\infty \cong \sigma_\infty$. 

Let $E / \bQ_p$ be a coefficient field containing the image of every embedding $F \to \overline{\bQ}_p$. After possibly enlarging $E$, we can assume that there is a model $\rho : G_{F, S} \to \GL_n(\cO)$ of $r_{\pi, \iota}$, which extends to a homomorphism $r : G_{F^+, S} \to \cG_n(\cO)$ such that $\nu \circ r = \epsilon^{1-n} \delta_{F / F^+}^n$. We moreover assume that $E$ contains every quadratic extension of $\bQ_p$, and that for each $\sigma \in G_{F, S}$, the characteristic polynomial $\det(X - \rho(\sigma))$ splits into linear factors in $\cO[X]$.	

Let $\overline{D} = \mathcal{DET}(\overline{\rho})$. With these choices the pseudodeformation ring denoted $R_S = R(\emptyset)$ in \S \ref{sec_deformation} is defined, as well as the prime ideal $P(\emptyset)
= \ker(R(\emptyset) \to \cO)$ determined by $\rho$. Moreover, for any Taylor--Wiles 
datum $\scrQ = (Q, \widetilde{Q}, (f_{v, 1}(X))_{v \in Q}, (f_{v, 2}(X))_{v \in Q})$
we have the auxiliary ring $R(\scrQ)$ and prime ideal $P(\scrQ) = \ker(R(\scrQ) \to \cO)$.

If $V = \prod_v V_v$ is an open compact subgroup of $U$ and $T$ is a finite set 
of places of $F^+$ containing all places such that $V_v \neq G(\cO_{F^+_v})$, 
then we write $\bT_\lambda^T(V, A)$ for the $A$-subalgebra of 
$\End_A(S_\lambda(V, A))$ generated by the unramified Hecke operators at 
split places away from $T$. The existence of 
$\sigma$ implies the existence of a homomorphism
\[ h_{V, \sigma} : \bT^T_\lambda(V, \cO) \to \cO \]
giving the Hecke eigenvalues of $\iota^{-1} \sigma^\infty$. On the other hand, the results of \cite[\S 5]{labesse} (base change), together with the existence of Galois representations associated to cuspidal, polarizable, regular algebraic automorphic representations of $\GL_n(\A_F)$,  imply the existence of a group determinant $D_{V, \lambda}$ of $G_{F}$ valued in $\bT^T_{\lambda}(V, \cO)$ (construction as in \cite[Proposition 4.11]{jackreducible}).

Let $\ffrm \subset \bT^S_\lambda(U, \cO)$ denote the unique maximal ideal containing $\ker h_{U, \sigma}$, and set
\[ S_\emptyset = S_\lambda(U, \cO)_\ffrm, \bT_\emptyset = \bT^S_\lambda(U, \cO)_\ffrm. \]
Then (\cite[Lemma 5.4]{New19a}) there is a surjective homomorphism $R(\emptyset) \to \bT_\emptyset$ classifying the image of $D_{U, \lambda}$ over $\bT_\emptyset$.

Now suppose that $\scrQ = (Q, \widetilde{Q}, (f_{v, 1}(X))_{v \in Q}, (f_{v, 2}(X))_{v \in Q})$ is a Taylor--Wiles datum. If $v \in Q$, we write $\mathfrak{p}_v \subset \GL_n(\cO_{F_\wv})$ for the standard parahoric subgroup associated to the partition $n = \deg f_{v, 1} + \deg f_{v, 2}$ and $\mathfrak{p}_{v, 1} \subset \mathfrak{p}_v$ for the kernel of the associated map $\mathfrak{p}_v \to \GL_{\deg f_{v, 2}}(k(\wv)) \to k(\wv)^\times(p) =\Delta_v$  (notation as in \S \ref{sec_hecke}). We define open compact subgroups $U_1(\scrQ) \subset U_0(\scrQ) \subset U$ as follows: $U_0(\scrQ) = \prod_v U_0(\scrQ)_v$ and $U_1(\scrQ) = \prod_v U_1(\scrQ)_v$, where $U_0(\scrQ)_v = U_1(\scrQ)_v = U_v$ if $v \not\in Q$ and $U_0(\scrQ)_v = \iota_\wv^{-1} \mathfrak{p}_v$ and $U_1(\scrQ)_v = \iota_\wv^{-1} \mathfrak{p}_{v, 1}$ if $v \in Q$.

Thus there is a canonical isomorphism $U_0(\scrQ) / U_1(\scrQ) \cong \Delta_Q = \prod_{v \in Q} \Delta_v$. We make $S_\lambda(U_0(\scrQ), \cO)$ into a  $\cB(\scrQ)$ by twisting the action of each algebra $\cB_{\deg f_{v, 1}, \deg f_{v,2 }}$ ($v \in Q$) by the character $| \cdot |^{(1-n)/2}$ (this twist is necessary to accord with the statement of Corollary \ref{cor_trace_relation_1}). We define $\bT^{S, \scrQ}_\lambda(U_0(\scrQ), \cO)$ to be the commutative $\cO$-subalgebra of $\End_\cO(S_\lambda(U_0(\scrQ), \cO)$ generated by the unramified Hecke operators at split places $v\not\in S \cup Q$, together with the image of the ring $\cB(\scrQ)$. Thus $\bT_\lambda^{S \cup Q}(U_0(\scrQ), \cO) \subset \bT_\lambda^{S, \scrQ}(U_0(\scrQ), \cO)$. We define $\ffrm_{0, \scrQ}$ to be the pullback of $\ffrm$ to $\bT_\lambda^{S \cup Q}(U_0(\scrQ), \cO)$ and define 
\[ S_{\scrQ, 0} = S_\lambda(U_0(\scrQ), \cO)_{\ffrm_{0, \scrQ}}, \bT_{\scrQ, 0} = \bT_\lambda^{S, \scrQ}(U_0(\scrQ), \cO)_{\ffrm_{0, \scrQ}}. \]
\begin{lemma}\label{lem_map_R_to_T_at_U_0_Q_level}
There is a canonical surjective homomorphism $R(\scrQ) \to \bT_{\scrQ, 0}$. 
\end{lemma}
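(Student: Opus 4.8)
The plan is to construct the homomorphism on the presentation $R_{S\cup Q}\otimes_{\cA(\scrQ)}\cB(\scrQ)\otimes_\cO\cO[\Delta_Q]$ of $R(\scrQ)$ and then to check that it annihilates the relations (\ref{eqn_trace_relations}). As in the construction of the map $R(\emptyset)\to\bT_\emptyset$ in \cite[Lemma 5.4]{New19a}, the group determinant $D_{U_0(\scrQ),\lambda}$ of $G_F$ valued in $\bT^{S\cup Q}_\lambda(U_0(\scrQ),\cO)$ localizes at $\ffrm_{0,\scrQ}$ and extends, using the unitary group $G$, to a conjugate self-dual group determinant of $G_{F^+}$ with multiplier $\epsilon^{1-n}\delta_{F/F^+}^n$. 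Since the automorphic forms contributing at level $U_0(\scrQ)$ are Iwahori-spherical at the places of $S$, unramified outside $S\cup Q$, and of weight matching $\lambda$, local-global compatibility shows that this deformation reduces to $\overline{D}$, is unramified outside $S\cup Q$, and is semistable with Hodge--Tate weights in $[a,b]$; it is therefore classified by a homomorphism $R_{S\cup Q}\to\bT_{\scrQ,0}$. The algebra $\cB(\scrQ)$ maps to $\bT_{\scrQ,0}$ by the very definition of $\bT_{\scrQ,0}=\bT^{S,\scrQ}_\lambda(U_0(\scrQ),\cO)_{\ffrm_{0,\scrQ}}$, and $\Delta_Q=U_0(\scrQ)/U_1(\scrQ)$ acts trivially on $S_\lambda(U_0(\scrQ),\cO)$ by Lemma \ref{lem_restriction_to_P_invariants}, so $\cO[\Delta_Q]$ maps to $\bT_{\scrQ,0}$ through its augmentation to $\cO$.

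These three maps are compatible over $\cA(\scrQ)$: tracing generators, the composite $\cA(\scrQ)\to R_{S\cup Q}\to\bT_{\scrQ,0}$ sends the elementary symmetric functions in the $v$-block to the coefficients of $D_{U_0(\scrQ),\lambda}(X-\phi_{\wv})$, while $\cA(\scrQ)\hookrightarrow\cB(\scrQ)\to\bT_{\scrQ,0}$ sends them to the corresponding parahoric Hecke operators at $\wv$, and these agree by local-global compatibility at the places $v\in Q$ --- this is precisely what the twist by $|\cdot|^{(1-n)/2}$ in the $\cB(\scrQ)$-module structure was arranged for (cf.\ Corollary \ref{cor_trace_relation_1}), since the Satake parameters at $\wv$ of the base changes of the relevant forms are the eigenvalues of the corresponding Frobenius. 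We thus obtain a homomorphism $R_{S\cup Q}\otimes_{\cA(\scrQ)}\cB(\scrQ)\otimes_\cO\cO[\Delta_Q]\to\bT_{\scrQ,0}$.

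The crux is to show this map kills the relations (\ref{eqn_trace_relations}). As $\bT_{\scrQ,0}$ acts faithfully on the $\cO$-torsion-free module $S_\lambda(U_0(\scrQ),\cO)_{\ffrm_{0,\scrQ}}$, it suffices to check after $-\otimes_\cO E$, where $\bT^{S\cup Q}_\lambda(U_0(\scrQ),E)$ is reduced (the unramified Hecke operators acting semisimply), hence a product of fields, and $S_\lambda(U_0(\scrQ),E)_{\ffrm_{0,\scrQ}}$ splits into the corresponding Hecke eigenspaces $V_{\mathfrak n}$, each carrying the attached Galois representation $\rho_{\mathfrak n}$. Fix $v\in Q$. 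On $V_{\mathfrak n}$, Lemma \ref{lem_restriction_to_P_invariants} gives $\langle\tau\rangle=1$, and since $e_{v,1}(X)+e_{v,2}(X)=\Res_v$, expanding $\Lambda_1$ linearly shows that the image of $\Lambda_1\big(\sigma(\Res_v^2\tau-\Res_v e_{v,1}(\phi_\wv)-\langle\tau\rangle\Res_v e_{v,2}(\phi_\wv))\big)$ equals $\Res_v^2\,(\Lambda_1(\sigma\tau)-\Lambda_1(\sigma))$, where $\Lambda_1(\sigma\tau)$ and $\Lambda_1(\sigma)$ are the scalar values of $\operatorname{tr}\rho_{\mathfrak n}$. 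Now either $\rho_{\mathfrak n}|_{G_{F_\wv}}$ is unramified, in which case $\rho_{\mathfrak n}(\tau)=1$ for $\tau\in I_{F_\wv}$ and so $\Lambda_1(\sigma\tau)=\Lambda_1(\sigma)$; or it is ramified, in which case the local component at $\wv$ of every $\mu$ contributing to $V_{\mathfrak n}$ is ramified, so that by Proposition \ref{prop_trace_relation_0} (equivalently Corollary \ref{cor_trace_relation_0}) $\Res_{v,q}^{n!}$ annihilates $V_{\mathfrak n}$. In either case the image of (\ref{eqn_trace_relations}) vanishes on $V_{\mathfrak n}$, so the map descends to $R(\scrQ)\to\bT_{\scrQ,0}$.

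Surjectivity is then immediate: $\bT_{\scrQ,0}$ is generated over $\cO$ by the image of $\cB(\scrQ)$ together with the unramified Hecke operators at split places $v\notin S\cup Q$; the former lie in the image of $R(\scrQ)$ by construction, and the latter lie in the image of $R_{S\cup Q}$, hence of $R(\scrQ)$, by local-global compatibility and the Chebotarev density theorem. The step I expect to require the most care is the verification that (\ref{eqn_trace_relations}) is killed, both because the relation must be interpreted as an identity of Hecke operators via the group determinant and because it rests on the fine analysis of parahoric-fixed vectors carried out in \S\ref{sec_hecke}, in particular the dichotomy of Proposition \ref{prop_trace_relation_0}.
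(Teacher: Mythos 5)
Your proof is correct and follows essentially the same route as the paper's: construct the map on $R_{S\cup Q}\otimes_{\cA(\scrQ)}\cB(\scrQ)\otimes_\cO\cO[\Delta_Q]$ from the group determinant $D_{U_0(\scrQ),\lambda}$ and the tautological $\cB(\scrQ)$-action (with $\Delta_Q$ acting trivially), check the two $\cA(\scrQ)$-actions agree, and kill the relations (\ref{eqn_trace_relations}) via the dichotomy of Corollary \ref{cor_trace_relation_0}. The only difference is one of citation: for the agreement of the two $\cA(\scrQ)$-actions on $S_{\scrQ,0}$ the paper invokes Corollary \ref{cor_centre_of_parahoric_hecke_algebra} (needed because $\mu_{\wv}$ may be ramified, so ``Satake parameters'' must be read via the Bernstein centre acting through the parahoric Hecke algebra), whereas you appeal somewhat loosely to local-global compatibility.
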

\begin{proof}
We recall that $R(\scrQ)$ is a quotient of $R_{S \cup Q} \otimes_{\cA(\scrQ)} \cB(\scrQ) \otimes_\cO \cO[\Delta_Q]$. The map we construct will factor through the quotient $R_{S \cup Q} \otimes_{\cA(\scrQ)} \cB(\scrQ)$. There is a surjective map $R_{S \cup Q} \otimes_\cO \cB(\scrQ) \to \bT_{\scrQ, 0}$ coming from the group determinant $D_{U_0(\scrQ), \lambda}$ and the canonical map $\cB(\scrQ) \to \bT_{\scrQ, 0}$; therefore what we need to check is first that this map factors through the quotient 
\[ R_{S \cup Q} \otimes_\cO \cB(\scrQ) \to R_{S \cup Q} \otimes_{\cA(\scrQ)} \cB(\scrQ) \] and second that for all $v \in Q$, $\tau \in I_{F_\wv}$, $\sigma \in G_{F, S \cup Q}$, its kernel contains the element
\[ \Res_{v, q}^{n!} \Lambda_1(\sigma(\Res_v^2 \tau - \Res_v e_{v, 1}(\phi_\wv) - \langle \tau \rangle \Res_v e_{v, 2}(\phi_\wv))) \]
appearing in (\ref{eqn_trace_relations}). The first claim is equivalent to the assertion that the two actions of $\cA(\scrQ)$ on $S_{\scrQ, 0}$ induced by the $R_{S \cup Q}$- and $\cB(\scrQ)$-module structures agree. This follows from Corollary \ref{cor_centre_of_parahoric_hecke_algebra}. For the second claim, it is enough to show that for each automorphic representation $\mu$ of $G(\A_{F^+})$ such that $\iota^{-1} \mu^\infty$ contributes to $S_{\scrQ, 0}$, we have the relation
\[ \Res_{v, q}^{n!} \Res_v^2 r_{\mu , \iota}(\tau) = \Res_{v, q}^{n!} \Res_v^2  \]
in $M_n(\cB_{\iota^{-1} \mu_v})$, where $\cB_{\iota^{-1} \mu_v}$ is the $\overline{\bQ}_p$-subalgebra of  $\End_{\overline{\bQ}_p}(\iota^{-1} \mu_v^{U_0(\scrQ)_v})$ generated by the image of $\cB_{\deg f_{v, 1},\deg f_{v, 2}}$. This follows from Corollary \ref{cor_trace_relation_0}: either $\Res_{v, q}^{n!} \iota^{-1} \mu_v^{U_0(\scrQ)_v} = 0$, in which case both sides are zero, or $\mu_v$ is unramified, in which case $r_{\mu, \iota}(\tau) = 1$.
\end{proof}
The space $S_\lambda(U_1(\scrQ), \cO)$ has a canonical structure of module over the ring $\cB(\scrQ)[\Delta_Q]$. Moreover,  $S_\lambda(U_1(\scrQ), \cO)$ is free over $\cO[\Delta_Q]$ and the trace map induces an isomorphism  $S_\lambda(U_1(\scrQ), \cO) \otimes_{\cO[\Delta_Q]} \cO \cong S_\lambda(U_0(\scrQ), \cO)$ (\cite[Lemma 4.6]{New19a}). We define $\bT^{S, \scrQ}_\lambda(U_1(\scrQ), \cO)$ to be the $\cO$-subalgebra of $\End_\cO(S_\lambda(U_1(\scrQ), \cO))$ generated by the unramified Hecke operators at split places $v\not\in S \cup Q$, together with the image of the ring $\cB(\scrQ)[\Delta_Q]$. Thus $\bT_\lambda^{S \cup Q}(U_1(\scrQ), \cO) \subset \bT_\lambda^{S, \scrQ}(U_1(\scrQ), \cO)$. We define $\ffrm_{1, \scrQ}$ to be the pullback of $\ffrm$ to $\bT_\lambda^{S \cup Q}(U_1(\scrQ), \cO)$ and define 
\[ S_{\scrQ, 1} = S_\lambda(U_1(\scrQ), \cO)_{\ffrm_{1, \scrQ}}, \bT_{\scrQ, 1} = \bT_\lambda^{S, \scrQ}(U_1(\scrQ), \cO)_{\ffrm_{1, \scrQ}}. \]
Thus there is a canonical surjective homomorphism $\bT_{\scrQ, 1} \to \bT_{\scrQ, 0}$ (here we apply Lemma \ref{lem_restriction_to_P_invariants}).
\begin{lemma}\label{lem_map_R_to_T_at_U_1_Q_level}
The homomorphism $R(\scrQ) \to \bT_{\scrQ, 0}$ lifts to a surjective $\cO[\Delta_Q]$-algebra homomorphism $R(\scrQ) \to \bT_{\scrQ, 1}$. 
\end{lemma}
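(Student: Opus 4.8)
The plan is to construct the homomorphism $R(\scrQ) \to \bT_{\scrQ, 1}$ directly, in close parallel with Lemma \ref{lem_map_R_to_T_at_U_0_Q_level}, the one genuinely new input being Corollary \ref{cor_trace_relation_1}. Recall that $R(\scrQ)$ is a quotient of $R_{S \cup Q} \otimes_{\cA(\scrQ)} \cB(\scrQ) \otimes_\cO \cO[\Delta_Q]$. The group determinant $D_{U_1(\scrQ), \lambda}$ of $G_F$, valued in $\bT^{S \cup Q}_\lambda(U_1(\scrQ), \cO)$, classifies a homomorphism $R_{S \cup Q} \to \bT^{S \cup Q}_\lambda(U_1(\scrQ), \cO)$; combined with the canonical homomorphism $\cB(\scrQ)[\Delta_Q] \to \bT_{\scrQ, 1}$ (the $\cB$-factors twisted by $| \cdot |^{(1-n)/2}$, as in the definition of $\bT_{\scrQ, 1}$), this gives a homomorphism $R_{S \cup Q} \otimes_\cO \cB(\scrQ)[\Delta_Q] \to \bT_{\scrQ, 1}$. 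It is surjective, because $\bT_{\scrQ, 1}$ is generated by the image of $\cB(\scrQ)[\Delta_Q]$ together with the unramified Hecke operators away from $S \cup Q$, and the latter lie in the image of $R_{S \cup Q}$ via the coefficients of $D_{U_1(\scrQ), \lambda}$.

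It remains to see that this homomorphism factors through $R(\scrQ)$, i.e.\ (i) that it factors through $R_{S \cup Q} \otimes_{\cA(\scrQ)} \cB(\scrQ) \otimes_\cO \cO[\Delta_Q]$, equivalently that the $\cA(\scrQ)$-module structures on $S_{\scrQ, 1}$ induced via $R_{S \cup Q}$ and via $\cB(\scrQ)$ agree, and (ii) that it annihilates the relations (\ref{eqn_trace_relations}). As $\bT_{\scrQ, 1}$ acts faithfully on $S_{\scrQ, 1}$, which is free over $\cO[\Delta_Q]$ and hence $\cO$-torsion free, $S_{\scrQ, 1}$ embeds into $S_{\scrQ, 1} \otimes_{\iota, \cO} \bC$, a sum of spaces $(\iota^{-1} \mu^\infty)^{U_1(\scrQ)}$ over automorphic representations $\mu$ of $G(\A_{F^+})$ with $\mu_\infty \cong \sigma_\infty$; so it suffices to verify (i) and (ii) after specializing at each such $\mu$. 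For (ii), the specialization of (\ref{eqn_trace_relations}) at $\mu$ amounts, for each $v \in Q$ and $\tau \in I_{F_\wv}$, to the identity
\[ \Res_{v, q}^{n!}\, \Res_v^2\, r_{\mu, \iota}(\tau) = \Res_{v, q}^{n!} \bigl( \Res_v\, e_{v, 1}(r_{\mu, \iota}(\phi_\wv)) + \langle \tau \rangle\, \Res_v\, e_{v, 2}(r_{\mu, \iota}(\phi_\wv)) \bigr) \]
of operators on $(\iota^{-1} \mu_v)^{\mathfrak{p}_{v, 1}}$, where $\Res_v$ and $\langle \tau \rangle$ act through $\cB(\scrQ)[\Delta_Q]$. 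Distinguish two cases. If $(\iota^{-1} \mu_v)^{\mathfrak{p}_v} \neq 0$, then $\iota^{-1} \mu_v$ is Iwahori-spherical, $(\iota^{-1} \mu_v)^{\mathfrak{p}_v} = (\iota^{-1} \mu_v)^{\mathfrak{p}_{v, 1}}$ (by exactness of taking invariants and the Jacquet-module computation in the proof of Proposition \ref{prop_trace_relation_0}), $\langle \tau \rangle$ acts trivially, and the actions of $\cA(\scrQ)$ and $\cB(\scrQ)$ on this space factor through $\Sigma_P$ (Lemma \ref{lem_restriction_to_P_invariants}); using Corollaries \ref{cor_centre_of_parahoric_hecke_algebra} and \ref{cor_trace_relation_0} together with local--global compatibility, the two $\cA(\scrQ)$-structures agree, and for (ii) either $\Res_{v, q}^{n!}$ annihilates the space (so both sides vanish) or $r_{\mu, \iota}|_{G_{F_\wv}}$ is unramified, in which case $r_{\mu, \iota}(\tau) = 1$ and the identity reduces to $e_{v, 1}(X) + e_{v, 2}(X) = \Res_v$ (cf.\ (\ref{eqn_projectors})). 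If instead $(\iota^{-1} \mu_v)^{\mathfrak{p}_v} = 0$, then $(\iota^{-1} \mu_v)^{\mathfrak{p}_{v, 1}}$ is one-dimensional and Corollary \ref{cor_trace_relation_1} --- applicable because of local--global compatibility and the $| \cdot |^{(1-n)/2}$-twist built into $\bT_{\scrQ, 1}$ --- both pins down the $\cB(\scrQ)[\Delta_Q]$-action (giving (i)) and provides the displayed identity (after multiplying by $\Res_{v, q}^{n!}$).

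Finally, the composite $R(\scrQ) \to \bT_{\scrQ, 1} \to \bT_{\scrQ, 0}$ coincides with the homomorphism of Lemma \ref{lem_map_R_to_T_at_U_0_Q_level}: both are determined by the images of the unramified Hecke operators and of $\cB(\scrQ)$, and the surjection $\bT_{\scrQ, 1} \to \bT_{\scrQ, 0}$ is compatible with $D_{U_1(\scrQ), \lambda} \mapsto D_{U_0(\scrQ), \lambda}$ under the trace map $S_\lambda(U_1(\scrQ), \cO) \to S_\lambda(U_0(\scrQ), \cO)$ and carries $\cB(\scrQ)[\Delta_Q]$ onto $\cB(\scrQ)$ with $\Delta_Q$ acting trivially (Lemma \ref{lem_restriction_to_P_invariants}). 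The constructed map is a homomorphism of $\cO[\Delta_Q]$-algebras by construction and surjective by the first paragraph, so the lemma follows. I expect the step requiring the most care to be (ii): arranging the support conditions at the Taylor--Wiles places and the $| \cdot |^{(1-n)/2}$-twist so that Corollaries \ref{cor_trace_relation_0} and \ref{cor_trace_relation_1} apply on the nose, and checking that $\Res_{v, q}^{n!}$ interacts with $(\iota^{-1} \mu_v)^{\mathfrak{p}_{v, 1}}$ exactly as needed for the displayed identity.
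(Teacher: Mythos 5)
Your proof is correct and follows essentially the same route as the paper: construct the map out of $R_{S \cup Q} \otimes_\cO \cB(\scrQ)[\Delta_Q]$ from the group determinant $D_{U_1(\scrQ), \lambda}$ and the canonical Hecke-side map, reduce the verification of the relations (\ref{eqn_trace_relations}) to each automorphic representation $\mu$ contributing to $S_{\scrQ, 1}$, and invoke Corollaries \ref{cor_trace_relation_0} and \ref{cor_trace_relation_1} according to whether $(\iota^{-1}\mu_v)^{\mathfrak{p}_v}$ vanishes. Your explicit case division, including the observation that $(\iota^{-1}\mu_v)^{\mathfrak{p}_v} = (\iota^{-1}\mu_v)^{\mathfrak{p}_{v,1}}$ in the Iwahori-spherical case and the check that the composite to $\bT_{\scrQ, 0}$ recovers Lemma \ref{lem_map_R_to_T_at_U_0_Q_level}, just fills in details the paper leaves implicit.
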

\begin{proof}
The proof is similar to the proof of Lemma \ref{lem_map_R_to_T_at_U_0_Q_level}. There is a map $R_{S \cup Q} \otimes_{\cO} \cB(\scrQ)[\Delta_Q] \to \bT_{\scrQ, 1}$ arising by tensor product of the maps $R_{S \cup Q} \to  \bT_{\scrQ, 1}$ classifying $h_{U_1(\scrQ), \lambda}$ and the canonical map $\cB(\scrQ) \otimes_\cO \cO[\Delta_Q] \to \bT_{\scrQ, 1}$. This map factors through $R_{S \cup Q} \otimes_{\cA(\scrQ)} \cB(\scrQ) \otimes_\cO \cO[\Delta_Q]$ by Proposition \ref{prop_trace_relation_1}. To complete the proof, we need to show the kernel of the resulting map contains the elements 
\[  \Res_{v, q}^{n!} \Lambda_1(\sigma(\Res_v^2 \tau - \Res_v e_{v, 1}(\phi_\wv) - \langle \tau \rangle \Res_v e_{v, 2}(\phi_\wv))), \]
or even that for each automorphic representation $\mu$ of $G(\A_{F^+})$ such that $\iota^{-1} \mu^\infty$ contributes to $S_{\scrQ, 1}$, we have the relations
\[ \Res_{v, q}^{n!} \Res_v^2 r_{\mu, \iota}(\tau) = \Res_{v, q}^{n!} \left( \Res_v e_{v, 1}(r_{\mu, \iota}(\phi_\wv)) + \langle \tau \rangle \Res_v e_{v, 2}(r_{\mu, \iota}(\phi_\wv)) \right) \]
in $M_n(\cB_{\iota^{-1}\mu_v, 1})$, where $\cB_{\iota^{-1} \mu_v, 1}$ is the $\overline{\bQ}_p$-subalgebra of  $\End_{\overline{\bQ}_p}(\iota^{-1} \mu_v^{U_1(\scrQ)_v})$ generated by the image of $\cB_{\deg f_{v, 1},\deg f_{v, 2}}[\Delta_v]$.
This follows from Corollary \ref{cor_trace_relation_0} and  Corollary \ref{cor_trace_relation_1}.
\end{proof}
	We need to control the difference between $S_\emptyset$ and $S_{\scrQ, 0}$. There is a homomorphism of $R_{S \cup Q} \otimes_{\cA(\scrQ)} \cB(\scrQ)$-modules:	
\begin{align*} f_{\scrQ} : \cB(\scrQ) \otimes_{\cA(\scrQ)}  S_\emptyset & \to S_{\scrQ, 0}  \\	
	s \otimes x & \mapsto sx	
\end{align*}	
(see Lemma \ref{lem_comparison_map_independent_of_choices}). The following result will be used to control the kernel and cokernel of $f_{\scrQ, m} = f_{\scrQ} \otimes_\cO \cO / \varpi^m \cO$ (when $\scrQ$ has level $N$ and $1 \leq m \leq N$):	
\begin{proposition}	
	Suppose that $\scrQ$ has level $N$ and that $1 \leq m \leq N$. Then the element $\prod_{v \in Q} \Res_v \in \cB(\scrQ)$ annihilates the kernel and cokernel of each of the maps $f_{\scrQ, m}$.
\end{proposition}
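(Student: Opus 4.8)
\end{proposition}
\begin{proof}
The plan is to deduce this from Proposition \ref{prop_quasi_inverse} by exhibiting $f_\scrQ$ as a tensor product, over the places $v \in Q$, of the local comparison maps $f : \cB \otimes_\cA \pi^\mathfrak{g} \to \pi^\mathfrak{p}$ of \S\ref{sec_hecke}, together with a quasi-inverse $g_\scrQ$ built from the corresponding maps $g$. First I would fix the tame level $U^Q$ and introduce the smooth $\cO[\prod_{v \in Q} \GL_n(F_\wv)]$-module $\pi^{\mathrm{aut}} = \varinjlim_{U_Q'} S_\lambda(U^Q U_Q', \cO)$ (using the isomorphisms $\iota_\wv$), on which the unramified Hecke operators away from $S \cup Q$ act commuting with $\prod_{v \in Q}\GL_n(F_\wv)$. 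Writing $\mathfrak{g}_v = \GL_n(\cO_{F_\wv})$ and $\mathfrak{p}_v$ for the parahoric attached to $n = \deg f_{v,1} + \deg f_{v,2}$, one has $S_\lambda(U, \cO) = (\pi^{\mathrm{aut}})^{\prod_v \mathfrak{g}_v}$ and $S_\lambda(U_0(\scrQ), \cO) = (\pi^{\mathrm{aut}})^{\prod_v \mathfrak{p}_v}$, and after localising at $\ffrm$ (resp.\ its pullback $\ffrm_{0,\scrQ}$) the map $f_\scrQ$ is identified with the iterate over $v \in Q$ of the maps $f$ applied to $\pi = \pi^{\mathrm{aut}}$. (The formation of $S_\lambda(V, -)$ is compatible with the base change $\cO \to \cO/\varpi^m\cO$ since $\cV_\lambda$ is $\cO$-free, and localisation at $\ffrm$ is exact, so the same description holds for $f_{\scrQ,m}$.)

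Next I would reduce modulo $\varpi^m$. Because $\scrQ$ has level $N \geq m$, we have $q_v \equiv 1 \bmod p^N$, hence $q_v - 1 \in p^N\cO \subseteq \varpi^m\cO$, so $\pi^{\mathrm{aut}}_m := \pi^{\mathrm{aut}} \otimes_\cO \cO/\varpi^m\cO$ is a smooth module on which $q_v - 1$ acts as zero for each $v \in Q$ --- exactly the hypothesis of Proposition \ref{prop_quasi_inverse}. One point to dispatch is the twist by $|\cdot|^{(1-n)/2}$ built into the $\cB(\scrQ)$-action on $S_\lambda(U_0(\scrQ),\cO)$: it is an automorphism of $\cB(\scrQ)$ which rescales each $\Res_v$ by a power of $q_v^{1/2}$, a unit of $\cO$ since $v \nmid p$, so it does not change the ideal generated by $\prod_{v \in Q}\Res_v$. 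With these preliminaries in place, I would apply Proposition \ref{prop_quasi_inverse} at each $v \in Q$ separately, obtaining maps $g_v$ in the opposite direction with $f_v g_v = g_v f_v = \Res_v$; since the operators attached to distinct places of $Q$ commute, composing them over $v \in Q$ and localising yields a map $g_\scrQ$ in the direction opposite to $f_{\scrQ,m}$ such that $f_{\scrQ,m} g_\scrQ$ and $g_\scrQ f_{\scrQ,m}$ are both multiplication by $\prod_{v \in Q}\Res_v$.

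Granting this, the proposition follows formally: if $f_{\scrQ,m}(x) = 0$ then $\prod_{v \in Q}\Res_v \cdot x = g_\scrQ f_{\scrQ,m}(x) = 0$, so $\prod_{v \in Q}\Res_v$ annihilates the kernel; and for any $y$ in the target, $\prod_{v \in Q}\Res_v \cdot y = f_{\scrQ,m}(g_\scrQ(y)) \in \im(f_{\scrQ,m})$, so $\prod_{v \in Q}\Res_v$ annihilates the cokernel.

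The step I expect to be the main obstacle is the first: carefully matching the global map $f_\scrQ$, with the $\cB(\scrQ)$- and $\cA(\scrQ)$-module structures (and their twists) on $S_\emptyset$ and $S_{\scrQ,0}$, against the iterated local construction of \S\ref{sec_hecke}, and checking that passing to $\mathfrak{g}_v$- and $\mathfrak{p}_v$-invariants, reducing mod $\varpi^m$, and localising all commute with one another and with the local maps $f_v, g_v$. Once that bookkeeping is done --- and once $q_v - 1$ is seen to vanish in $\cO/\varpi^m\cO$, which is where $m \leq N$ is used --- the rest is a direct appeal to Proposition \ref{prop_quasi_inverse}.
\end{proof}
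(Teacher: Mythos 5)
Your proposal is correct and follows essentially the same route as the paper: both realise $S_\lambda(U,\cO)/(\varpi^m)$ and $S_\lambda(U_0(\scrQ),\cO)/(\varpi^m)$ as $\prod_v\mathfrak{g}_v$- and $\prod_v\mathfrak{p}_v$-invariants of a smooth $\cO/(\varpi^m)[\prod_{v\in Q}\GL_n(F_{\wv})]$-module killed by $q_v-1$ (using $N\geq m$ and that $U$ is sufficiently small), and then invoke Proposition \ref{prop_quasi_inverse} to produce a quasi-inverse $g_\scrQ$ with $f_\scrQ g_\scrQ = g_\scrQ f_\scrQ = \prod_{v\in Q}\Res_v$ modulo $\varpi^m$. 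Your extra remarks on the unramified twist and on commuting the various localisations are correct elaborations of points the paper leaves implicit.
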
	
\begin{proof}	
	By Proposition \ref{prop_quasi_inverse}, there is a morphism $g_{\scrQ} : S_{\scrQ, 0}  \to \cB(\scrQ) \otimes_{\cA(\scrQ)} S_\emptyset $ of $R_{S \cup Q} \otimes_{\cA(\scrQ)} \cB(\scrQ)$-modules such that $f_{\scrQ} g_{\scrQ} \text{ mod }\varpi^m$ and $g_{\scrQ} f_{\scrQ} \text{ mod }\varpi^m$ are both given by multiplication by $\prod_{v \in Q} \Res_v$. This implies the desired result. (Here we use that $U$ is sufficiently small, cf. the discussion on \cite[p. 98]{cht}, so that e.g.\ $S_{\scrQ, 0} / (\varpi^m)$ may be viewed as the space of $\prod_{v \in Q} \p_v$-invariants in a suitable $\cO / (\varpi^m)[\prod_{v \in Q} \GL_n(F_\wv)]$-module.)	
\end{proof}

\subsection{Patching}	

We now collect together the data necessary to carry out the patching argument. We argue using ultrafilters, following Pan \cite{Lue} in a similar way to \cite{New19a}.	
\begin{itemize}	
	\item First fix $q = \operatorname{corank}_\cO H^1(F_S / F^+, \ad r(1) \otimes_\cO E /  \cO)$. Let $R_\infty = \cO \llbracket x_1, \dots, x_q \rrbracket$.	
	\item We next fix $d \geq 1$, elements $\sigma_1, \dots, \sigma_q \in G_{F(\zeta_{p^\infty})}$, and factorisations $\det(X - \rho(\sigma_i)) = f_{i, 1}(X) f_{i, 2}(X)$ satisfying the conclusion of  Proposition \ref{prop_existence_of_TW_primes}. For each $N > d$, we can find a Taylor--Wiles datum 	
	\[ \scrQ_N = (Q_N, \widetilde{Q}_N, (f_{v, 1}(X))_{v \in Q}, (f_{v, 2}(X))_{v \in Q}), \] where $Q_N = \{ v_{N, 1}, \dots, v_{N, q} \},$  and the following additional conditions are satisfied:	
	\begin{itemize}	
		\item The characteristic polynomials of $\Frob_{\wv_{N, i}}$ and $\sigma_i$ over $R(\emptyset) / \ffrm_{R(\emptyset)}^N$ agree.	
		\item The characteristic polynomials of $\Frob_{\wv_{N, i}}$ and $\sigma_i$ over $\bT_{\emptyset} / (\varpi^N)$ agree.	
		\item $\rho(\Frob_{\wv_{N, i}}) \equiv \rho(\sigma_i) \text{ mod }\varpi^N$ for each $i = 1, \dots, q$.	
		\item For each $i = 1, \dots, q$, we have $f_{i, 1}(X) \equiv f_{v_{N, i}, 1}(X) \text{ mod }\varpi^N$ and $f_{i, 2}(X) \equiv f_{v_{N, i}, 2}(X) \text{ mod }\varpi^N$. Moreover, $\ord_\varpi \Res(f_{v_{N, i}, 1}, f_{v_{N, i}, 2}) \leq d$.	
	\end{itemize} 	
	(This is possible by the Chebotarev density theorem and Hensel's lemma.) We write $R_N = R(\scrQ_N)$ and $\cA_N = \cA(\scrQ_N)$, $\cB_N = \cB(\scrQ_N)$. We write $P_N = P(\scrQ_N) = \ker(R_N \to \cO)$.	
	\item We set $S_\infty = \cO \llbracket \bZ_p^q \rrbracket$ and fix for each $N \geq 1$ a surjection $\bZ_p^q \to \Delta_{Q_N}$. This gives each ring $R(\scrQ_N)$ the structure of $S_\infty$-algebra. We write $\mathbf{a}_\infty \subset S_\infty$ for the augmentation ideal. We also set $\cA_0 = \otimes_{i=1}^q \bZ[e_1, \dots, e_n]$ and $\cB_0 = \otimes_{i=1}^q \bZ[a_1, \dots, a_{\deg f_{i, 1}}, b_{1}, \dots, b_{\deg f_{v, 2}}]$. The choice of elements $\sigma_1, \dots, \sigma_q$ gives $R(\emptyset)$ the structure of $\cA_0$-algebra. We define $R_0 = R(\emptyset) \otimes_{\cA_0} \cB_0$. There are isomorphisms $\cA_0 \cong \cA_N$ and $\cB_0 \cong \cB_N$ for any $N \geq 1$. We define $P_0 \subset \cA_0$ to be the kernel of the map $R_0 \to \cO$ associated to $\mathcal{DET}(\rho)$ and factorisations $f_i(X) = f_{i, 1}(X) f_{i, 2}(X)$ ($i = 1, \dots, q$).	
	\item Finally, we fix a non-principal ultrafilter $\cF$ on $\{ N \in \bN \mid N > d \}$, and set $\bR = \prod_{N > d} \cO$. If $I \in \cF$, then we define $e_I = (\delta_{N \in I})_{N > d} \in \bR$. Then $e_I$ is an idempotent and $S = \{ e_I \mid I \in \cF \}$ is a multiplicative subset of $\bR$, and we define $\bR_\cF = S^{-1} \bR$. Note that the map $\bR \to \bR_\cF$ factors through $\prod_{N \geq m} \cO$ for any $m > d$.	
\end{itemize}	
We remark that if $N > d$ then there is no canonical map $R_N \to R_0$, but our choice of Taylor--Wiles data means that the map $R_{S \cup Q_N} \otimes_\cO \cB_N \to R_S \otimes_\cO \cB_0$ descends to a surjection $R_N \to R_0 / \ffrm_{R(\emptyset)}^N$. Similarly, there is a canonical surjection $R_N \to \bT_\emptyset / (\varpi^N) \otimes_{\cA_0} \cB_0$.	
We define modules:	
\begin{itemize}	
	\item $M_1 = \varprojlim_m \bR_\cF \otimes_\bR \prod_{N \geq m} S_{\scrQ, 1} / (\mathfrak{m}_{S_\infty}^m)$.	
	\item $M_0 = \varprojlim_m \bR_\cF \otimes_\bR \prod_{N \geq m} S_{\scrQ, 0} /(\varpi^m)$.	
	\item $M = \varprojlim_m \bR_\cF \otimes_\bR \prod_{N \geq m} S_{\emptyset} \otimes_{\cA_N} \cB_N / (\varpi^m)$.	
\end{itemize}	
When $N \geq m$, the $\cA_N$-- and $\cA_0$--actions on $S_\emptyset / (\varpi^m)$ are the same. Thus there is a natural isomorphism $M \cong S_\emptyset \otimes_{\cA_0} \cB_0$, where $\cA_0$ acts via the canonical map $\cA_0 \to R_\emptyset \to \bT_\emptyset$.	
\begin{lemma}\label{lem_comparison_maps_for_patched_mdoules}	
	\begin{enumerate}	
		\item $M_1$ is a flat $S_\infty$-module.	
		\item The trace maps induce an isomorphism $M_1 \otimes_{S_\infty} \cO \cong M_0$. 	
		\item The maps $f_{\scrQ_N, m}$ induce a map $f : M \to M_0$, with kernel and cokernel annihilated by $(\prod_{v \in Q_N} \Res_v^2)_{N > d} \in \prod_{N > d} R(\scrQ_N)$.	
	\end{enumerate}	
\end{lemma}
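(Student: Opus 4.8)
The plan is to run the ultrafilter patching formalism of \cite{Lue}, exactly as in \cite[\S 4]{New19a}, simply carrying the idempotent-type factors $\Res_v$ along; each of the three assertions then reduces to the corresponding input at finite level combined with a routine limiting argument.

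For (1), I would first invoke \cite[Lemma 4.6]{New19a}, by which $S_\lambda(U_1(\scrQ_N), \cO)_{\ffrm_{1, \scrQ_N}}$ is finite free over $\cO[\Delta_{Q_N}]$. Since $q_v \equiv 1 \bmod p^N$ for each $v \in Q_N$, the fixed surjection $\bZ_p^q \twoheadrightarrow \Delta_{Q_N}$ induces an isomorphism $S_\infty / \mathfrak{m}_{S_\infty}^m \xrightarrow{\sim} \cO[\Delta_{Q_N}]/\mathfrak{m}^m$ whenever $N \geq m$, so $S_{\scrQ_N, 1}/\mathfrak{m}_{S_\infty}^m$ is finite free over $\overline{S}_m := S_\infty/\mathfrak{m}_{S_\infty}^m$, of rank equal to the $\cO[\Delta_{Q_N}]$-rank $\tilde{r}_N$ of $S_{\scrQ_N, 1}$. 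The crucial point---and the step I expect to be the main obstacle---is that $\tilde{r}_N$ is independent of $N$. By the trace isomorphism of \cite[Lemma 4.6]{New19a} it equals $\operatorname{rank}_\cO S_{\scrQ_N, 0}$; the comparison map $f_{\scrQ_N} \colon \cB(\scrQ_N) \otimes_{\cA(\scrQ_N)} S_\emptyset \to S_{\scrQ_N, 0}$ has kernel and cokernel killed by the nonzero element $\prod_{v \in Q_N} \Res_v \in \cO$ (the proposition at the end of \S\ref{subsec_start_of_the_proof}), so $f_{\scrQ_N} \otimes_\cO E$ is an isomorphism and
\[ \operatorname{rank}_\cO S_{\scrQ_N, 0} = \operatorname{rank}_\cO \bigl( \cB(\scrQ_N) \otimes_{\cA(\scrQ_N)} S_\emptyset \bigr) = \Bigl( \prod_{i=1}^q \binom{n}{\deg f_{i, 1}} \Bigr) \operatorname{rank}_\cO S_\emptyset, \]
which does not depend on $N$ because $q$ and the data $\sigma_i$, $f_{i, j}$ were fixed once and for all; call this common rank $B$. (Without a uniform bound of this kind the patched module would have infinite $S_\infty$-rank, so this is precisely where the structural features of the construction---fixed $q$, fixed degrees, and the $\Res$-controlled comparison map---enter.) Now $\overline{S}_m$ is a finite ring, $\{ N \geq m \} \in \cF$, and finite-rank free $\overline{S}_m$-modules are finitely presented, so applying $\bR_\cF \otimes_\bR \prod_{N \geq m}(-)$ to a family of free $\overline{S}_m$-modules of constant rank $B$ yields a free $\overline{S}_m$-module of rank $B$ (using $\bR_\cF / \varpi^m = \cO/\varpi^m$); passing to the limit over $m$ and using $\mathfrak{m}_{S_\infty}$-adic completeness then shows $M_1$ is free of rank $B$ over $S_\infty = \varprojlim_m \overline{S}_m$, in particular flat.

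For (2), since $M_1$ is finite free over $S_\infty$ by (1), the functor $- \otimes_{S_\infty} \cO$ commutes with the defining limit and ultraproduct; moreover a direct computation shows that the image of $\mathfrak{m}_{S_\infty}^m$ in $\cO = S_\infty/\mathbf{a}_\infty$ is exactly $\varpi^m \cO$, so that $S_{\scrQ_N, 1}/\mathfrak{m}_{S_\infty}^m \otimes_{S_\infty} \cO = S_{\scrQ_N, 1}/(\mathbf{a}_\infty + \varpi^m)S_{\scrQ_N, 1} = S_{\scrQ_N, 0}/\varpi^m$ by the trace isomorphism of \cite[Lemma 4.6]{New19a}, with no re-indexing needed. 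Applying $\varprojlim_m \bR_\cF \otimes_\bR \prod_{N \geq m}(-)$ to these identifications then yields $M_1 \otimes_{S_\infty} \cO \cong M_0$.

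For (3), I would set $f = \varprojlim_m \bigl( \bR_\cF \otimes_\bR \prod_{N \geq m} f_{\scrQ_N, m} \bigr) \colon M \to M_0$, using that each $f_{\scrQ_N}$ is the $\cB(\scrQ_N)$-module map of Lemma \ref{lem_comparison_map_independent_of_choices}. Since arbitrary products are exact and $\bR_\cF \otimes_\bR(-)$ is an exact localization, the kernel and cokernel of $\bR_\cF \otimes_\bR \prod_{N \geq m} f_{\scrQ_N, m}$ are killed by $\bigl( \prod_{v \in Q_N} \Res_v \bigr)_{N \geq m}$ (the proposition at the end of \S\ref{subsec_start_of_the_proof}, which applies since $m \leq N$). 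Left exactness of $\varprojlim_m$ identifies $\ker f$ with the limit of these kernels, so it is killed by $\bigl( \prod_{v \in Q_N} \Res_v \bigr)_{N > d}$, a fortiori by $\bigl( \prod_{v \in Q_N} \Res_v^2 \bigr)_{N > d}$. For the cokernel, the standard $\varprojlim$ and $\varprojlim^1$ exact sequences show that $\operatorname{coker} f$ carries a two-step filtration whose graded pieces embed respectively into $\varprojlim^1_m$ of the kernels and into $\varprojlim_m$ of the cokernels of the $f_{\scrQ_N, m}$---both killed by $\bigl( \prod_{v \in Q_N} \Res_v \bigr)_{N > d}$---so $\operatorname{coker} f$ is killed by $\bigl( \prod_{v \in Q_N} \Res_v^2 \bigr)_{N > d}$. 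Everything beyond these points is the bookkeeping of \cite[\S 4]{New19a}, carried through with the extra factors $\Res_v$.
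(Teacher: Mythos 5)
Parts (2) and (3) of your argument are essentially correct and agree with the intended one (the paper's proof of this lemma is simply ``the same as the proof of \cite[Lemma 4.13]{New19a}''): applying the functor $\varprojlim_m \bR_\cF \otimes_\bR \prod_{N \geq m}(-)$ levelwise to the trace isomorphisms of \cite[Lemma 4.6]{New19a} gives (2), and your $\varprojlim$/${\varprojlim}^1$ bookkeeping correctly accounts for the passage from $\prod_{v}\Res_v$ at finite level to $\prod_v \Res_v^2$ in the statement of (3).

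The gap is in part (1), at exactly the step you flag as the main obstacle. You assert that $f_{\scrQ_N} \otimes_\cO E$ is an isomorphism because its kernel and cokernel are killed by ``the nonzero element $\prod_{v \in Q_N} \Res_v \in \cO$''. But $\Res_v$ is not an element of $\cO$: it lies in $\cB(\scrQ_N)$ and acts on $S_{\scrQ_N, 0}$ as a Hecke operator (via $\Sigma_P$, twisted) and on $\cB_N \otimes_{\cA_N} S_\emptyset$ by multiplication in $\cB_N$. It can be a zero divisor even after inverting $p$: on $\cB_N \otimes_{\cA_N} (S_\emptyset \otimes E)$ it vanishes along the locus of factorisations of $\det(X - \rho_\mu(\Frob_\wv))$ whose two factors share a root, which is nonempty whenever some eigensystem $\mu$ contributing to $S_\emptyset$ has a repeated Frobenius eigenvalue at $\wv$; and $S_{\scrQ_N,0} \otimes E$ receives contributions from forms ramified at $Q_N$. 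Only the scalar $f_\scrQ(\Res_v) \in \cO$, i.e.\ the image of $\Res_v$ under the character cut out by $\rho$ and the chosen factorisation, is known to be nonzero, and this is only exploited after localising at $P^\prm$ --- which is precisely what Proposition \ref{prop_S_infty_freeness}(1) does later. Consequently your common rank $B$ does not exist: since no hypothesis is placed on $\overline{\rho}$ (it may be trivial), $\operatorname{rank}_\cO S_{\scrQ_N, 0}$ need not be bounded in $N$ (the index $[U : U_0(\scrQ_N)]$ grows with $q_v$ and the localisation at $\ffrm_{0, \scrQ_N}$ need not cut it down), and $M_1$ is not finitely generated over $S_\infty$; finite generation is only obtained in Proposition \ref{prop_S_infty_freeness}(3) for the localised quotient $\mrm_1$, by a Nakayama-type argument from $\dim_E \mrm_0 < \infty$. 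The repair is that flatness requires no finite generation: for $N \geq m$ each $S_{\scrQ_N, 1}/(\mathfrak{m}_{S_\infty}^m)$ is free, hence flat, over the Artinian ring $S_\infty / \mathfrak{m}_{S_\infty}^m$; a product of flat modules over an Artinian (hence coherent) ring is flat; $\bR_\cF \otimes_\bR (-)$ is a localisation and preserves flatness; and the inverse limit is flat over $S_\infty$ by the local criterion for flatness applied to this $\mathfrak{m}_{S_\infty}$-adically compatible system. That is the argument of \cite[Lemma 4.13]{New19a}, which the paper invokes verbatim.
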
	
\begin{proof}	
	The proof is the same as the proof of \cite[Lemma 4.13]{New19a}.	
\end{proof}	
We define	
\[ R^\prm = \varprojlim_m \bR_\cF \otimes_\bR \prod_{N > d} R_N / (\ffrm_{R_{S \cup Q_N}} \prod_{v \in Q_N} \Res_v^2)^m. \]	
Then $M_1$, $M_0$, and $M$ have natural structures of $R^\prm$-modules with respect to which the maps of Lemma \ref{lem_comparison_maps_for_patched_mdoules} are morphisms of $R^\prm$-modules (same proof as \cite[Lemma 4.15]{New19a}). There is also a natural map	
\[ R^\prm \to \varprojlim_m \bR_\cF \otimes_\bR \prod_{N > d} R_0 / \ffrm_{R(\emptyset)}^m\cong R_0. \]	
\begin{lemma}	
	\begin{enumerate} \item The map $R^\prm \to  R_0$ just defined is surjective.  The action of $R^\prm$ on $M$ factors through this map.	
		\item Let $P^p$ denote the pre-image of $P_0$ under this map. Then $P^p$ equals the image of $\prod_{N > d} P_N \subset \prod_{N > d} R_N$ under the map $\prod_{N > d} R_N\to R^\prm$.	
		\item For each $k \geq 1$, the ideal $(P^\prm)^k$ equals the image of $\prod_{N> d} P_N^k \subset \prod_{N > d} R_N$ in $R^\prm$.	
	\end{enumerate}	
\end{lemma}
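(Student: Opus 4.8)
The plan is to follow the argument of the corresponding lemma in \cite[\S 4]{New19a} (compare the proof surrounding \cite[Lemma 4.15]{New19a}); the only genuinely new features are the auxiliary rings $\cA_0 \cong \cA_N$, $\cB_0 \cong \cB_N$ and the resultant factors $\prod_{v \in Q_N} \Res_v^2$ occurring in the definition of $R^\prm$, and these require only bookkeeping.

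For (1): since $\cB_0$ is free of finite rank over $\cA_0$ (first lemma of \S\ref{sec_different}), $R_0 = R(\emptyset) \otimes_{\cA_0} \cB_0$ is finite over the complete Noetherian local ring $R(\emptyset)$, so every $R_0/\ffrm_{R(\emptyset)}^m R_0$ has finite length over $\cO$, $R_0 = \varprojlim_m R_0/\ffrm_{R(\emptyset)}^m R_0$, and the ultraproduct of the constant ring $R_0/\ffrm_{R(\emptyset)}^m R_0$ is $R_0/\ffrm_{R(\emptyset)}^m R_0$ itself. For $N \geq m$ the given surjection $R_N \twoheadrightarrow R_0/\ffrm_{R(\emptyset)}^N R_0$ carries $\ffrm_{R_{S \cup Q_N}}$ into $\ffrm_{R(\emptyset)} R_0$ (it is induced by a local homomorphism $R_{S\cup Q_N} \to R_0$), hence carries $(\ffrm_{R_{S \cup Q_N}} \prod_v \Res_v^2)^m$ into $\ffrm_{R(\emptyset)}^m R_0$; composing with $R_0/\ffrm_{R(\emptyset)}^N R_0 \to R_0/\ffrm_{R(\emptyset)}^m R_0$, taking products over $N \geq m$ and applying $\bR_\cF \otimes_\bR (-)$, we get a surjection onto $R_0/\ffrm_{R(\emptyset)}^m R_0$ at each level. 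As the transition maps in the inverse system defining $R^\prm$ are surjective, this yields the surjection $R^\prm \to R_0$. That the $R^\prm$-action on $M$ factors through it is checked as in \emph{loc.\ cit.}: under the identification $M \cong S_\emptyset \otimes_{\cA_0} \cB_0$ (with $\cA_0$ acting through $\cA_0 \to R(\emptyset) \to \bT_\emptyset$), the $R^\prm$-action and the composite $R^\prm \to R_0 \to \End_\cO(M)$ are both induced by the map $R(\emptyset) \to \bT_\emptyset$ on $S_\emptyset$ together with the tautological $\cB_0$-action, the componentwise $R_N$-actions factoring through the canonical surjections $R_N \to \bT_\emptyset/(\varpi^N) \otimes_{\cA_0} \cB_0$.

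For (2), the inclusion ``$\supseteq$'' follows by inspection: for $N \geq m$ the maps $R_N \to \cO/(\varpi^m)$ coming from $f_{\scrQ_N}$ and from $R_N \to R_0/\ffrm_{R(\emptyset)}^N R_0 \to \cO/(\varpi^m)$ agree, because the factorisations $f_{v_{N,i},j}$ and $f_{i,j}$ (used to define $f_{\scrQ_N}$ and $P_0$, respectively) agree modulo $\varpi^N$; hence each $P_N$ maps into $(P_0 + \ffrm_{R(\emptyset)}^m R_0)/\ffrm_{R(\emptyset)}^m R_0$, and $\varprojlim_m (P_0 + \ffrm_{R(\emptyset)}^m R_0)/\ffrm_{R(\emptyset)}^m R_0 = P_0$ by completeness of $\cO$. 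The inclusion ``$\subseteq$'' is the ultrafilter diagonalisation of \cite{New19a}: one lifts $x \in P^\prm$ to a compatible system in the $\bR_\cF \otimes_\bR \prod_N R_N/(\ffrm_{R_{S\cup Q_N}} \prod_v \Res_v^2)^m$ and, using surjectivity of $R_N \to R_0/\ffrm_{R(\emptyset)}^N R_0$ and of $f_{\scrQ_N} : R_N \to \cO$, corrects the $N$-th component into $P_N$. For (3) one uses that the number of generators of $P_N \subset R_N$ is bounded independently of $N$ — by Proposition \ref{prop_existence_of_TW_primes} and the uniform bound on the number of generators of $\ffrm_{R_{S\cup Q_N}}/\ffrm_{R_{S\cup Q_N}}^2$ used in its proof — so $P_N^k$ is generated by the boundedly many $k$-fold products of a fixed generating set of $P_N$; hence the image of $\prod_{N>d} P_N^k$ in $R^\prm$ is the ideal generated by the images of those products, which equals $(P^\prm)^k$ by part (2).

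I expect the main obstacle to be the ``$\subseteq$'' part of (2): one must carry out the ultrafilter lifting compatibly with the two interlaced filtrations — the $\ffrm_{R(\emptyset)}$-adic one controlling $R^\prm \to R_0$ and the $\varpi$-adic one controlling $R^\prm$ — exactly as in \cite{New19a}. The resultant factors $\prod_v \Res_v^2$ are carried along throughout but cause no new difficulty, since $\ord_\varpi \operatorname{Res}(f_{v_{N,i},1}, f_{v_{N,i},2}) \leq d$ is uniformly bounded.
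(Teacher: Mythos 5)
Your proposal is correct and takes essentially the same route as the paper, whose entire proof is a citation: part (1) is ``proved in the same way as [New19a, Lemma 4.17]'' and parts (2)--(3) ``in the same way as [New19a, Lemma 4.19, Lemma 4.20]''. You have simply unwound those arguments and checked the (routine) adaptations needed for the new ingredients --- the rings $\cA_0 \cong \cA_N$, $\cB_0 \cong \cB_N$, the resultant factors $\prod_{v\in Q_N}\Res_v^2$ in the definition of $R^{\mathrm{p}}$, and the mod-$\varpi^N$ agreement of the factorisations --- which is exactly what the paper's proof implicitly asks the reader to do.
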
	
\begin{proof}	
	The first part is proved in the same way as \cite[Lemma 4.17]{New19a}. The second and third parts can be proved in the same way as \cite[Lemma 4.19, Lemma 4.20]{New19a}.	
\end{proof}	
For each $N > d$, Proposition \ref{prop_existence_of_TW_primes} implies the existence of a map 
\[ R_\infty = \cO \llbracket x_1, \dots, x_q \rrbracket \to R_N \]
 which sends $x_1, \dots, x_q$ into $P_N$, and such that $P_N / (P_N^2, x_1, \dots, x_q)$ has uniformly bounded length (as $\cO$-module). There is an induced map $R_\infty \to R^\prm$ which sends the ideal $P_\infty = (x_1, \dots, x_q)$ into $P^\prm$.	
\begin{proposition}\label{prop_completed_patched_def_ring}	
	The natural map $R_\infty \to R^\prm$ induces a surjection $(R_\infty)^{\widehat{}}_{P_\infty} \to (R^\prm)^{\widehat{}}_{P^\prm}$ on completed local rings. In particular, $(R^\prm)^{\widehat{}}_{P^\prm} \in \cC_E$.	
\end{proposition}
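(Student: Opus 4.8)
The plan is to deduce this from Proposition~\ref{prop_existence_of_TW_primes} by a topological Nakayama argument, exactly as in the corresponding step of \cite{New19a}. The key elementary observation is that $\varpi$ becomes a unit after localizing at the relevant primes: since the map $R^\prm \to R_0$ is surjective and carries $P^\prm$ onto $P_0$, we have $R^\prm/P^\prm = R_0/P_0 = \cO$, so $P^\prm \not\ni \varpi$ and $(R^\prm)^{\widehat{}}_{P^\prm}$ is an $E$-algebra. Likewise $(R_\infty)^{\widehat{}}_{P_\infty}$ is the completion of the localization of $R_\infty = \cO\llbracket x_1, \dots, x_q\rrbracket$ at the height-$q$ prime $P_\infty = (x_1, \dots, x_q)$; this localization inverts $\varpi$, and by the Cohen structure theorem $(R_\infty)^{\widehat{}}_{P_\infty} \cong E\llbracket x_1, \dots, x_q\rrbracket$ is a regular complete Noetherian local $E$-algebra with residue field $E$. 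Write $A := (R_\infty)^{\widehat{}}_{P_\infty}$ and $B := (R^\prm)^{\widehat{}}_{P^\prm}$, with maximal ideals $\ffrm_A$, $\ffrm_B$, and let $\ffrm_A B = (x_1, \dots, x_q) B \subseteq \ffrm_B$ denote the image of $\ffrm_A$.

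Next I would extract from Proposition~\ref{prop_existence_of_TW_primes}, applied along the tower $(\scrQ_N)_{N > d}$, the uniform statement that the images of $x_1, \dots, x_q$ lie in $P_N$ and $P_N/(P_N^2, x_1, \dots, x_q)$ has $\cO$-length $\leq d$, whence $\varpi^d P_N \subseteq (P_N^2, x_1, \dots, x_q)$ in $R_N$ for every $N > d$. Since this relation holds in each $R_N$, it holds in every quotient $R_N/(\ffrm_{R_{S \cup Q_N}}\prod_{v\in Q_N}\Res_v^2)^m$, hence in the ultraproduct, hence in the limit $R^\prm$; using that $(P^\prm)^k$ is the image of $\prod_{N>d}P_N^k$ and that $R_\infty \to R^\prm$ sends $(x_1, \dots, x_q)$ into $P^\prm$, we obtain $\varpi^d P^\prm \subseteq (P^\prm)^2 + (x_1, \dots, x_q)R^\prm$. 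Localizing at $P^\prm$ and completing, this gives $\varpi^d \ffrm_B \subseteq \ffrm_B^2 + \ffrm_A B$; and since $\varpi$ is a unit in $B$, we conclude $\ffrm_B = \ffrm_B^2 + \ffrm_A B$, and therefore $\ffrm_B = \ffrm_A B + \ffrm_B^k$ for all $k \geq 1$ by iteration.

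Finally, because $B$ is complete and separated for the $\ffrm_B$-adic topology and $A$ is Noetherian, a topological Nakayama argument (compare \cite[\S 4]{New19a}) shows that $A \to B$ is surjective: passing to $\overline{B} = B/\ffrm_A B$ one sees that its maximal ideal $\ffrm_B/\ffrm_A B$ is idempotent and hence zero, so $\ffrm_A B = \ffrm_B$, the $\ffrm_A$-adic and $\ffrm_B$-adic topologies on $B$ agree, $B = \mathrm{image}(A) + \ffrm_A^k B$ for all $k$, and completeness together with the fact that $\ker(A \to B)$ is closed in the Noetherian ring $A$ lets one conclude $B = \mathrm{image}(A)$. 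The final assertion then follows at once: $B$ is a quotient of the complete Noetherian local $E$-algebra $A$, hence lies in $\cC_E$. I expect the main obstacle to be the second paragraph — one must keep careful track of the three operations entering the definition of $R^\prm$ (the ultraproduct over $\cF$, the completion along the ideals $\ffrm_{R_{S \cup Q_N}}\prod_{v\in Q_N}\Res_v^2$, whose precise shape is forced by the kernel/cokernel estimates of \S\ref{sec_hecke}, and the final localization and completion at $P^\prm$) in order to see that the integral bounds of Proposition~\ref{prop_existence_of_TW_primes} really do descend to the clean cotangent-space identity $\ffrm_B = \ffrm_B^2 + \ffrm_A B$ once $\varpi$ is inverted. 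The Nakayama step itself is then formal.
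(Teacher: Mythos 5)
Your argument is correct and is essentially the paper's own proof, which simply defers to \cite[Proposition 4.22]{New19a}: the uniform bound $\varpi^d P_N \subseteq (P_N^2, x_1, \dots, x_q)$ supplied by Proposition \ref{prop_existence_of_TW_primes} descends to $\varpi^d P^\prm \subseteq (P^\prm)^2 + (x_1,\dots,x_q)R^\prm$ via the description of $(P^\prm)^k$ as the image of $\prod_{N>d} P_N^k$, and inverting $\varpi$ together with topological Nakayama gives the surjection. One minor caution: since $R^\prm$ need not be Noetherian, its completion at $P^\prm$ is not obviously complete for its own maximal-adic topology, so it is safer to run your Nakayama step at finite level --- showing $(R_\infty)/\ffrm_{R_\infty}^k \onto C/\p_C^k$ for the localization $C = (R^\prm)_{P^\prm}$ and each $k$, then passing to the inverse limit (Mittag--Leffler holds as the $(R_\infty)/\ffrm_{R_\infty}^k$ are Artinian) --- rather than assuming $B$ is $\ffrm_B$-adically complete and separated.
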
	
\begin{proof}	
	The proof is the same as the proof of \cite[Proposition 4.22]{New19a}.	
\end{proof}	
We next define quotients of our patched modules as follows:	
\begin{itemize}	
	\item $\mrm_1 = (M_1 / \mathbf{a}_\infty^2)_{P^\prm}$.	
	\item $\mrm_0 = (M_0)_{P^\prm}$.	
	\item $\mrm = M_{P^p} = M_{P_0}$.	
\end{itemize}	
\begin{proposition}\label{prop_S_infty_freeness}	
	\begin{enumerate}	
		\item The map $f : M \to M_0$ induces an isomorphism $\mrm \to \mrm_0$.	
		\item The trace maps induce an isomorphism $\mrm_1 / (\mathbf{a}_\infty) \cong \mrm_0$.	
		\item $\mrm_1$ is a finite free $S_{\infty, \mathbf{a}_\infty} / (\mathbf{a}_\infty^2)$-module.	
	\end{enumerate}	
\end{proposition}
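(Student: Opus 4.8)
The plan is to read off all three parts from Lemma~\ref{lem_comparison_maps_for_patched_mdoules} (flatness of $M_1$ over $S_\infty$, the trace isomorphism $M_1\otimes_{S_\infty}\cO\cong M_0$, and the comparison map $f\colon M\to M_0$) together with the finiteness of $M_1$ over $S_\infty$, in close analogy with \cite{New19a}; the only genuinely new point is the appearance of the resultant factors, which intervenes in part~(1).

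For part~(1) I would take the $R^\prm$-linear map $f\colon M\to M_0$ of Lemma~\ref{lem_comparison_maps_for_patched_mdoules}(3), whose kernel and cokernel are annihilated by $x=(\prod_{v\in Q_N}\Res_v^2)_{N>d}$, and localise at $P^\prm$. Because the $R^\prm$-action on $M$ factors through $R_0$, localising $M$ at $P^\prm$ coincides with localising the $R_0$-module $M$ at $P_0$; and since $R^\prm/P^\prm\cong\cO$ the ideal $P^\prm$ is prime, so it is enough to check $x\notin P^\prm$, i.e.\ that the image of $x$ in $R_0$ does not lie in $P_0$. That image is the product of the squares of the resultants attached to the factors of $\cB_0$, whose image in $\cO=R_0/P_0$ is $\prod_{i=1}^q\Res(f_{i,1},f_{i,2})^2$; this is nonzero because the $f_{i,j}$ have no common roots in $\overline{\bQ}_p$ (Proposition~\ref{prop_existence_of_TW_primes}). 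Hence $x$ is a unit in $(R^\prm)_{P^\prm}$, the kernel and cokernel of $f$ vanish after localisation, and $\mrm\to\mrm_0$ is an isomorphism. Part~(2) is then formal: localisation commutes with quotients, so $\mrm_1/\mathbf{a}_\infty\mrm_1=(M_1/\mathbf{a}_\infty M_1)_{P^\prm}$, and $M_1/\mathbf{a}_\infty M_1=M_1\otimes_{S_\infty}\cO\cong M_0$ as $R^\prm$-modules by Lemma~\ref{lem_comparison_maps_for_patched_mdoules}(2), whence $\mrm_1/\mathbf{a}_\infty\mrm_1\cong(M_0)_{P^\prm}=\mrm_0$.

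Part~(3) I would deduce from flatness and finiteness of $M_1$ over $S_\infty$. Flatness is Lemma~\ref{lem_comparison_maps_for_patched_mdoules}(1); finiteness follows, as in \cite{New19a}, from a uniform bound on $\operatorname{rank}_\cO S_{\scrQ_N,0}$ --- indeed, after inverting the $\Res_v$ the map $f_{\scrQ_N}$ becomes an isomorphism, so $\operatorname{rank}_\cO S_{\scrQ_N,0}=\operatorname{rank}_\cO(\cB_N\otimes_{\cA_N}S_\emptyset)\le 2^{nq}\operatorname{rank}_\cO S_\emptyset$, and $S_{\scrQ_N,1}$ is free over $\cO[\Delta_{Q_N}]$ of this rank. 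Then $M_1/\mathbf{a}_\infty^2 M_1$ is finite flat over $S_\infty/\mathbf{a}_\infty^2$, and $\mrm_1=(M_1/\mathbf{a}_\infty^2M_1)_{P^\prm}$ is a finite module over the local ring $(R^\prm)^{\widehat{}}_{P^\prm}/\mathbf{a}_\infty^2$ (Proposition~\ref{prop_completed_patched_def_ring}) that remains flat over the Artinian local ring $S_{\infty,\mathbf{a}_\infty}/\mathbf{a}_\infty^2$ --- the flatness propagating through the localisation because $S_\infty\setminus\mathbf{a}_\infty$ maps outside $P^\prm$ --- and a finite flat module over an Artinian local ring is free. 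This last step is the one requiring care, but it is identical to the corresponding argument in \cite{New19a}; the real novelty lies entirely in part~(1), and the substantive work behind it (the Hecke-theoretic trace relations of \S\ref{sec_hecke} and the deformation-theoretic estimates of \S\ref{sec_deformation}) is already in place.
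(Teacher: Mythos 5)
Your parts (1) and (2) are correct and coincide with the paper's argument: the whole content of (1) is that the image of $(\prod_{v \in Q_N} \Res_v^2)_{N > d}$ in $R^\prm$ lies outside the prime $P^\prm$, which you justify correctly by computing its image in $R_0/P_0 \hookrightarrow \cO$ as $\prod_{i=1}^q \Res(f_{i,1}, f_{i,2})^2 \neq 0$ (using the congruences $f_{v_{N,i},j} \equiv f_{i,j} \bmod \varpi^N$ built into the choice of Taylor--Wiles data); and (2) is indeed just the localisation of the trace isomorphism $M_1 \otimes_{S_\infty} \cO \cong M_0$.

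Part (3) contains a genuine gap, in the step where you deduce finiteness of $M_1$ over $S_\infty$ from the claimed equality $\operatorname{rank}_\cO S_{\scrQ_N, 0} = \operatorname{rank}_\cO(\cB_N \otimes_{\cA_N} S_\emptyset)$. Inverting $\prod_v \Res_v$ does make $f_{\scrQ_N}$ an isomorphism, but it also discards the $\Res$-torsion of \emph{both} modules, so it yields no bound on $\operatorname{rank}_\cO S_{\scrQ_N,0}$ itself. That torsion can be large here: since no hypothesis is placed on $\overline{\rho}$ (it may even be trivial), the maximal ideal $\ffrm_{0, \scrQ_N}$ need not separate old forms from new ones at $v \in Q_N$, and $S_{\scrQ_N, 0}$ receives contributions from automorphic representations $\mu$ with $\mu_v$ parahoric-spherical but ramified --- precisely the constituents killed by $\Res_{v,q}^{n!}$ in Proposition \ref{prop_trace_relation_0} --- whose number is not uniformly bounded in $N$. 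This is presumably why the paper never asserts that $M_1$ is finitely generated over $S_\infty$; only flatness is claimed. The repair is to run the finiteness argument entirely after localisation, which is what the paper does: $\mrm_1$ is flat over the Artinian local ring $S_{\infty, \mathbf{a}_\infty}/(\mathbf{a}_\infty^2)$ (flatness survives localisation), hence projective, hence free of some a priori possibly infinite rank; that rank equals $\dim_E \mrm_1/(\mathbf{a}_\infty) = \dim_E \mrm_0 = \dim_E \mrm$ by your parts (1) and (2), and this is finite because $\mrm$ is a localisation of the finite $\cO$-module $S_\emptyset \otimes_{\cA_0} \cB_0$. So the conclusion of (3) stands, but finiteness must be extracted from the finite-dimensionality of the fibre $\mrm_0$ rather than from a global rank bound on the spaces $S_{\scrQ_N, 0}$.
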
	
\begin{proof}	
	The first part is true because the image of the element $(\prod_{v \in Q_N} \Res_v^2)_{N > d} \in \prod_{N > d} R_N$ in $R^\prm$ is not in $P^\prm$. The second part is true because the analogous statement is true before localization. The third part is true because $\mrm_1$ is both flat (as $M_1$ is flat) and finitely generated (because $S_{\infty, \mathbf{a}_\infty} / (\mathbf{a}_\infty^2)$ is Artinian and $\mrm_0$ is a finite-dimensional $E$-vector space).	
\end{proof}	
Finally, we conclude:	
\begin{proposition}	
	$\mrm$ is a free $(R_0)^{\widehat{}}_{P_0}$-module. Consequently, $H^1_f(F^+, \ad r_{\pi, \iota}) = 0$.	
\end{proposition}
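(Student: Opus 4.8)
The plan is to run the standard endgame of the Taylor--Wiles method, in the ``first-order'' (square-zero) form matched to the ultrafilter patching carried out above, following the final part of \cite[\S 4]{New19a}. \textbf{Set-up.} The numerical coincidence is that $R_\infty = \cO\llbracket x_1, \dots, x_q \rrbracket$ and $S_\infty = \cO\llbracket \bZ_p^q \rrbracket$ are both formally smooth over $\cO$ of relative dimension $q = \operatorname{corank}_\cO H^1(F_S/F^+, \ad r(1)\otimes_\cO E/\cO)$. Hence $C := (R_\infty)^{\widehat{}}_{P_\infty}$ is regular complete local of dimension $q$ with residue field $E$, while $A := S_{\infty, \mathbf{a}_\infty}/\mathbf{a}_\infty^2$ is an Artinian local $E$-algebra with residue field $E$ and $\mathbf{a}_\infty A$ square-zero of $E$-dimension $q$, so $\ell_E(A) = q+1$. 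By Proposition \ref{prop_completed_patched_def_ring}, $B := (R^\prm)^{\widehat{}}_{P^\prm}$ is a quotient of $C$; and since the $S_\infty$-structure on $R^\prm$ is compatible with the surjection $R^\prm \to R_0$ (which kills the $\Delta_Q$-operators), the image of $\mathbf{a}_\infty$ lies in $\ker(B \to (R_0)^{\widehat{}}_{P_0})$, so $(R_0)^{\widehat{}}_{P_0}$ is a nonzero quotient of $B/\mathbf{a}_\infty B$.

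\textbf{The patched module.} By Proposition \ref{prop_S_infty_freeness}, $\mrm_1$ is a finite free $A$-module, nonzero because $\sigma$ contributes to $S_\emptyset$; moreover $\mrm_1$ is a $B$-module whose $A$-structure is induced along $S_{\infty,\mathbf{a}_\infty}\to B$, and $\mrm \cong \mrm_0 \cong \mrm_1 \otimes_A E$. It is crucial here that localising at $P^\prm$ turns the comparison maps between the spaces of automorphic forms at levels $U$, $U_0(\scrQ)$, $U_1(\scrQ)$ — whose kernels and cokernels were only controlled by powers of the resultants $\prod_{v \in Q}\Res_v$ — into \emph{isomorphisms}, since the chosen factorisations of the $f_v$ are coprime, so $f_\scrQ(\Res_v) \neq 0$ and $\prod_{v \in Q}\Res_v \notin P^\prm$. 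This is exactly what the different computation of \S\ref{sec_different} and the Hecke algebra comparisons of \S\ref{sec_hecke} were engineered to make possible.

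\textbf{The commutative algebra.} I would then carry out the first-order Auslander--Buchsbaum / depth computation. Choose lifts $\widehat y_1, \dots, \widehat y_q$ in the maximal ideal of $C$ of the images in $B$ of a basis $y_1, \dots, y_q$ of $\mathbf{a}_\infty/\mathbf{a}_\infty^2$; then $\mathbf{a}_\infty B$ is the image of the ideal $(\widehat y_1, \dots, \widehat y_q)C$. Using the $\mathbf{a}_\infty^2$-truncation in the construction of $M_1$ and $\mrm_1$, together with the uniform bound on the number of generators of $P(\scrQ)/P(\scrQ)^2$ and the fact that $P(\scrQ)/(P(\scrQ)^2, x_1, \dots, x_q)$ is killed by inverting $p$ (Proposition \ref{prop_existence_of_TW_primes}), one shows that the $B$-action on $\mrm_1$ is first-order over the $A$-action, i.e.\ $\mathfrak{m}_B^2 \mrm_1 = 0$. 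Since $\mrm_1$ is a nonzero free, hence faithful, $A$-module on which $\widehat y_i \widehat y_j$ acts as zero, this forces $\widehat y_1, \dots, \widehat y_q$ to be linearly independent modulo the square of the maximal ideal of $C$, i.e.\ a regular system of parameters of the $q$-dimensional regular ring $C$. Therefore $C/(\widehat y_1, \dots, \widehat y_q) = E$, and as $(R_0)^{\widehat{}}_{P_0}$ is a nonzero quotient of $B/\mathbf{a}_\infty B$, which is itself a quotient of $C/(\widehat y_1, \dots, \widehat y_q)$, we conclude $(R_0)^{\widehat{}}_{P_0} = E$. In particular $\mrm$ is a (trivially free) module over the field $E = (R_0)^{\widehat{}}_{P_0}$.

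\textbf{Conclusion.} From $(R_0)^{\widehat{}}_{P_0} = E$ we get $P_0/P_0^2 \otimes_\cO E = 0$, hence $\Hom_\cO(P_0/P_0^2, E) = 0$. By Lemma \ref{lem_adjoint_Selmer} this group is $H^1_{g,S}(F^+, W_E)$, and since $\operatorname{WD}(r_{\pi, \iota}|_{G_{F_\wv}})$ is generic for every $v \in S$, it equals $H^1_f(F^+, W_E) = H^1_f(F^+, \ad r_{\pi, \iota})$, which therefore vanishes, completing the proof of Theorem \ref{thm_vanishing_over_CM_field}. The main obstacle is the first-order commutative-algebra step of the third paragraph: one must show that the Galois/Hecke action on the square-zero patched module $\mrm_1$ does not ``see'' anything beyond first order past the action of the diamond ring $A$, so that the numerical coincidence can be converted into the statement that the $\mathbf{a}_\infty$-variables cut $C$ down to $E$ — this replaces the genuine maximal-Cohen--Macaulay input available in the classical finite-patching situation, which is unavailable here because the ultrafilter-patched module $M_1$ is only flat, not finitely generated, over $S_\infty$.
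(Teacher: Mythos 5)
Your set-up and your second paragraph are consistent with the paper, but the decisive step in your third paragraph has a genuine gap, and it is not a repairable one by the means you cite. You assert that ``one shows'' $\ffrm_B^2 \mrm_1 = 0$, where $B = (R^\prm)^{\widehat{}}_{P^\prm}$. The ingredients you invoke do not give this: the $\mathbf{a}_\infty^2$-truncation gives only $(\mathbf{a}_\infty B)^2\mrm_1 = 0$, and the uniform bound on generators of $P(\scrQ)/P(\scrQ)^2$ together with the torsionness of $P(\scrQ)/(P(\scrQ)^2,x_1,\dots,x_q)$ gives only the surjection $C=(R_\infty)^{\widehat{}}_{P_\infty}\twoheadrightarrow B$ (Proposition \ref{prop_completed_patched_def_ring}). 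But $\ffrm_B$ is generated by the images of $x_1,\dots,x_q$, which are lifts of Selmer/tangent classes, and nothing constructed so far controls how products $x_ix_j$ act on $\mrm_1$. Worse, the statement $\ffrm_B^2\mrm_1=0$ is essentially equivalent to the conclusion: a posteriori it holds because $\ffrm_B=\mathbf{a}_\infty B$, i.e.\ because $B/\mathbf{a}_\infty B$ is a field --- but that is exactly the ``$R=\bT$'' statement the whole argument is trying to establish, so taking it as an input is circular. The paper avoids this entirely by applying Brochard's freeness criterion to the finite free $S_{\infty,\mathbf{a}_\infty}/(\mathbf{a}_\infty^2)$-module $\mrm_1$ and the quotient $B/(\mathbf{a}_\infty^2)$ of the $q$-dimensional regular ring $C$; this yields that $\mrm_1$ is free over $B/(\mathbf{a}_\infty^2)$, hence $\mrm\cong\mrm_0$ is free over $B/(\mathbf{a}_\infty)$ and therefore over its quotient $(R_0)^{\widehat{}}_{P_0}$, with no a priori comparison of $\ffrm_B$ and $\mathbf{a}_\infty B$ required.

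A second, related omission: even with freeness of $\mrm$ over $(R_0)^{\widehat{}}_{P_0}$ in hand, one cannot conclude $(R_0)^{\widehat{}}_{P_0}=E$ (equivalently $P_0/P_0^2\otimes_\cO E=0$) by commutative algebra alone --- a nonzero free module over an Artinian local ring says nothing about the ring being a field. The paper's proof needs the additional automorphic input that $\mrm\cong(S_\emptyset\otimes_{\cA_0}\cB_0)_{P_0}$ is a \emph{semisimple} $(R_0)^{\widehat{}}_{P_0}$-module, which follows from the semisimplicity of $S_\emptyset\otimes_\cO E$ over $\bT_\emptyset\otimes_\cO E$ together with the fact that $\bT_\emptyset\to\bT_\emptyset\otimes_{\cA_0}\cB_0$ is \'etale at $P_0$ (Proposition \ref{prop_etale_when_different_non-zero}, using that the chosen factorisations are coprime so the resultants do not lie in $P_0$). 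Free plus semisimple forces the maximal ideal to vanish, and then Lemma \ref{lem_adjoint_Selmer} and genericity at $v\in S$ give $H^1_f(F^+,\ad r_{\pi,\iota})=\Hom_\cO(P_0/P_0^2,E)=0$. Your proposal bypasses this semisimplicity step, which is a further sign that the claimed purely commutative-algebraic shortcut cannot be correct as stated.
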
	
\begin{proof}	
	According to Proposition \ref{prop_completed_patched_def_ring}, $(R^\prm)^{\widehat{}}_{P^\prm}$ is a quotient of $(R_\infty)^{\widehat{}}_{P_\infty}$, which is a complete Noetherian regular local ring of dimension $q$. Applying Brochard's criterion \cite[Theorem 1.1]{brochard} (along with the third part of Proposition \ref{prop_S_infty_freeness}), we conclude that  $\mrm_1$ is a free $(R^\prm)^{\widehat{}}_{P^\prm} / (\mathbf{a}_\infty^2)$-module, and hence that $\mrm_0 \cong \mrm$ is a free $(R^\prm)^{\widehat{}}_{P^\prm} / (\mathbf{a}_\infty)$-module. Since the action of $(R^\prm)^{\widehat{}}_{P^\prm} / (\mathbf{a}_\infty)$ on $\mrm$ factors through the map $(R^\prm)^{\widehat{}}_{P^\prm} \to (R_0)^{\widehat{}}_{P_0}$, it must be the case that $\mrm$ is a free $(R_0)^{\widehat{}}_{P_0}$-module. To prove the vanishing of $H^1_f(F^+, \ad r_{\pi, \iota})$ and finish the proof, we need to check the following two points: 	
	\begin{itemize}	
		\item $\mrm$ is a semisimple $(R_0)^{\widehat{}}_{P_0}$-module.	
		\item The $E$-vector spaces $H^1_f(F^+, \ad r \otimes_\cO E)$ and $P_0 / P_0^2 \otimes_\cO E$ have the same dimension.	
	\end{itemize}	
	For the first point, we note that there is an isomorphism 	
	\[ \mrm \cong (S_\emptyset \otimes_{\cA_0} \cB_0)_{P_0}. \]	
	Since $S_\emptyset \otimes_\cO E$ is a semisimple $\bT_{\emptyset} \otimes_\cO E$-module and the map $\bT_{\emptyset} \to \bT_{\emptyset} \otimes_{\cA_0} \cB_0$ is \'etale at $P_0$, $\mrm$ is indeed semisimple. For the second, we simply apply Lemma \ref{lem_adjoint_Selmer}.	
\end{proof}
\bibliographystyle{amsalpha}
\bibliography{CMpatching}

\end{document}